\theoremstyle{plain}
\newtheorem{theorem}{Theorem}[section]
\newtheorem*{unnumberedTheorem}{Theorem}
\newtheorem{lemma}[theorem]{Lemma}
\newtheorem{proposition}[theorem]{Proposition}
\newtheorem{corollary}[theorem]{Corollary}
\theoremstyle{definition}
\newtheorem{remark}[theorem]{Remark}
\newtheorem{definition}[theorem]{Definition}
\newtheorem{rappels}[theorem]{Rappels}
\newtheorem{notation}[theorem]{Notation}
\newtheorem{setting}[theorem]{Setting}
\DeclareMathAlphabet{\pazocal}{OMS}{zplm}{m}{n}
   \newcommand{\dagg}{{\normalfont({\boldmath$\dagger$})}}
\newcommand{\leswithmaps}[5]{
	\begin{tikzcd}[column sep= 0.5cm]
		\pgfmatrixnextcell#1\arrow[r,"#2"] \pgfmatrixnextcell#3 \arrow[r,"#4"] \pgfmatrixnextcell#5
	\end{tikzcd}
}
\newcommand{\les}[3]{\leswithmaps{#1}{}{#2}{}{#3}}
\newcommand{\disttrianglewithmaps}[5]{
	\begin{tikzcd}[column sep= 0.6cm]
	#1\arrow[r,"#2"] \pgfmatrixnextcell #3 \arrow[r,"#4"] \pgfmatrixnextcell#5\arrow[r,"+1"]\pgfmatrixnextcell ~
	\end{tikzcd}
}
\newcommand{\disttriangle}[3]{\disttrianglewithmaps{#1}{}{#2}{}{#3}}
\newcommand{\cartesiandiagramwithmaps}[8]{
\begin{tikzcd} #1  \arrow[r,"#2"]  \arrow[d,"#4"']   \pgfmatrixnextcell  #3  \arrow[d,"#5"]  \\ #6  \arrow [r,"#7"'] \pgfmatrixnextcell  #8 
\end{tikzcd}
}
\newcommand{\setline}{\,|\,}
\newcommand{\un}{\mathbbm{1}}
\newcommand{\kk}{\mathbbm{k}}
\newcommand{\A}{\mathbb A}
\newcommand{\C}{\mathbb C}
\newcommand{\Chow}{CH}
\newcommand{\cato}{\pazocal O}
\newcommand{\Dual}{\operatorname{D}}
\newcommand{\Ee}{\pazocal E}
\newcommand{\F}{\mathbb F}
\newcommand{\Ff}{\pazocal F}
\newcommand{\G}{\mathbb G}
\newcommand{\g}{\mathfrak{g}}
\renewcommand{\H}{\mathsf{H}}
\newcommand{\Hyp}{\mathbb H}
\newcommand{\h}{\mathfrak{h}}
\newcommand{\uK}{\underline{K}}
\newcommand{\K}{\mathbb{K}}
\newcommand{\Mm}{\pazocal M}
\newcommand{\Nn}{\pazocal N}
\newcommand{\OO}{\pazocal{O}}
\newcommand{\op}{\operatorname{op}}
\newcommand{\Pcal}{\mathcal{P}}
\newcommand{\Proj}{\mathbb P}
\renewcommand{\P}{\mathbb P}
\newcommand{\Projectives}{\mathsf{Proj}\,}
\newcommand{\Tilt}{\mathsf{Tilt}\,}
\newcommand{\R}{\mathbb{R}}
\newcommand{\simple}{\pazocal S}
\newcommand{\Sch}{\mathsf{Sch}}
\newcommand{\SymA}{\operatorname{S}}
\newcommand{\Spec}{\mathsf{Spec}}
\newcommand{\Scal}{\mathcal{S}}
\newcommand{\Ss}{\pazocal S}
\newcommand{\Simp}{S}
\newcommand{\T}{\mathbb T}
\newcommand{\Tt}{\pazocal T}
\newcommand{\V}{\mathbb V}
\newcommand{\Weyl}{W}
\newcommand{\Z}{\mathbb Z}
\newcommand{\sun}{\un^\Sfrak}
\newcommand{\sotimes}{%
\mathchoice%
{\stackrel{\Sfrak}{\otimes}}
{\otimes^\Sfrak}%
{\otimes^\Sfrak}%
{\otimes^\Sfrak}%
}%
\newcommand{\rotimes}{%
\mathchoice%
{\stackrel{\R}{\otimes}}
{\otimes^\R}%
{\otimes^\R}%
{\otimes^\R}%
}%
\newcommand{\QProj}{\mathsf{QProj}}
\newcommand{\Sm}{\mathsf{Sm}}
\newcommand{\PSh}{\mathsf{PSh}}
\newcommand{\Sh}{\mathsf{Sh}}
\newcommand{\Shv}{\mathsf{Sh}}
\newcommand{\Nis}{\mathsf{Nis}}
\newcommand{\Ind}{\mathsf{Ind}}
\renewcommand{\mod}{\mathsf{-mod}}
\newcommand{\id}{\mathsf{id}}
\renewcommand{\ker}{\mathsf{ker}}
\newcommand{\coker}{\mathsf{coker}}
\newcommand{\Sfrak}{\mathfrak{S}}
\newcommand{\Cone}{\mathsf{Cone}}
\newcommand{\Cat}{\mathsf{Cat}}
\newcommand{\HZop}[1][{}]{\mathsf{H}\Z\!/\!p}
\newcommand{\Zop}[1][{}]{\Z\!/\!p}
\newcommand{\Forget}{\mathsf{Res}}
\newcommand{\Htp}{\mathsf{Htp}}
\newcommand{\Stb}{\mathsf{Stb}}
\newcommand{\kmp}[1][{}]{\uK^M_{#1}\!\!/p}
\newcommand{\Der}{\operatorname{Der}}
\newcommand{\modules}{\operatorname{-mod}}
\newcommand{\soergelmodules}{\operatorname{-Smod}}
\newcommand{\Derb}[1]{\operatorname{Der^b}(#1)} %bounded derived category
\newcommand{\Hotb}[1]{\operatorname{Hot^b}(#1)} %bounded ht cat
\newcommand{\DT}[1]{\operatorname{MTDer}(#1)} %Tate-Motives on Strata
\newcommand{\DMT}[2]{\operatorname{MTDer}_{#1}(#2)} %stratified mixed tate motives
\newcommand{\Par}[2]{\operatorname{Par}_{#1}(#2)} %Parity motives
\newcommand{\Per}[2]{\operatorname{Per}_{#1}(#2)} %perverse motives
\newcommand{\Hotub}{\operatorname{Hot}}
\newcommand{\Hom}[1]{\operatorname{Hom}_{#1}} %Hom
\newcommand{\iHom}[1]{\mathcal{H}\hspace{-1.9pt}om_{#1}} %internal Hom
\newcommand{\End}[1]{\operatorname{End}_{#1}} %End
\newcommand{\BS}{\operatorname{BS}} %End
\begin{document}
	\title[Motives and Representations]{Mixed Motives and Geometric Representation Theory in Equal Characteristic}

\author{Jens Niklas Eberhardt}
\email{jneberhardt@gmail.com}  % 'optional'
\address{Department of Mathematics, University of California Los Angeles\\520 Portola Plaza, Los Angeles, CA 90095}
\author{Shane Kelly}
\email{shane.kelly.uni@gmail.com}  % 'optional'
\address{Department of Mathematics, Tokyo Institute of Technology\\ 2-12-1 Ookayama, Meguro-ku,
Tokyo 152-8551, Japan}
\subjclass{14F05 (Primary), 14C15, 14M15, 19D45, 20G40 (Secondary)}
\keywords{motives, representation theory, positive characteristic, reductive groups}
\begin{abstract}
	Let $\kk$ be a field of characteristic $p$. We introduce a formalism of mixed sheaves with coefficients in $\kk$ and apply it in representation theory.\\
	We construct a system of $\kk$-linear triangulated category of motives on schemes over $\overline{\mathbb{F}}_p$, which has a six functor formalism and computes higher Chow groups. Indeed, it behaves similarly to other categories of mixed sheaves that one is used to. We attempt to make its construction also accessible to non-experts.
	\\
	Next, we consider the subcategory of \emph{stratified mixed Tate motives} defined for affinely stratified varieties, discuss perverse and parity motives and prove formality results. We combine this with results of Soergel to construct a geometric and graded version of the derived \emph{modular category} $\cato(G)$, consisting of rational representations of a semisimple algebraic group $G/\kk$.
	%\\\indent
	%The main idea of using motives in geometric representation theory in this way as well as many results about stratified mixed Tate motives are directly borrowed from Soergel and Wendt \cite{SoeWe}, who tell the story in characteristic zero.
\end{abstract}
\maketitle
%auto-ignore
% !TEX root = Dmain.tex
%\newpage
\section{Introduction}
\subsection{Mixed Sheaves}
Categories of mixed $\ell$-adic sheaves and mixed Hodge modules are indispensable tools in geometric representation theory. They are used in the proof of the Kazhdan--Lusztig conjecture, uncover hidden gradings in categories of representations \cite{BGS} or categorify objects as Hecke algebras \cite{Springer}, representations of quantum groups \cite{lusztig2010introduction} and link invariants \cite{webster2017}, to name a few. But they are---by their nature---limited to \emph{characteristic zero} coefficients. 

In this paper, we address this gap and propose a formalism of mixed sheaves with coefficients in \emph{characteristic $p$} by following the idea of Soergel and Wendt \cite{SoeWe} to make use of the recent developments in the world of motivic sheaves. Most importantly, our formalism comes equipped with six functors and computes Chow groups, which is often all one really needs in the specific applications. It allows us to translate many results from the characteristic zero setting to characteristic $p$, as we will show.

\subsection{Motives}
Just as constructible sheaves live inside more general triangulated categories, as for example étale sheaves, our formalism of mixed sheaves is developed using \emph{mixed motives in equal characteristic}, i.e., motives on characteristic $p$ schemes with characteristic $p$ coefficients. Using the work of Ayoub \cite{Ayo07}, Cisinski--Déglise \cite{CD} and Geisser--Levine \cite{GL} we will show:
\begin{unnumberedTheorem}[Theorem \ref{theo:propertiesofH}, Corollary \ref{coro:calculateHigherChowGroups}]
There is a system $\H(X,\kk)$ of $\kk$-linear tensor triangulated categories of motives associated to quasi-projective schemes $X/\overline\F_p$. It is equipped
with a six functor formalism fulfilling all the usual properties. Moreover for smooth varieties $X\rightarrow \overline\F_p$ one has
	\[ \Chow^{n}(X, 2n{-}i; \kk) \cong \Hom{\H(X,\kk)}(\un_X, \un_X(n)[i]) \]
	where the left hand side denotes higher Chow groups.
\end{unnumberedTheorem}

The category $\H(X,\kk)$ is essentially the \emph{homotopy category of modules over the $T$-spectrum representing motivic cohomology with $\kk$-coefficients in the Morel--Voevodsky stable homotopy category}, sometimes written as $\H\kk_X\modules$ in the motivic literature. 
However, we have attempted to construct it in a way which is as accessible as possible to non-homotopy theorists. Using the elementary Milnor $K$-theory we avoid any discussion of presheaves with transfers, and using techniques from \cite{CD} we avoid any mention of simplicial sets or $S^1$-spectra, and indeed, even manage to avoid the word $T$-spectrum. We build our categories step-by-step, from the ground up, using honest sheaves, modules over an explicit monoïd object, derived categories of abelian categories, and Verdier quotients, inviting the non-expert to take a peek inside the black box.
\subsection{Stratified Mixed Tate Motives} Following \cite{SoeWe}, we define for affinely stratified varieties $X$ the category of \emph{stratified mixed Tate motives} $\DMT{\Ss}{X,\kk}$ as a full subcategory of $\H(X,\kk)$ consisting of motives which restrict to finite direct sums of Tate objects $\un(n)[m]$ on each stratum. This is the analogue of constructible sheaves whose cohomology sheaves are locally constant on each stratum.

Stratified mixed Tate motives behave well under the six functors and admit a perverse $t$-structure and weight structure. Furthermore, the theory of parity sheaves of Juteau--Mautner--Williamson \cite{JMW} applies, and we will be able to show:
\begin{unnumberedTheorem}[Corollary \ref{cor:paritymotivestilting}, Theorem \ref{thm:perversemotivestilting}]\label{thm:intro2}
	Let $(X,\Ss)$ be an affinely stratified variety over $\overline\F_p$  fulfilling some additional conditions---all of them are fulfilled for flag varieties with their Bruhat-stratification. Then there are equivalences of categories
\begin{equation*}\label{eq:introtheorem3}
\Derb{\Per{\Ss}{X,\kk}}
\stackrel{\sim}{\leftarrow}
\DMT{\Ss}{X,\kk}
\stackrel{\sim}{\rightarrow}
\Hotb{\Par{S}{X,\kk}_{w=0}}
\end{equation*}
between the derived category of perverse motives, the category of stratified mixed Tate motives, and the homotopy category of weight zero parity motives on $X$.
\end{unnumberedTheorem}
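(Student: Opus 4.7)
The plan is to prove each of the two equivalences in (\ref{eq:introtheorem3}) separately, following the blueprint of Soergel--Wendt \cite{SoeWe} in characteristic zero, and verifying that the key inputs -- a weight structure and a perverse $t$-structure on stratified Tate motives together with the appropriate $\operatorname{Ext}$-vanishing -- survive the passage to $\kk$-coefficients using the six functor formalism of Theorem~\ref{theo:propertiesofH} and the computation of $\Hom{}$-groups via higher Chow groups from Corollary~\ref{coro:calculateHigherChowGroups}.

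For the right-hand equivalence $\DMT{\Ss}{X,\kk}\simeq \Hotb{\Par{S}{X,\kk}_{w=0}}$ the strategy is to invoke Bondarko's weight-complex machinery. First I would exhibit a bounded weight structure on $\DMT{\Ss}{X,\kk}$ whose heart is precisely $\Par{S}{X,\kk}_{w=0}$: the objects $j_{s!}\un_s(n)[2n]$ and $j_{s*}\un_s(n)[2n]$ coming from the affine stratum inclusions $j_s$ are pure of weight zero, they generate $\DMT{\Ss}{X,\kk}$ as a thick triangulated subcategory, and the $\Hom$-vanishing
\begin{equation*}
\Hom{\DMT{\Ss}{X,\kk}}\bigl(j_{s!}\un_s(n)[2n],\,j_{t*}\un_t(m)[2m+i]\bigr)=0 \quad\text{for } i>0,
\end{equation*}
supplied by higher Chow, cuts out the desired heart. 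Once the weight structure is in place and the heart is Karoubian (it is, by idempotent completion of the ambient category), Bondarko's theorem produces the triangulated equivalence with $\Hotb{\text{heart}}$.

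For the left-hand equivalence $\Derb{\Per{\Ss}{X,\kk}}\simeq\DMT{\Ss}{X,\kk}$ I would apply Beilinson's realization principle. First, glue a perverse $t$-structure on $\DMT{\Ss}{X,\kk}$ from the natural $t$-structures on each stratum using the recollement provided by the six functors; its heart is the abelian category $\Per{\Ss}{X,\kk}$, which under the running hypotheses has finite homological dimension. Then construct a realization functor
\begin{equation*}
\operatorname{real}\colon \Derb{\Per{\Ss}{X,\kk}}\longrightarrow\DMT{\Ss}{X,\kk}
\end{equation*}
via either Beilinson's filtered derived category or a dg-enhancement of motives, and show it is fully faithful by checking
\begin{equation*}
\operatorname{Ext}^i_{\Per{\Ss}{X,\kk}}(A,B)\stackrel{\sim}{\longrightarrow}\Hom{\DMT{\Ss}{X,\kk}}(A,B[i])
\end{equation*}
on a set of generators, namely the intersection-cohomology perverse motives $\operatorname{IC}_s(n)$. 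Essential surjectivity is then automatic from generation, closing the argument.

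The main obstacle, and the place where the hypotheses on $(X,\Ss)$ really bite, is the $\Hom$-computation between standards, costandards, and intersection-cohomology motives across strata. By adjunction and base change it reduces to higher Chow groups of the strata themselves, which for affine strata collapse to copies of $\kk$ placed in the correct bidegree -- but only provided closure strata contribute no spurious classes. This is where the additional conditions on $(X,\Ss)$ (Bruhat-type affineness and Tate-purity of intersection cohomology) enter in an essential way. Once this purity input is secured, both equivalences follow formally from the weight- and $t$-structure formalisms above.
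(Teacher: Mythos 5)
Your overall strategy---a weight structure for the parity equivalence, a perverse $t$-structure for the other, both reduced to Ext-vanishing that ultimately lands in higher Chow groups---correctly tracks the shape of the argument, but there are two concrete gaps and a genuine difference in route.

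On the parity side, invoking ``Bondarko's theorem'' elides the crucial point. The weight complex functor attached to a bounded weight structure is not an equivalence in general, and a Karoubian heart does not make it one. What the paper actually uses is Keller's dg-tilting criterion, which requires $\operatorname{Hom}(A,B[n])=0$ for \emph{all} $n\neq 0$ and $A,B$ in $\DMT{\Ss}{X}_{w=0}$. The vanishing you quote, for $i>0$ between $j_{s!}$- and $j_{t*}$-type objects, is essentially the orthogonality axiom of a weight structure and costs nothing; the substantive direction is $n<0$, which is \emph{not} an axiom. In the paper it follows from pointwise purity of weight-zero objects (Lemma~\ref{lem:pointwisepuremorphism}), which in turn requires (i) the Whitney--Tate hypothesis, (ii) the resolution hypothesis of Theorem~\ref{thm:paritymotives}---existence of proper affinely stratified maps $\widetilde{X}_s\to\overline{X}_s$ with smooth source---to produce pointwise pure parity motives and identify $\DMT{\Ss}{X}_{w=0}$ with $\Par{\Ss}{X}_{w=0}$ (which is a theorem, not a definition), and (iii) the Steinberg observation that $K_n^M(\overline\F_p)$ has no $p$-torsion for $n>0$, which is what collapses $\DT{\A^n}$ to bigraded vector spaces in Proposition~\ref{prop:mixedtateonstrata}. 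None of this appears in your sketch.

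On the perverse side you propose a Beilinson realization functor, which is a genuinely different route from the paper's and carries real overhead: one needs either a filtered triangulated enhancement of $\DMT{\Ss}{X}$ or a full dg-enhancement before the realization even exists, and then full faithfulness has to be checked. The paper sidesteps all of this by again running Keller's tilting criterion, this time with the projective perverse motives as the tilting collection; Proposition~\ref{prop:enoughprojperversemotives} gives enough projectives with costandard ($j_!$-)flags, and Lemma~\ref{lem:standardcostandardtilting} supplies the $\operatorname{Hom}(E,F[n])=0$ vanishing for $n\neq 0$. Your choice of generators is also off target: the paper deliberately never works with $\operatorname{IC}$ motives, because the decomposition theorem fails with mod-$p$ coefficients---this is precisely why parity objects enter the story at all. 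Finally, the ``additional conditions'' the paper imposes are not ``Tate-purity of intersection cohomology'': for the parity equivalence it is the resolution hypothesis above, and for the perverse one it is that the standard objects $j_{s*}\un_{X_s}[\dim X_s]$ are perverse, a substitute for Artin's affine vanishing theorem (which has no known motivic proof) that is then verified by hand for the flag variety.
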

The proof heavily relies on the fact that there are no non-trivial extensions between the Tate objects $\un(n)$ in $\H(\Spec(\overline{\F}_p),\kk)$, which boils down to a classical observation of Steinberg, namely that the Milnor $K$-groups $K_n^M(\F_{p^m})/p$ vanish for $n>0$. 
This is the reason why, of all things, we work with mixed motives in \emph{equal characteristic}.
\subsection{Representation Theory} We conclude the paper with a particular representation theoretic application of our formalism.
Let $G/\kk$ be a semisimple simply connected split algebraic group, for example $\operatorname{SL}_n/{\F_q}$. A fundamental problem in representation theory is to determine the characters of all simple rational $G$-modules. Unlike in the characteristic zero case, this is still wide open and the subject of ongoing research. In \cite{Soe00}, Soergel proposes a strategy using geometric methods: He translates the problem---at least for some of the simple modules---into a question about the geometry of %the Schubert varieties in 
a flag variety $X^\vee$. He does this by relating \emph{parity sheaves} on $X^\vee(\C)^{an}$ to the projective objects in the \emph{modular category $\cato(G)$} (see Definition~\ref{defi:modO}), a subquotient of the category of $G$-modules. But the beauty and clarity of these and other results in characteristic $p$ geometric representation theory suffered---so far---%in their formulation 
from the lack of an appropriate formalism of \emph{mixed sheaves} with coefficients in $\kk$. We will show:
\begin{unnumberedTheorem}[%Theorem~\ref{thm:perversemotivestilting}, 
Theorem~\ref{thm:modularcategoryo}]\label{thm:intro3}
Let $G/\kk$ be a semisimple simply connected split algebraic group and $X^\vee/\overline{\F}_p$ be the flag variety of the Langlands dual group. Then there is an equivalence of categories
\begin{equation*}
%\Derb{\Per{(B^\vee)}{X^\vee,\kk}}
\DMT{(B^\vee)}{X^\vee,\kk}
\stackrel{\sim}{\to}
\Derb{\cato^{\Z,ev}(G)}
\end{equation*}
%between the derived category of perverse motives on %
between the category of stratified mixed Tate motives on 
$X^\vee$ %(a full subcategory of $\H(X,\kk)$ analogous to constructible sheaves, Definition \ref{def:stratmixedtate}), 
and the derived evenly graded modular category $\cato^{\Z,ev}(G)$. We have to assume that $p$ is bigger than the Coxeter number of $G$.
\end{unnumberedTheorem}
This equips the modular category $\cato(G)$ with all the amenities of the geometric world, as for example a full six functor formalism. %$f^*, f_*, f_!, f^!, \otimes, \iHom{}$  (discussed below) à la étale sheaves or mixed Hodge modules, but also, for example, an action of the motivic cohomology operations (in particular the Steenrod algebra), which is of independent interest.
\subsection{Future Work}
\begin{enumerate}
		\item  In a future paper we extend our results to the equivariant setting and to ind-schemes. Here Iwahori constructible stratified mixed Tate motives on the affine Grassmannian provide a graded version of the derived principal block of the category of all rational representations of $G$, this is the \emph{graded Finkelberg-Mirkovi\'c
 conjecture}, see \cite{AR16}. %Furthermore,  the category of Iwahori equivariant stratified mixed Tate motives on the affine flag variety should be equivalent to the derived Hecke category corresponding to an affine Weyl group, see \cite{RiWi}.
		\item For quasi-projective schemes $X/\Z$, we strive to construct a realization functor
		$$real: \H(X/\overline{\F}_p,\kk)\rightarrow \Der(X(\C)^{an}, \kk)$$
		into the derived category of constructible sheaves, compatible with the six functors, see \cite[Section 17]{CD}. This would allow us to prove the \emph{ungraded} Finkelberg--Mirkovi\'c
 conjecture, which is still an open conjecture.
		\item Our formalism should also be useful in categorification. In \cite{webster2017} Webster and Williamson give a geometric construction of the triply graded Khovanov--Rozansky link homology (a categorifaction of the HOMFLYPT polynomial) using mixed $\ell$-adic sheaves on flag varieties. This immediately translates into our setting, and allows one to do the same with coefficients in characteristic $p$. Since in our settings Tate objects do not extend, their techniques can even be simplified.
		\item It would be interesting to study the action of motivic cohomology operations, in particular the Steenrod algebra, on stratified mixed Tate motives with mod-$p$ coefficients. In the case of the flag variety one should obtain a straightforward description of the action on Soergel modules, since they are just constructed from copies of the equivariant motivic cohomology ring $H_{\mathcal M,\G_m}^{\bullet}(\Spec(\overline{\F}_p),\kk(\bullet))=\kk[u]$, where the Steenrod reduced powers just act by $\pazocal P^i(u^k)={k \choose i} u^{k+i(p-1)}.$
		\item Our construction of  $\H(X, \Z/p)$ works \textit{mutatis mutandis} with $\Z/p^n$ coefficients, which allows one to consider $\H(X, \mathbb{Q}_p)=\varprojlim\H(X, \Z/p^n)\otimes_{\Z_p}\mathbb{Q}_p$. 
\end{enumerate}
\subsection{Relation to other work.}
\begin{enumerate}
 \item First and foremost, it must be said that this project is strongly inspired by Soergel and Wendt's work in characteristic zero, \cite{SoeWe}.

		\item The statements in Equations \ref{eq:introtheorem3} and \ref{eq:introdmtishotweightzero}  can also be interpreted as a \emph{formality} result. Namely, that $\DMT{\Ss}{X}$ can be realized as category of dg-modules over a formal dg-algebra. Similar formality results for the flag variety were achieved first by \cite{RSW14} using étale sheaves on $X^\vee/\F_p$ with mod-$\ell$ coefficients, but with a stronger requirement on $\ell$, and then by Achar and Riche using their \emph{mixed derived category} (see the next point).

		\item In  \cite{AR1} and \cite{AR2}, Achar and Riche present another approach to mixed sheaves. They have the ingenious idea to simply \emph{define} their mixed derived category of a stratified variety $X/\C$ to be the homotopy category of parity sheaves on its complex points $X(\C)^{an}$, equipped with the metric topology. Equations \ref{eq:introtheorem3} and \ref{eq:introdmtishotweightzero} imply that this coincides with our category of stratified mixed Tate motives, at least when $X$ is the flag variety---here weight zero parity motives on $X/\overline{\F}_p$ coincide with parity sheaves on $X(\C)$.
		They also reverse engineer some parts of a six functor formalism, as for example base change for locally closed embeddings.
		On the other hand, our approach has the advantage of being embedded in the rich environment of motives, which immediately implies all those properties and also provides new structures, as for example an action of the motivic cohomology operations. It even yields a sensible outcome for a variety which does not have enough parity motives.
	
	\item The systems of categories $\H\kk_X\modules$ are of great interest, and nailing down the six functor formalism à la \cite{Ayo07} for them (for a general ring $\kk$) is one of the achievements of \cite{CD}. Our contribution is to observe that in our case, Milnor $K$-theory gives a particularly nice model of $\H\kk_X$ allowing one to largely avoid the abstract homotopy theory.
	\end{enumerate}
	\subsection{Outline} In Section~\ref{sec:motives} we construct the system of categories $\H(X)$ of motives with coefficients in a commutative $\Zop[p]$-algebra $A$ equipped with a six functor formalism. As mentioned before, we go out of our way to make this as accessible as possible to the non-expert. A roadmap to our construction can be found in Section~\ref{sec:constructionOverview}.

In Section~\ref{sec:mstm} we define the category of stratified mixed Tate motives as a full subcategory
\begin{equation*}
\DMT{\Ss}{X,\kk}  \subseteq H(X, \kk).
\end{equation*}
We consider a weight structure on this category, and prove that $\DMT{\Ss}{X,\kk}$ is equivalent to the bounded homotopy category of its weight zero objects 
\begin{equation*}\label{eq:introdmtishotweightzero}
\DMT{\Ss}{X}\cong\Hotb{\DMT{\Ss}{X}_{w=0}},
\end{equation*}
Theorem~\ref{thm:tilting}. This shows that $\DMT{\Ss}{X}$ is the dg-derived category of a formal (equipped with a trivial differential) graded dg-algebra, Theorem~\ref{theo:formality}. In Section~\ref{subsec:Erweiterungssatz} we state the Erweiterungssatz, Theorem~\ref{thm:Erweiterungssatz}, which implies that the weight zero motives in $\DMT{\Ss}{X}_{w=0}$ can be realised as graded modules (Soergel modules) over the Chow ring of $X$, Corollary~\ref{coro:pwPureErweiterungssatz}
\begin{equation*}
\DMT{\Ss}{X}_{w=0} \subseteq \Chow^\bullet(X,\kk(\bullet))\modules^{\Z}.
\end{equation*}

In Section~\ref{sec:parityMotives} we study certain interesting subcategories of our category of stratified mixed Tate motives---\emph{parity motives} in Section~\ref{sec:parity} and \emph{perverse motives} in Section~\ref{subsec:perversemotives}. %
The general principle here is that everything works as one is used to from constructible étale sheaves or mixed Hodge modules. In Section~\ref{subsec:flagvar} we consider the case of flag varieties. 

In Section~\ref{sec:repTheory} we apply our results to the representation theory of semisimple algebraic groups in equal characteristic. This is where we observe that we can obtain $\OO(G)$ from our categories, Theorem~\ref{thm:modularcategoryo}.

In Section~\ref{sec:Cat} we recall some notions from category theory needed in the construction of $\H(X,\kk)$ for the convenience of the reader.
\\
\\
\emph{Acknowledgements.} We thank the referee for their very detailed and constructive suggestions improving the exposition of this article substantially.  We would like to thank Wolfgang Soergel for many encouraging and illuminating discussions.
The first author thanks Markus Spitzweck and Matthias Wendt for instructive email exchanges about motivic six functors.
We also thank Oliver Braunling, Brad Drew, Thomas Geisser, Lars Thorge Jensen and Simon Pepin Lehalleur for their valuable input. 
The second author thanks the first author for the opportunity to work on this project, and for many stimulating questions and discussions. %
The first author was financially supported by the DFG Graduiertenkolleg 1821 ``Cohomological Methods in Geometry''.
%

%auto-ignore
% !TEX root = Dmain.tex
\section{Categories of motives and the six operations} \label{sec:motives}

Everything in this section is over an arbitrary perfect base field $k$ of characteristic $p$, except for Corollary~\ref{coro:fieldChowGroups} where we restrict to an algebraic extension of $\F_p$. In fact, everything preceding Section~\ref{sec:motivesChow} works with $k$ replaced by an arbitrary separated noetherian base scheme.

In this section we construct a system of categories of motives with coefficients in a commutative $\Zop[p]$-algebra $A$ equipped with a six functor formalism. %
More explicitly, in Definition~\ref{defi:H} we associate to every quasi-projective variety $X/k$ (resp. morphism of quasi-projective varieties $f: Y \to X$) a symmetric monoïdal triangulated category (resp. a tensor triangulated functor) %
\begin{equation*}
 \H(X)=\H(X, A), %=\H(X,\K\otimes A)
\qquad \textrm{ resp. } \qquad 
f^*: \H(Y) {\to} \H(X).
\end{equation*}

Using a theorem of Geisser--Levine we show in Corollary~\ref{coro:calculateHigherChowGroups} that when $X$ is smooth, the category $\H(X)$ can be used to calculate motivic cohomology in Voevodsky's sense, or equivalently, Bloch's higher Chow groups, \cite[Theorem 19.1]{MVW}, in the sense that there are canonical isomorphisms, 
\begin{align*}
\Hom{\H(X,A)}(\un, \un(i)[j]) 
\ \cong \ H^{j}_\mathcal{M}(X, A (i)) %
\ \cong \ \Chow^i(X, 2i-j; A).
\end{align*}

We observe in Corollary~\ref{coro:stableHomotopy2functor} that this system of categories is what Ayoub calls a unital symmetric monoïdal stable homotopy 2-functor, \cite[Definitions 1.4.1 and 2.3.1]{Ayo07}, and consequently, satisfies the following list of properties.
\begin{theorem} \label{theo:propertiesofH} Let $k$ be a perfect field (or more generally a separated noetherian scheme). 
\begin{enumerate}
 \item For every morphism $f: Y \to X$ in $\QProj/k$ the functor $f^*$ has a right adjoint, \cite[Def.1.4.1]{Ayo07}.
 \begin{equation}
 f^* : \H(X) \rightleftarrows \H(Y): f_*. 
\end{equation}

\item \label{theo:propertiesofH:except} For any  morphism $f:Y\rightarrow X$ in $\QProj/k$, one can construct a further pair of adjoint functors, the {\bf exceptional functors}  
$$
f_!:\H(Y)\leftrightarrows
\H(X):f^! 
$$
which fit together to form a covariant (resp. contravariant) 
$2$-functor  $f\mapsto f_!$ (resp. $f\mapsto f^!$), \cite[Prop.1.6.46]{Ayo07}. 

 \item For each $X \in \QProj/k$, the tensor structure on $\H(X)$ is closed in the sense that for every $E \in \H(X)$, the functor $-\otimes E$ has a right adjoint 
 \begin{equation}
 - \otimes E : \H(X) \rightleftarrows \H(X): \iHom{X}(E, -), 
\end{equation}

the \textbf{internal Hom functor}.

 \item \label{theo:propertiesofH:stability} \textbf{(Stability)} %
  For every $X \in \QProj/k$, let $p: \A^1_X \to X$ be the canonical projection with zero section $s$. Then the endofunctor
 \begin{equation}
 s^!p^* : \H(X) \to \H(X) 
\end{equation}
is invertible, \cite[Def.1.4.1]{Ayo07}. For $E\in \H(X)$ and $n\in \Z$ we denote
\begin{equation}
 E(n):=(s^!p^*)^{n}(E)[-2n] 
\end{equation}

the $n$th \textbf{Tate twist} of $E$.

 \item \label{theo:propertiesofH:homotopy}With $X$ and $p$ as above, $\H$ satisfies \textbf{$\A^1$-homotopy invariance} in the sense that the unit of the adjunction $(p^*, p_*)$ is an isomorphism, \cite[Def.1.4.1]{Ayo07}.
 \begin{equation}
 \id \stackrel{\sim}{\to} p_*p^*. 
\end{equation}

 \item \label{theo:propertiesofH:properRightAdjoint}For any $f:Y\rightarrow X$ in
 $\QProj/k$ there exists a natural transformation \[f_!\rightarrow
f_\ast\] which is an isomorphism when $f$ is proper, \cite[Def.1.7.1, Thm.1.7.17]{Ayo07}.

 \item (\textbf{Relative purity}) %
  For any smooth morphism $f:Y\rightarrow X$ in
 $\QProj/k$ of relative dimension $d$ there is a canonical isomorphism, \cite[\S 1.5.3]{Ayo07}, \begin{equation}
f^* \rightarrow f^!(-d)[-2d].
\end{equation}

\item (\textbf{Base change}) For any cartesian square 
\begin{center}
  \begin{minipage}[c]{10cm}
    \xymatrix{
      X'\ar[r]^{g'} \ar[d]_{f'} & X\ar[d]^f \\
      Y' \ar[r]_g & Y
    }
  \end{minipage}
\end{center}
there exist natural isomorphisms of functors, \cite[Prop.1.6.48, Chap.1]{Ayo07},
$$g^\ast f_!\stackrel{\sim}{\longrightarrow}f_!'g'^\ast,\qquad
g'_\ast f'^!\stackrel{\sim}{\longrightarrow}f^!g_\ast,
$$ 
\item \label{theo:propertiesofH:localisation} ({\bf Localization}),
  For $i:Z\rightarrow X$ a closed immersion with open complement $j:U\to 
X$, there are distinguished triangles
$$
j_!j^!\rightarrow 1\rightarrow i_\ast i^\ast \rightarrow j_!j^![1]
$$
$$
i_! i^!\to 1\to j_\ast j^\ast \to i_! i^! [1]
$$
where the first and second maps are the counits and units  of the
respective adjunctions, \cite[Lem.1.4.6, 1.4.9]{Ayo07}.
\item 
 (\textbf{Projection formulae, Verdier duality}) For any morphism  $f:Y\rightarrow X$ in
  $\QProj/k$, there exist natural isomorphisms 
\begin{align*}
(f_!E) \otimes_XF&\stackrel{\sim}{\longrightarrow}
f_!(E\otimes_Y f^\ast F),\\
\iHom{X}(E,f_\ast F)&\stackrel{\sim}{\longrightarrow} f_\ast \iHom{Y}(f^\ast E,F),\\
\iHom{X}(f_!E,F)&\stackrel{\sim}{\longrightarrow}
f_\ast \iHom{Y}(E,f^!F),\\
f^!\iHom{X}(E,F)&\stackrel{\sim}{\longrightarrow}
\iHom{Y}(f^\ast E,f^!F).
\end{align*}

\item 
Define the subcategory of \textbf{constructible} objects $\H^c(S)\subset \H(S)$ to be the subcategory of compact objects. This subcategory coincides with the thick full subcategory generated by $f_!f^!\un(n)$ for $n\in\mathbb{Z}$ and $f: X\to S$ smooth. The six functors $f_!, f^!, f^*, f_*, \otimes, \iHom{}$ preserve compact objects.%

\item \label{theo:propertiesofH:dual} Let $f:X\to \Spec(k)$ in $\QProj/k$. For $E\in\H(X)$ we denote by 
\begin{equation} \label{equa:propertiesofH:dual}
\Dual_X(E):=\iHom{X}(E,f^!(\un))
\end{equation}
 the \textbf{Verdier dual} of $E$. 
For  all $E,  F\in \H^c(X)$, there is a canonical duality isomorphism  
$$
\Dual_X(E\otimes \Dual_X(F))\stackrel{\sim}{\rightarrow} \iHom{X}(E,F).
$$ 
Furthermore, for any morphism $f:Y\rightarrow X$ in $\QProj/k$ and any $E\in\H^c(X)$ there are natural isomorphisms  
\begin{align*}
\Dual_X(\Dual_X(E))&\cong E,\\
\Dual_Y(f^\ast(E))&\cong f^!(\Dual_X(E)),\\
\Dual_X(f_!(E))&\cong f_\ast(\Dual_Y(E)).
\end{align*}
\end{enumerate}
\end{theorem}
Finally, let us mention that $\H(-,A)$ is canonically equipped with a morphism from the constant stable homotopy functor with value $D(A\modules)$ the derived category of $A$-modules. More explicitly, for every $X \in \QProj/k$ there is a canonical \emph{tensor triangulated} functor, $\gamma$, which admits a right adjoint 
\begin{equation}
 \gamma: D(A\modules) \rightleftarrows \H(X) : \Gamma 
\end{equation}
compatible with the functor $f^*$.
So in particular, we get formulas like
\begin{equation}
 H^n \Gamma(\iHom{X}(E,F)) = \Hom{\H(X)}(E, F[n]). 
\end{equation}

\subsection{Construction motivation and overview} \label{sec:constructionOverview}

Philosophically, categories of motives satisfy a universal property; they are universal targets for functors from schemes satisfying certain properties (of course, which schemes and which properties depends on the task at hand). As such, many of the constructions, including the one presented here, have a generators-and-relations flavor.

We want our category of motives amongst other things to fulfill all the properties listed in Theorem \ref{theo:propertiesofH}. To achieve this, it suffices to implement a short list of ``axioms'' (existence of right adjoints $f_*$ in general, and left adjoints $f_\#$ for smooth morphisms, invertibility of $M(\G_m / 1)$, Nisnevich descent, $\A^1$-invariance). The fact that this gives rise to all six functors and their properties, in particular purity and duality, is difficult, and due to Morel--Voevodsky, and Ayoub.
%The construction of $\H$ proceeds via a sequence of systems of categories equipped with functorialities (as the base scheme $S$ varies) which are used to build the six operations and force them to have the desired properties. 

We will end up with a sequence of \emph{morphisms of monoïdal $\Sm$-fibered categories} (i.e., categories equipped with $f_\#, f^*, \otimes$, see Definition~\ref{definition:monoidalsmfibered}),
%

%\begin{center}
%		\begin{tikzcd}[column sep= 0.9cm, row sep= 0.8cm]
%		\Sm / S \arrow[r,"\Z_\Nis(-)"] \arrow[rrd,"\K(-)"] \arrow[d,"M(-)"'] 
%		& \Sh_\Nis(\Sm / S)  \rar & \Sh_\Nis(\Sm / S)^\Sfrak \arrow[d,"\K \sotimes -"]
%	 \\
%		\H(S)\stackrel{def}{=}D(\K_S\modules) / \Ss_{\Htp, \Stb}
%		& D(\K_S\modules) \lar 
%		& \K_S\modules 
%		\lar
%	\end{tikzcd}
%\end{center}
\begin{center}
		\begin{tikzcd}[column sep= 0.9cm, row sep= 0.8cm]
		\Sm / S \arrow[rd,"\Z_\Nis(-)"]\arrow[r,"\Z(-)"] \arrow[rrdd,"\K(-)"',bend right=12] \arrow[dd,"M(-)"'] 
		& \PSh(\Sm / S) \arrow[d]  \rar & \PSh(\Sm / S)^\Sfrak \arrow[d]\\
		
		& \Sh_\Nis(\Sm / S)  \rar & \Sh_\Nis(\Sm / S)^\Sfrak \arrow[d,"\K \sotimes -"]
	 \\
		\H(S)\stackrel{def}{=}D(\K_S\modules) / \Ss_{\Htp, \Stb}
		& D(\K_S\modules) \lar 
		& \K_S\modules 
		\lar
	\end{tikzcd}
\end{center}
of which all but $\Sm / S$ are \emph{$\Sm$-premotivic categories} (i.e., categories also equipped with $f_*$ and $\iHom{},$ see Definition~\ref{defi:Smpremotiviccategory}).

From the generators-and-relations point of view, the top row of the above diagram corresponds to generators, and the second and last row are the relations. The passage from $\Sm / S$ to $\PSh(\Sm / S)$ ``freely'' adds sums of morphisms and colimits of objects to $\Sm / S$ (cf. the fact that every presheaf is a colimit of representable presheaves) and grants the existence of certain adjoint functors. 

The passage to symmetric sequences $\PSh(\Sm / S)^\Sfrak$ is the first step in making the functor $-\otimes M(\G_m / 1)$ invertible. 
%This is analogous to the use of $\prod_{n \geq 1} \Z / p^n$ in the construction of $\Z_p \subseteq \prod_{n \geq 1} \Z / p^n$. Just as the $n$th entry of $\prod_{n \geq 1} \Z / p^n$ ends up corresponding to the image of multiplication by $p^n$ in $\Z_p$, 
The $n$th entry in a symmetric sequence will end up corresponding to the image of the functor $- \otimes M(\G_m / 1)^{\otimes{(-n)}}$, cf. Lemma~\ref{prop:stability}. Keeping track of the $\Sfrak_n$-action makes the tensor product well defined at the underived level.
Having a tensor inverse to $M(\G_m / 1)$ is a necessary condition for duality, and as Ayoub shows, also a sufficient condition in our setting. It is perhaps quite a natural requirement when one remembers that the symbol $M(\G_m / 1)$ represents the reduced cohomology of $\G_m;$ certainly the sheaf of $\ell$th roots of unity $\mu_\ell$ (resp. Tate--Hodge structure $\Z(1)=2\pi i \Z$) should be $\otimes$-invertible in any reasonable category of $\Z / \ell$-{\'e}tale sheaves (resp. mixed Hodge modules). In fact, the \emph{Tate twist} $M(1)$ of a motive $M$ can simply be defined as $M\otimes M(\G_m / 1)[-1]$ and should yield an autoequivalence of our category.

In the second row, we impose Nisnevich descent (or equivalently, impose the Nisnevich--Mayer--Vietoris triangle to be distinguished, or equivalently, impose {\'e}tale excision) by passing from presheaves to Nisnevich sheaves\footnote{In our construction, we actually skip the step of considering $\PSh(\Sm / S)^\Sfrak$, and instead pass directly from $\PSh(\Sm / S)$ to $\Sh_\Nis(\Sm / S)$ and to $\Sh_\Nis(\Sm / S)^\Sfrak.$}. We remark that some authors work with presheaves until the last step of their construction and add this ``relation'' to the Verdier subcategory $\Ss_{\Htp, \Stb}$.

In the last row, we pass to $\K_S\modules.$ This forces certain Hom groups to calculate mod $p$ Milnor $K$-theory and hence ultimately Chow groups as Geisser--Levine show. This grounds our formalism in reality and will make practical computations possible, especially since the Chow and cohomology groups of many spaces considered in geometric representation theory coincide. The transition to $D(\K_S\modules)$ forces quasi-isomorphisms to become isomorphisms, and the final passage to $\H(S)$ imposes $\A^1$-invariance, and forces the ``shift'' operation of symmetric sequences to be isomorphic to $- \otimes M(\G_m / 1),$ with the consequence that this functor and hence the Tate twist becomes invertible.
%The fact that this short list of ``axioms'' (Nisnevich descent, $\A^1$-invariance, invertibility of $M(\G_m / 1)$, existence of right adjoints $f_*$ in general, and left adjoints $f_\#$ for smooth morphisms) gives rise to all six functors and their properties, in particular purity and duality, is difficult, and due to Morel-Voevodsky, and Ayoub.
\subsection{Presheaves and the five operations $f_\#$, $f^*$, $f_*$, $\otimes$, $\iHom{}$.}

In this section we define the categories of sheaves we will use to build $\H$ and recall the language of $\Sm$-premotivic categories used to work with the various functoriality properties which will induce those of $\H$.

\begin{notation}
We set the following notation.
\begin{enumerate}
 \item[{$k$}] is a perfect field of positive characteristic $p$.
 \item[{$\QProj / k$}] is the category of quasi-projective $k$-varieties.
 \item[{$\Sm$}] is the class of smooth morphisms in $\QProj / k$.
 \item[{$\Sm / S$}] is the category of smooth morphisms $X {\to} S$ in $\QProj / k$ for $S$ a quasi-projective variety. Morphisms are commutative triangles $Y {\to} X {\to} S$ in $\QProj / k$ (the morphism $Y {\to} X$ does not have to be smooth). 
 \item[{$\PSh(\Sm / S)$}] is the category of presheaves of abelian groups on $\Sm / S$ where $S \in \QProj / k$.
 \item[{$\Z(-),$}] is the Yoneda embedding (combined with the free abelian presheaf functor)
 \begin{equation}
 \Z(-) : \Sm / S \to \PSh(\Sm / S), \qquad X \mapsto \Z \Hom{}(-, X)
\end{equation}
 \item[{$\Z(X/Y)$}] is the cokernel of the canonical morphism $\Z(Y) \to \Z(X)$ where $Y \to X$ is an immersion in $\Sm / S$ for some $S \in \QProj / k$.
\end{enumerate}
\end{notation}
Notice that the assignment $S \mapsto \Sm / S$ is equipped with functors
\begin{align}
 f^* = T \times_S - &: \Sm / S \to \Sm / T, & \label{equa:Smfstar}\\
 \otimes = \times_S &: \Sm / S \times \Sm / S \to \Sm / S,  &\textrm{ and } \label{equa:Smotimes}\\
 f_\# = \Forget &: \Sm / T \to \Sm / S & (\textrm{when } f:T \to S \textrm{ is smooth}). \label{equa:Smfhash} 
\end{align}

However, there is no internal hom, and $f^*$ does not have a right adjoint. To obtain these, we pass to the categories of presheaves using Yoneda. That is, we consider the assignment $S \mapsto \PSh(\Sm / S)$. It is useful to have a name for the structure we obtain. 

For the convenience of the reader we first quickly recall the notion of a 2-functor.
\begin{definition}[{cf. \cite[\S 2]{Del01}}] \label{defi:2functor}
A 2-functor 
$\Mm: (\QProj / k)^{\op} \to \Cat$ is an assignment sending every variety $S \in \QProj / k$ to a category $\Mm(S)$, every morphism $f: T \to S$ in $\QProj / k$ to a functor $f^*: \Mm(S) \to \Mm(T)$, and every pair of composable morphisms $\stackrel{g}{\to}\stackrel{f}{\to}$ to a natural isomorphism $\alpha_{g,f}: g^*f^* \stackrel{\sim}{\to} (fg)^*$, such that for any triple of composable morphisms $\stackrel{h}{\to}\stackrel{g}{\to}\stackrel{f}{\to}$, the cocycle condition $\alpha_{h, gf} (h^*\alpha_{g,f}) = \alpha_{hg, f} (\alpha_{h,g}f^*)$ is satisfied.
\end{definition}

% less compact version commented out below
\begin{definition}[{cf. \cite[Section 1]{CD}}] %1.1.2, 1.1.9, 1.1.10, 1.1.12, 1.1.21, 1.1.27, 1.1.29 and 1.4.2}] 
\label{defi:Smpremotiviccategory}
An \emph{$\Sm$-premotivic category} on $\QProj / k$ is a 2-functor $\Mm$, cf. %
\cite[\S 2]{Del01}, factoring through the category of symmetric monoïdal categories, %
satisfying the following properties.

\begin{enumerate}
 \item \textbf{(Adjoints)}
 \begin{enumerate}
  \item \label{defi:Smpremotiviccategory:rightadj} For every morphism $f: T \to S$ the functor $f^*$ has a right adjoint $f_*$.
 
  \item \label{defi:Smpremotiviccategory:leftadj} and when $f$ is in $\Sm$, the functor $f^*$ has a left adjoint $f_\#$.

 If $f = \id$, these are all the identity functors.
 \end{enumerate}

  \item \label{defi:Smpremotiviccategory:baseChange} 
  \textbf{(Base change)} Given a cartesian square 
   \[ \xymatrix{%@R=0em{
 Y \ar[r]^q \ar[d]_g & X \ar[d]^f  \\
 T \ar[r]_p & S 
 } \]
with $f$ (and therefore $g$) smooth, the canonical natural transformation $q_\#g^* \stackrel{\sim}{\to} f^* p_\#$ defined in \cite[1.1.6]{CD} is an isomorphism.

 \item \label{defi:Smpremotiviccategory:monoidalComplete}
 \textbf{(Completeness)} Each of the symmetric monoïdal categories $\Mm(S)$ is complete in the sense that 
for every object $F \in \Mm(S)$ the functor $-\otimes F$ admits a right adjoint $\iHom{S}(F, -)$.

 \item \label{defi:Smpremotiviccategory:projForm} 
 \textbf{(Projection formula)} For any smooth $f$, the canonical natural transformation $f_\#(-\otimes f^*-) \stackrel{\sim}{\to} (f_\#-) \otimes (-)$ defined in \cite[1.1.24]{CD}, is an isomorphism, .
\end{enumerate}
\end{definition}

\begin{definition} \label{defi:tripremot}
A \emph{triangulated $\Sm$-premotivic category} is a $\Sm$-premotivic category whose underlying 2-functor comes equipped with a factorisation through the category of symmetric monoïdal triangulated categories.%
\end{definition}

It is observed in \cite[Exam.5.1.1]{CD} that the assignment sending a variety $S$ in $\QProj / k$ to the category $\PSh(\Sm / S)$ and a morphism $f: T \to S$ in $\QProj / k$ to the functor $f^*: \PSh(\Sm / S) \to \PSh(\Sm / T)$ is an $\Sm$-premotivic category (set $\Scal = \QProj / k, \Pcal = \Sm$, and $\Lambda = \Z$ in their notation). The verification of all these properties for $\PSh(\Sm / -)$ is a routine formal exercise.

Notice that the Yoneda embedding preserves $f_\#, f^*, \otimes$. We will also need a name for such a system of natural transformations, and therefore also for an $\Sm$-premotivic category missing $f_*$ and $\iHom{}$.

\begin{definition}[{\cite[Def.1.1.2, Def.1.1.21, Def.1.1.27]{CD}, \cite[Def.1.2.2, Def.1.2.7]{CD}}] \label{definition:monoidalsmfibered}
A \emph{monoïdal $\Sm$-fibered category} is a 2-functor $\Mm$ taking values in symmetric monoïdal categories as in Definition~\ref{defi:Smpremotiviccategory}, satisfying \eqref{defi:Smpremotiviccategory:leftadj}, \eqref{defi:Smpremotiviccategory:baseChange}, %
 \eqref{defi:Smpremotiviccategory:projForm}, but not necessarily \eqref{defi:Smpremotiviccategory:rightadj} or \eqref{defi:Smpremotiviccategory:monoidalComplete}.
A \emph{morphism of monoïdal $\Sm$-fibered categories}, $\phi: \Mm \to \Nn$, is the data of a (strong) symmetric monoïdal functor $\phi_S: \Mm(S) \to \Nn(S)$ for every $S \in \QProj / k$, and natural isomorphisms $f^* \phi_S \cong \phi_T f^*$ of symmetric monoïdal functors for every $f: T \to S$ in $\QProj / k$. These $\phi_S$ are required to satisfy the appropriate cocycle condition with respect to composition in $\QProj / S$, and for smooth morphisms $p: T \to S$, the induced natural transformations $p_\# \phi_T^* \to \phi_S^*p_\#$ (defined in \cite[1.2.1]{CD}) is required to be an isomorphism.
\end{definition}

Thus, the assignments $S \mapsto \Sm/S$ and $S \mapsto \PSh(\Sm / S)$ are mono{\"i}dal $\Sm$-fibered categories, and the Yoneda embeddings
\[ \Z_S: \Sm/S \to \PSh(\Sm/S) \]
define a morphism of mono{\"i}dal $\Sm$-fibered categories; one can check that we have
 \begin{align*}
 f^*\Z_S(X) &\cong \Z_T(T{\times_S}X),\\  \Z_S(X) \otimes \Z_S(X') &\cong \Z_S(X {\times}_S X')  \textrm{ and }\\  f_\#\Z_T(Y) &\cong \Z_S(Y) 
\end{align*}
for $f: T \to S$ a morphism, $X, X' \in \Sm / S$, $Y \in \Sm / T$. The latter isomorphism only makes sense when $f$ is smooth, and we have written $\Z_S, \Z_T$ instead of just the usual $\Z$ to emphasize the base scheme.
 
Of course, every $\Sm$-premotivic category is a monoïdal $\Sm$-fibered category. When we talk about morphisms of $\Sm$-premotivic categories, we mean morphisms as monoïdal $\Sm$-fibered category.
\subsection{Nisnevich sheaves}

In this section we pass from presheaves to Nisnevich sheaves.

\begin{definition}
We equip each $\Sm/S$ with the \emph{Nisnevich topology}. Its covering families are those families of étale morphisms $\{U_i \to X\}_{i \in I}$ which have ``constructible'' sections, by which we mean there exists a sequence of closed subvarieties  $Z_0 \subset Z_1 \subset \dots \subset Z_n = X$ such that for each $j = 1, \dots, n$ there is a factorisation $Z_j {-} Z_{j {-} 1} \to U_{i_j} \to X$ for some $i_j \in I$.
\end{definition}

\begin{remark} \label{rema:emptyCovering}
By convention one also says the empty set is a covering family of the empty variety. This forces every Nisnevich sheaf $F$ to satisfy $F(\varnothing) = 0$.
\end{remark}

Since all étale morphisms are in $\Sm$, this topology is $\Sm$-admissible in the sense of \cite[Def.5.1.3]{CD}.

\begin{notation} \label{nota:NisShv}
We write
\begin{enumerate}
 \item[{$\Sh_{\Nis}(\Sm / S)$}] for the category of Nisnevich sheaves of abelian groups on $\Sm / S$, and

 \item[{$\Z_\Nis(-)$}] when we want to emphasise that the Yoneda embedding $\Z(-)$ takes values in the category of Nisnevich sheaves (the Nisnevich topology is subcanonical, i.e., representable presheaves of sets are Nisnevich sheaves).

 \item[{$\Z_\Nis(X/Y)$}] As above, when $Y \to X$ is an immersion in $\Sm / S$ for some $S \in \QProj / k$, we write $\Z_\Nis(X/Y)$ for the cokernel in the category of Nisnevich sheaves of the canonical morphism $\Z_\Nis(Y) \to \Z_\Nis(X)$. 
 
 \item[{$\Z_\Nis(\G_m/1)$}] $= \coker(\Z_\Nis(S) \stackrel{1}{\to} \Z_\Nis(\G_m))$ where $1: S \to \G_m$ is the section corresponding to $1 \in \OO^*_S$.
\end{enumerate}
\end{notation}

See the beginning of \cite[Sec.3]{MV99} for some motivation for the choice of the Nisnevich topology.

\begin{theorem}[{\cite{MV99}, \cite[cf. proof of 3.3.2]{CD}}]
\label{theo:NBG}
Let $S$ be a quasi-projective variety and $K$ an (unbounded) complex of Nisnevich sheaves on $\Sm / S$. Then the following two conditions are equivalent.
\begin{enumerate}
 \item \label{theo:NBG:1} $H^n(X, K) \to \Hyp_\Nis^n(X, K)$ is an isomorphism for every $X \in \Sm / S, n \in \Z$.
 \item \label{theo:NBG:2} $K(\varnothing)$ is acyclic and $\Cone(K(X) {\to} K(U)) \to \Cone(K(V) {\to} K(U{\times}_XV))$ is a quasi-isomorphism %
  for every cartesian square
  \begin{equation} \label{equa:distNisSqu}
 \xymatrix{ 
 U{\times}_X V \ar[r]^-j \ar[d]_-g & V \ar[d]^f \\
 U \ar[r]_{i} & X
 } \end{equation}
such that $i$ is an open immersion, $f$ is a étale morphism, and $f^{-1}(X{-}U) \to X{-}U$ is an isomorphism. \end{enumerate}
\end{theorem}

It is observed in \cite[Exam.5.1.4]{CD} that the assignment $S \mapsto \Sh_{\Nis}(\Sm/S)$ is also a $\Sm$-premotivic category, so we have all the functors, natural transformations, and isomorphisms mentioned in Definition~\ref{definition:monoidalsmfibered}. Checking this is again a formal routine exercise. The Nisnevich sheaf %
version of the Yoneda embedding gives us another morphism %
of monoïdal $\Sm$-fibered categories.
\begin{equation}
 \Z_\Nis(-): \Sm / S \to \Sh_{\Nis}(\Sm / S). 
\end{equation}

\subsection{Symmetric sequences, the first step towards Stability}

In this section we recall what a symmetric sequence is, cf. \cite[\S 2.1]{HSS}, and highlight various functors, $i_n, s^n, t^n$, which will become important later. This is the first step towards formally $\otimes$-inverting $\Z_\Nis(\G_m / 1) = \coker(\Z_\Nis(S) \stackrel{1}{\to} \Z_\Nis(\G_m))$. %

\begin{remark}
Having a tensor inverse to $M(\G_m / 1)$ is a necessary condition for duality, and as Ayoub shows, also a sufficient condition in our setting. This is perhaps quite a natural requirement when one remembers that the symbol $M(\G_m / 1)$ represents the reduced cohomology of $\G_m$, and certainly $\mu_p$ (resp. $2\pi i \Z$) should be $\otimes$-invertible in any reasonable category of $\Z / p$-{\'e}tale sheaves (resp. mixed Hodge modules).
\end{remark}

\begin{notation} \label{nota:symSeq}
Consider the following categories.
\begin{enumerate}
 \item[{$\Sfrak$}] will denote the category whose objects are finite (or empty) sets, and morphisms are bijections of sets.
 
  \item[{$\Sh_{\Nis}(\Sm / S)^\Sfrak$}] is the category of symmetric sequences in $\Sh_{\Nis}(\Sm / S)$. This is the category of functors from $\Sfrak$ to $\Sh_{\Nis}(\Sm / S)$. Equivalently, it is the category of sequences of sheaves $(\Ee_0, \Ee_1, \dots, )$ such that each $\Ee_n$ is equipped with an action of the symmetric group on $n$ letters $\Sfrak_n$.
 
 \item[{$\sotimes$}] will be the product on $\Sh_{\Nis}(\Sm / S)^\Sfrak$ defined as follows. Given symmetric sequences $\Ee, \Ff$ the symmetric sequence sends a finite set $N$ to 
 \begin{equation}
 (\Ee \sotimes \Ff)(N) = \bigoplus_{N = P \sqcup Q} \Ee(P) \otimes \Ff(Q) 
\end{equation}
where the sum is indexed by decompositions of $N$ into disjoint two finite sets $P$ and $Q$. Given an isomorphism $\phi: N \stackrel{\sim}{\to} N'$, the morphism $(\Ee \sotimes \Ff)(N) \to (\Ee \sotimes \Ff)(N')$ is the obvious one induced by the induced isomorphisms $P \stackrel{\sim}{\to} \phi(P)$ and $Q \stackrel{\sim}{\to} \phi(Q)$. In the alternative description in which we restrict to the sets $\{1, \dots, n\}$, the product is 
\begin{equation}\label{equa:tensorsymmetriccoord}
 (\Ee \sotimes \Ff)_n = \bigoplus_{p = 0, \dots, n} \Ind_{\Sfrak_p {\times} \Sfrak_{n-p}}^{\Sfrak_n} \Ee_p \otimes \Ff_{n-p}. 
\end{equation}

This product is symmetric in the sense that there are canonical functorial isomorphisms $\Ee \sotimes \Ff \cong \Ff \sotimes \Ee$.

\item[{$\sun$}] is the unit for the tensor product $\sotimes$. Explicitly, it is the symmetric sequence which sends all nonempty finite sets to $0$, and the empty set to the constant sheaf. Equivalently, it is the sequence $(\Z_\Nis
, 0, 0, \dots)$.
\end{enumerate}
\end{notation}

It is straightforward to check that the structure of $\Sm$-premotivic category on $\Sh_{\Nis}(\Sm / -)$ induces one on $\Sh_{\Nis}(\Sm / -)^\Sfrak$ equipped with the tensor product $\sotimes$ and unit $\sun$.

\begin{notation} \label{nota:ssBasics}
Let us give some examples of symmetric sequences, and ways of building symmetric sequences. %
\begin{enumerate}
 \item[{$i_nF$}] For any $F \in \Sh_{\Nis}(\Sm / S)$ and $n \geq 0$ consider the sequence which sends a finite set $N$ to 0 if $|N| \neq n$ and $\oplus_{Aut(N)} F$ otherwise. As a sequence this looks like
$
 i_nF = (0, \dots, 0, \oplus_{\Sfrak_n} F, 0, \dots ). 
$
 The functor $i_n$ is equivalently defined as the left adjoint to the functor taking a symmetric sequence to its $n$th space
 \begin{equation}
 i_n: \Sh_{\Nis}(\Sm / S) \rightleftarrows \Sh_{\Nis}(\Sm / S)^\Sfrak : (-)_n. 
\end{equation}

 \item[{$s^n\Ee$}] Let $\Ee\in\Sh_{\Nis}(\Sm / S)^\Sfrak$ be a symmetric sequence. Define $s\Ee$ to be the symmetric sequence which sends a finite set $N$ to $\Ee(N \sqcup \{\ast\})$. For any $n \geq 0$ we set $s^n\Ee = \underbrace{s \circ \dots \circ s}_n\Ee$.
 In the sequence description
$$
 s^n\Ee = (\Ee_n, \Ee_{n+1}, \Ee_{n+2}, \dots) 
$$
where the actions of the $\Sfrak_i$ come from the canonical inclusions $\{1\} \subset \{1, 2\} \subset \{1, 2, 3 \} \subset \dots$.  

 \item[{$t^n\Ee$}]  The functor $s$ has a left adjoint $t$. Setting $t^n = \underbrace{t \circ \dots \circ t}_n$, we have the explicit description 
\begin{equation}\label{equa:defoft}
 t^n\Ee = (\underbrace{0, \dots, 0,}_{n} \Ind_{\Sfrak_0}^{\Sfrak_{n}}\Ee_0, \Ind_{\Sfrak_1}^{\Sfrak_{n + 1}} \Ee_1, \Ind_{\Sfrak_2}^{\Sfrak_{n + 2}} \Ee_2,\dots) 
\end{equation}
for $n \geq 0$. Using the fact that $\Ind_{\Sfrak_i}^{\Sfrak_{n + i}}$ is left adjoint to the forgetful functor from $\Sfrak_{n+i}$ objects to $\Sfrak_i$ objects, one checks easily that 
\begin{equation}
 t^n: \Sh_{\Nis}(\Sm / S)^\Sfrak \rightleftarrows \Sh_{\Nis}(\Sm / S)^\Sfrak : s^n
\end{equation}
is an adjunction for all $n \geq 0$. Moreover, from Equation~\eqref{equa:tensorsymmetriccoord} and \eqref{equa:defoft} we have
\begin{equation}\label{equa:tandtensor}
t\Ee=\Ee\sotimes t\sun.
\end{equation}
\end{enumerate}
\end{notation}
\begin{remark} \label{rema:stexact}
Notice that both $s$ and $t$ are exact functors. 
\end{remark}

\subsection{$\K$-modules, and calculating motivic cohomology}

In this section we introduce the symmetric sequence $\K$ representing motivic cohomology (at least for smooth varieties).

Consider $\G_m$ not as a variety to which we can apply $\Z_\Nis(-)$, but as the sheaf of groups of invertible global sections $\G_m = \OO^* \in \Sh_{\Nis}(\Sm / S)$. This sheaf sends a smooth $S$-scheme $X$ to the units $\Gamma(X, \OO_X)^* = \Gamma(X, \OO_X^*)$ of $\Gamma(X, \OO_X)$, or equivalently, $\Hom{\Sm / S}(X, \G_m)$. Recall that the Milnor $K$-theory $K^M_\bullet(R)$ of a ring $R$ is the quotient of the tensor algebra of the units $(R^*)^{\otimes \bullet}$ by the two-sided ideal generated by elements of the form $a \otimes (1 - a)$ for $a \in R^* {-} \{1\}$, \cite[\S 1]{Mil69}. 

\begin{notation} \label{nota:Kn}
Here we define the Milnor $K$-theory sheaves in $\Shv_\Nis(\Sm/S)$.
\begin{enumerate}
 \item[{$\uK^M_n$}] The sheaf $\uK^M_2$ is defined as the cokernel of the morphism
\begin{equation} \label{equa:St2}
 \textsf{St}_2: \Z(\G_m{-}\{1\}) \to \G_m^{\otimes 2}, \qquad \sum n_ia_i \mapsto \sum n_i (a_i \otimes(1 - a_i)). 
\end{equation}
and more generally, for $n > 1$, the sheaf $\uK^M_n$ is defined as the cokernel of the morphism
\begin{equation} \label{equa:Stn}
\textsf{St}_n \stackrel{def}{=} \sum_{i = 0}^{n - 2}
\id_{\G_m^{\otimes {i}}} \otimes \textsf{St}_2 \otimes \id_{\G_m^{\otimes {n - i}}} :
\bigoplus_{i = 0}^{n - 2}
\G_m^{\otimes {i}} \otimes \Z(\G_m{-}\{1\}) \otimes \G_m^{\otimes {n - i}}
\to \G_m^{\otimes n}.
\end{equation}

 \item[{$\kmp[n]$}] Tensoring with $\Zop$ we obtain the sheaf $\kmp[n] = \uK^M_n \otimes \Zop$.
\end{enumerate}
\end{notation}

\begin{notation} \label{nota:monoidsTK}
We now define two monoïd objects in $\Sh_{\Nis}(\Sm / S)^\Sfrak$ that interest us.
\begin{enumerate}
 \item[{$\T$}] is the commutative monoïd, cf.~Notation~\ref{nota:NisShv},
\[ \T = %
(
\Z_\Nis,\  
\Z_\Nis(\G_m / 1),\ 
\Z_\Nis(\G_m / 1)^{\otimes 2},\ 
\Z_\Nis(\G_m / 1)^{\otimes 3},\ 
\dots, ). \]

 \item[{$\K$}] The obvious action of $\Sfrak_n$ on $\G_m^{\otimes n}$ descends to an action on $\kmp[n]$ and produces a symmetric sequence
  \[ \K = ( 
 \underset{\overset{=}{\Z_\Nis / p}}{\kmp[0]}, 
 \underset{\overset{=}{ \OO^* / (\OO^*)^p}}{\kmp[1]},
  \kmp[2], \ %
  \kmp[3], \dots ). \]
The canonical morphisms $(\OO^*)^{\otimes N} \otimes (\OO^*)^{\otimes M} \to (\OO^*)^{\otimes {N \sqcup M}}$ induce morphisms of symmetric sequences
\begin{align*}
 \T \sotimes \T &\to \T \text{ and }\\
 \K \sotimes \K &\to \K 
\end{align*}
compatible with the symmetry isomorphism of $\sotimes.$ %
We thus obtain a commutative monoïds in $\Sh_{\Nis}(\Sm / S)^\Sfrak$.

In fact, the canonical morphism $\Z_\Nis(\G_m / 1) \to \G_m = \OO^*$ induces a morphism of symmetric sequences 
\begin{equation}
 \T \to \K
\end{equation}
which gives $\K$ the structure of a $\T$-algebra.

 \item[{$\T_S, \K_S$}] If we want to emphasise which base our sheaves are over we will write $\T_S$ and $\K_{S}$.
 
 \item[{$\R\modules.$}] Given a monoïd $\R$, such as $\T$ or $\K$, in a monoïdal category, such as $\Sh_{\Nis}(\Sm / S)^\Sfrak$, we write $\R\modules$ for the category of its modules.

 \item[{$\un, \un_S$}] In a monoïdal category, we denote the tensor unit by $\un$. If we want to emphasize that we consider the unit in $\R_S\modules$, we write $\un_S$.

 \item[{$\rotimes$}] Given a commutative monoïd $\R$, the category $\R\modules$ inherits a canonical structure of symmetric monoïdal category, \cite[Lem.2.2.2]{HSS}. Indeed, for any two $\R$-modules $\Ee, \Ff$ there are two canonical morphisms $\Ee {\sotimes} \R {\sotimes} \Ff \rightrightarrows \Ee {\sotimes} \Ff$ induced by the $\R$-module structures $$\mu_\Ee: \R {\sotimes} \Ee {\to} \Ee\text{ and } \mu_\Ff: \R {\sotimes} \Ff {\to} \Ff$$ of $\Ee\text{ and }\Ff$ respectively, and the symmetry isomorphism $$\sigma_\Ee: \Ee \sotimes \R \to \R \sotimes \Ee$$ of $\sotimes$. We define $\rotimes$ using the coequaliser (i.e., the cokernel of the difference $\id_\Ee \sotimes \mu_\Ff - (\mu_\Ee \sigma_\Ee)\sotimes \id_\Ff$) of these two morphisms
 \begin{equation}
 \Ee \rotimes \Ff = \coker(\Ee \sotimes \R \sotimes \Ff \longrightarrow \Ee \sotimes \Ff). 
\end{equation}

 \item[{$\Ind, \Forget$}] For any commutative monoïd $\R$ in $\Sh_\Nis(\Sm / S)^\Sfrak$ and a symmetric sequence $\Ee$ we obtain a new symmetric sequence just by tensoring with $\R$. This new symmetric sequence has a canonical structure of $\R$-module, induced by the monoïd structure of $\R$. In fact, as one would expect, this process, which we denote $\Ind^\R$ is left adjoint to the forgetful functor $\Forget_\R: \R\modules \to \Sh_\Nis(\Sm / S)^\Sfrak$ which sends an $\R$-module to its underlying symmetric sequence. Similarly, if $\R \to \R'$ is a morphism of commutative monoïds, applying $\R' \rotimes- $ gives a functor $\Ind^{\R'}_\R$ from $\R\modules$ to $\R'\modules$. Just as in the classical case, there is a canonical factorisation of the ``free-module/forgetful-functor'' adjunctions
\begin{equation} \label{equa:freeModuleAdjunctions}
 \xymatrix@C+=1.5cm{
\Sh_\Nis(\Sm / S)^\Sfrak \ar@<0.25em>[r]^-{\Ind^\R} \ar@<1em>@/^1em/[rr]^-{\Ind^{\R'}}  &
\R\modules \ar@<0.25em>[r]^-{\Ind^{\R'}_\R} \ar@<0.25em>[l]^-{\Forget_\R} & 
\R'\modules \ar@<0.25em>[l]^-{\Forget^{\R}_{\R'}} \ar@<1em>@/^1em/[ll]^-{\Forget_{\R'}}
 } 
 \end{equation}
 \item[{$t$}] Note that for any $\R$-module $\Ee$ the symmetric sequence $t(\Forget_\R\Ee)$ comes equipped with a canonical $\R$-module structure, see Equation \eqref{equa:tandtensor}. That is, the functor $t$ has a canonical extension to $\R\modules$ compatible with $\Forget_\R$. Moreover we have
 \begin{equation} \label{equa:tCommutesWithTensor}
 \Ee\rotimes(t\Ff)=t(\Ee\rotimes\Ff)
 \end{equation} for all $\Ee,\Ff\in\R\modules$.
 \item[{$\R(X)$}] Composing all the left adjoints, we find a functor
  \[  \xymatrix{ 
  \R(-) : \Sm / S \ar[r]^-{\Z_\Nis} &
  \Shv_\Nis(\Sm / S) \ar[r]^-{i_0} &
   \Shv_\Nis(\Sm / S)^\Sfrak \ar[r]^-{\Ind^\R} &
    \R\modules } \]

 \item[{$\R(X / Y)$}] If $Y \to X$ is an embedding of smooth $S$-varieties we will write $\R(X / Y) = coker(\R(Y) \to \R(X))$. In particular, we will be using
 \begin{equation}
 \R(\G_m  / 1) = \coker(\R(S) \stackrel{1}{\to} \R(\G_m )). 
\end{equation}

\end{enumerate}
\end{notation}
Not only are $\T$ and $\K$ commutative monoïds, but the two collections $\{ \T_S \}_{S \in \QProj / k}$, and $\{ \K_S \}_{S \in \QProj / k}$ are \emph{cartesian} in the following sense.

\begin{definition}[{\cite[1.1.38, 7.2.10]{CD}}]
A \emph{cartesian section} of a monoïdal $\Sm$-fibered category $\Mm$ is a collection of objects $\{A_X \in \Mm(X)\}_{X \in \QProj / k}$ equipped with isomorphisms $f^*A_X \stackrel{\sim}{\to} A_Y$ for every $f: Y \to X \in \QProj / k.$ These isomorphisms are subject to coherence identities, \cite[Exp.VI]{SGA1}. %

A cartesian section $\R = \{\R_X\}$ of $\Sh_\Nis(\Sm / -)^\Sfrak$ such that each $\R_X$ is equipped with a monoïd structure, and each $f^*\R_X \to \R_Y$ is a morphism of monoïds will be called a \emph{cartesian monoïd}. We define similarly a \emph{cartesian commutative monoïd}.
\end{definition}

\begin{lemma}
For any morphism $f: Y \to X$ in $\QProj / k$ the canonical comparison morphisms $f^*\K_X \to \K_Y$ (described in the proof below) are isomorphisms, and make the collection of $\K_X$ a cartesian section. The same is true for $\T$.
\end{lemma}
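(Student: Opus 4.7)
The plan is to reduce everything to three basic facts about $f^*$ acting on Nisnevich sheaves: it is symmetric monoidal (this is part of the data of a morphism of monoïdal $\Sm$-fibered categories, Definition~\ref{definition:monoidalsmfibered}), it preserves colimits (being a left adjoint to $f_*$, Definition~\ref{defi:Smpremotiviccategory}\eqref{defi:Smpremotiviccategory:rightadj}), and it extends levelwise to the symmetric sequence categories $\Sh_{\Nis}(\Sm/-)^\Sfrak$ and commutes with $\sotimes$ and the forgetful functors. Granting these, the statement for $\T$ is essentially a direct calculation: $\G_m = \G_{m,k}\times_k S$ is obtained by base change from $\Spec k$, so the representable sheaf $\G_m$ is cartesian, and hence so is $\Z_\Nis(\G_m/1) = \coker(\Z_\Nis(S)\stackrel{1}{\to}\Z_\Nis(\G_m))$, because $f^*$ preserves the representable $\Z_\Nis(S) = \un$ and the cokernel. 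The monoid and symmetric sequence structure is just given levelwise by $\Z_\Nis(\G_m/1)^{\otimes n}$, so the cartesian structure on $\T$ follows from that on $\Z_\Nis(\G_m/1)$ by the monoïdality of $f^*$.

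For $\K$, the first step is to observe that $\G_m = \OO^*$ and $\G_m{-}\{1\}$ are both cartesian as sheaves: the former because $f^*\OO^*_X \to \OO^*_Y$ is the canonical comparison for representable sheaves, and the latter by the same argument applied to the smooth $S$-scheme $\G_m{-}\{1\}$. Consequently the Steinberg morphism $\textsf{St}_n$ of Equation~\eqref{equa:Stn}, built from $\G_m$ and $\Z_\Nis(\G_m{-}\{1\})$ via iterated tensor product and direct sum, is itself pulled back functorially under $f^*$. Since $f^*$ preserves cokernels, $f^*\uK^M_{n,X} \cong \uK^M_{n,Y}$. Tensoring with the constant sheaf $\Zop$ (which is cartesian because constant sheaves are pulled back from $\Spec k$, or equivalently because $f^*$ commutes with the tensor unit $\un$ and $\Zop = \un/p$) gives $f^*\kmp[n]_X \cong \kmp[n]_Y$. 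These isomorphisms are compatible with the $\Sfrak_n$-actions, since these are induced from the obvious $\Sfrak_n$-action on $\G_m^{\otimes n}$ which is respected by $f^*$ via the symmetry isomorphisms of $\sotimes$. The monoïd multiplication on $\K$ is induced from the multiplication on the tensor algebra of $\G_m$, and the $\T$-algebra structure on $\K$ comes from $\Z_\Nis(\G_m/1) \to \G_m$; both morphisms are pulled back compatibly under $f^*$.

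Finally, the cocycle conditions expressing that $\{\K_X\}$ is a cartesian section in the sense required are automatic: they are inherited from the cocycle conditions for $f^*$ acting on $\G_m$ and on the unit $\un$, which in turn are part of the 2-functor structure on $\Sh_{\Nis}(\Sm/-)$. The main potential obstacle is not conceptual but bookkeeping: verifying that all the ``canonical comparison morphisms'' one constructs by the universal properties of cokernels, free modules, and symmetric sequences are compatible with one another and with the monoïd multiplication. However, since $f^*$ is symmetric monoïdal and exact on sheaves, all these compatibilities reduce formally to the corresponding ones for the sheaves $\G_m$ and $\Z_\Nis(\G_m{-}\{1\})$, where they are obvious. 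I would therefore expect the proof in the paper to be brief, essentially saying ``$\G_m$ is cartesian, and $f^*$ is monoïdal and exact.''
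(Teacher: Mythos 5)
Your proposal is correct and follows essentially the same route as the paper: observe that representable sheaves are cartesian (so $\G_m$, $\G_m{-}\{1\}$, and their tensor powers are), note that $f^*$ is monoïdal and preserves cokernels so it respects the Steinberg morphism and hence $\uK^M_n$ and $\kmp[n]$, and deduce cartesianity for $\K$; the paper likewise disposes of $\T$ by appealing to $\Z_\Nis$ being a morphism of monoïdal $\Sm$-fibered categories. Your closing guess about the paper's proof is exactly right.
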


\begin{proof}
First we observe that if $W \to X$ is any smooth scheme, and $h_W$ is the Nisnevich sheaf of sets on $\Sm/X$ it represents, then we have $i^*h_W \cong h_{Y {\times_X} W}$ as Nisnevich sheaves on $\Sm/Y$. So in particular, $f^*\G_{m,X} \cong \G_{m,Y}$, and the collection of sheaves $\G_m$ is a cartesian section of $\Sh_\Nis(\Sm / -)$. Moreover, $f^*$ preserves $\otimes$ so the collection of $\G_{m}^{\otimes n}$ is also a cartesian section for any $n \geq 0$. Now recall that $\uK^M_n$ is defined as the cokernel of the morphism $\textsf{St}_n$, Equation~\eqref{equa:Stn}. Observing that $f^*\textsf{St}_{n,X} = \textsf{St}_{n,Y}$ for any $f:Y \to X \in \QProj / k$, and $f^*$ preserves cokernels, we find that the collection of $\uK_{n}^M$ is a cartesian section of $\Sh_\Nis(\Sm / -)$. Finally, $\kmp[n] = \coker(\uK_{n}^M \stackrel{p}{\to} \uK_{n}^M)$ so, again since $f^*$ preserves cokernels, the collection of $\kmp[n]$ is a cartesian section. It follows that $\K$ is a cartesian monoïd.
The statement for $\T$ follows directly from $\Z_\Nis$ being a morphism of monoïdal $\Sm$-fibered categories.
\end{proof}

Cartesiannes is of interest as the assignment $S \mapsto \R_S\modules$ inherits a canonical structure of $\Sm$-premotivic category whenever $\R$ is a cartesian commutative monoïd, \cite[Prop.5.3.1]{CD}. %

\begin{remark} \label{rema:IndRbracketsMormonSmfib}
It is a straightforward exercise to check that the functors $\Ind^\R$ and $\Ind^{\R'}_{\R}$ of Equation~\eqref{equa:freeModuleAdjunctions} are morphisms of monoïdal $\Sm$-fibered categories, and consequently, the functors $\R(-): \Sm / - \to \R_-\modules$ %
form a morphism of monoïdal $\Sm$-fibered categories.
\end{remark}

\begin{remark} \label{rema:FgtExact}
It is also straightforward to check that the functors $\Forget$ are exact in the sense that they preserve all limits and colimits (just as in the case of classical rings).
\end{remark}

\subsection{Derived categories} \label{subsec:DerCat}

In this section, we discuss the derived category $D(\K\modules)$ of $\K$-modules. Our category $\H$ will be the full subcategory of $D(\K\modules)$ of those objects satisfying a stability and homotopy invariance property. We also discuss in this section the interplay between these two properties and the functorialities we have obtained so far.

\begin{notation}
Let $\R_S \in \Sh_\Nis(\Sm / S)^\Sfrak$ be a commutative monoïd such as $\K_S$ or $\T_S$ and consider the following categories.
\begin{enumerate}
 \item[{$C(\Sh_\Nis(\Sm / S))$}] is the category of (unbounded) chain complexes in $\Sh_\Nis(\Sm / S)$.

 \item[{$C(\R_S\modules)$}] is the category of (unbounded) chain complexes in the abelian category $\R_S\modules$. Note that this is equivalent to the category of $\R_S$-modules in $C(\Sh_{\Nis}(\Sm / S))^\Sfrak$, the category of symmetric sequences in the category of unbounded complexes of Nisnevich sheaves of abelian groups.
 
 \item[{$D(\R_S\modules)$}] is the (unbounded) derived category of $\R_S\modules$. 
\end{enumerate}
\end{notation}
 \begin{remark}Note that $D(\R_S\modules)$ is \emph{not} the same as considering $\R_S$-modules in $D(\Sh_{\Nis}(\Sm / S)^\Sfrak)$. In general, for a monoïd $M$ in a tensor triangulated category, it is rare for the category of $M$-modules to inherit the structure of a triangulated category. Similarly, $D(\Sh_{\Nis}(\Sm / S)^\Sfrak)$ is not the same as $D(\Sh_{\Nis}(\Sm / S))^\Sfrak$. There are however canonical functors $D(\Sh_{\Nis}(\Sm / S)^\Sfrak) \to D(\Sh_{\Nis}(\Sm / S))^\Sfrak$ and from $D(\R_S\modules)$ to $\R_S$-modules in $D(\Sh_{\Nis}(\Sm / S)^\Sfrak)$.
 \end{remark}
When $\R$ is cartesian, the systems of categories $C(\R_S\modules)$ and $D(\R_S\modules)$ (as $S$ varies in $\QProj / k$) inherit structures of $\Sm$-premotivic categories. The case $C(\R_S\modules)$ is straightforward \cite[Lemma 5.1.7]{CD}. The case $D(\R_S\modules)$ uses the theory of descent structures developed \cite{CDHomAlg} to observe that the functors $f_\#, f^*, f_*, \otimes, \iHom{}$ of the $\Sm$-premotivic category $C(\R_-\modules)$ can be derived, \cite[5.1.16]{CD}. The model structure is recalled in Section~\ref{subsec:descentModelStru} but we will only need the following consequences.

\begin{proposition} \label{prop:derivedKmodules}
Let $\R$ be a cartesian commutative monoïd of $\Sh_\Nis(\Sm / -)^\Sfrak$, for example, $\R = \K$ or $\R = \T$. %
The functor $\R(-)$ composed with the canonical functor $\R\modules \to C(\R\modules)$ ${\to}$ $D(\R\modules)$ induces a morphism of monoïdal $\Sm$-fibered categories
\begin{equation}
 \R(-): \Sm / - \to D(\R_-\modules). 
\end{equation}

That is, for any morphism $T \stackrel{f}{\to} S \in \QProj/k$, $X, X' \in \Sm / S$, $Y \in \Sm / T$ we have 
\begin{align}
 f_\#\R(Y) &\cong \R(Y) 
 \quad 
 \textrm{ in } D(\R_S\modules)
 \textrm{ (when } f \textrm{ is smooth), } \\
 f^*\R(X) &\cong \R(T{\times_S}X)
 \quad 
 \textrm{ in } D(\R_T\modules)
 , \textrm{ and } \\
 \R(X) \otimes \R(X') &\cong \R(X {\times}_S X')
 \quad 
 \textrm{ in } D(\R_S\modules). \label{prop:derivedKmodules:otimes}
 \end{align}
where the functors $f_\#, f^*, \otimes$ on the left are the derived ones acting on $D(\R\modules)$. Moreover, for any $n \geq 0, X \in \Sm / S$ and $\Ee \in D(\R\modules)$,  
\begin{equation} \label{equa:NisHyperInDKMod}
\Hom{D(\R\modules)}(t^n\R(X), \Ee[i]) \cong \Hyp_\Nis^i(X, \Ee_n). 
\end{equation}
where $\Ee_n$ is the complex obtained by applying the ``$n$th sheaf'' functor $(-)_n$ to the complex of symmetric sequences $\Ee$ (we forget the $\R$-module structure). 
\end{proposition}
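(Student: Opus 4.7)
The proof divides into two parts, both of which I would handle by exploiting the fact that $\R(X) = \Ind^\R i_0 \Z_\Nis(X)$ is built from left adjoints applied to a representable sheaf, making it cofibrant in the $\tau$-descent model structure on $C(\R\modules)$ from \cite{CDHomAlg} that underlies the derived category $D(\R\modules)$ (see \cite[5.1.16]{CD}). Once this cofibrancy is admitted, the derived functors $Lf_\#, Lf^*, -\otimes^L-$ restricted to objects of the form $\R(X)$ coincide with their underived counterparts on the nose.

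For the three functoriality isomorphisms, Remark~\ref{rema:IndRbracketsMormonSmfib} has already established that $\R(-): \Sm/- \to \R_-\modules$ is a morphism of monoïdal $\Sm$-fibered categories at the abelian level, and this lifts trivially to $C(\R\modules)$ by viewing everything as a complex concentrated in degree zero. The three displayed isomorphisms for $Lf_\#\R(Y)$, $Lf^*\R(X)$, and $\R(X) \otimes^L \R(X')$ then follow from the underived ones via cofibrancy of $\R(X)$.

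For the $\Hom$-formula I would chain together three adjunctions. The functors $t^n$ and $s^n$ are both exact (Remark~\ref{rema:stexact}) and so induce a derived adjunction; $\Forget_\R$ is exact (Remark~\ref{rema:FgtExact}), so $(\Ind^\R, \Forget_\R)$ descends to a derived adjunction once cofibrant representatives are used on the left; and $(i_0, (-)_0)$ is likewise an exact adjoint pair. Chaining these together yields
\begin{align*}
\Hom{D(\R\modules)}(t^n\R(X), \Ee[i])
&\cong \Hom{D(\Sh_\Nis(\Sm/S)^\Sfrak)}(t^n i_0 \Z_\Nis(X), \Forget_\R \Ee[i]) \\
&\cong \Hom{D(\Sh_\Nis(\Sm/S)^\Sfrak)}(i_0 \Z_\Nis(X), s^n \Forget_\R \Ee[i]) \\
&\cong \Hom{D(\Sh_\Nis(\Sm/S))}(\Z_\Nis(X), \Ee_n[i]) \\
&\cong \Hyp_\Nis^i(X, \Ee_n),
\end{align*}
where I use cofibrancy of $\R(X)$ (hence of $t^n\R(X)$, using Equation~\eqref{equa:tandtensor} and that $t$ is exact) for the $(\Ind^\R, \Forget_\R)$ step, the identification $(s^n\Forget_\R\Ee)_0 = \Ee_n$ for the $(i_0, (-)_0)$ step, and the standard representation of Nisnevich hypercohomology by derived $\Hom$ out of a representable for the last step.

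The principal obstacle is the very first sentence: one genuinely needs the descent-structure machinery of \cite{CDHomAlg} to know that the $\R(X)$ are amongst the generators of a model structure on $C(\R\modules)$ whose weak equivalences detect Nisnevich hyperdescent, so that they are cofibrant and that $D(\R\modules)$ is in particular the homotopy category of this model structure. Granting this input, every step above is formal: one is manipulating exact functors, adjunctions, and Yoneda, and all the substantive content is packaged into the cofibrancy of $\R(X)$ together with Theorem~\ref{theo:NBG}, which is what makes the last displayed isomorphism valid for complexes rather than merely for sheaves.
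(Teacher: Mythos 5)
Your proposal is correct and follows essentially the same route as the paper's (deliberately terse) proof, both resting on the descent model structure of \cite{CDHomAlg}, the cofibrancy of the objects $t^n\R(X)$, and the adjunction chain $\Ind^\R\!/\!\Forget_\R$, $t^n\!/\!s^n$, $i_0\!/\!(-)_0$. The only cosmetic difference is at the final step of the $\Hom$-formula, where you invoke the standard identification of $\Hom{D(\Sh_\Nis(\Sm/S))}(\Z_\Nis(X), \Ee_n[i])$ with Nisnevich hypercohomology, whereas the paper replaces $\Ee$ by a fibrant representative and appeals directly to Definition~\ref{defi:fibrantKmodule} (which builds the equality $H^i(X,\Ee_n') = \Hyp^i_\Nis(X,\Ee_n')$ into the notion of fibrancy), so that the chain-level $\Hom$ already produces hypercohomology---two packagings of the same underlying input.
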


\begin{proof}
The first part comes from the observation that the images of representable sheaves are cofibrant. The second part comes from the definition of fibrancy. See Section~\ref{subsec:descentModelStru} for these definitions.
\end{proof}

One of the reasons to use symmetric sequences to invert the Tate twist instead of just a bookkeeping index, is so that our categories admit small sums. This gives us access to the theory of Bousfield localisations/Brown representability à la Neeman, cf. \cite{Nee96}.% Section~\ref{sec:triCat} in the arXiv version of this paper.

\begin{proposition} \label{prop:miscStatementsDmod}
Let $\R \to \R'$ be a morphism of cartesian commutative monoïds of $\Sh_\Nis(\Sm / -)^\Sfrak$, such as $\T \to \K$. %
We have the following.
\begin{enumerate}
 \item \label{prop:miscStatementsDmod:conservativity} The forgetful functors of Equation~\eqref{equa:freeModuleAdjunctions} directly pass to the derived categories without having to be derived. On the derived categories they are conservative.
 
 \item \label{prop:miscStatementsDmod:compGen} The category $D(\R\modules)$ admits all small sums and is compactly generated by the objects $t^n\R(X)$ for $X \in \Sm / S$ and $n \geq 0$.

 \item On the derived categories, all three forgetful functors have left adjoints, and so the adjunctions of Equation~\eqref{equa:freeModuleAdjunctions} induce adjunctions
\begin{equation} \label{equa:derAdjoints}
 \xymatrix@C+=1.5cm{
D(\Sh_\Nis(\Sm / S)^\Sfrak) \ar@<0.25em>[r]^-{L\Ind^\R} \ar@<1em>@/^1em/[rr]^-{L\Ind^{\R'}}  &
D(\R\modules) \ar@<0.25em>[r]^-{L\Ind^{\R'}_\R} \ar@<0.25em>[l]^-{\Forget_\R} & 
D(\R'\modules). \ar@<0.25em>[l]^-{\Forget^{\R}_{\R'}} \ar@<1em>@/^1em/[ll]^-{\Forget_{\R'}}
} 
\end{equation}

\item The functors $L\Ind$ satisfy 
\begin{equation} \label{equa:leftAdjCalc}
 L\Ind^\R(i_n \Z(X)) \cong t^n\R(X), \qquad  L\Ind^{\R'}_\R(t^n\R(X)) \cong t^n\R'(X)
 \end{equation}
for all $S \in \QProj / k$, $X \in \Sm / S$, $n \geq 0$.

\item \label{prop:miscStatementsDmod:Lotimes} The functors $L\Ind$ define morphisms of monoïdal $\Sm$-fibered categories.

 \item The adjunctions $(L\Ind, \Forget)$ also satisfy a projection formula: the canonical comparison natural transformation is an isomorphism:
 \begin{equation} \label{equa:IndFgtProj} 
L\Ind(- \otimes \Forget(-)) \stackrel{\sim}{\to} L\Ind(-) \otimes (-). 
 \end{equation}
\end{enumerate}
\end{proposition}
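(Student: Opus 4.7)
The strategy is to exploit the descent model structure on $C(\R\modules)$ alluded to in Section~\ref{subsec:descentModelStru}, under which $\Ind$ is a monoidal left Quillen functor and the representables $i_n\Z_\Nis(X)$ are cofibrant, together with the compact generation statement (part~\eqref{prop:miscStatementsDmod:compGen}), which reduces most assertions to checks on explicit generators.

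\textbf{Parts \eqref{prop:miscStatementsDmod:conservativity} and \eqref{prop:miscStatementsDmod:compGen}.} First I would show that $\Forget_\R$ is exact (Remark~\ref{rema:FgtExact}), hence sends quasi-isomorphisms to quasi-isomorphisms and descends directly to derived categories without needing to be derived. A morphism in $C(\R\modules)$ is a quasi-isomorphism precisely when its image under $\Forget_\R$ is; this gives conservativity. For part \eqref{prop:miscStatementsDmod:compGen}, $\R\modules$ is Grothendieck abelian, so $D(\R\modules)$ admits small sums. Using the identification in Equation~\ref{equa:NisHyperInDKMod}, $\Hom_{D(\R\modules)}(t^n\R(X), \Ee[i]) \cong \Hyp^i_\Nis(X, \Ee_n)$, which commutes with arbitrary small sums because $X$ is quasi-compact of finite Krull dimension; hence each $t^n\R(X)$ is compact. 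Together with the conservativity from part~\eqref{prop:miscStatementsDmod:conservativity}, applied after noting that the analogous statement in $D(\Sh_\Nis(\Sm/S)^\Sfrak)$ is known and the generators $i_n\Z_\Nis(X)$ map to $t^n\R(X)$, these form a generating set.

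\textbf{Parts on left adjoints and formulas.} The existence of left adjoints now follows from Brown representability (Section~\ref{sec:triCat}) applied to the compactly generated category $D(\R\modules)$ and the sum-preserving triangulated functor $\Forget$; alternatively, one derives the underived left adjoints $\Ind^\R, \Ind^{\R'}_\R$ as left Quillen functors with respect to the descent model structure. Because the representables $i_n\Z_\Nis(X)$ are cofibrant, no derivation is needed for the calculation at these objects:
\begin{equation*}
L\Ind^\R(i_n\Z_\Nis(X)) \;=\; \R \sotimes i_n\Z_\Nis(X),
\end{equation*}
and using Equation~\ref{equa:tensorsymmetriccoord} together with the double-coset identity
$\Ind_{\Sfrak_{m-n}\times \Sfrak_n}^{\Sfrak_m}\bigl(\R_{m-n}\otimes\bigoplus_{\Sfrak_n}\Z_\Nis(X)\bigr) \cong \Ind_{\Sfrak_{m-n}}^{\Sfrak_m}\bigl(\R_{m-n}\otimes\Z_\Nis(X)\bigr)$
one identifies this with $t^n\R(X)$. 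The second identity $L\Ind^{\R'}_\R(t^n\R(X)) \cong t^n\R'(X)$ then follows from $\R'\rotimes t^n\R(X) = t^n(\R'\rotimes \R(X))$ (compatibility of $t$ with $\rotimes$) and $\R' \rotimes \R(X) = \R'(X)$.

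\textbf{Parts \eqref{prop:miscStatementsDmod:Lotimes} and the projection formula.} Monoidality of $L\Ind$ as a morphism of monoïdal $\Sm$-fibered categories follows because $\Ind$ is such a morphism at the underived level (Remark~\ref{rema:IndRbracketsMormonSmfib}) and is a monoidal left Quillen functor; the derived functors of a monoidal left Quillen functor inherit the same structure. For the projection formula, the unit/counit of adjunction induces a canonical comparison map
\begin{equation*}
L\Ind(A \otimes \Forget(B)) \longrightarrow L\Ind(A) \otimes B,
\end{equation*}
and both sides are exact triangulated bifunctors commuting with sums. Hence it suffices to check the map is an isomorphism when $A = i_m\Z_\Nis(X)$ and $B = t^n\R(Y)$ (a compact generator of $\R$-modules, which is itself in the image of $L\Ind$). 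Using $B \cong L\Ind(i_n\Z_\Nis(Y))$ and the formulas from the previous step, both sides reduce to $L\Ind(i_m\Z_\Nis(X) \sotimes i_n\Z_\Nis(Y))$ via the underived identity $\Ind(C) \rotimes \Ind(D) = \Ind(C \sotimes D)$ and the fact that $\Forget \Ind(D) = \R \sotimes D$ flattens out inside $\Ind$.

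The main obstacle will be the projection formula: the underived identity $\Ind(A \sotimes \Forget\Ind(D)) \cong \Ind(A) \rotimes \Ind(D)$ requires care because $\Forget$ forgets the $\R$-action, so one must verify on free modules and then extend by exactness and compact generation, being careful that the reduction uses cofibrancy of the generators to justify replacing derived tensor by underived tensor.
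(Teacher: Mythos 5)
Your proposal is correct and follows the same architecture as the paper's proof: conservativity of $\Forget$ from the fact that weak equivalences/acyclicity ignore the module structure, compact generation via Equation~\eqref{equa:NisHyperInDKMod} (the paper attributes the sum-commutation of unbounded Nisnevich hypercohomology to finite cohomological dimension, citing Kato--Saito, rather than to finite Krull dimension per se, but this is the same mechanism), Brown representability for the left adjoints, and reduction to compact generators for monoidality and the projection formula.

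The one place where you take a genuinely different route is part \eqref{equa:leftAdjCalc}. You compute $L\Ind^\R(i_n\Z_\Nis(X))$ explicitly, using cofibrancy of $i_n\Z_\Nis(X)$ in the descent model structure to replace the derived tensor by the underived one, and then work out the induction-along-$\Sfrak_n$ identity by hand. The paper instead avoids unwinding the model structure entirely: it applies co-Yoneda, showing
\[
\Hom{D(\R\modules)}(L\Ind^\R(i_n \Z(X)), \Ee)
\cong \Hom{D(\Shv_\Nis(\Sm/S)^\Sfrak)}(i_n\Z(X), \Forget^\R\Ee)
\cong \Hyp_\Nis^0(X,\Ee_n)
\cong \Hom{D(\R\modules)}(t^n\R(X),\Ee)
\]
functorially in $\Ee$, so the two objects corepresent the same functor and are canonically isomorphic. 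Your explicit double-coset computation is correct and has the advantage of being hands-on, but it requires you to invoke the descent model structure on $C(\Sh_\Nis(\Sm/S)^\Sfrak)$ and cofibrancy of $i_n\Z_\Nis(X)$ there (the paper explicitly records only the cofibrancy of $t^n\R(X)[i-1]$ in $C(\R\modules)$, though the statement you need is the $\R=\Z$ case of that and is true). The paper's co-Yoneda argument is more economical and also automatically delivers the functoriality. Everything else in your write-up matches the paper's strategy, including the observation at the end that the projection formula reduction relies on both sides preserving sums and on the computed values on generators.
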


\begin{proof}
\begin{enumerate}
 \item This follows directly from the fact that a morphism being a weak equivalence or not (resp. an object being acyclic or not) has nothing to do with the $\R$-module structure.

 \item Compactness follows from Equation~\eqref{equa:NisHyperInDKMod}: sheaf cohomology commutes with sums (in $\Sh_\Nis(\Sm / S)$), the Nisnevich topology has finite cohomological dimension \cite[\S 1.2]{KaSa} and so hypercohomology of unbounded complexes also commutes with sums. That it is a generating set (in the sense of %
 \cite[Def.1.7,1.8]{Nee96}) follows from the above conservativity combined with Equation~\eqref{equa:NisHyperInDKMod}. 

 \item Here we use Brown representability \cite[Thm.4.1]{Nee96}: %
 The forgetful functors commute with products, therefore they admit left adjoints.
 
 \item The second one follows from the first via commutativity of the Diagram~\eqref{equa:derAdjoints}. %
 By coYoneda, it suffices to show that the two objects in Equation~\eqref{equa:leftAdjCalc} corepresent the same functor. To this end, we observe that there are isomorphisms
\begin{align*}
\Hom{D(\R\modules)}(L\Ind^\R(i_n \Z(X)), \Ee) 
&\stackrel{\textrm{adjunction}}{\cong} \Hom{D(\Shv_\Nis)^\Sfrak}(i_n\Z(X), \Forget^\R \Ee) \\
\stackrel{}{\cong} \Hyp_\Nis^0(X, \Ee_n) 
&\stackrel{\textrm{Eq.\eqref{equa:NisHyperInDKMod}}}{\cong}
 \Hom{D(\R\modules)}(t^n\R(X), \Ee), 
\end{align*}
 all functorial in $\Ee$.

 \item Let $f: T \to S$ be a morphism in $\QProj / k$. The functors $f_\#$ (when $f$ is smooth), $f^*$, $\otimes$, and $L\Ind$ are all left adjoints, and therefore commute with sums, and our categories are compactly generated, so to show some compatibility relation such as $f^* L\Ind  \cong L\Ind f^*$, it suffices to check it on a set of compact generators. But we have just seen that on our set of compact generators, the functor $L\Ind$ acts as the underived $\Ind$. Moreover, the functors $f_\#$ (when $f$ is smooth), $f^*$, $\otimes$, and $t$ also act on our compact generators as their underived versions; Proposition~\ref{prop:derivedKmodules}, Remark~\ref{rema:stexact}. So $L\Ind$ being a morphism of monoïdal $\Sm$-fibered categories follows from $\Ind$ being a morphism of monoïdal $\Sm$-fibered categories, Remark~\ref{rema:IndRbracketsMormonSmfib}. 

 \item Since all functors in question are triangulated and preserve sums, Remark~\ref{rema:FgtExact}, it suffices to check the morphism on compact generators. We want to show that $L\Ind(t^n\R(X) \otimes \Forget (t^m\R'(Y))) \to L\Ind(t^n\R(X)) \otimes t^m\R'(Y)$ is an isomorphism for all $X, Y \in \Sm / S$ and $n, m \geq 0$. This follows from Equation~\eqref{equa:leftAdjCalc} and the fact that $L\Ind$ preserves $\otimes$.\qed
\end{enumerate}
\renewcommand{\qedsymbol}{}
\end{proof}

\begin{notation} \label{nota:SHtpStb}
Let $\R$ be a cartesian commutative monoïd of $\Sh_\Nis(\Sm / -)^\Sfrak$. %
Consider now the following %
subcategories of $D(\R\modules)$, cf. \cite[5.2.15, 5.3.21]{CD}.
\begin{enumerate}

 \item[{$\Ss_{\Htp}$}] is defined to be the ``$t$-stable'' thick tensor ideal of $D(\R\modules)$ generated by the cone of 
\begin{equation} \label{equa:KA1projection}
 \phi_\Htp: \R(\A^1_S) \to \R(S). 
 \end{equation}
That is, it is the smallest full subcategory of $D(\R\modules)$ containing this cone which is triangulated, closed under direct summands and all small direct sums (thick), and satisfies the properties $\Ss_{\Htp} \otimes \Ee \subseteq \Ss_{\Htp}$ for any $\Ee \in D(\R\modules)$ (ideal), and $t \Ss_{\Htp} \subseteq \Ss_{\Htp}$ ($t$-stable).
\end{enumerate} 

Suppose that $\R$ is equipped with a morphism of monoïds $\T \to \R$.
\begin{enumerate}
 \item[{$\Ss_{\Stb}$}] is the $t$-stable thick tensor ideal of $D(\R\modules)$ generated by the cone of the morphism 
\begin{equation} \label{equa:stabGen}
 \phi_{\Stb}: t \R(\G_m / 1) \to \R(S) 
 \end{equation}
which we will now describe. Consider the adjunction $$\T \sotimes i_1 : \Sh_\Nis(\Sm / S) \rightleftarrows \T_S\modules: (-)_1$$ whose right adjoint sends a $\T$-module to the first (not zeroth) sheaf in the underlying symmetric sequence. %
For $\R = \T$ we define $\phi_\Stb$ to be the counit of the adjunction $(\T \sotimes i_1) \circ (-)_1 (\T) \cong t \T(\G_m / 1) \to \T$, and otherwise we apply $L\Ind_\T^\R$ to the $\phi_\Stb$ for $\T$. It follows from Equation~\eqref{equa:leftAdjCalc} that we get a morphism between the appropriate objects.

 \item[{$\Ss_{\Htp, \Stb}$}] is the smallest $t$-stable thick tensor ideal of $D(\R\modules)$ generated by the cones of the two morphisms \eqref{equa:KA1projection} and \eqref{equa:stabGen}.

 \item[{$\Ss_{?, S}$}] If we want to emphasise the base variety $S$ we will use this notation for $\Ss_{?}$ where $? = $ ``$\Htp$'', or ``$\Stb$'', or ``$\Htp, \Stb$''.

 \item[{$\Ss_{?, S}^{\R}$}] We will use this notation if we want to emphasise the monoïd $\R$.

\end{enumerate}
\end{notation}

\begin{lemma} \label{lemm:altSHtpStbDef}
Let $\R$ be a cartesian commutative monoïd of $\Sh_\Nis(\Sm / -)^\Sfrak$. %
The category $\Ss_{\Htp}$ is equivalently defined as the smallest triangulated subcategory of $D(\R\modules)$ closed under small sums and containing the cones of $\phi_\Htp \otimes t^n\R(X) $ for all $n \geq 0, X \in \Sm / S$. %
If $\R$ is equipped with a morphism of monoïds $\T \to \R$, the analogous statement is true of $\Ss_\Stb$ and $\Ss_{\Htp, \Stb}$.
\end{lemma}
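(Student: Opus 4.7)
The plan is to show the two descriptions coincide by mutual inclusion. Write $\Ss'$ for the smallest triangulated subcategory of $D(\R\modules)$ closed under small sums and containing each $\Cone(\phi_\Htp \otimes t^n\R(X)) \cong \Cone(\phi_\Htp) \otimes t^n\R(X)$ for $n \geq 0, X \in \Sm/S$. The inclusion $\Ss' \subseteq \Ss_\Htp$ is free of charge, since $\Ss_\Htp$ is by definition a $t$-stable tensor ideal closed under small sums that contains $\Cone(\phi_\Htp)$, and hence contains each generator of $\Ss'$.

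The substance is in the reverse inclusion $\Ss_\Htp \subseteq \Ss'$, which I would establish by verifying that $\Ss'$ is itself a $t$-stable thick tensor ideal containing $\Cone(\phi_\Htp)$. Containment of $\Cone(\phi_\Htp)$ is the case $n = 0, X = S$. Thickness (closure under direct summands) follows from a standard Eilenberg swindle for triangulated subcategories with countable coproducts. Stability under $t$ reduces to Equation~\ref{equa:tandtensor}: $t(\Cone(\phi_\Htp) \otimes t^n\R(X)) \cong \Cone(\phi_\Htp) \otimes t^{n+1}\R(X)$, which is a generator of $\Ss'$.

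The only real work is checking the tensor ideal condition, which will use the compact generation of $D(\R\modules)$ by $\{t^n\R(X)\}$ from Proposition~\ref{prop:miscStatementsDmod}\eqref{prop:miscStatementsDmod:compGen} in the standard two-step ``generate and extend'' form. First, one notes that
\begin{equation*}
\Cone(\phi_\Htp) \otimes t^m\R(Y) \otimes t^n\R(X) \cong \Cone(\phi_\Htp) \otimes t^{m + n}\R(X \times_S Y)
\end{equation*}
by Equation~\ref{equa:tandtensor} and Equation~\ref{prop:derivedKmodules:otimes}, so the generators of $\Ss'$ are closed under tensoring with any $t^n\R(X)$; since the full subcategory $\{F : F \otimes t^n\R(X) \in \Ss' \text{ for all } n, X\}$ is triangulated and closed under small sums, this upgrades to all of $\Ss'$. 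Second, for fixed $F \in \Ss'$, the full subcategory $\{E : F \otimes E \in \Ss'\}$ is triangulated, closed under small sums (because $F \otimes -$ is a left adjoint in the closed monoïdal category $D(\R\modules)$), and contains the compact generating set $\{t^n\R(X)\}$, so equals all of $D(\R\modules)$.

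The exact same argument works verbatim with $\phi_\Htp$ replaced by $\phi_\Stb$ to handle $\Ss_\Stb$, and with the two sets of generators simultaneously for $\Ss_{\Htp, \Stb}$. No new idea is needed, so the only conceptual obstacle is the compact generation of $D(\R\modules)$, which has already been secured.
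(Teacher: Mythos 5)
Your proof is correct and fleshes out exactly the argument the paper alludes to: the paper's one-line proof cites that $D(\R\modules)$ is the smallest triangulated subcategory closed under small sums containing the $t^n\R(X)$, and your two-step "generate-and-extend" bootstrap for the tensor-ideal condition, together with the routine checks of $t$-stability (via $t(\Ee\otimes t^n\R(X))\cong \Ee\otimes t^{n+1}\R(X)$) and thickness (Eilenberg swindle), is precisely how one turns that citation into a proof.
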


\begin{proof}
This follows directly from the fact that $D(\R\modules)$ is the smallest triangulated subcategory of $D(\R\modules)$ closed under small sums and containing the $t^n\R(X)$ for $n \geq 0, X \in \Sm / S$, %
\cite[Thm.8.3.3]{Nee01}, 
Proposition~\ref{prop:miscStatementsDmod}\eqref{prop:miscStatementsDmod:compGen}. %
\end{proof}

\begin{lemma} \label{fstarPreservesS}
Let $\R$ be a cartesian commutative monoïd of $\Sh_\Nis(\Sm / -)^\Sfrak$ equipped with a morphism of monoïds $\T \to \R$. %
For any morphism of varieties $f : T \to S$ in $\QProj / k$, we have $f^* \Ss_{\Htp, \Stb, S} \subseteq \Ss_{\Htp, \Stb, T}$. If $f$ is smooth we have $f_\# \Ss_{\Htp, \Stb, T} \subseteq \Ss_{\Htp, \Stb, S}$.
\end{lemma}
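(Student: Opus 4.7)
The plan is to use the alternative characterization of $\Ss_{\Htp,\Stb}$ from Lemma~\ref{lemm:altSHtpStbDef}, which presents it as the smallest triangulated subcategory of $D(\R\modules)$ closed under small sums and containing the cones of $\phi_\Htp \otimes t^n\R(X)$ and $\phi_\Stb \otimes t^n\R(X)$ for all $n \geq 0$ and $X \in \Sm/S$. Both $f^*$ and (when $f$ is smooth) $f_\#$ are left adjoints between triangulated categories (Proposition~\ref{prop:derivedKmodules} and its surroundings), hence preserve arbitrary direct sums and commute with cones. It therefore suffices to show that $f^*$ (resp.\ $f_\#$) sends this explicit set of generators into $\Ss_{\Htp,\Stb,T}$ (resp.\ $\Ss_{\Htp,\Stb,S}$).

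For $f^*$, I would argue that $f^*$ carries each generator to a generator. Since $f^*$ is a morphism of monoïdal $\Sm$-fibered categories (Proposition~\ref{prop:derivedKmodules}), it commutes with $\R(-)$, with the tensor product, and with the endofunctor $t = -\otimes t\un$ (using that $f^*\un_S \cong \un_T$ and $f^*t\un_S \cong t\un_T$ via cartesianity of $\T\to\R$). Moreover, by cartesianity of $\R$ and of $\G_m$, one has $f^*\phi_{\Htp,S} = \phi_{\Htp,T}$ and $f^*\phi_{\Stb,S} = \phi_{\Stb,T}$. Consequently,
\begin{equation*}
f^*\bigl(\Cone(\phi_{\Htp,S})\otimes t^n\R(X)\bigr) \;\cong\; \Cone(\phi_{\Htp,T})\otimes t^n\R(T{\times_S}X),
\end{equation*}
which is one of the generators of $\Ss_{\Htp,T}$, and similarly for the $\Stb$ generators.

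For $f_\#$ with $f$ smooth, I would use the projection formula, Definition~\ref{defi:Smpremotiviccategory}\eqref{defi:Smpremotiviccategory:projForm}, applied to a generator $\Cone(\phi_{\Htp,T})\otimes t^n\R(X)$ with $X\in\Sm/T$. Writing $\phi_{\Htp,T} = f^*\phi_{\Htp,S}$ as above and observing that $t$ commutes with $f_\#$ (again via $t = -\otimes t\un$ together with $t\un_T = f^*t\un_S$ and the projection formula), the projection formula gives
\begin{equation*}
f_\#\bigl(f^*\Cone(\phi_{\Htp,S})\otimes t^n\R(X)\bigr) \;\cong\; \Cone(\phi_{\Htp,S})\otimes f_\#t^n\R(X) \;\cong\; \Cone(\phi_{\Htp,S})\otimes t^n\R(X{\to}S),
\end{equation*}
where $X\to S$ denotes the composite of the structure maps, which is smooth. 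This lies in $\Ss_{\Htp,S}$. The argument for the $\Stb$ generator is entirely analogous, using $\phi_{\Stb,T} = f^*\phi_{\Stb,S}$.

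The main technical point — not really an obstacle, but the step that must be handled carefully — is the interaction of $t$ and of the distinguished morphisms $\phi_\Htp$, $\phi_\Stb$ with $f^*$ and $f_\#$. All of this is forced by the cartesianity of $\R$ together with the compatibilities \eqref{prop:derivedKmodules:otimes}, Equation~\eqref{equa:tandtensor}, and the projection formula; no genuinely new input is required beyond these formal inputs and the reformulation of $\Ss_{\Htp,\Stb}$ in terms of small-sum-closed generation.
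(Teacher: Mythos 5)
Your proof is correct and follows essentially the same approach as the paper: both reduce to the key compatibilities $f^*\phi_{\ast,S}=\phi_{\ast,T}$ for $\ast\in\{\Htp,\Stb\}$, the commutation of $f^*$ and $f_\#$ with $t$, and the projection formula for the $f_\#$ case. The only cosmetic difference is that for $f^*$ the paper exploits the ideal-closure definition of $\Ss_{\Htp,\Stb}$ directly, while you route through the small-sum generator description of Lemma~\ref{lemm:altSHtpStbDef}, which the paper uses only for the $f_\#$ half; the underlying argument is the same.
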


\begin{proof}
Since $f^*$ preserves $\otimes$, by the definition of $\Ss_{\Htp, \Stb}$ as an ideal it suffices to show that \begin{equation} \label{equa:fstarphi}
f^*(\phi_{?, S}) = \phi_{?, T}, \qquad \textrm{ for }? = \Htp, \Stb. 
\end{equation}
For $\Htp$ this follows from $\R(-)$ commuting with $f^*$, Proposition~\ref{prop:derivedKmodules}. The case of $\Stb$ also follows from this, together with $f^*$ commuting with the adjunction $(\T\sotimes i_1, (-)_1)$ and the functor $L\Ind^\R_\T$, it is just a little fiddly.

For $f_\#$, by Lemma~\ref{lemm:altSHtpStbDef}, we should show that $f_\#$ sends $\phi_{?, T} \otimes t^n\R(X)$ inside $\Ss_{?, S}$ for $? = \Htp, \Stb$. This follows from the Projection Formula, Definition~\ref{defi:Smpremotiviccategory}\eqref{defi:Smpremotiviccategory:projForm}, and Equation~\eqref{equa:fstarphi}.
\end{proof}

\begin{lemma} \label{lemm:IndPreservesSs}
Let $\R$ and $\R'$ be cartesian commutative monoïds in $\Sh_\Nis(\Sm / -)^\Sfrak$ equipped with morphisms of monoïds $\T \to \R \to \R'$. %
For any $S \in \QProj / k$ we have
\begin{equation}  \label{equa:IndPreservesSs}
L\Ind^{\R'}_{\R}(\Ss_{\Htp, \Stb}^\R) \subseteq \Ss_{\Htp, \Stb}^{\R'} \qquad \textrm { and } \qquad \Forget^\R_{\R'} (\Ss_{\Htp, \Stb}^{\R'}) \subseteq \Ss^\R_{\Htp, \Stb}.
\end{equation}
\end{lemma}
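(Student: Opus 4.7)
The plan is to reduce both inclusions to checking them on the explicit generating set supplied by Lemma~\ref{lemm:altSHtpStbDef}: the category $\Ss^\R_{\Htp,\Stb}$ is the smallest triangulated subcategory of $D(\R\modules)$ closed under small sums and containing the cones $\Cone(\phi^\R_* \otimes t^n\R(X))$ for $* \in \{\Htp,\Stb\}$, $n \geq 0$, and $X \in \Sm/S$, and analogously for $\Ss^{\R'}_{\Htp,\Stb}$. Both functors in question are triangulated and commute with small sums: $L\Ind^{\R'}_\R$ because it is a left adjoint (Proposition~\ref{prop:miscStatementsDmod}), and $\Forget^\R_{\R'}$ because it is exact and passes directly to the derived categories (Remark~\ref{rema:FgtExact}, Proposition~\ref{prop:miscStatementsDmod}\eqref{prop:miscStatementsDmod:conservativity}). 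So in each case it suffices to check that the functor sends the generators of the source into the target ideal.

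For the first inclusion I use that $L\Ind^{\R'}_\R$ is a morphism of monoidal $\Sm$-fibered categories (Proposition~\ref{prop:miscStatementsDmod}\eqref{prop:miscStatementsDmod:Lotimes}), hence commutes with $\otimes$, and that $L\Ind^{\R'}_\R(t^n\R(X)) \cong t^n\R'(X)$ by Equation~\eqref{equa:leftAdjCalc}. It then remains to identify $L\Ind^{\R'}_\R(\phi^\R_*)$ with $\phi^{\R'}_*$. For $* = \Htp$ this is immediate from the naturality of Equation~\eqref{equa:leftAdjCalc} applied to the projection $\A^1_S \to S$. For $* = \Stb$, the definitions in Notation~\ref{nota:SHtpStb} give $\phi^\R_\Stb = L\Ind^\R_\T(\phi^\T_\Stb)$ and $\phi^{\R'}_\Stb = L\Ind^{\R'}_\T(\phi^\T_\Stb)$, and the claim reduces to the composition identity $L\Ind^{\R'}_\R \circ L\Ind^\R_\T \cong L\Ind^{\R'}_\T$ built into Diagram~\eqref{equa:derAdjoints}. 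Consequently $L\Ind^{\R'}_\R(\phi^\R_* \otimes t^n\R(X)) \cong \phi^{\R'}_* \otimes t^n\R'(X)$, whose cone lies in $\Ss^{\R'}_{\Htp,\Stb}$.

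For the second inclusion, the argument above has already shown that every generator of $\Ss^{\R'}_{\Htp,\Stb}$ is of the form $L\Ind^{\R'}_\R(C)$ for some $C = \Cone(\phi^\R_* \otimes t^n\R(X)) \in \Ss^\R_{\Htp,\Stb}$. Unwinding the definition of $L\Ind^{\R'}_\R$ in Notation~\ref{nota:monoidsTK}, the composite $\Forget^\R_{\R'} \circ L\Ind^{\R'}_\R$ is the endofunctor $(-) \rotimes (\R')_\R$ of $D(\R\modules)$, where $(\R')_\R$ denotes $\R'$ regarded as an $\R$-module via $\R \to \R'$. Therefore $\Forget^\R_{\R'}(L\Ind^{\R'}_\R(C)) \cong C \rotimes (\R')_\R$, which belongs to the tensor ideal $\Ss^\R_{\Htp,\Stb}$ because $C$ does.

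The only non-formal points are the identification $L\Ind^{\R'}_\R(\phi^\R_\Stb) \cong \phi^{\R'}_\Stb$, which one reads off from the tower $\T \to \R \to \R'$ using the composition of left adjoints, and the computation of $\Forget^\R_{\R'} \circ L\Ind^{\R'}_\R$ as tensor product over $\R$ with $(\R')_\R$; everything else is structurally immediate from Lemma~\ref{lemm:altSHtpStbDef} and the properties already catalogued in Proposition~\ref{prop:miscStatementsDmod}.
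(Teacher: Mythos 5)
Your proof is correct and follows essentially the same route as the paper's: reduce to the generating morphisms via Lemma~\ref{lemm:altSHtpStbDef}, establish $L\Ind^{\R'}_\R(\phi^\R_*)\cong\phi^{\R'}_*$ (which the paper leaves as ``one checks''), and then handle the $\Forget$ direction. The one place where you genuinely diverge is the second inclusion: the paper cites the projection formula Eq.~\eqref{equa:IndFgtProj} (which is $L\Ind(-\otimes\Forget(-))\cong L\Ind(-)\otimes(-)$) and leaves the remaining step implicit, whereas you instead compute the composite $\Forget^\R_{\R'}\circ L\Ind^{\R'}_\R$ directly as derived tensor with $\R'$ viewed as an $\R$-module. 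Your computation is the cleaner and more self-contained variant, since Eq.~\eqref{equa:IndFgtProj} alone does not immediately yield the needed conclusion without an additional observation about $\Forget L\Ind$—which is precisely the observation you make explicit. A small notational caveat: $\rotimes$ in the paper denotes the underived tensor on $\R$-modules, so in your formula $C\rotimes(\R')_\R$ one should really mean the derived tensor, but this does not affect the argument.
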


\begin{proof}
For the first claim, since $L\Ind^{\R'}_{\R}$ preserves $\otimes$, Proposition~\ref{prop:miscStatementsDmod}\eqref{prop:miscStatementsDmod:Lotimes}, and the $\Ss_{\Htp, \Stb}$ are defined as ideals, it suffices to show that $L\Ind^{\R'}_{\R}$ sends $\phi^\R_\Htp$ and $\phi^\R_\Stb$ inside $\Ss_{\Htp, \Stb}^{\R'}$. One checks that, in fact, we have
\begin{equation} \label{equa:Indphi}
L\Ind^{\R'}_{\R}(\phi^\R_?) \cong \phi_?^{\R'}, \qquad ? = \Htp, \Stb.
\end{equation}

For the second claim we use the alternative definition of Lemma~\ref{lemm:altSHtpStbDef}. We must show that $\Forget^\R_{\R'} (\phi_?^{\R'} \otimes t^n\R'(X))$ is in $\Ss^\R_{\Htp, \Stb}$ for $? = \Htp, \Stb$. This follows from Equation~\eqref{equa:Indphi} and Equation~\eqref{equa:IndFgtProj}.
\end{proof}

\begin{lemma} \label{lemm:Sstblocality}
%Let $\R$ be as in Setting~\ref{sett}. 
Let $\R$ be a cartesian commutative monoïd of $\Sh_\Nis(\Sm / -)^\Sfrak$ equipped with a morphism of monoïds $\T \to \R$. % %%%here
An object $\Ee \in D(\R\modules)$ is $\Ss_{\Stb}$-local (see Rappels~\ref{rappels:tricat2}\eqref{BousfieldLocalisationTriCat}) if and only if for every $X \in \Sm / S$, $n, i \geq 0$ the morphisms
\begin{equation} \label{equation:Wlocalcriteria}
\Hyp_\Nis^i(X, \Ee_n) \to %
\ker \biggl ( \Hyp_\Nis^i(\G_m {\times_S} X, \Ee_{n+1}) \to \Hyp_\Nis^i(X, \Ee_{n+1}) \biggr )
\end{equation}
corresponding to 
\begin{equation}
 \Hom{D(\R\modules)}((\phi_{\Stb}) \otimes t^n\R(X) , \Ee) 
\end{equation}
under Equation~\eqref{equa:NisHyperInDKMod} are all isomorphisms, where $\phi_{\Stb}$ is the morphism from Equation~\eqref{equa:stabGen}. Similarly, an object is $\Ss_{\Htp}$-local if and only if for all $X \in \Sm / S, n \geq 0, i\in \Z$, the canonical morphisms
\begin{equation}
\Hyp_\Nis^i(X, \Ee_{n})  \to \Hyp_\Nis^i(\A^1_X, \Ee_{n}) 
\end{equation}
are isomorphisms, and an object is $\Ss_{\Htp, \Stb}$-local if and only if it is both $\Ss_{\Htp}$-local and $\Ss_{\Stb}$-local.
\end{lemma}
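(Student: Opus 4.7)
The plan is to reduce, via Lemma~\ref{lemm:altSHtpStbDef}, to checking a vanishing condition on the explicit generating sets of cones described there. The general principle I will invoke is that, for any set of objects $\{C_\alpha\}$ in a triangulated category with small sums, an object $\Ee$ is right-orthogonal to the smallest triangulated subcategory closed under small sums containing $\{C_\alpha\}$ if and only if $\Hom{D(\R\modules)}(C_\alpha[j], \Ee) = 0$ for all $\alpha$ and all $j \in \Z$. Indeed, since $\Hom{D(\R\modules)}(-, \Ee)$ is cohomological and sends direct sums to products, the class of those $C$ with $\Hom{D(\R\modules)}(C[j], \Ee) = 0$ for all $j$ is triangulated and closed under small sums; moreover each $\Ss_\ast$ is triangulated, so being $\Ss_\ast$-local (as defined in the statement, as a single $\Hom$ vanishing) is automatically the same as being orthogonal to $\Ss_\ast$ together with all its shifts.

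For $\Ss_\Htp$ the generators are $\Cone(\phi_\Htp \otimes t^n\R(X))$, so the long exact sequence of a cone reduces $\Ss_\Htp$-locality to the condition that the induced map $\Hom{D(\R\modules)}(\phi_\Htp \otimes t^n\R(X), \Ee[j])$ is an isomorphism for all $X, n, j$. Using that $\R(-)$ is a morphism of monoïdal $\Sm$-fibered categories (Proposition~\ref{prop:derivedKmodules}) and that $t^n$ is tensoring with $t^n\un$ (a consequence of Equation~\eqref{equa:tandtensor}), one identifies $\phi_\Htp \otimes t^n\R(X)$ with the canonical morphism $t^n\R(\A^1_X) \to t^n\R(X)$. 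Equation~\eqref{equa:NisHyperInDKMod} then translates this into the stated $\A^1$-invariance of $\Hyp^j_\Nis(-, \Ee_n)$.

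For $\Ss_\Stb$ the strategy is the same, but first one must identify $\phi_\Stb \otimes t^n\R(X)$. Since $-\otimes t^n\R(X)$ is a left adjoint it preserves cokernels, so $\R(\G_m/1) \otimes \R(X) \cong \R(\G_m \times_S X)/\R(X)$; combined with the identification $t^n(-) \cong (-) \rotimes t^n\un$, this expresses $\phi_\Stb \otimes t^n\R(X)$ as a morphism $t^{n+1}(\R(\G_m \times_S X)/\R(X)) \to t^n\R(X)$. Applying $\Hom{D(\R\modules)}(-, \Ee[i])$ turns the target into $\Hyp^i_\Nis(X, \Ee_n)$ via Equation~\eqref{equa:NisHyperInDKMod}, and, because $\Hom$ takes cokernels to kernels, turns the source into precisely the kernel appearing in Equation~\eqref{equation:Wlocalcriteria}. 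Finally, $\Ss_{\Htp,\Stb}$ is by Lemma~\ref{lemm:altSHtpStbDef} generated by the union of the two previous sets of cones, so $\Ss_{\Htp,\Stb}$-locality is equivalent to the conjunction of $\Ss_\Htp$- and $\Ss_\Stb$-locality. The main delicate point is the identification of $\phi_\Stb \otimes t^n\R(X)$: $\phi_\Stb$ is originally defined for $\R = \T$ and then pushed forward along $L\Ind_\T^\R$, so its compatibility with tensor products rests on Proposition~\ref{prop:miscStatementsDmod}\eqref{prop:miscStatementsDmod:Lotimes} together with the formulas in Equation~\eqref{equa:leftAdjCalc}; everything else is a routine long-exact-sequence bookkeeping.
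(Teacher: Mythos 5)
Your proposal is correct and follows essentially the same route as the paper, which simply invokes Lemma~\ref{lemm:altSHtpStbDef} together with Proposition~\ref{prop:derivedKmodules}\eqref{prop:derivedKmodules:otimes}; you have expanded that one-line reference into the concrete long-exact-sequence and generator-by-generator bookkeeping. One small point worth making explicit: when you say ``$\Hom$ takes cokernels to kernels'' to obtain the kernel in Equation~\eqref{equation:Wlocalcriteria}, this relies on the morphism $\R(X)\to\R(\G_m\times_S X)$ being split (via the projection $\G_m\times_S X\to X$), so that $\R(\G_m\times_S X/X)$ is a direct summand and $\Hom$ does not merely give a long exact sequence but the split short one you need.
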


\begin{proof}
This follows directly from the description of Lemma~\ref{lemm:altSHtpStbDef} and Proposition~\ref{prop:derivedKmodules}\eqref{prop:derivedKmodules:otimes}.
\end{proof}

\subsection{Motives---definitions}

In this section we complete our construction of $\H$.

\begin{setting} \label{sett}
Let $\R$ be a cartesian commutative monoïd of $\Sh_\Nis(\Sm / -)^\Sfrak$ equipped with a morphism of monoïds $\T \to \R$, for example, $\R = \K$ or $\R = \T$. %
\end{setting}

\begin{definition} \label{defi:H}
Let $\R$ be as in Setting~\ref{sett}. 
%Let $\R$ be a cartesian commutative monoïd of $\Sh_\Nis(\Sm / -)^\Sfrak$ equipped with a morphism of monoïds $\T \to \R$, for example, $\R = \K$ or $\R = \T$. %
For $S \in \QProj / k$ define $\H(S, \R)$ as the Verdier quotient
\begin{equation} \label{equa:HSRdef}
 \H(S, \R) \stackrel{def}{=} D(\R_S\modules) / \Ss_{\Htp, \Stb}.
\end{equation}
See Section~\ref{sec:triCat} for some recollections about Verdier quotients.

If $A$ is a commutative $\Zop$-algebra we will write (cf. Notation~\ref{nota:monoidsTK})
\begin{equation} \label{}
 \H(S, A) \stackrel{def}{=} \H(S, A \otimes \K).
\end{equation}
We define the composition $\Sm / S \to D(\R_S\modules) \to \H(S, \R)$ of the canonical functors as 
\[ M(-): \Sm / S \to \H(S, \R). \]
\end{definition}

\begin{remark}
By the definition of $\Ss_{\Htp,\Stb}$, the functor $t$ on $D(\R_S\modules)$ induces a functor on $\H(S, \R)$, which we continue to denote by $t$.
\end{remark}

\begin{remark} \label{rema:DA1def}
In the case $\R = \T$, Cisinski--Déglise use the notation
\begin{equation}
 D_{\A^1}(S) \stackrel{def}{=} \H(S, \T),
\end{equation}
cf. \cite[5.3.21, Def.5.3.22, Exam.5.3.31, Rem.5.3.34]{CD}.
\end{remark}

\begin{proposition} \label{prop:Hispremotivic}
Let $\R$ be as in Setting~\ref{sett}. 
%Let $\R$ be a cartesian commutative monoïd of $\Sh_\Nis(\Sm / -)^\Sfrak$ equipped with a morphism of monoïds $\T \to \R$, for example, $\R = \K$ or $\R = \T$. %
The system of categories $\H(-, \R)$ inherits a structure of triangulated $\Sm$-premotivic category from $D(\R_S\modules)$, cf. Definition~\ref{defi:tripremot}. The canonical functors %
$
 D(\R_S\modules) \to \H(S, \R) 
$
form a morphism of $\Sm$-premotivic categories. Moreover, $\H(S, \R)$ is compactly generated by the $t^nM(X)$ for $n \geq 0, X \in \Sm / S$.
\end{proposition}

\begin{remark}
In particular, this proposition means that the morphisms in the composition %
$
M(-): \Sm / S \to D(\R_S\modules) \to \H(S, \R) 
$
commute (up to canonical isomorphism) with the functors $f^*$, $\otimes$, and for smooth morphisms, the functors $f_\#$, cf. the end of Section~2.2. So for any $f: S' \to S$ in $\QProj / k$, $X, Y \in \Sm /S$, and $X' \in \Sm / S'$, we have
\begin{align}
f^*M(X) &\cong M(S' {\times}_S X), \\
M(X) \otimes M(Y) &\cong M(X{\times}_S Y), \qquad \textrm{ and } \\
f_\#M(X') &\cong M(X') \qquad (\textrm{when } f \textrm{ is smooth}).
\end{align}
\end{remark}

\begin{proof}
Let $f: T \to S$ be a morphism in $\QProj / k$. By Lemma~\ref{fstarPreservesS}, and the description of $\Ss_{\Htp, \Stb}$ as an ideal, the functors $f_\#$ (if $f$ is smooth), $f^*, \otimes$ preserve $\Ss_{\Htp, \Stb}$. Hence, they descend to the Verdier quotient by the universality in the definition. The adjunctons %
The statement about being compactly generated follows directly from the fact that $D(\R\modules)$ is compactly generated by these objects, and the adjunction of Rappels~\ref{rappels:tricat2}\eqref{BousfieldRightadjoint}.
\end{proof}

\begin{proposition} \label{prop:PhiPsi}
Let $\R, \R'$ be as in Setting~\ref{sett}, 
%Let $\R, \R'$ be cartesian commutative monoïds of $\Sh_\Nis(\Sm / -)^\Sfrak$ 
equipped with morphisms of monoïds $\T \to \R \to \R'$. %
The adjunction $(L\Ind^{\R'}_\R, \Forget^\R_{\R'})$ of Equation~\eqref{equa:derAdjoints} passes to the Verdier quotients:
\begin{equation}
\H(S, \R) \rightleftarrows \H(S, \R') 
\end{equation}
\begin{enumerate}
 \item The left adjoint is a morphism of $\Sm$-premotivic categories.
 \item The right adjoint commutes with $f^*$ for any morphism $f \in \QProj / k$ (but it is not a morphism of monoïdal $\Sm$-fibered categories).
 \item For every $S \in \QProj / k$, the right adjoint is conservative.
\end{enumerate}
\end{proposition}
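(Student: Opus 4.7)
The plan is to push the adjunction $(L\Ind^{\R'}_\R, \Forget^\R_{\R'})$ through the Verdier quotients defining $\H(S, \R)$ and $\H(S, \R')$, and then verify each of the three claims on the compact generators $t^n M(X)$. Lemma~\ref{lemm:IndPreservesSs} is tailor-made for the first step: it says that both $L\Ind^{\R'}_{\R}$ and $\Forget^{\R}_{\R'}$ preserve the localising ideals $\Ss_{\Htp,\Stb}$, so the universal property of the Verdier quotient (Rappels~\ref{rappels:tricat2}) produces induced triangulated functors between $\H(S, \R)$ and $\H(S, \R')$. To inherit adjointness, I would identify $\H(S, \R)$ with the reflective subcategory of $\Ss_{\Htp,\Stb}$-local objects in $D(\R\modules)$ (Bousfield localisation); since $L\Ind$ kills the subcategory being localised, $\Forget$ sends local objects to local objects by the usual orthogonality argument, and the induced functors on $\H$ remain adjoint.

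For (1), the natural isomorphisms $f^* L\Ind \cong L\Ind f^*$, $f_\# L\Ind \cong L\Ind f_\#$ (for $f$ smooth), and $L\Ind(-\otimes-) \cong L\Ind(-)\otimes L\Ind(-)$ already hold in $D(\R\modules)$ by Proposition~\ref{prop:miscStatementsDmod}\eqref{prop:miscStatementsDmod:Lotimes}; since each side preserves the killed subcategory, they descend through the quotient functors to yield the same data on $\H$.

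For (2), I would first establish $f^* \Forget \cong \Forget f^*$ at the level of $D$. The mate of the iso $L\Ind f^* \cong f^* L\Ind$ gives a canonical comparison $f^* \Forget \to \Forget f^*$ of triangulated sum-preserving functors, and one checks it is an iso on the compact generators $t^n\R'(X)$ of $D(\R'\modules)$: there both sides unwind to the same pullback of the underlying symmetric sequence (the $\R'$-module structure is just being carried along). This then passes to $\H$ as in (1). The parenthetical remark that $\Forget$ is not monoïdal reflects the familiar failure of the canonical map $\Forget(-) \otimes_\R \Forget(-) \to \Forget(- \otimes_{\R'} -)$ to be an iso, already present for ordinary commutative rings.

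For (3), conservativity is a short compact generator argument. By Proposition~\ref{prop:Hispremotivic} the objects $t^n M(X)$ compactly generate $\H(S, \R')$; by Equation~\eqref{equa:leftAdjCalc}, each of them is (isomorphic to) $L\Ind$ of the corresponding generator of $\H(S, \R)$. Hence by adjunction
\[
\Hom{\H(S,\R')}(t^n M(X), \Ee[i]) \cong \Hom{\H(S,\R)}(t^n M(X), \Forget \Ee[i]),
\]
and $\Ee$ is zero iff all of these groups vanish iff $\Forget \Ee$ is zero. The main obstacle is really only the initial descent step---making precise that both functors preserving the subcategory being localised really does force the induced pair on the quotients to remain adjoint---after which everything else is routine diagram chasing on compact generators.
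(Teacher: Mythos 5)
Your proof is correct and follows the same overall route as the paper: use Lemma~\ref{lemm:IndPreservesSs} to descend both functors through the Verdier quotients (equivalently, through the Bousfield localisation picture of Rappels~\ref{rappels:tricat2}), inherit the adjunction, and then deal with the three claims. The one genuine divergence is in the proof of conservativity. The paper identifies each $\H(S,\R)$ with the full subcategory of $\Ss_{\Htp,\Stb}$-local objects in $D(\R\modules)$, observes that $\Forget^\R_{\R'}$ preserves these full subcategories (because its left adjoint preserves the $\Ss_{\Htp,\Stb}$), and then simply invokes the fact, already recorded in Proposition~\ref{prop:miscStatementsDmod}\eqref{prop:miscStatementsDmod:conservativity}, that $\Forget^\R_{\R'}$ is conservative on all of $D(\R'\modules)$: a restriction of a conservative functor along full embeddings is still conservative. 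Your version reproves conservativity from scratch inside $\H$, using the adjunction isomorphism and the fact that $L\Ind$ sends the compact generators $t^nM(X)$ of $\H(S,\R)$ to those of $\H(S,\R')$ via Equation~\eqref{equa:leftAdjCalc}. Both arguments are short and valid; the paper's is marginally more economical since the conservativity on $D$ is already in hand, while yours is self-contained at the level of $\H$ and does not depend on the explicit full-subcategory identification. Your extra detail verifying $f^*\Forget \cong \Forget f^*$ at the level of $D$ (mate plus a check on compact generators for sum-preserving triangulated functors) is a reasonable way to make explicit what the paper's one-line appeal to the universal property of Verdier localisation leaves implicit.
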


\begin{proof}
All claims except the last one follow from the universal property of Verdier localisations, %
since the subcategories $\Ss_{\Htp, \Stb}$ are preserved by the functors in question. The last claim follows from the description of $\H(S, \R)$ and $\H(S, \R')$ as full subcategories of $D(\R\modules)$ and
$D(\R'\modules)$ (see Rappels~\ref{rappels:tricat2}\eqref{BousfieldLocalisationTriCat}); the functor $\Forget^\R_{\R'}$ preserves these full subcategories because its left adjoint $L\Ind^{\R'}_{\R}$ preserves the $\Ss_{\Htp, \Stb}$.
\end{proof}

\subsection{Motives---six operations}

In this section we develop the properties of $\H(-)$ that we are interested in.

\begin{proposition} \label{prop:adjointProperty}
Let $\R$ be as in Setting~\ref{sett}. 
%Let $\R$ be a cartesian commutative monoïd of $\Sh_\Nis(\Sm / -)^\Sfrak$ equipped with a morphism of monoïds $\T \to \R$, for example, $\R = \K$ or $\R = \T$. %
For \emph{any} morphism $f: T \to S \in \QProj / k$, the functor $f_*: \H(T, \R) \to \H(S, \R)$ admits a right adjoint.
\end{proposition}

\begin{proof}
By Brown representability, \cite[Thm.4.1]{Nee96}, 
it suffices to show that $f_*$ preserves small sums. Since we are working with compactly generated categories, Proposition~\ref{prop:Hispremotivic}, the right adjoint $f_*$ preserves small sums if and only if its left adjoint $f^*$ preserves compact objects, \cite[Thm.5.1]{Nee96}. In fact, it suffices that $f^*$ sends each compact generator $t^nM(X), X \in \Sm / S, n \geq 0$ to a compact object. But since $M$ and $t^n$ are morphisms of monoïdal $\Sm$-fibered categories, we have $f^*t^nM(X) \cong t^nM(T{\times_S}X)$. 
\end{proof}

\begin{proposition}[Homotopy invariance] \label{prop:homotopy}
Let $\R$ be as in Setting~\ref{sett}. 
%Let $\R$ be a cartesian commutative mo\-noïd of $\Sh_\Nis(\Sm / -)^\Sfrak$ equipped with a morphism of monoïds $\T \to \R$, for example, $\R = \K$ or $\R = \T$.  %
The $\Sm$-premotivic category $\H(-, \R)$ is $\A^1$-homotopy invariant in the sense that for any $S \in \QProj / k$ the unit of adjunction $\id \to p_*p^*$ is an isomorphism where $p: \A^1_S \to S$ is the canonical projection.
\end{proposition}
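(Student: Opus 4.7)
The strategy is to reduce the statement about the unit $\id \to p_*p^*$ to a statement about the counit $p_\#p^* \to \id$ (which exists because $p: \A^1_S \to S$ is smooth, so $p^*$ has a left adjoint $p_\#$), and then to observe that this counit is tautologically an isomorphism by our construction of $\H(-,\R)$ as the Verdier quotient by $\Ss_{\Htp,\Stb}$.

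First, I recall the standard categorical observation: given an adjoint triple $p_\# \dashv p^* \dashv p_*$, the counit $p_\#p^* \to \id$ is an isomorphism if and only if $p^*$ is fully faithful if and only if the unit $\id \to p_*p^*$ is an isomorphism. Hence it suffices to prove that for every $\Ee \in \H(S,\R)$, the counit $p_\#p^*\Ee \to \Ee$ is an isomorphism in $\H(S,\R)$.

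Next, by the Projection Formula in the form of Equation~\eqref{equa:geomMotive}, we have a natural isomorphism $p_\#p^*\Ee \cong M(\A^1_S) \otimes \Ee$, under which the counit is identified with $\phi_\Htp \otimes \id_\Ee$, where $\phi_\Htp : M(\A^1_S) = \R(\A^1_S) \to \R(S) = \un_S$ is the canonical projection of Equation~\eqref{equa:KA1projection}. It remains to show that $\phi_\Htp \otimes \id_\Ee$ is an isomorphism in $\H(S,\R)$, equivalently that its cone lies in $\Ss_{\Htp,\Stb}$ (viewing $\Ee$ as an object of $D(\R_S\modules)$ via the right adjoint of the Verdier quotient, Rappels~\ref{rappels:tricat2}\eqref{BousfieldRightadjoint}).

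This last step is immediate from the definitions: by construction, the cone of $\phi_\Htp$ lies in $\Ss_{\Htp} \subseteq \Ss_{\Htp,\Stb}$, and since $\Ss_{\Htp,\Stb}$ is by definition a tensor ideal of $D(\R_S\modules)$, the cone of $\phi_\Htp \otimes \id_\Ee$ also lies in $\Ss_{\Htp,\Stb}$. Thus $\phi_\Htp \otimes \id_\Ee$ is an isomorphism in $\H(S,\R)$, which completes the proof. There is no real obstacle here: the whole point of inverting $\Ss_\Htp$ in the Verdier quotient was precisely to force this homotopy invariance, and the only ingredients needed beyond the definitions are the projection formula and the adjunction trick relating unit and counit.
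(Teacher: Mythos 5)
Your proof is correct and begins exactly as the paper does, reducing to the statement that the counit $p_\#p^* \to \id$ is an isomorphism. From there you diverge slightly from the paper's argument, which invokes compact generation (via Proposition~\ref{prop:Hispremotivic}) and the fact that $p_\#, p^*$ preserve small sums to reduce to checking the counit on the generators $t^nM(X)$, then identifies these with images of $\phi_\Htp \otimes t^n\R(X)$ and appeals to Lemma~\ref{lemm:altSHtpStbDef} (the ``alternative generating set'' description of $\Ss_\Htp$). You instead work directly with an arbitrary $\Ee$: lift it to a $\Ss_{\Htp,\Stb}$-local object of $D(\R_S\modules)$, use the projection formula \eqref{equa:geomMotive} to identify the counit with $\phi_\Htp \otimes \id_\Ee$, and then conclude from the tensor-ideal property of $\Ss_{\Htp,\Stb}$ that the cone lies in $\Ss_{\Htp,\Stb}$. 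This avoids the detour through compact generation and Lemma~\ref{lemm:altSHtpStbDef}, and is arguably a hair more streamlined, but it relies on the same underlying mechanism: $\Cone(\phi_\Htp) \in \Ss_{\Htp,\Stb}$ and $\Ss_{\Htp,\Stb}$ is an $\otimes$-ideal, so tensoring $\phi_\Htp$ with anything becomes invertible in the quotient. The one place to be careful in your write-up is the implicit identification of the counit in $\H(S,\R)$ with the image under the quotient functor of $\phi_\Htp \otimes \id_{\Ee'}$ for a lift $\Ee'$; this is valid because the quotient $D(\R_S\modules) \to \H(S,\R)$ is a morphism of monoïdal $\Sm$-fibered categories and adjunctions descend along it, but it is worth saying explicitly.
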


\begin{proof}
By adjunction it suffices to prove that the counit $p_\#p^* \to \id$ is an isomorphism. %
Both $p_\#$ and $p^*$ are left adjoints, and consequently commute with all small sums. So since the category $\H(S)$ is compactly generated by the $t^nM(X)$ for $n \geq 0$ and $X \in \Sm / S$ (see proof of Proposition~\ref{prop:Hispremotivic}) it suffices to show that the morphisms $p_\#p^*t^nM(X) \to t^nM(X)$ are all isomorphisms. But these are isomorphic to the images of the morphisms $\phi_{\Htp} \otimes t^n\R(X)$ by Proposition~\ref{prop:derivedKmodules} and these are isomorphisms by definition since they are used to define $\Ss_{\Htp}$, Lemma~\ref{lemm:altSHtpStbDef}.
\end{proof}

\begin{proposition}[Localisation] \label{prop:localisation}
Let $\R$ be as in Setting~\ref{sett}. 
%Let $\R$ be a cartesian commutative monoïd of $\Sh_\Nis(\Sm / -)^\Sfrak$ equipped with a morphism of monoïds $\T \to \R$, for example, $\R = \K$ or $\R = \T$. %
For any closed immersion $i: Z \to X$ in $\QProj / k$ with open complement $j: U \to X$ the pair 
\begin{equation}
 (i^*, j^*): \H(X,\R) \to \H(Z,\R) \times \H(U,\R) 
\end{equation}
is conservative, and the counit $i^*i_* \to \id$ is an isomorphism.
\end{proposition}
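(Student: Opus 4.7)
The plan is to reduce both assertions to the corresponding statements for $\H(-, \T) = D_{\A^1}(-)$ (see Remark~\ref{rema:DA1def}), for which Localisation is a deep theorem of Morel--Voevodsky, established in the present formalism in \cite{CD}. The transfer from $\T$ to an arbitrary $\R$ will use only the formal properties of the adjunction $(L\Ind^\R_\T, \Forget^\T_\R)$ recorded in Proposition~\ref{prop:PhiPsi}: the left adjoint $L\Ind^\R_\T$ is a morphism of $\Sm$-premotivic categories, while the right adjoint $\Forget^\T_\R$ is conservative and commutes with $f^*$ for every $f \in \QProj/k$.

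For the conservativity of $(i^*, j^*)$, given $E \in \H(X, \R)$ with $i^*E = 0$ and $j^*E = 0$, I would apply $\Forget^\T_\R$. Since $\Forget^\T_\R$ commutes with $i^*$ and $j^*$, one obtains $i^*\Forget^\T_\R E = 0$ and $j^*\Forget^\T_\R E = 0$ in $\H(-, \T)$. Localisation for $D_{\A^1}$ then yields $\Forget^\T_\R E = 0$, and conservativity of $\Forget^\T_\R$ gives $E = 0$.

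For the counit $i^*i_* \to \id$, I would first observe that $\Forget^\T_\R$ commutes with $i_*$. This is a Beck--Chevalley transform: taking the right adjoints of each side of the natural isomorphism $L\Ind^\R_\T \circ i^* \cong i^* \circ L\Ind^\R_\T$ (of functors $\H(X, \T) \to \H(Z, \R)$) produces an isomorphism $\Forget^\T_\R \circ i_* \cong i_* \circ \Forget^\T_\R$ (of functors $\H(Z, \R) \to \H(X, \T)$). Applying $\Forget^\T_\R$ to the counit $i^*i_* \to \id$ on $\H(Z, \R)$ therefore yields the counit $i^*i_* \to \id$ on $\H(Z, \T)$, which is an isomorphism by Localisation for $D_{\A^1}$. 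Conservativity of $\Forget^\T_\R$ then finishes the argument.

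The main obstacle is the Localisation theorem for $D_{\A^1}(-)$ itself, which I would black-box from \cite{CD}; it rests on the Nisnevich-local triviality of closed immersions of smooth varieties combined with Homotopy Invariance and Stability (cf.\ the discussion in Remark~\ref{rema:whyNis}). Given that input, the transfer argument above is entirely formal, and uses no further geometric input beyond the structural properties of the adjunction already established in Proposition~\ref{prop:PhiPsi}.
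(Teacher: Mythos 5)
Your proof takes essentially the same route as the paper's: reduce to the known Localisation for $D_{\A^1} = \H(-, \T)$ via the conservative right adjoint $\Forget^\T_\R$ of Proposition~\ref{prop:PhiPsi}. You go a bit further than the paper in explaining, by a calculus-of-mates argument, why $\Forget^\T_\R$ commutes with $i_*$; the paper asserts this commutation without proof.
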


\begin{remark}
Note that localisation for integral motives as defined by Voevodsky is a major open problem. However, since transfers don't appear anywhere in this paper, we can apply what is in some sense \cite[Theorem 3.2.21]{MV99}. The new problem then becomes to show that whatever categories you are working with are equivalent to Voevodsky's categories, but we don't care about this. We only care about the calculation of Corollary~\ref{coro:fieldChowGroups}, and that we have a six functor formalism (including localisation).
\end{remark}

\begin{proof}
Rather than reproduce the proof of \cite[Theorem 3.2.21]{MV99} as most writers do, we will deduce localisation for $\H(-, \R)$ from localisation for $D_{\A^1} = \H(-, \T)$.
Indeed, $D_{\A^1}$ satisfies Localisation, \cite[Theorem 6.2.1]{CD}. Since the right adjoint from Proposition~\ref{prop:PhiPsi} is conservative and commutes with $j^*$, $i^*$, and $i_*$, it follows that $\H(-, \R)$ also satisfies Localisation.
\end{proof}
 
\begin{corollary}
With notation as in Proposition~\ref{prop:localisation} the unit and counit of adjunction fit into a distinguished triangle
\begin{equation} \label{equa:locaTri}
j_\#j^* \to \id \to i_*i^* \to j_\#j^*[1]
\end{equation}
\end{corollary}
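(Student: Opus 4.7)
The plan is to build the triangle from the canonical counit $j_\#j^* \to \id$ by completing it into a distinguished triangle and identifying the third term with $i_*i^*$ by means of the conservativity statement in Proposition~\ref{prop:localisation}.

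First, I would establish the two vanishing results $i^*j_\# = 0$ and $j^*i_* = 0$. The cartesian square
\[ \xymatrix{\varnothing \ar[r] \ar[d] & U \ar[d]^j \\ Z \ar[r]_i & X} \]
together with the Base Change axiom for the smooth morphism $j$ (Definition~\ref{defi:Smpremotiviccategory}~\eqref{defi:Smpremotiviccategory:baseChange}) gives $i^*j_\# \cong a_\#b^*$ where $a,b$ are the structure maps from the empty scheme. Since Nisnevich sheaves vanish on $\varnothing$ (Remark~\ref{rema:emptyCovering}), the category $\H(\varnothing,\R)$ is zero, so $i^*j_\# = 0$; passing to right adjoints yields $j^*i_* = 0$.

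Next, consider the composite $j_\#j^* \to \id \to i_*i^*$ of the counit and the unit. Under the adjunction $(j_\#, j^*)$, this composite corresponds to a morphism $j^* \to j^*i_*i^*$, which is zero since $j^*i_* = 0$; hence the composite itself is zero. Completing the counit to a distinguished triangle $j_\#j^* \to \id \to C \to j_\#j^*[1]$ in $\H(X,\R)$, the vanishing of the composite produces a map $C \to i_*i^*$ in $\H(X,\R)$.

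Finally, I would verify that $C \to i_*i^*$ is an isomorphism via the conservativity of $(i^*, j^*)$ from Proposition~\ref{prop:localisation}. Applying $j^*$ to the triangle and using the fact that $j^*j_\# \to \id$ is an isomorphism (a standard consequence of $j$ being an open immersion, deducible from Base Change applied to the diagonal square of $j$) yields $j^*C = 0$; since also $j^*i_*i^* = 0$, the map $j^*C \to j^*i_*i^*$ is an isomorphism. Applying $i^*$ and using $i^*j_\# = 0$ gives $i^* \stackrel{\sim}{\to} i^*C$; compatibility of this isomorphism with the unit $i^* \to i^*i_*i^*$ (which is inverse to the counit isomorphism $i^*i_*i^* \to i^*$ by the triangle identities) shows that $i^*C \to i^*i_*i^*$ is an isomorphism. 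The main delicate point is the base-change argument giving $i^*j_\# = 0$; everything else is formal manipulation of units, counits, and the triangle axioms.
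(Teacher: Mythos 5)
Your proof is correct and follows essentially the same strategy as the paper's: establish the vanishing identities $i^*j_\# \cong 0$ and $j^*i_* \cong 0$ via base change over the empty scheme, complete the counit $j_\#j^* \to \id$ to a distinguished triangle, factor the unit $\id \to i_*i^*$ through the cone using that the composite is zero, and verify the comparison map is an isomorphism after applying $i^*$ and $j^*$ and invoking conservativity. No changes needed.
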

 
\begin{proof}
First notice that we have the identities
\begin{equation} \label{equa:locaIden}
i^*i_*i^* \cong i^*, \quad i^*j_\# \cong 0, \quad  j^*j_\# \cong \id, \quad  j^*i_* \cong 0.
\end{equation}
The first one is part of Localisation. The second two follow from the observation that $\H(\varnothing)$ is the zero category and the Base Change property, Definition~\ref{defi:Smpremotiviccategory}\eqref{defi:Smpremotiviccategory:baseChange}. The last one follows from the fact that $j^*i_*$ is the right adjoint to $i^*j_\#$. For any object $\Ee$, choose a cone $\Ff$ of $j_\#j^*\Ee \to \Ee$. It follows from adjunction and $j^*i_* \cong 0$ that $\Hom{}(j_\#j^*\Ee, i_*i^*\Ee) = 0$ and so by the usual triangulated category  long exact $\Hom{}$ sequence, there exists a factorisation $j_\#j^*\Ee \to \Ee \stackrel{}{\to} \Ff \stackrel{\phi}{\to} i_*i^*\Ee$. Applying $i^*$ to these morphisms and using the identities \eqref{equa:locaIden} we obtain the distinguished triangle $0 \to i^*\Ee \stackrel{\sim}{\to} i^*\Ff \to 0[1]$, and the factorisation $i^*\Ee \to i^*\Ff \stackrel{i^*\phi}{\to} i^* \Ee$ which shows that $i^*\phi$ is an isomorphism. Applying $j^*$ to these morphisms and using the identities \eqref{equa:locaIden} we obtain the distinguished triangle $j^*\Ee \to j^*\Ee \to j^*\Ff \to j^*\Ee[1]$ showing that $j^*\Ff$ is zero. Since $j^*i_*i^*\Ee$ is also zero, $j^*\phi: j^*\Ff \to j^*i_*i^*\Ee$ is an isomorphism. Hence, since $(j^*, i^*)$ is conservative, it follows that $\phi$ is an isomorphism. Moreover, applying $\Hom{}(j_\#j^*\Ee[1], i_*i^*\Ee) = 0$, we see that $\Ff$ was unique up to unique isomorphism.
\end{proof}

\begin{lemma} \label{lemm:TateTwist}
Let $\R$ be as in Setting~\ref{sett}. 
%Let $\R$ be a cartesian commutative monoïd of $\Sh_\Nis(\Sm / -)^\Sfrak$ equipped with a morphism of monoïds $\T \to \R$, for example, $\R = \K$ or $\R = \T$. %
There is a natural isomorphism of endofunctors
\begin{equation}
p_\#s_*(-) \cong - \otimes M(\G_m / 1)[1] : \H(S, \R) \to \H(S, \R)
\end{equation}
where $p: \A^1_S \to S$ is the canonical projection and $s: S \to \A^1_S$ the zero section. Consequently, there is also a natural isomorphism of their right adjoints
\begin{equation}
s^!p^*(-) \cong \iHom{\H(S, \R)}(M(\G_m / 1)[1], -)
\end{equation}
where $s^!$ is the right adjoint to $s_*$ given by Proposition~\ref{prop:adjointProperty}, cf. Theorem~\ref{theo:propertiesofH}\eqref{theo:propertiesofH:properRightAdjoint}.
\end{lemma}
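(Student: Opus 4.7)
The plan is to apply the localisation triangle from Proposition~\ref{prop:localisation} (in the form of Equation~\eqref{equa:locaTri}) to the closed immersion $s : S \hookrightarrow \A^1_S$ with open complement $j: \G_m \hookrightarrow \A^1_S$, evaluated on $p^*\Ee$ for an arbitrary $\Ee \in \H(S,\R)$, and then push forward along $p_\#$.

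First I note the elementary identities $p \circ s = \id_S$ and $p \circ j = \pi$ (with $\pi : \G_m \to S$ the canonical projection). These give natural isomorphisms $s^* p^* \cong \id$ and $j^* p^* \cong \pi^*$ by the 2-functoriality of $(-)^*$. Homotopy invariance (Proposition~\ref{prop:homotopy}) states that $\id \to p_* p^*$ is an isomorphism; taking adjoints (i.e.\ using that $p^*$ is fully faithful, hence that the counit $p_\# p^* \to \id$ of the further left adjunction is an isomorphism) we get $p_\# p^* \cong \id$. Applying $p_\#$ to the localisation triangle
\[
j_\# j^* p^*\Ee \to p^*\Ee \to s_* s^* p^*\Ee \to j_\# j^* p^*\Ee[1]
\]
therefore yields a distinguished triangle
\[
\pi_\# \pi^* \Ee \to \Ee \to p_\# s_* \Ee \to \pi_\# \pi^* \Ee[1].
\]

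Next, by Equation~\eqref{equa:geomMotive} applied to the smooth morphism $\pi$, the left hand term is naturally isomorphic to $M(\G_m) \otimes \Ee$, and the first map is the one induced by the counit of $(\pi_\#, \pi^*)$, which under \eqref{equa:geomMotive} corresponds to the map $M(\G_m) \otimes \Ee \to \un \otimes \Ee = \Ee$ coming from the structural morphism $\pi$. The unit section $e : S \to \G_m$ is a section of $\pi$, and functoriality of $M$ (as a morphism of monoïdal $\Sm$-fibered categories, Proposition~\ref{prop:Hispremotivic}) gives a splitting $M(\G_m) \cong \un \oplus M(\G_m/S)$, where $M(\G_m/S)$ denotes the cofibre of $\un \to M(\G_m)$, on which the map to $\un$ is $(\id,0)$. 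Consequently the cone of $M(\G_m) \otimes \Ee \to \Ee$ is naturally isomorphic to $M(\G_m/S) \otimes \Ee[1]$, and this produces the desired natural isomorphism $p_\# s_* \Ee \cong \Ee \otimes M(\G_m/S)[1]$.

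For the second isomorphism, $s_*$ admits a right adjoint $s^!$ by Proposition~\ref{prop:adjointProperty}, and $- \otimes M(\G_m/S)[1]$ admits the right adjoint $\iHom{\H(S,\R)}(M(\G_m/S)[1],-)$ by the Completeness property of the $\Sm$-premotivic structure, Definition~\ref{defi:Smpremotiviccategory}\eqref{defi:Smpremotiviccategory:monoidalComplete}. The statement then follows by uniqueness of right adjoints. The only subtlety is bookkeeping: one must check that the splitting produced by the unit section is natural in $\Ee$ (it is, because it comes from a splitting of objects in $\H(S,\R)$ tensored against $\Ee$), and that the identification of the connecting map with the correct map is compatible enough so that the resulting isomorphism is genuinely a natural transformation of endofunctors; this is where a little care is required but no real obstacle appears, since all maps involved are canonical counits/units of Verdier-localised adjunctions on compactly generated categories.
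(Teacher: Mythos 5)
Your proof is correct and takes essentially the same route as the paper: apply $p_\#$ to the localisation triangle for $s: S \hookrightarrow \A^1_S$ evaluated on $p^*\Ee$, use homotopy invariance to identify $p_\#p^*$ with the identity, identify the third term as $M(\G_m)\otimes\Ee$ via the projection formula, and split off $\un$ using the identity section of $\G_m$. The paper's own proof is terser but structurally identical, and your added remark deriving $p_\#p^*\cong\id$ from full faithfulness of $p^*$ and the closing appeal to uniqueness of right adjoints both match what the paper implicitly does.
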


\begin{proof}
Consider the localisation triangles, Equation~\eqref{equa:locaTri}, associated to the open immersion $j: \G_m \to \A^1$. This gives rise to distinguished triangles
\begin{equation} \label{equa:A1Gmlocal}
 p_\#(j_\#j^*)p^* \to p_\#p^* \to p_\#(s_*s^*)p^* \to. 
\end{equation}
We have $p_\#p^* \cong \id$ by homotopy invariance, Proposition~\ref{prop:homotopy}, and $p_\#(j_\#j^*)p^*(-) \cong - \otimes M(\G_m) $ by %
the Projection Formula, Definition~\ref{defi:Smpremotiviccategory}\eqref{defi:Smpremotiviccategory:projForm}, 
so our distinguished triangle becomes
\begin{equation}
  (- \otimes M(\G_m)) \to (- \otimes M(S)) \to p_\#s_* \to, 
\end{equation}
The structural morphism $M(\G_m) \to M(S)$ is split by the identity section, and so $M(\G_m)$ decomposes as a direct sum $M(\G_m) \cong M(\G_m / 1) \oplus M(S)$, giving an isomorphism $M(\G_m / 1)[1] \cong \Cone(M(\G_m) \to M(S))$. The result follows.
\end{proof}

\begin{proposition}[Stability] \label{prop:stability}
Let $\R$ be as in Setting~\ref{sett}. 
%Let $\R$ be a cartesian commutative monoïd of the category $\Sh_\Nis(\Sm / -)^\Sfrak$ equipped with a morphism of monoïds $\T \to \R$, for example, $\R = \K$ or $\R = \T$. %
The $\Sm$-premotivic category $\H(-, \R)$ is stable in the following sense. For any $S \in \QProj / k$ the endofunctor $s^!p^*$ for $p: \A^1_S \to S$, $s: S \to \A^1_S$ as above is an equivalence.%, cf. Proposition~\ref{prop:adjointProperty}.
\end{proposition}

\begin{proof}
By Lemma~\ref{lemm:TateTwist} it suffices to show that $\iHom{\H(S, \R)}(M(\G_m / 1), -)$ is an invertible endofunctor. We do this in Lemma~\ref{lemma:stability}.
\end{proof}

\begin{lemma} \label{lemma:stability}
The functor $\iHom{\H(S)}(M(\G_m / 1), -)$ on $\H(S, \R)$ is an equivalence, inverse to the functor induced by $s: (\Ee_0, \Ee_1, \Ee_2, \dots) \mapsto (\Ee_1, \Ee_2, \Ee_3, \dots)$. %
Consequently, there are natural isomorphisms of endofunctors
\begin{equation}
t \cong \iHom{\H(S)}(M(\G_m / 1), -), \qquad s \cong - \otimes M(\G_m / 1) .
\end{equation}
\end{lemma}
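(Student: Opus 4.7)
The plan is to deduce everything from the fact that $M(\G_m/1)$ becomes $\otimes$-invertible in $\H(S,\R)$, with explicit inverse $t\un$. This invertibility rests squarely on the definition of $\Ss_\Stb$.

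First, for any $\R$-module $\Ee$ the identity $\Ee\rotimes t\Ff=t(\Ee\rotimes\Ff)$ specialised at $\Ff=\un$ gives $t\Ee\cong \Ee\rotimes t\un$. Since $t$ is exact (Remark~\ref{rema:stexact}) and the canonical functor $D(\R\modules)\to \H(S,\R)$ preserves $\otimes$ (Proposition~\ref{prop:Hispremotivic}), we have $t\cong -\otimes t\un$ as endofunctors of $\H(S,\R)$. Applied to $\R(\G_m/1)=M(\G_m/1)$, this reads $t\R(\G_m/1)\cong M(\G_m/1)\otimes t\un$. Meanwhile, by construction of $\Ss_\Stb$ and the universal property of the Verdier quotient (Definition~\ref{defi:verdQuot}), the morphism $\phi_\Stb: t\R(\G_m/1)\to\un$ is inverted in $\H(S,\R)$. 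Hence $M(\G_m/1)\otimes t\un\cong \un$, so $M(\G_m/1)$ is $\otimes$-invertible with inverse $t\un$.

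Second, for any $\otimes$-invertible object $X$ in a closed symmetric monoidal category one has the formal identity $\iHom{}(X,-)\cong -\otimes X^{-1}$ (both functors are right adjoint to $-\otimes X$). Applied to $X=M(\G_m/1)$ this gives $\iHom{\H(S)}(M(\G_m/1),-)\cong -\otimes t\un\cong t$, which identifies $t$ with the internal hom functor and shows in particular that $\iHom{\H(S)}(M(\G_m/1),-)$ is an equivalence with inverse $-\otimes M(\G_m/1)$.

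Finally, I would identify $s$ with $-\otimes M(\G_m/1)$. Viewing $\H(S,\R)$ as the full subcategory of $\Ss_{\Htp,\Stb}$-local objects in $D(\R\modules)$, the criterion of Lemma~\ref{lemm:Sstblocality} characterises locality termwise on the sheaves $\Ee_n$; since $(s\Ee)_n=\Ee_{n+1}$ is merely a shift of indexing, $s$ carries local objects to local objects and thus restricts to an endofunctor of $\H(S,\R)$. A routine adjunction computation (using that the quotient functor is left adjoint to the inclusion of local objects) shows that $t\dashv s$ in $D(\R\modules)$ induces $t\dashv s$ on $\H(S,\R)$. Since $t$ has already been shown to be an equivalence there, its right adjoint $s$ is forced to be its inverse, giving $s\cong t^{-1}\cong -\otimes M(\G_m/1)$. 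The main subtlety lies in this last verification that $(t,s)$ passes cleanly to the Verdier quotient; the termwise nature of Lemma~\ref{lemm:Sstblocality}, compatible with the shift defining $s$, makes it go through mechanically.
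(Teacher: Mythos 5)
Your argument is correct, but it takes a genuinely different route from the paper's. The paper works directly with the model of $\H(S,\R)$ as the full subcategory of $\Ss_{\Htp,\Stb}$-local objects in $D(\R\modules)$: for a local (hence fibrant) $\Ee$, the locality criterion of Lemma~\ref{lemm:Sstblocality} is read off to say that the canonical morphism $\Ee \to \iHom{C(\R\modules)}(\R(\G_m/1), s\Ee) = s\,\iHom{C(\R\modules)}(\R(\G_m/1), \Ee)$ is a quasi-isomorphism, which immediately gives $\id \cong s \circ \iHom{}(M(\G_m/1),-) \cong \iHom{}(M(\G_m/1),-) \circ s$ and hence the equivalence; the ``consequently'' part is then extracted by uniqueness of adjoints. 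You instead first prove that $M(\G_m/1)$ is $\otimes$-invertible (with inverse $t\un$, using that $\phi_\Stb$ is inverted and the chain-level identity $t \cong -\rotimes t\un$), then invoke the formal identity $\iHom{}(X,-) \cong -\otimes X^{-1}$ for invertible $X$, and finally check that the adjunction $t \dashv s$ descends through the Bousfield localisation. Both routes consume the same essential input --- the $\Ss_\Stb$-locality criterion / the fact that $\phi_\Stb$ becomes an isomorphism --- but you package it at a higher level of abstraction. Your approach buys a cleaner conceptual statement ($\otimes$-invertibility) that feeds directly into Ayoub's axioms, while the paper's is more hands-on and exhibits the inverse quasi-isomorphism explicitly on representatives. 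One small point you should make explicit: when you pass from the chain-level isomorphism $t\Ee \cong \Ee\rotimes t\un$ to $\H(S,\R)$, you need that the naive tensor $-\rotimes t\un$ agrees with the \emph{derived} tensor $-\otimes^L t\un$; this does hold, because $-\rotimes t\un$ equals the exact functor $t$ and hence preserves quasi-isomorphisms (equivalently, $t\un = t^1\R(S)$ is cofibrant, cf.\ Definition~\ref{defi:cofibrations}), but it is worth flagging rather than leaving implicit.
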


\begin{proof}
To evaluate these two functors on $\H(S, \R)$, we consider the full subcategory of $D(\R\modules)$ consisting of $\Ss_{\Htp, \Stb}$-local objects (see Rappels~\ref{rappels:tricat2}\eqref{BousfieldLocalisationTriCat}) which are fibrant, Definition~\ref{defi:fibrantKmodule}. These properties imply that for any such object and any $X \in \Sm / S$ the canonical morphism
\begin{equation}
 H^i(X, \Ee_n) \to H^i(X, \iHom{C(\Sh_\Nis(\Sm / S))}(\Z_\Nis(\G_m  / 1), \Ee_{n + 1})) 
\end{equation}
is an isomorphism, Lemma~\ref{lemm:Sstblocality}. That is, the canonical morphism
\begin{equation}
 \Ee_n \to \iHom{C(\Sh_\Nis(\Sm / S))}(\Z_\Nis(\G_m / 1), \Ee_{n + 1}) 
\end{equation}
is a quasi-isomorphism or in other words the morphism
\begin{equation}
 \Ee \to \iHom{C(\R\modules)}(\R(\G_m / 1), s \Ee) = s \iHom{C(\R\modules)}(\R(\G_m / 1), \Ee) 
\end{equation}
is a quasi-isomorphism, and therefore the corresponding morphism in $D(\R\modules)$ is an isomorphism. So we have shown that we have natural isomorphisms of endofunctors of $H(S, \R)$
\begin{equation}
 \id \cong s \circ \iHom{H(S, \R)}(M(\G_m / 1), -) \cong \iHom{H(S, \R)}(M(\G_m / 1), -) \circ s 
\end{equation}
from which it follows that both $s$ and $\iHom{H(S, \R)}(M(\G_m / 1), -)$ are essentially surjective and fully faithful, and inverse equivalences of categories. The ``Consequently'' statement follows directly by uniqueness of adjoints.
\end{proof}

\begin{notation} \label{nota:tateTwist}
Recall that for $n \in \Z$ we have defined the $n$th \emph{Tate twist} by
\begin{enumerate}
 \item[{$(-)(n):$}] $\H(S, \R)  \to \H(S, \R)$ to be $(s^!p^*)^n(-)[-2n]$
\end{enumerate}
where $p: \A^1_S \to S$, $s: S \to \A^1_S$ as above (see also Theorem~\ref{theo:propertiesofH}~\eqref{theo:propertiesofH:except}, \eqref{theo:propertiesofH:properRightAdjoint} and \eqref{theo:propertiesofH:stability}). By Lemmata~\ref{lemm:TateTwist} and~\ref{lemma:stability} we see that we have when $n \geq 0$ we have 
\begin{equation}
 (-)(-n)[-n] \cong \iHom{}(M(\G_m/S)^{\otimes n}, -) \cong t^n
\end{equation}
and 
\begin{equation}
 (-)(n)[n] \cong - \otimes M(\G_m/S)^{\otimes n}  \cong s^n.
\end{equation}
\end{notation}

\begin{corollary} \label{coro:stableHomotopy2functor}
Let $\R$ be as in Setting~\ref{sett}. 
%Let $\R$ be a cartesian commutative monoïd of $\Sh_\Nis(\Sm / -)^\Sfrak$ equip\-ped with a morphism of monoïds $\T \to \R$, for example, $\R = \K$ or $\R = \T$. %
The 2-functor $\H(-, \R)$ is a \emph{triangulated $\Sm$-motivic category} in the sense of \cite[Def.2.4.45]{CD}, and a unitary symmetric monoïdal stable homotopy 2-functor in the sense of \cite[Def.1.4.1,Def.2.3.1]{Ayo07}. Consequently, we have all the six operations and properties as described in Theorem~\ref{theo:propertiesofH}. See the statement of the theorem for references.
\end{corollary}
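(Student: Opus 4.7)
The plan is to observe that every axiom demanded by the definitions of Cisinski–Déglise and Ayoub has already been verified in the preceding propositions, so the proof amounts to packaging these together and then invoking Ayoub's machine to deduce the six operations and the properties listed in Theorem~\ref{theo:propertiesofH}. In particular, no new computation is required; the work is to check that our list matches the one in \cite[Def.2.4.45]{CD} and \cite[Def.1.4.1, Def.2.3.1]{Ayo07}.

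First I would assemble the premotivic data. Proposition~\ref{prop:Hispremotivic} shows that $\H(-,\R)$ is a triangulated $\Sm$-premotivic category; this simultaneously yields the adjunctions $(f_\#,f^*)$ for $f\in\Sm$ and $(f^*,f_*)$ for arbitrary $f\in\QProj/k$, the smooth base change isomorphism, the closed symmetric monoidal structure supplying internal homs $\iHom{\H(S,\R)}$, and the smooth projection formula. Next I would attach the three motivic properties: $\A^1$-homotopy invariance from Proposition~\ref{prop:homotopy}, Localisation from Proposition~\ref{prop:localisation}, and Stability from Proposition~\ref{prop:stability} (together with Lemma~\ref{lemm:TateTwist} and Lemma~\ref{lemma:stability} which identify $s^!p^*$ with the invertible Tate twist functor). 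Finally, Proposition~\ref{prop:adjointProperty} supplies the adjoint property, namely that $f_*$ admits a right adjoint for every morphism $f$ in $\QProj/k$, equivalently that $f^*$ preserves compact objects; this is the ingredient needed to run Brown representability uniformly in the base.

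Once these axioms are in hand, the category $\H(-,\R)$ satisfies the full list of requirements in \cite[Def.2.4.45]{CD} of a triangulated $\Sm$-motivic category. The translation to Ayoub's vocabulary is then formal: a triangulated $\Sm$-motivic category restricts to a unital symmetric monoïdal stable homotopy $2$-functor in the sense of \cite[Def.1.4.1, Def.2.3.1]{Ayo07}, since the axioms coincide (adjoint pairs, smooth base change, monoidal closedness with projection formula, Homotopy, Stability, and Localisation). Invoking Ayoub's construction \cite[Chap.1]{Ayo07} produces the exceptional adjunction $(f_!,f^!)$ for arbitrary $f$, together with proper base change, the comparison $f_!\to f_*$ which is an isomorphism for proper $f$, relative purity for smooth $f$, the full projection formulas, and Verdier duality; each of these appears in Theorem~\ref{theo:propertiesofH} with a precise reference to \cite{Ayo07}.

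There is no genuine obstacle here: the only subtle point is the adjoint property, because Ayoub requires $f^*$ to have a right adjoint for \emph{every} morphism $f$, not merely smooth ones, and this is precisely what is established in Proposition~\ref{prop:adjointProperty} via Brown representability applied to the compactly generated categories $\H(S,\R)$ (Proposition~\ref{prop:Hispremotivic}). The compatibility of constructibility (generation by the $f_!f^!\un(n)$ with $f$ smooth) and the preservation of compact objects by all six functors, which appears in item~11 of Theorem~\ref{theo:propertiesofH}, likewise follows from Ayoub's formalism applied to our compactly generated setting, completing the corollary.
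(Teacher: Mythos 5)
Your proposal follows essentially the same route as the paper's proof: assemble the triangulated $\Sm$-premotivic structure from Proposition~\ref{prop:Hispremotivic}, add Homotopy Invariance (Prop.~\ref{prop:homotopy}), Localisation (Prop.~\ref{prop:localisation}), Stability (Prop.~\ref{prop:stability} with Lemmas~\ref{lemm:TateTwist}, \ref{lemma:stability}) and the Adjoint Property (Prop.~\ref{prop:adjointProperty}); then match the resulting list against \cite[Def.2.4.45]{CD} and \cite[Def.1.4.1, 2.3.1]{Ayo07} and invoke Ayoub's machinery to obtain the remainder of Theorem~\ref{theo:propertiesofH}.

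One misattribution in your closing paragraph, though it does not affect the argument: the condition that $f^*$ admit a right adjoint $f_*$ for \emph{every} morphism $f$ is Property~\eqref{defi:Smpremotiviccategory:rightadj} of the $\Sm$-premotivic structure and is established in Proposition~\ref{prop:Hispremotivic}; it is already part of Ayoub's Def.~1.4.1, not the ``subtle extra.'' What Proposition~\ref{prop:adjointProperty} supplies, via Brown representability, is the Cisinski--Déglise Adjoint Property — that $f_*$ \emph{itself} has a further right adjoint — which is not among Ayoub's axioms but is one of the supplementary conditions defining a triangulated $\Sm$-motivic category in \cite[Def.2.4.45]{CD}. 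You might also note (the paper does so in Remark~\ref{rema:immersion}) that Ayoub asks for $i^*i_*\to\id$ to be an isomorphism for \emph{any} immersion; the open immersion case, not covered by Localisation as stated, follows by factoring a quasi-projective immersion and observing that $i^*i_*$ is right adjoint to $i^*i_\#\cong\id$.
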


\begin{proof}
By definition, a triangulated $\Sm$-motivic category is a triangulated $\Sm$-premotivic category which satisfies Homotopy, Stability, Localisation and the Adjoint Property, the latter being: $f_*$ has a right adjoint for every proper morphism $f$. We have seen all of this in Propositions~\ref{prop:Hispremotivic}, \ref{prop:homotopy}, \ref{prop:stability}, \ref{prop:localisation}, and \ref{prop:adjointProperty}, respectively.

On the other hand, by definition, a stable homotopy 2-functor is a 2-functor $\H$ satisfying: $\H(\varnothing) = 0$, Properties \eqref{defi:Smpremotiviccategory:rightadj}, \eqref{defi:Smpremotiviccategory:leftadj}, \eqref{defi:Smpremotiviccategory:baseChange} of Def.~\ref{defi:Smpremotiviccategory}, Homotopy, Stability, and Localisation. A stable homotopy 2-functor is unitary symmetric monoïdal if it comes equipped with a factorisation through the category of (unital) symmetric monoïdal triangulated categories, %
and satisfies Property \eqref{defi:Smpremotiviccategory:projForm} of Def.~\ref{defi:Smpremotiviccategory}.
\end{proof}

\begin{remark} \label{rema:immersion} Ayoub actually asks that $i^*i_* \to \id$ be an isomorphism for \emph{any} immersion. Since any quasi-projective immersion factors as a closed and open immersion, it suffices to consider the two cases $i$ is a closed immersion and $i$ is an open immersion. The closed immersion case is part of the Localisation property as we state it. For the case when $i$ is an open immersion, notice that Property (4) of Def.~\ref{defi:Smpremotiviccategory} implies that $\id \to i^*i_\#$ is an isomorphism. Then we notice that $i^*i_*$ is right adjoint to $i^*i_\#$, and a left adjoint is an equivalence if and only if its right adjoint is an equivalence.
\end{remark}

\subsection{Motives---Chow groups} \label{sec:motivesChow}

\begin{notation} \label{nota:Chow}
We write
\begin{enumerate}
 \item[{$\Z(n)$}] for the complex denoted by $z^n(X, 2n-\ast)$ in \cite{Blo86}. It is the following complex of presheaves concentrated in cohomological degrees $\leq 2n$. For $X \in \QProj / k$ the group $\Z(n)(X)^i$ is  the free abelian group of codimension $n$ closed irreducible subsets of 
 \[ X \times \Spec(k[t_0, \dots, t_{2n-i}] / 1 - \Sigma t_j) \] whose intersection with each of the faces $t_j = 0$ has pure codimension $n$ inside that face.  The differentials are the alternating sums of the intersections with the faces.
 
 \item[{$A(n)$}] $ = \Z(n) \otimes A$ for any abelian group $A$.

 \item[{$\Chow^n(X, 2n{-}i)$}] is the $i$th cohomology group of $\Z(n)(X)$.
 
 \item[{$\Chow^n(X, 2n{-}i; A)$}] is the $i$th cohomology group of $A(n)(X)$.
\end{enumerate}
\end{notation}

One of the most important properties of the complexes $\Z(n)$ is the localisation property.

\begin{theorem}[{\cite[p.269]{Blo86}}]
For any closed immersion $Z \to Y$ of codimension $d$ in $\QProj / k$ with open complement $U \to Y$, there exists a canonical quasi-isomorphism of complexes of abelian groups 
\begin{equation} \label{equa:Znlocal}
\Z(n{-}d)(Z) \stackrel{q.i.}{\to} \Cone \biggl (  \Z(n)(Y) {\to}  \Z(n)(U)\biggr )[1].
\end{equation}
\end{theorem}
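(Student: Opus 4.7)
The strategy, due to Bloch, is to introduce an auxiliary subcomplex $\Z(n)(Y)_Z \subseteq \Z(n)(Y)$ consisting of admissible cycles on $Y \times \Delta^\bullet$ which \emph{additionally} meet $Z \times F$ properly for every face $F$ of $\Delta^\bullet$. The restriction map to $U$ is well-defined on this subcomplex, and its kernel consists precisely of those cycles supported set-theoretically on $Z$. Denoting this kernel by $\Z(n)^Z(Y)$, one obtains a short exact sequence
$$0 \to \Z(n)^Z(Y) \to \Z(n)(Y)_Z \to \Z(n)(U) \to 0,$$
where surjectivity on the right is routine: close up a cycle on $U$ to a cycle on $Y$ and adjust by cycles supported on $Z$ to arrange proper intersection with all faces. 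This yields a distinguished triangle of complexes of abelian groups.

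Next I would identify $\Z(n)^Z(Y) \cong \Z(n-d)(Z)$ via the pushforward $i_\ast$ along the closed immersion $i : Z \hookrightarrow Y$. Since $Z$ has codimension $d$, a codimension-$(n-d)$ admissible cycle on $Z \times \Delta^\bullet$ pushes forward to a codimension-$n$ admissible cycle on $Y \times \Delta^\bullet$ supported on $Z$; conversely, every such cycle is uniquely the pushforward of one on $Z$. Compatibility with the face maps is automatic because taking the intersection with a face of $Y \times \Delta^\bullet$ commutes with pushing forward along $i \times \id_{\Delta^\bullet}$ for cycles supported on $Z$.

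The main obstacle---and the actual content of the theorem---is to show that the inclusion $\Z(n)(Y)_Z \hookrightarrow \Z(n)(Y)$ is itself a quasi-isomorphism. This is Bloch's moving lemma for higher Chow groups: every admissible cycle on $Y \times \Delta^\bullet$ is homologous, within $\Z(n)(Y)$, to one whose intersection with every face of $Z \times \Delta^\bullet$ has the expected codimension. Bloch proves this by constructing an explicit chain-level homotopy built from affine coordinate projections on $\Delta^\bullet$, combined with an induction on the dimension of the faces, ultimately reducing to the classical Chow moving lemma applied fibrewise. The original argument in \cite{Blo86} handles the case of affine $Y$; the general quasi-projective case is then obtained via a Mayer--Vietoris / limit argument (completed in Bloch's addendum and by Levine). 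Granting the moving lemma, combining it with the identification of the previous paragraph produces the distinguished triangle $\Z(n-d)(Z) \to \Z(n)(Y) \to \Z(n)(U)$, and rotating this triangle yields the claimed quasi-isomorphism.
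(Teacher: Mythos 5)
The paper gives no proof of this statement---it is a direct citation of Bloch's localization theorem---so there is nothing internal to compare your sketch against. On its own merits, your overall architecture is sound: an exact sequence of cycle complexes, identification of cycles supported on $Z$ with $\Z(n{-}d)(Z)$ via $i_*$, Bloch's moving lemma as the real content, and the correct historical note that the original argument had a gap later repaired in Bloch's addendum and by Levine. But the particular short exact sequence you write down does not exist. Your subcomplex $\Z(n)(Y)_Z$ of cycles meeting each $Z \times F$ properly contains \emph{no} nonzero cycle supported on $Z$: if $W$ has codimension $n$ in $Y\times\Delta^k$ and support in $Z\times\Delta^k$, then $W\cap(Z\times F)=W\cap(Y\times F)$, which admissibility only forces to have codimension $\geq n + \operatorname{codim} F$ in $Y\times\Delta^k$, whereas proper intersection with $Z\times F$ would demand codimension $\geq n + d + \operatorname{codim} F$---a deficit of exactly $d$. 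So $\Z(n)^Z(Y)\cap\Z(n)(Y)_Z=0$ and your kernel identification fails. Surjectivity onto $\Z(n)(U)$ is likewise not routine: the excess-dimensional components of $\overline{W}\cap(Y\times F)$ are components of an \emph{intersection}, not of $\overline{W}$, and since $\overline{W}$ has no component inside $Z$, adding or subtracting cycles supported on $Z$ cannot cancel them. Repairing this is precisely what the moving lemma does, so it cannot also be spent on a separate step.

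The argument that works is Bloch's original one: take the tautological short exact sequence $0\to\Z(n)^Z(Y)\to\Z(n)(Y)\to\Z(n)(Y)/\Z(n)^Z(Y)\to0$, identify the left-hand term with $\Z(n{-}d)(Z)$ via $i_*$ as you did, and apply the moving lemma to show that the \emph{injective} restriction map $\Z(n)(Y)/\Z(n)^Z(Y)\hookrightarrow\Z(n)(U)$ is a quasi-isomorphism. The moving happens to cycles over $U$, deforming them within their homology class until they extend across $Z$ to admissible cycles on $Y$, rather than to cycles on $Y$ to meet $Z$ properly.
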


\begin{corollary} \label{coro:ZnNis}
For any $X \in \QProj / k$, $n, i \in \Z$, and any abelian group $A$ we have 
\begin{equation}
H^i(A(n)(X)) \cong \Hyp_\Nis^i(X, A(n))
\end{equation}
\end{corollary}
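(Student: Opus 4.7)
The plan is to apply Theorem~\ref{theo:NBG}, which reduces the identification of the naive cohomology of a complex of presheaves with its Nisnevich hypercohomology to verifying vanishing on $\varnothing$ and Mayer--Vietoris for distinguished Nisnevich squares. The vanishing condition is immediate from Notation~\ref{nota:Chow}, since there are no closed irreducible subsets of $\varnothing \times \Spec K(t_0, \dots, t_{2n-i})$, so $A(n)(\varnothing) = 0$. Moreover, since $\Z(n)$ is a complex of degreewise free abelian groups, tensoring by $A$ preserves quasi-isomorphisms and it suffices to treat the case $A = \Z$.

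For the Mayer--Vietoris step, fix a distinguished Nisnevich square as in Equation~\eqref{equa:distNisSqu}, and let $Z = X \setminus U$ with its reduced structure and $Z' = f^{-1}(Z)$, so that $f$ restricts to an isomorphism $Z' \stackrel{\sim}{\to} Z$. I must show that
\[
\Cone(\Z(n)(X) \to \Z(n)(U)) \longrightarrow \Cone(\Z(n)(V) \to \Z(n)(U \times_X V))
\]
is a quasi-isomorphism. By Bloch's localisation theorem in the form of Equation~\eqref{equa:Znlocal}, extended to arbitrary closed immersions of quasi-projective varieties (allowing non-equicodimensional complements), each cone is canonically quasi-isomorphic (up to a shift) to the subcomplex of $\Z(n)(X)$ (resp.~$\Z(n)(V)$) generated by irreducible cycles whose support is contained in $Z$ (resp.~$Z'$), and the displayed comparison morphism is induced by flat pullback along $f$.

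The crux is then to verify that this flat pullback is an isomorphism between the two support subcomplexes. Given an irreducible codimension $n$ cycle $W \subseteq X \times \Spec K(t_0, \dots, t_{2n-i})$ supported on $Z$, the fact that $f$ restricts to an isomorphism over $Z$ implies that $W$ has a unique preimage $W'$ in $V \times \Spec K(t_0, \dots, t_{2n-i})$ which is irreducible, of codimension $n$, and supported on $Z'$. This assignment is a degreewise inverse to flat pullback. Since étale morphisms preserve codimensions and proper intersections with faces, this correspondence respects the face conditions defining the cycle complex, establishing the required Mayer--Vietoris quasi-isomorphism.

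The main obstacle is this étale excision step—that flat pullback along the étale morphism $f$ identifies cycles on $X$ supported on $Z$ with cycles on $V$ supported on $Z'$, compatibly with faces. It is essentially the classical excision principle for Bloch's cycle complex, but it relies crucially on the defining hypothesis of a distinguished Nisnevich square, namely the isomorphism $Z' \cong Z$, which controls the behaviour of supports under pullback.
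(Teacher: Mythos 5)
Your proof follows the same strategy as the paper's: apply Theorem~\ref{theo:NBG} to reduce the comparison to a Mayer--Vietoris statement for distinguished Nisnevich squares, invoke Bloch's localisation theorem, and reduce to $A = \Z$ via degreewise freeness. You unpack the implicit excision step (comparing support subcomplexes over the isomorphic closed complements $Z' \cong Z$) that the paper compresses into ``the localisation sequence implies Condition~\eqref{theo:NBG:2},'' and rightly note that one needs the support-subcomplex form of localisation to handle closed complements that are not equicodimensional.
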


\begin{proof}
The localisation sequence implies that the presheaf $\Z(n)$ satisfies Condition~\eqref{theo:NBG:2} of Theorem~\ref{theo:NBG}. Since $\Z(n)$ is a presheaf of complexes of free abelian groups, we can apply $-\otimes A$ directly, and $A(n)$ also satisfies Condition~\eqref{theo:NBG:2} of Theorem~\ref{theo:NBG}. Therefore, $A(n)$ also satisfies Condition~\eqref{theo:NBG:1}.
\end{proof}

We will use the following result of Geisser--Levine.

\begin{theorem}[{\cite[Proposition 3.1, Theorem 8.5]{GL}}] \label{theorem:GL}
For any $X \in \Sm / k$ there are canonical functorial isomorphisms
\begin{equation}
 \Chow^n(X, 2n{-}i; \Zop) \cong \Hyp_\Nis^{i-n}(X, \kmp[n]). 
\end{equation}
\end{theorem}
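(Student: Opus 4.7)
The plan is to prove the cleaner derived-category statement that, in $D(\Sh_\Nis(\Sm/k))$, there is a quasi-isomorphism
\begin{equation*}
\Z/p(n) \stackrel{\sim}{\to} \kmp[n][-n].
\end{equation*}
Granting this, for $X \in \Sm/k$ one immediately obtains
\begin{equation*}
\Chow^n(X, 2n{-}i; \Z/p) \stackrel{\textrm{Cor.~\ref{coro:ZnNis}}}{\cong} \Hyp^i_\Nis(X, \Z/p(n)) \cong \Hyp^{i-n}_\Nis(X, \kmp[n]),
\end{equation*}
which is the desired statement. So the task reduces to constructing a comparison map and checking it is a quasi-isomorphism stalk-wise.

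For the map, I would use the Totaro--Nesterenko--Suslin construction at the level of fields: to a symbol $\{a_1, \dots, a_n\}$ with $a_j \in F^\times$ one associates a codimension-$n$ cycle on $\Spec F \times \Delta^n$ meeting all faces properly, producing a class in $H^n(\Z(n)(F))$. Reducing mod $p$, Nisnevich sheafifying, and checking compatibility with the Steinberg relation yields a morphism of Nisnevich sheaves $\kmp[n] \to \mathcal{H}^n(\Z/p(n))$ on $\Sm/k$. Since $\Z(n)(F)$ vanishes in degrees $>n$ on a field by dimension reasons (or is at least cohomologically concentrated in degrees $\le n$ after checking), this extends to a morphism $\kmp[n][-n] \to \Z/p(n)$ in the derived category.

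To verify this is a quasi-isomorphism, I would pass to Nisnevich stalks. Using the Gersten resolution for both Bloch's cycle complex and Milnor $K$-theory mod $p$ (which expresses the relevant cohomology on henselian local rings in terms of residue field data), it suffices to check the statement on any field $F$ of characteristic $p$. Concretely, I must show (i) $H^n(\Z/p(n)(F)) \cong K^M_n(F)/p$ via our map, (ii) $H^j(\Z/p(n)(F)) = 0$ for $j > n$, and (iii) $H^j(\Z/p(n)(F)) = 0$ for $j < n$. Claim (ii) is automatic from the shape of the cycle complex on a field, and (i) is the mod-$p$ reduction of the classical Totaro--Nesterenko--Suslin identification.

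The main obstacle is claim (iii), the vanishing in low degrees; this is the characteristic-$p$ analogue of the Beilinson--Lichtenbaum conjecture and is the genuine content of Geisser--Levine. The argument I would follow proceeds by induction on the transcendence degree of $F$ over $\F_p$, using the localisation triangle for a closed immersion $\Spec E \to \A^1_E$ with complement $\G_{m,E}$, combined with $\A^1$-homotopy invariance of $\Z/p(n)$, to reduce to the case of perfect fields. The crucial characteristic-$p$ input is the Bloch--Kato--Gabber theorem identifying $K^M_n(F)/p$ with the logarithmic de Rham sheaf $\Omega^n_{F,\log}$; this controls the top-degree cohomology on stalks and, via a careful residue-map computation, forces the vanishing below. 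This step is subtle and is where all the arithmetic of characteristic $p$ enters.
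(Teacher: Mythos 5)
Your overall reduction --- establish a quasi-isomorphism $\Z/p(n) \simeq \kmp[n][-n]$ of complexes of Nisnevich sheaves on $\Sm/k$ and then feed this into Corollary~\ref{coro:ZnNis} --- is exactly what the paper does. The difference is that the paper treats that quasi-isomorphism itself as a citation: the theorem is explicitly attributed to Geisser--Levine, and the proof simply assembles their Theorem~8.5 (which identifies $\Z/p(n)$ Zariski-locally with $\nu^n[-n]$) with their Proposition~3.1 (which identifies $\nu^n$ with the common Zariski $=$ Nisnevich $=$ étale sheafification of $K^M_n/p$; this last point is what licenses passing from Geisser--Levine's Zariski statement to the Nisnevich one used here, a step your proposal elides). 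You instead set out to re-derive the quasi-isomorphism from scratch, i.e.\ to re-prove the Geisser--Levine theorem, which is considerably more than this proof is meant to carry.

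Your sketch names the right ingredients (the Nesterenko--Suslin--Totaro symbol map, the dimension count giving vanishing above degree $n$, Gersten-type reduction to fields, Bloch--Kato--Gabber), but the account of the decisive low-degree vanishing (iii) is not correct. ``Induction on transcendence degree using the localization triangle for $\Spec E \hookrightarrow \A^1_E$ plus $\A^1$-invariance, reducing to perfect fields'' does not describe the Geisser--Levine argument, nor does it obviously work: the localization triangle for $\{0\}\subset\A^1_E$ together with homotopy invariance merely gives the standard splitting $\Z/p(n)(\G_{m,E}) \simeq \Z/p(n)(E)\oplus \Z/p(n{-}1)(E)[-1]$, which neither lowers the transcendence degree of the residue fields one encounters nor renders them perfect. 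The actual content of Geisser--Levine --- a surjectivity statement for the $d\log$ map together with norm/transfer arguments and a comparison with the logarithmic de Rham--Witt sheaf --- is precisely what your sketch leaves unexamined, and since that is the entire substance of the theorem, the sketch as written does not amount to a proof. Citing Geisser--Levine, as the paper does, is the appropriate thing to do here.
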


\begin{proof}
In \cite[Theorem 8.5]{GL} Geisser--Levine show that on the small Zariski site of a smooth $k$-variety $X$, the complex $\Zop(n)$ is quasi-isomorphic to the complex concentrated in degree $n$ with the Zariski sheaf $\nu^n$ in degree $n$ (their statement is a little strange, but $\tau_{\leq n}R\epsilon_*\nu^n[-n] = \nu^n[-n]$ due to $\nu^n$ being an étale sheaf). We don't recall what $\nu^n$ is because it doesn't matter to us at the moment, we will just observe that Geisser--Levine's \cite[Proposition 3.1]{GL} says that it is isomorphic to the sheafification of $\kmp[n]$ (since $\nu^n$ is an étale sheaf, \cite[Proposition 3.1]{GL} shows that the Zariski, Nisnevich, and étale sheafifications of $\kmp[n]$ are all the same).

So we have a quasi-isomorphism 
\begin{equation}
 \Zop(n)_\Nis \cong \kmp[n][-n] 
\end{equation}
in $C(\Shv_\Nis(\Sm / k))$. It remains to observe that the cohomology of $\Zop(n)$ is the same as its Nisnevich hypercohomology. This is Corollary~\ref{coro:ZnNis}.
\end{proof}

\begin{corollary} \label{coro:calculateHigherChowGroups}
For any smooth $k$-variety $f: X \to k$, and any commutative $\Zop$-algebra $A$ we have canonical functorial isomorphisms
\begin{equation} \label{equa:ChhomH}
 \Chow^{n}(X, 2n{-}i; A) \cong \Hom{\H(k, \K \otimes A)}(\un, f_*f^*\un(n)[i]). 
\end{equation}
\end{corollary}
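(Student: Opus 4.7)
The plan is to reduce the computation of the left-hand side of \eqref{equa:ChhomH} to Nisnevich hypercohomology of Milnor $K$-theory sheaves via equation~\eqref{equa:NisHyperInDKMod}, and then invoke the Geisser--Levine identification of Theorem~\ref{theorem:GL}. The starting point is the $(f^*, f_*)$-adjunction together with the compatibility of $f^*$ with $\otimes$, shifts, and Tate twists (all consequences of $\H$ being a monoidal $\Sm$-fibered category and of the Tate twist being built from $s^!p^*$): these reduce the claim to
\begin{equation*}
\Hom{\H(X, \K\otimes A)}(\un_X, \un_X(n)[i]) \cong \Chow^n(X, 2n{-}i; A).
\end{equation*}
By Notation~\ref{nota:tateTwist}, $\un_X(n)[i] \cong s^n\un_X[i{-}n]$, and the symmetric sequence underlying $s^n\un_X$ in $D((\K\otimes A)_X\modules)$ has $0$-th term $(\K\otimes A)_n = \kmp[n]\otimes A$ placed in homological degree $0$.

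Granted that $\un_X$ (and hence $s^n\un_X$) is $\Ss_{\Htp,\Stb}$-local in $D((\K\otimes A)_X\modules)$, the universal property of the Verdier quotient gives
\begin{equation*}
\Hom{\H(X, \K\otimes A)}(\un_X, s^n\un_X[i{-}n]) = \Hom{D((\K\otimes A)_X\modules)}(\un_X, s^n\un_X[i{-}n]),
\end{equation*}
and equation~\eqref{equa:NisHyperInDKMod}, applied to the compact generator $\un_X = t^0\un_X$, rewrites this as $\Hyp^{i-n}_\Nis(X, \kmp[n]\otimes A)$. Any commutative $\Zop$-algebra $A$ is an $\F_p$-vector space and in particular $\Zop$-flat, so Nisnevich hypercohomology commutes with $-\otimes_{\Zop}A$; combining Theorem~\ref{theorem:GL} with the tautological identification $\Chow^n(X, 2n{-}i; \Zop)\otimes_{\Zop} A = \Chow^n(X, 2n{-}i; A)$ then finishes the argument.

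The main obstacle is establishing that $\un_X$ is $\Ss_{\Htp,\Stb}$-local in $D((\K\otimes A)_X\modules)$. By Lemma~\ref{lemm:Sstblocality}, this amounts to showing, for every smooth $Y\to X$ and every $m\geq 0$, $i\in\Z$, that the canonical maps
\begin{equation*}
\Hyp^i_\Nis(Y, \kmp[m]\otimes A) \to \Hyp^i_\Nis(\A^1_Y, \kmp[m]\otimes A)
\end{equation*}
and
\begin{equation*}
\Hyp^i_\Nis(Y, \kmp[m]\otimes A) \to \ker\bigl(\Hyp^i_\Nis(\G_m{\times_X}Y, \kmp[m{+}1]\otimes A) \to \Hyp^i_\Nis(Y, \kmp[m{+}1]\otimes A)\bigr)
\end{equation*}
are isomorphisms. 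I would deduce both from Theorem~\ref{theorem:GL} itself by translating into classical statements about higher Chow groups: $\A^1$-homotopy invariance of higher Chow groups handles the first, and the $\G_m$-decomposition $\Chow^n(Y{\times}\G_m,q) \cong \Chow^n(Y,q) \oplus \Chow^{n{-}1}(Y,q{-}1)$, obtained from the projective bundle formula together with the localization sequence for higher Chow groups, handles the second.
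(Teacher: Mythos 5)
Your proof is correct and follows essentially the same route as the paper's: reduce by $(f^*,f_*)$-adjunction to $\Hom{\H(X)}(\un,\un(n)[i])$, use the Tate-twist description from Notation~\ref{nota:tateTwist} to rewrite this as a $\Hom$ between the unit and a twist, use the characterization of the Verdier quotient as the subcategory of local objects together with Equation~\eqref{equa:NisHyperInDKMod} to identify it with Nisnevich hypercohomology of $\kmp[n]$, apply Theorem~\ref{theorem:GL}, and finally establish $\Ss_{\Htp,\Stb}$-locality by invoking $\A^1$-invariance and the projective bundle theorem for higher Chow groups (the paper cites \cite[p.269]{Blo86} for both; you spell out how the $\G_m$-decomposition follows from projective bundle plus localization, which is the same content). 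The one place you are genuinely more careful than the paper is the passage from $A = \Zop$ to a general commutative $\Zop$-algebra $A$: the paper's chain of isomorphisms quietly drops the $\otimes A$ and ends with $\Chow^n(X,2n-i;\Zop)$, whereas you record the necessary input — $A$ is a flat $\Zop$-module, so hypercohomology commutes with $-\otimes_{\Zop}A$ and $\Chow^n(X,2n-i;A)\cong\Chow^n(X,2n-i;\Zop)\otimes_{\Zop}A$ — to complete the deduction for arbitrary $A$. The only cosmetic difference is that you move the twist to the target via $s^n$ rather than to the source via $t^n$ as the paper does; by $(t,s)$-adjunction and $t$-stability of $\Ss_{\Htp,\Stb}$ these are interchangeable, and you correctly note that locality of $\un_X$ propagates to $s^n\un_X$.
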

\begin{proof}
Assume for the moment that $\K$ is $\Ss_{\Htp, \Stb}$-local. Then we have isomorphisms, \cite[Lemma 9.1.5]{Nee01}
\begin{align}
&\vphantom{\cong}\Hom{\H(k, \K)}(\un, f_*f^*\un(n)[i]) \\
(\textrm{Adjunction}) \qquad
&\cong\Hom{\H(X, \K)}(\un, \un(n)[i]) \\
(\textrm{Nota.\ref{nota:tateTwist}}) \qquad
&\stackrel{}{\cong} \Hom{\H(X, \K)}(t^n\K, \K[i{-}n]) \\
(\Ss_{\Htp, \Stb}\textrm{-locality, Rap.~\ref{rappels:tricat2}\eqref{BousfieldLocalisationTriCat}}) \qquad
&\stackrel{}{\cong} \Hom{D(\K_X\modules)}(t^n\K, \K[i{-}n]) \\
(\textrm{Equa.\eqref{equa:NisHyperInDKMod}}) \qquad 
&\cong \Hyp_{\Nis}^{i-n}(X, \kmp[n]) \\
(\textrm{Thm.\ref{theorem:GL}}) \qquad
&\cong \Chow^n(X, 2n-i; \Zop).
\end{align}
Now $\Ss_{\Htp}$-locality of $\K$ follows from $\A^1$-invariance for $CH$, \cite[p.269]{Blo86}. For $\Ss_{\Stb}$-locality, use the Projective Bundle Theorem, \cite[p.269]{Blo86}. %
\end{proof}

\begin{corollary} \label{coro:fieldChowGroups}
Let $k$ be a (possibly infinite) algebraic extension of $\F_p$, and $A$ a commutative $\Zop$-algebra. Then
\begin{equation}
 \Hom{\H(k, A)}(\un, \un(i)[j]) = \left \{ \begin{array}{ll}
A & \textrm{ if } i = j = 0,\\
0 & \textrm{ otherwise. }
 \end{array} \right .
\end{equation}
\end{corollary}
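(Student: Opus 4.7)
The plan is to invoke Corollary~\ref{coro:calculateHigherChowGroups} with $X = \Spec(k)$ and $f = \id$, which reduces the Hom group to the higher Chow group
\[
\Chow^i(\Spec(k),\, 2i - (2i+j);\, A) = \Chow^i(\Spec(k),\, -j;\, A).
\]
In one stroke this handles the case $i < 0$: there are no codimension-$i$ cycles, so the cycle complex $\Z(i)$, and hence $A(i) = \Z(i) \otimes A$, is zero and the Chow group vanishes. Negative $j$ can also be ruled out immediately, since $\Z(i)$ is concentrated in cohomological degrees $\leq 2i$, so only $-j \geq 0$ (i.e.\ $j \leq 0$) can contribute on the cycle side; combined with $j \geq 0$ below this will force $j = 0$.

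For $i \geq 0$ I would invoke the Geisser--Levine quasi-isomorphism used in the proof of Theorem~\ref{theorem:GL}, namely $\Zop(i)_{\Nis} \simeq \kmp[i][-i]$ on the small Nisnevich site of $\Spec(k)$. Since $A$ is a $\Z/p$-algebra, hence $\Z/p$-flat, tensoring preserves the quasi-isomorphism, and Corollary~\ref{coro:ZnNis} identifies $\Chow^i(\Spec(k), -j; A)$ with $\Hyp_\Nis^{-j-i}(\Spec(k),\, \kmp[i] \otimes A)$. By Remark~\ref{rema:whyNis}\eqref{rema:whyNis:cohDimZero} the Nisnevich site of the field $k$ has cohomological dimension zero, so the hypercohomology of a sheaf (in degree $0$) vanishes outside of degree $0$. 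Thus the Hom group is zero unless $j = -i$, which combined with $j \leq 0$ and the range of $i$ forces $i = j = 0$, and in that case the answer is the group of global sections $(\kmp[0] \otimes A)(\Spec k) = \Z/p \otimes A = A$.

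There remains to handle the apparent residue: for $i \geq 1$ and $j = -i < 0$ the computation also lands on $K_i^M(k)/p \otimes_{\Z/p} A$ in cohomological degree $0$, and I must show this group is zero. This is where the classical Milnor $K$-theory of finite fields enters: by Milnor's computation (Bass--Tate), $K_i^M(\F_q) = 0$ for all $i \geq 2$, and taking the filtered colimit along $\F_q \subset \overline{\F}_p$ gives the same vanishing over $\overline{\F}_p$. For $i = 1$, $K_1^M(\F_q) = \F_q^\times$ has order $q - 1$ coprime to $p$, while $K_1^M(\overline{\F}_p) = \overline{\F}_p^\times$ is $p$-divisible (every element admits a $p$-th root in the algebraic closure); in either case $K_1^M(k)/p = 0$. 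Combining these vanishings collapses everything outside of $(i,j) = (0,0)$ and produces the claimed answer $A$ in that slot.

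The main obstacle is not any step of the argument itself but the care required to shuttle between the three presentations of the group: Hom in $\H(k, \K \otimes A)$, higher Chow groups $\Chow^i(\Spec k, -j; A)$, and the Nisnevich hypercohomology of $\kmp[i] \otimes A$. All the hard input has already been absorbed into Corollary~\ref{coro:calculateHigherChowGroups} and Theorem~\ref{theorem:GL}; the rest is the classical vanishing of $K^M_{\geq 1}(k)/p$ for the fields in question.
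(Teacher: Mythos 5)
Your proposal follows the paper's own strategy essentially step for step: reduce to $A = \Zop$ by flatness, run the Hom group through Corollary~\ref{coro:calculateHigherChowGroups} and Geisser--Levine to land in the Nisnevich hypercohomology of the mod-$p$ Milnor $K$-sheaves, kill everything except degree zero by the Nisnevich cohomological dimension of a field, and finish with the vanishing of $K^M_n(\F_q)/p$ for $n\geq 1$ plus a filtered colimit for $\overline{\F}_p$. So this is not a different route.

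There is, however, a systematic indexing error that you should correct. Corollary~\ref{coro:calculateHigherChowGroups} reads $\Chow^n(X,2n-i;A)\cong\Hom{\H}(\un,\un(n)[i])$; substituting $n\mapsto i$, $i\mapsto j$ gives
\[
\Hom{\H(k,A)}(\un,\un(i)[j])\;\cong\;\Chow^i\bigl(\Spec k,\,2i-j;\,A\bigr),
\]
which by Notation~\ref{nota:Chow} is the $j$-th cohomology of $A(i)(\Spec k)$ --- not $\Chow^i(\Spec k,-j;A)$ as you wrote. Correspondingly, the associated hypercohomology group (via Geisser--Levine and Corollary~\ref{coro:ZnNis}) sits in degree $j-i$, not $-j-i$; this matches the $\Hyp^{i-n}_\Nis(k,\kmp[n])$ appearing in the paper's proofs of Corollaries~\ref{coro:calculateHigherChowGroups} and~\ref{coro:fieldChowGroups}. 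With the correct degree, the Nisnevich dimension-zero argument localizes the only possible non-vanishing at $j=i$ (not $j=-i$), and your interim constraint ``$j\leq 0$'' from the cycle complex should instead read $j\leq 2i$, which is in fact superfluous once the hypercohomology vanishing is applied. Because $K^M_i(k)/p=0$ for every $i\geq 1$, the final answer happens to be identical on either diagonal, so your conclusion is right --- but your reader would notice that your bookkeeping contradicts the formulas it cites. On the other hand, your treatment of the Milnor $K$-theory input is cleaner than the paper's phrasing: what is actually needed is $K^M_n(k)/p=0$ ($p$-divisibility), not absence of $p$-torsion; you correctly supply $K^M_n(\F_q)=0$ for $n\geq 2$ and $\F_q^\times$ of order prime to $p$, together with the colimit argument for $\overline{\F}_p$.
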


\begin{proof}
Since $\Zop$ is a field, any $\Zop$-module is flat and so it suffices to prove the case $A = \Zop$. 

By Theorem~\ref{theorem:GL} and Corollary~\ref{coro:calculateHigherChowGroups} it suffices to prove that $\Hyp_\Nis^{i-n}(k, \kmp[n]) = 0$ unless $n = i = 0$. Fields have Nisnevich cohomological dimension zero %
so we only need to consider the case $n = i$.
Since $\Hyp_\Nis^{0}(k, \kmp[n]) = \kmp[n](k)$, it suffices to show that the Milnor $K$-theory has no $p$-torsion for $n > 0$. For finite fields, this property is due to Steinberg, \cite[Exam.1.5]{Mil69}, and arbitrary algebraic extension, it follows from the fact that Milnor $K$-theory commutes with filtered colimits.
\end{proof}

\begin{proposition}[Projective bundle formula]\label{prop:projectivebundleformula}
Let $A$ be a commutative $\Zop$-algebra, and let $E \to X$ be a vector bundle of dimension $n$ in $\QProj / k$, and $p: \P(E) \to X$ its associated projective bundle. Then we have the following isomorphisms in $\H(X, A)$ 
\[ p_*\un_\P(E) \cong \bigoplus_{i = 0}^{n-1} \un(-i)[-2i], \qquad M(\P(E)) \cong \bigoplus_{i = 0}^{n-1} \un(i)[2i]. \]
\end{proposition}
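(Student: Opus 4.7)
The two assertions are equivalent: since $p$ is smooth and proper of relative dimension $n-1$, relative purity combined with properness gives $p_\# \cong p_*(n{-}1)[2(n{-}1)]$ (as left adjoints to $p^*$ and $p^*(n{-}1)[2(n{-}1)] = p^!$ respectively), and applying this identity to $\un_{\P(E)}$ reindexes one decomposition into the other via $i \mapsto n{-}1{-}i$. I will work with the first formulation.

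The plan is to first build a natural candidate morphism using Chern classes, then reduce to $\P^{n-1}_k$ and conclude by induction and a weight argument. By Corollary~\ref{coro:calculateHigherChowGroups}, the first Chern class of $\OO_{\P(E)}(1)$ furnishes a morphism $c : \un \to \un(1)[2]$ in $\H(\P(E),A)$. Its $i$-fold tensor powers $c^i : \un \to \un(i)[2i]$, together with the adjunction $(p^*, p_*)$ and $p^*\un_X = \un_{\P(E)}$, produce morphisms $\un_X(-i)[-2i] \to p_*\un_{\P(E)}$ for $0 \le i \le n{-}1$. Summing them yields
\[
 \phi \,:\, \bigoplus_{i=0}^{n-1} \un_X(-i)[-2i] \longrightarrow p_* \un_{\P(E)} .
\]

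Since vector bundles trivialise Zariski-locally, iterated use of the Localisation triangle (Theorem~\ref{theo:propertiesofH}\eqref{theo:propertiesofH:localisation}) shows that the family $\{u^*\}$ for $u$ ranging over a Zariski cover of $X$ is conservative, and by smooth and proper base change both sides of $\phi$ together with $\phi$ itself are compatible with pullback in $X$. Thus we may assume $E = \OO_X^n$, and then base change along $X \to \Spec k$ further reduces the problem to proving the formula for $q : \P^{n-1}_k \to \Spec k$. We proceed by induction on $n$, the case $n=1$ being trivial. Consider a hyperplane $i : \P^{n-2} \hookrightarrow \P^{n-1}$ with open complement $j : \A^{n-1} \hookrightarrow \P^{n-1}$ and apply $q_*$ to the localisation triangle $j_\# j^*\un \to \un \to i_* i^*\un \to$. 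The right-hand term becomes $(qi)_*\un_{\P^{n-2}} \cong \bigoplus_{k=0}^{n-2} \un(-k)[-2k]$ by the induction hypothesis. For the left-hand term, $q$ being smooth proper of relative dimension $n{-}1$ gives $q_* = q_\#(-(n{-}1))[-2(n{-}1)]$, hence $q_* j_\# \un_{\A^{n-1}} = (qj)_\#\un \cdot (-(n{-}1))[-2(n{-}1)] = M(\A^{n-1})(-(n{-}1))[-2(n{-}1)] \cong \un(-(n{-}1))[-2(n{-}1)]$ by homotopy invariance.

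It remains to observe that the resulting distinguished triangle splits. The connecting map lives in $\Hom{\H(\Spec k)}\bigl(\bigoplus_{k=0}^{n-2}\un(-k)[-2k],\, \un(-(n{-}1))[-2(n{-}1){+}1]\bigr)$, a sum of groups of the form $\Hom{\H(\Spec k)}(\un, \un(m)[2m{+}1])$ with $m = k-(n{-}1) < 0$. By Corollary~\ref{coro:calculateHigherChowGroups} each such group is $\Chow^{m}(\Spec k, -1; A)$, which vanishes since negative-codimension higher Chow groups are zero. Thus the triangle splits, yielding $q_*\un_{\P^{n-1}} \cong \bigoplus_{i=0}^{n-1} \un(-i)[-2i]$. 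I expect the main obstacle to be not the splitting but the bookkeeping that guarantees the abstract decomposition obtained by induction actually matches the canonical morphism $\phi$; since the proposition only asserts the existence of an isomorphism, one may verify afterwards that $\phi$ restricts to the triangle constructed above (using that $c_1(\OO(1))$ is compatible with $i^*$ via $c_1(\OO_{\P^{n-2}}(1))$), but strictly speaking this compatibility is not needed to establish the statement as written.
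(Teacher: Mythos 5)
Your proof is structurally sound but takes a genuinely different route from the paper's, and it contains one logical slip in the final sentence that I'll point out.

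\textbf{How it differs from the paper's argument.} The paper deduces the $p_*\un$ formula from the $M(\P(E))$ formula by applying $\iHom{}(-, \un)$, whereas you go the other way via the relative-purity identity $p_\# \cong p_*(n{-}1)[2(n{-}1)]$ for $p$ smooth proper; both reductions are correct. For the induction on $n$ for $\P^{n-1}_k$, the paper uses the Mayer--Vietoris triangle $M(\A^n{-}\{0\}) \to M(\P^n{-}\{0\}) \oplus M(\A^n) \to M(\P^n) \to$ and closes the gap by identifying $M(\A^n{-}\{0\})[1]$ explicitly via a class in $\Hyp^{n-1}(\A^n{-}\{0\}, \G_m^{\otimes n})$ arranged to give a morphism of distinguished triangles. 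You instead push $q_*$ through the hyperplane localisation triangle $j_\# j^*\un \to \un \to i_*i^*\un$ for $\P^{n-2} \hookrightarrow \P^{n-1}$, and you split the resulting triangle by showing the connecting map lies in groups of the form $\Hom(\un, \un(m)[2m{+}1])$ with $m<0$, which vanish by Corollary~\ref{coro:calculateHigherChowGroups} since Bloch's complex $\Z(m)$ is zero in negative codimension. This is a clean way to obtain the splitting and avoids the explicit class.

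\textbf{The gap.} Your closing remark — that the compatibility of $\phi$ with the inductive triangle ``is not needed to establish the statement as written'' — is wrong, and this is precisely where the paper is more careful. The reduction from an arbitrary vector bundle $E$ to the trivial one rests on having a \emph{single} morphism $\phi$ that commutes with pullback and testing it against a conservative family $\{u^*\}$. That argument concludes only if you know that $\phi$ itself becomes an isomorphism after pullback; knowing merely that $q_*\un_{\P^{n-1}_k}$ is \emph{abstractly} isomorphic to the direct sum does not feed back into the reduction. Indeed, for a non-trivial bundle there is no way to glue local abstract isomorphisms without a canonical comparison morphism. So the step you flag as ``bookkeeping'' — checking that $\phi$ is compatible with the localisation triangle via compatibility of $c_1(\OO(1))$ with restriction to the hyperplane — is genuinely required. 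This is exactly what the paper insists on when it says the identification of the fourth term must be done ``in a way that forms a morphism of distinguished triangles.'' You do sketch how to perform this verification, so the proof is repairable; only the dismissal of its necessity is incorrect.

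One minor omission: the paper first reduces to $A = \Zop$ by flatness before starting; you do not need this, but it does no harm either.
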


\begin{proof}
For ease of notation, set $P = \P(E)$. Applying $\iHom{}(-, \un)$ to the right isomorphism gives the left one, so it suffices to prove the right one (note that $M(P) \cong p_\#p^*\un_X$ and $\iHom{}(p_\# p^*\un, \un) \cong \iHom{}(\un, p_*p^*\un)$). Now the right isomorphism in $\H(X, A)$ is the image of this isomorphism in $\H(X, \Zop)$ under the canonical functor $\H(X, \Zop) \to \H(X, A)$, so it suffices to treat the case $A = \Zop$.

Now the standard proof applies, cf. \cite[Prop.3.5.1]{Voev00}, \cite[Thm.4.2.7]{Voev96}. 
\end{proof}

\begin{remark}
Many of the other proofs from \cite{Voev00} also work in our setting. See \cite[Section 2.2]{Voev00} for a list. Note that in our setting, $f_*f^!\un_S$ plays the rôle of $M^c(X)$ for $f: X \to S$ in $\Sm/S$, $S \in \QProj/k$.
\end{remark}

%auto-ignore
% !TEX root = Dmain.tex
\section{Mixed Stratified Tate Motives} \label{sec:mstm}
In the following we always assume that our varieties are defined over $\overline{\F}_p$ and that $\kk$ is a field of characteristic $p$. We will often drop the $\kk$ from the notation.
In this section we define the category of stratified mixed Tate motives as a full subcategory of $H(X,\K\otimes\kk)=H(X,\kk)=H(X)$ as constructed in the last section. We will then consider a weight structure on this category, prove a formality result, and state the Erweiterungssatz.
\subsection{Stratified mixed Tate motives}
Let $(X,\Ss)$ be an affinely stratified variety over $\overline{\F}_p$, i.e. a variety X with a finite partition into locally closed subvarieties (called the strata of $X$)
\begin{equation}
X=\bigcup_{s\in \Ss} X_s,
\end{equation}
such that each stratum $X_s$ is isomorphic to $\A^n$ for some $n$, and the closure $\overline{X}_s$ is a union of strata. The embeddings are denoted by $j_s: X_s\hookrightarrow X$. The prime example we always have in mind here is the flag variety of a reductive group with its Bruhat stratification.
Starting from this datum, \cite{SoeWe} defines the category of \emph{stratified mixed Tate motives} on $X$, which we recall in this paragraph.
We start with the basic case of just one stratum.
\begin{definition}\label{def:mixedtateonstrata}
	For $X\cong\A^n$, denote by $\DT{X,\kk}=\DT{X}$ the full triangulated subcategory of $\H(X,\kk)$ generated by motives isomorphic to $\un_X(q)$ for $q\in \Z$. Recall that $\un_X$ denotes the tensor unit in $\H(X,\kk)$.
\end{definition}
We shall make extensive use of the following statement.
\begin{proposition}\label{prop:mixedtateonstrata}
	For $X\cong\A^n$, there is an equivalence of monoidal $\kk$-linear categories
	\begin{equation}
\DT{X}\cong\kk\modules^{\Z\times\Z}\cong\Derb{\kk\modules^\Z}.
\end{equation}
Here, the $\kk\modules^{\Z\times\Z}$ denotes the category of bigraded, finite dimensional vector spaces over $\kk$ and $\Derb{\kk\modules^\Z}$ is the bounded derived category of graded, finite dimensional vector spaces over $\kk$.
	
	We choose the isomorphisms such that $\un_X(i)[j]$ corresponds to $\kk$ sitting in degree $(i,j)$ in $\kk\modules^{\Z\times\Z}$ and $\kk$ sitting in degree $i$ with respect to the grading and cohomological degree $-j$ in $\Derb{\kk\modules^\Z}$.
	
	Note that this equips $\DT{X}$ with a natural t-structure. We denote the $j$-th cohomology functor by 
	\begin{equation}
\pazocal H^j: \DT{X}\rightarrow \kk\modules^\Z.
\end{equation}

\end{proposition}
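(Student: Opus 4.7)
The plan is to exploit the extreme sparsity of $\Hom$-groups between Tate objects on $X\cong\A^n$ over $\overline{\F}_p$, and deduce that $\DT{X}$ is ``semisimple'' with the $\un_X(i)[j]$ forming a complete orthogonal system.

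First I would compute $\Hom{\H(X)}(\un_X, \un_X(a)[b])$ for $(a,b)\in\Z^2$. The Tate twist is an autoequivalence (Theorem~\ref{theo:propertiesofH}\eqref{theo:propertiesofH:stability}) and the structural map $\pi: X = \A^n \to \Spec\overline{\F}_p$ satisfies $\id \cong \pi_*\pi^*$ by iterated Homotopy Invariance (Theorem~\ref{theo:propertiesofH}\eqref{theo:propertiesofH:homotopy}). Since $\pi^*$ commutes with the Tate twist (via base change applied to the defining $s^!p^*$, or equivalently via Lemma~\ref{lemm:TateTwist}), adjunction gives
\[
\Hom{\H(X)}(\un_X, \un_X(a)[b]) \cong \Hom{\H(\overline{\F}_p)}(\un, \pi_*\pi^*\un(a)[b]) \cong \Hom{\H(\overline{\F}_p)}(\un, \un(a)[b]),
\]
which by Corollary~\ref{coro:fieldChowGroups} equals $\kk$ if $(a,b)=(0,0)$ and $0$ otherwise. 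Combined with the autoequivalence property, this yields $\Hom{\H(X)}(\un_X(i)[j], \un_X(i')[j']) = \delta_{(i,j),(i',j')}\cdot\kk$.

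Next I would let $\Tt\subset\H(X,\kk)$ denote the full subcategory of finite direct sums of Tate objects $\un_X(i)[j]$ and show $\Tt$ is already a triangulated subcategory. The only nontrivial closure property is under cones. By the Hom computation, any morphism $f: \bigoplus_\alpha \un_X(i_\alpha)[j_\alpha] \to \bigoplus_\beta \un_X(i'_\beta)[j'_\beta]$ is block-diagonal with respect to the bigrading: within each bidegree block one has a $\kk$-linear map of finite-dimensional vector spaces, and elementary linear algebra shows its cone is again a finite direct sum of Tate objects (summands of the form $\un_X(i)[j]^{b-r}\oplus\un_X(i)[j+1]^{a-r}$). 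Hence $\Tt$ is triangulated, contains all the generators $\un_X(p)$, and is contained in $\DT{X}$; by minimality, $\Tt=\DT{X}$.

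Finally I would define $F:\kk\modules^{\Z\times\Z}\to\DT{X}$ by $V=\bigoplus V_{i,j}\mapsto \bigoplus V_{i,j}\otimes_\kk \un_X(i)[j]$, with the evident action on morphisms. Full faithfulness is exactly the Hom computation of the first step; essential surjectivity is the content of the second; compatibility with the monoidal structures reduces to the identification $\un_X(i)[j]\otimes\un_X(i')[j']\cong\un_X(i+i')[j+j']$. The second equivalence $\kk\modules^{\Z\times\Z}\cong\Derb{\kk\modules^\Z}$ is formal, since $\kk\modules^\Z$ is semisimple and every bounded complex splits canonically as a sum of shifts of its cohomology (with sign convention dictated by the statement: $\un_X(i)[j]$ corresponds to $\kk$ in internal degree $i$ and cohomological degree $-j$). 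The pivotal technical input is the Hom vanishing of the first paragraph, which ultimately rests on Steinberg's theorem that the Milnor $K$-theory of $\overline{\F}_p$ is $p$-torsion-free in positive degrees, and is the raison d'\^etre for working in equal characteristic.
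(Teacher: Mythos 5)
Your proof is correct and is precisely the expansion of the paper's terse one-line argument, which simply cites Corollary~\ref{coro:fieldChowGroups}; the reduction to a point via iterated homotopy invariance, the orthogonality of the $\un_X(i)[j]$, and the linear-algebra splitting of cones are exactly the intended fillings-in. Your observation that the finite direct sums of Tate objects already form a triangulated subcategory is the key intermediate step the paper leaves implicit, and your handling of it is sound.
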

\begin{proof}
	Follows from Corollary \ref{coro:fieldChowGroups}.
\end{proof}
We can now proceed to the general case. Since our category should be closed under taking Verdier duals and other reasonable combinations of the six functors, we have to assume that $(X,\Ss)$ fulfils an additional condition:
\begin{definition}
	$(X,\Ss)$ is called \emph{Whitney--Tate} if and only if for all $s,t\in\Ss$ and $M\in \DT{X_s}$ we have $j_t^*j_{s*}M\in \DT{X_t}$.
\end{definition}

From now on we always assume that $(X,\Ss)$ is Whitney--Tate. In \cite{SoeWe} it is shown that (partial) flag varieties and other examples are indeed Whitney--Tate\footnote{In \cite[Prop{.}A.2]{SoeWe} Soergel--Wendt give a sufficient condition for a stratified scheme to be Whitney--Tate, involving the existence of certain resolutions of singularities of the closure of strata. In one step of the proof they use absolute purity, which is not proven for our formalism. But here relative purity actually suffices, and the proposition still applies in our setting.}.
\begin{definition}\label{def:stratmixedtate}
	The category of \emph{stratified mixed Tate motives} on $X$, denoted by $\DMT{\Ss}{X,\kk}=\DMT{\Ss}{X}$, is the full subcategory of $\H(X)$ consisting of objects $M$ such that $j_s^*M\in \DT{X_s}$ for all $s\in \Ss$.
\end{definition}
\begin{remark}\label{rem:WTec} Because we assumed $X$ to be Whitney--Tate, we could have also required the equivalent condition $j_s^!M\in \DT{X_s}$ for all $s\in \Ss$, see \cite[Lemma 4.4, Remark 4.8]{SoeWe}.
\end{remark}
\subsection{Affinely stratified maps}
The right definition of a map between affinely stratified varieties is different from the usual definition of a stratified map, as defined for example in \cite{GM88}. 
\begin{definition}
	Let $(X,\Ss)$ and $(Y,\Ss^\prime)$ be affinely stratified varieties. We call
	$f:X\rightarrow Y$ an \emph{affinely stratified map} if
	\begin{enumerate}
		\item for all $s \in\Ss^\prime$ the inverse image $f^{-1}(Y_s)$ is a union of strata;
		\item for each $X_s$ mapping into $Y_{s^\prime}$, the induced map $f:X_s\rightarrow Y_{s^\prime}$ is a surjective linear map.
	\end{enumerate}
\end{definition}
 Affinely stratified maps are defined in such a way that their associated pullback and pushforward functors preserve stratified mixed Tate motives.
\begin{lemma}\label{lem:sixfunctorsonstrata} Let $X\in \Sm/k$. Consider the maps
	\begin{equation}
s:X \rightleftarrows  \A^n_X:p
\end{equation}
where $p$ denotes the projection and $s$ the zero section. Then we can make the following identifications\\
	\begin{minipage}{0.5\linewidth}
		\begin{align*}
		p_*(\un_{\A^n_X})&=\un_{X}, 	\\
		p^*(\un_{X})&=\un_{\A^n_X}, 	\\
		s^*(\un_{\A^n_X})&=\un_{X} \text{ and}	
		\end{align*}
	\end{minipage}\hspace{-10pt}
\begin{minipage}{0.2\linewidth}
	\begin{align*}
	p_!(\un_{\A^n_X})&=\un_{X}(-n)[-2n],\\
	p^!(\un_{X})&=\un_{\A^n_X}(n)[2n],\\
	s^!(\un_{\A^n_X})&=\un_{X}(-n)[-2n]
	\end{align*}\end{minipage}\vspace{10pt}\\
Furthermore $\Dual_X(\un_{X}(m)[2m])=\un_{X}(\dim X-m)[2\dim X-2m]$ where $$\Dual_X = \iHom{X}(-,f^!(\un))$$ denotes the Verdier duality functor, for $f: X \to k$ the structural morphism, cf. Equation~\ref{equa:propertiesofH:dual}.
\end{lemma}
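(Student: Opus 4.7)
The plan is to prove each identity in turn by directly invoking properties from Theorem~\ref{theo:propertiesofH}, starting with the easy cases and reducing everything else to Verdier duality plus Relative Purity applied to $p$ and to the structural map $f_X : X \to \Spec k$.

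First I would dispatch the three pullback-type formulas. The identities $p^*\un_X = \un_{\A^n_X}$ and $s^*\un_{\A^n_X} = \un_X$ are immediate because $p^*$ and $s^*$ are tensor triangulated functors, hence preserve the monoïdal unit. The identity $p_*\un_{\A^n_X} = \un_X$ then follows from Theorem~\ref{theo:propertiesofH}\eqref{theo:propertiesofH:homotopy}: the unit $\id \to p_*p^*$ is an isomorphism, so $p_*\un_{\A^n_X} = p_*p^*\un_X = \un_X$.

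Next I would handle $p^!$ and the Verdier-dual formula using Relative Purity (item 7 of Theorem~\ref{theo:propertiesofH}). Applied to the smooth morphism $p$ of relative dimension $n$ it gives $p^*\cong p^!(-n)[-2n]$, so $p^!\un_X = \un_{\A^n_X}(n)[2n]$. Applied to $f_X$, smooth of dimension $d = \dim X$, it gives the dualising object $f_X^!\un = \un_X(d)[2d]$. Since $\un_X(m)[2m]$ is $\otimes$-invertible with inverse $\un_X(-m)[-2m]$, I can then read off
\[
\Dual_X(\un_X(m)[2m])=\iHom{X}(\un_X(m)[2m],\un_X(d)[2d])=\un_X(d-m)[2d-2m].
\]

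Finally, I would obtain $p_!\un_{\A^n_X}$ and $s^!\un_{\A^n_X}$ by Verdier duality (Theorem~\ref{theo:propertiesofH}\eqref{theo:propertiesofH:dual}). Since $\A^n_X$ is smooth of dimension $n+d$, we know $\Dual_{\A^n_X}(\un_{\A^n_X}) = \un_{\A^n_X}(n+d)[2n+2d]$ by the formula just proven. Applying $\Dual_X(p_!E)\cong p_*\Dual_{\A^n_X}(E)$ with $E=\un_{\A^n_X}$, and using that $p_*$ commutes with Tate twists and shifts (they are $\iHom$ with invertibles) together with $p_*\un_{\A^n_X}=\un_X$, yields $\Dual_X(p_!\un_{\A^n_X})=\un_X(n+d)[2n+2d]$. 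Biduality then gives $p_!\un_{\A^n_X}=\un_X(-n)[-2n]$. An identical argument using $\Dual_X(s^*E)\cong s^!\Dual_{\A^n_X}(E)$ with $E=\un_{\A^n_X}$ produces $s^!\un_{\A^n_X}=\un_X(-n)[-2n]$. Alternatively one may compute $p_!$ directly: since $p$ is smooth, $p^!\cong p^*(n)[2n]$ implies $p_!\cong p_\#(-n)[-2n]$, and $p_\#\un_{\A^n_X}=p_\#p^*\un_X\cong M(\A^n_X)\otimes\un_X=\un_X$ by homotopy invariance.

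There is no serious obstacle: the entire argument is formal manipulation of the six-functor formalism. The only care required is bookkeeping of twists and dimensions (in particular distinguishing $n$, $d=\dim X$, and $n+d=\dim \A^n_X$), and checking that all objects to which Verdier duality is applied are constructible---but the $\un_Y(m)[2m]$ are visibly in $\H^c(Y)$, so this is automatic.
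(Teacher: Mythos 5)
Your proof is correct, and since the paper states this lemma without proof (it is treated as a routine consequence of the six-functor formalism established in Theorem~\ref{theo:propertiesofH}), there is nothing to compare against. Your route—unitality of $p^*$, $s^*$; homotopy invariance for $p_*$; relative purity of $p$ and $f_X$ for $p^!$, $f_X^!$ and the duality formula; then Verdier duality or the identity $p_!\cong p_\#(-n)[-2n]$ together with $p_\#p^*\cong\id$ for $p_!$ and $s^!$—is exactly the intended formal argument. Two small points worth spelling out if you write this up: the fact that $p_*$ and $s^!$ commute with $(m)[2m]$ follows from the listed projection formulae $f_*\iHom{}(f^*E,F)\cong\iHom{}(E,f_*F)$ and $f^!\iHom{}(E,F)\cong\iHom{}(f^*E,f^!F)$ because $\un(m)[2m]$ is tensor-invertible; and an even shorter derivation of $s^!\un_{\A^n_X}$ avoiding duality altogether is available from $s^!p^!=(ps)^!=\id$ applied to $\un_X$, using $p^!\un_X=\un_{\A^n_X}(n)[2n]$ already computed.
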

%\begin{proof}
%	\todo{prove this}
%\end{proof}
\begin{proposition}\label{prop:sixfunctorsandstratifiedmaps} 
Let $(X,\Ss)$ and $(Y,\Ss^\prime)$ be affinely Whitney--Tate stratified varieties and $f:X\rightarrow Y$ an affinely stratified map. Then the induced functors restrict to stratified mixed Tate motives on $X$ and $Y$
\begin{equation}
f_*,f_!: \DMT{\Ss}{X}\rightleftarrows\DMT{\Ss^\prime}{Y}:f^*,f^!.
\end{equation}
Also the internal Hom, duality and tensor product restrict.
\end{proposition}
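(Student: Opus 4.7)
The plan is to handle the four functors $f^*, f^!, f_!, f_*$ first, then the three tensor operations separately.

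\textbf{Step 1: The pullback functors $f^*$ and $f^!$.} For each stratum $X_s \subset X$, let $Y_{s'} \supset f(X_s)$ be the unique stratum containing the image, so we have a factorization $f\circ j_s = j_{s'} \circ f_s$ where by hypothesis $f_s: X_s \cong \A^{n} \times \A^{m} \to \A^{m} \cong Y_{s'}$ is a linear projection. Given $N \in \DMT{\Ss^\prime}{Y}$, I would compute
\[ j_s^* f^* N \cong f_s^* j_{s'}^* N \quad \text{and} \quad j_s^! f^! N \cong f_s^! j_{s'}^! N. \]
The right-hand factor lies in $\DT{Y_{s'}}$ by definition (resp. by the Whitney--Tate hypothesis and Remark~\ref{rem:WTec}), and by Lemma~\ref{lem:sixfunctorsonstrata} the functors $f_s^*, f_s^!$ send $\un(i)[j]$ to Tate twists of the constant motive on $X_s$. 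Hence the result lies in $\DT{X_s}$.

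\textbf{Step 2: The pushforwards $f_!$ and $f_*$.} This is the main work. Fix $s' \in \Ss^\prime$ and form the cartesian square
\[
\cartesiandiagramwithmaps{U_{s'}}{\tilde j}{X}{g}{f}{Y_{s'}}{j_{s'}}{Y}
\]
where $U_{s'} = f^{-1}(Y_{s'})$, which by hypothesis is a union of strata $X_s \subseteq U_{s'}$. Base change (Theorem~\ref{theo:propertiesofH}) gives $j_{s'}^* f_! M \cong g_! \tilde j^* M$ and $j_{s'}^! f_* M \cong g_* \tilde j^! M$. By Step~1, $\tilde j^* M$ (resp.\ $\tilde j^! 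M$) restricted to each $X_s \subseteq U_{s'}$ lies in $\DT{X_s}$, i.e.\ is stratified mixed Tate on $U_{s'}$. Now I would prove by induction on the number of strata in $U_{s'}$ that $g_!$ (resp.\ $g_*$) of a stratified mixed Tate motive on $U_{s'}$ lies in $\DT{Y_{s'}} = \DT{\A^m}$. The base case is a single projection $\A^n \times \A^m \to \A^m$, handled by Lemma~\ref{lem:sixfunctorsonstrata}. The inductive step: pick an open stratum $j': V \hookrightarrow U_{s'}$ with closed complement $i': Z \hookrightarrow U_{s'}$, apply the localisation triangle to $M' \in \H(U_{s'})$, then apply $g_!$ (resp.\ $g_*$). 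For $g_!$ one uses $g_! \circ i'_* \cong (g\circ i')_!$ (as $i'$ is proper) and $g_! \circ j'_! \cong (g\circ j')_!$; the analogous statement for $g_*$ uses $g_* \circ j'_*$ and the identification $g_* \circ i'_* \cong (g \circ i')_*$. Both outer terms are in $\DT{Y_{s'}}$ by induction.

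\textbf{Step 3: Tensor product, duality, internal Hom.} For tensor product, since $j_s^*$ is monoidal, $j_s^*(M \otimes N) \cong j_s^* M \otimes j_s^* N$; under the equivalence $\DT{X_s} \cong \Derb{\kk\modules^\Z}$ of Proposition~\ref{prop:mixedtateonstrata} the tensor product of bounded graded complexes is clearly bounded graded, so the result is in $\DT{X_s}$. For duality, I would use Theorem~\ref{theo:propertiesofH}\eqref{theo:propertiesofH:dual} to get $j_s^! \Dual_X M \cong \Dual_{X_s} j_s^* M$, and observe that $\Dual_{X_s}$ preserves $\DT{X_s}$ by Lemma~\ref{lem:sixfunctorsonstrata}; combined with Remark~\ref{rem:WTec} this shows $\Dual_X M \in \DMT{\Ss}{X}$. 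Finally, $\iHom{X}(M,N) \cong \Dual_X(M \otimes \Dual_X N)$ by Theorem~\ref{theo:propertiesofH}\eqref{theo:propertiesofH:dual}, so this reduces to the two previous cases.

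The main obstacle is Step~2: the pushforward case requires combining base change with a dévissage along the stratification of the fibre $U_{s'}$, and one must invoke properness of closed immersions to reduce $i'_* \cong i'_!$ so that the localisation triangle can be pushed forward under $g_!$. All other steps are essentially pointwise on strata.
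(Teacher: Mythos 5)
Your proof is correct and follows essentially the same strategy as the paper's: base change to a single target stratum, then induct on the strata in the fibre using the localisation triangles together with Lemma~\ref{lem:sixfunctorsonstrata}. The one place you do more work than necessary is in proving $f^!$ and $f_*$ by parallel direct arguments; the paper instead observes at the outset that $\Dual_X$ preserves $\DMT{\Ss}{X}$ (via Remark~\ref{rem:WTec}), which gives $f^!=\Dual f^*\Dual$ and $f_*=\Dual f_!\Dual$ for free, so it only needs to check $f^*$, $f_!$, and $\otimes$.
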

\begin{proof}
	Duality preserves stratified mixed Tate motives because $(X,\Ss)$ and $(Y,\Ss^\prime)$ are Whitney--Tate; this follows from Remark \ref{rem:WTec}. So we only have to prove the statements for half of the six functors, cf. Theorem~\ref{theo:propertiesofH}\eqref{theo:propertiesofH:dual}. 
	The statement for $f^*$ follows directly from the definitions.
	
	We consider $f_!$ next. Let $E\in\DMT{\Ss}{X}$. We have to show $v^*f_!E\in\DT{Y_s}$ for all strata $v:Y_s\hookrightarrow Y$. By  base change applied to the cartesian diagram
	\begin{center}
		\cartesiandiagramwithmaps{f^{-1}(Y_s)}{w}{X}{g}{f}{Y_s}{v}{Y}
	\end{center}
	we have to show that $g_!w^*E\in\DT{Y_s}$. This can be done by an induction on the number of strata in $f^{-1}(Y_s)$. 
	Denote by $j$ the inclusion of an open stratum $X_s$ in $f^{-1}(Y_s)$ and by $i$ the one of the complement. We obtain the distinguished triangle
	\begin{center}
		\disttriangle{g_!j_!j^*w^*E}{g_!w^*E}{g_!i_!i^*w^*E}.
	\end{center}
	Let us first consider the left hand side. By assumption we have $j^*w^*E=j^*_sE\in \DT{X_s}$. Since $f$ is an affinely stratified map, $gj$ is a projection $\A^n\times\A^m\rightarrow\A^n$ and by Lemma \ref{lem:sixfunctorsonstrata} $(gj)_!$ maps $\DT{X_s}$ to $\DT{Y_s}$. Hence we have $g_!j_!j^*w^*E\in\DT{Y_s}$.
	The right hand side is a stratified mixed Tate motive by induction. Now the statement follows from the fact that $\DT{Y_s}$ is closed under extensions.
	
	The statement for the tensor product follows immediately, since pullback is a tensor functor, and we are done. 
\end{proof}
\subsection{Weights}
Weight structures---as first considered in \cite{Bon}---provide a very concise framework for the powerful \emph{yoga of weights}, as applied, for example, in the proof of the Weil conjectures or the decomposition theorem for perverse sheaves.
\begin{definition}
	Let $\pazocal C$ be a triangulated category. A \emph{weight structure} on $\pazocal C$ is a pair $(\pazocal C_{w\leq 0},\pazocal C_{w\geq 0})$ of full subcategories of $\pazocal C$ such that with $\pazocal C_{w\leq n}:=\pazocal C_{w\leq 0}[n]$ and $\pazocal C_{w\geq n}:=\pazocal C_{w\geq 0}[n]$ the following conditions are satisfied:
	\begin{enumerate}
		\item $\pazocal C_{w\leq 0}$ and $\pazocal C_{w\geq 0}$ are closed under direct summands;
		\item $\pazocal C_{w\leq 0}\subseteq \pazocal C_{w\leq 1}$ and $\pazocal C_{w\geq 1}\subseteq \pazocal C_{w\geq 0};$
		\item for all $X\in \pazocal C_{w\leq 0}$ and $Y\in\pazocal C_{w\geq 1}$, we have $\Hom{\pazocal C}(X,Y)=0;$
		\item for any $X\in \pazocal C$ there is a distinguished triangle \disttriangle{A}{X}{B} with $A\in \pazocal C_{w\leq 0}$ and $B\in \pazocal C_{w\geq 1}.$
	\end{enumerate}
	The full subcategory $\pazocal C_{w=0}=\pazocal C_{w\leq 0}\cap\pazocal C_{w\geq 0}$ is called the \emph{heart of the weight struture}.
\end{definition}
Unlike in the setting of motives with rational coefficients as considered in \cite{SoeWe}, a priori, it is not known if our category of motives can be equipped with a weight structure (the standard proofs rely on the existence of some kind of resolution of singularities and do not work for torsion coefficients equal to the characteristic of the base). Nevertheless we can define a weight structure directly on the category of stratified mixed Tate motives and also prove compatibilities with the six functors (at least for affinely stratified maps).

We start by defining weight structures for the Tate motives on the affine strata. Here we want $\un_{\A^n}(p)[q]$ to have weight $q-2p$. 
\begin{definition}
	Let $\DT{\A^n}_{w\leq 0}$ (resp. $\DT{\A^n}_{w\geq 0}$) be the full subcategory of $\DT{\A^n}$ consisting of objects isomorphic to finite direct sums of $\un_{\A^n}(p)[q]$ for $q\leq 2p$ (resp. $q\geq 2p$). This defines a weight structure on $\DT{\A^n}$.
\end{definition}
\begin{proof}
We use Proposition \ref{prop:mixedtateonstrata} to identify $\DT{\A^n}$ with the derived category of graded vector spaces. Here the axioms of a weight structure are easily checked.
\end{proof}
We can now obtain a weight structure for stratified mixed Tate motives by glueing.
\begin{definition} Let $(X,\Ss)$ be an affinely Whitney--Tate stratified variety. Then we obtain a weight structure on $\DMT{\Ss}{X}$ by setting
	\begin{align*}
	\DMT{\Ss}{X}_{w\leq 0}&:=\left\{M\setline j_s^*M \in \DT{X_s}_{w\leq 0}\text{ for all } s\in \Ss \right\}\text{ and}\\
	\DMT{\Ss}{X}_{w\geq 0}&:=\left\{M\setline j_s^!M \in \DT{X_s}_{w\geq 0}\text{ for all } s\in \Ss\right\}.
	\end{align*}
	
\end{definition}
\begin{proof}
	\cite[Proposition 5.1]{SoeWe}.
\end{proof}
With this definition we have the following compatibilities with the six functors.
\begin{proposition}\label{prop:weightexactness}
	Let $(X,\Ss)$ and $(Y,\Ss^\prime)$ be affinely Whitney--Tate stratified varieties and $f:X\rightarrow Y$ an affinely stratified map. Then
	\begin{enumerate}
		\item the functors $f^*,f_!$ are weight left exact, i.e. they preserve $w\leq0$;
		\item the functors $f^!,f_*$ are weight right exact, i.e. they preserve $w\geq0$;
		\item the tensor product is weight left exact, i.e. restricts to
		\begin{equation}
\DMT{\Ss}{X}_{w\leq n}\times\DMT{\Ss}{X}_{w\leq m}\rightarrow\DMT{\Ss}{X}_{w\leq n+m};
\end{equation}
	\item Verdier duality reverses weights, i.e. restricts to
				\begin{equation}
\Dual_X: \DMT{\Ss}{X}_{w\leq n}^{\op}\rightarrow\DMT{\Ss}{X}_{w\geq -n};
\end{equation}
	\item the internal Hom functor $\iHom{X}$ is weight right exact, i.e. restricts to
			\begin{equation}
\DMT{\Ss}{X}_{w\leq n}^{\op}\times\DMT{\Ss}{X}_{w\geq m}\rightarrow\DMT{\Ss}{X}_{w\geq m-n};
\end{equation}
	\item For f smooth $f^!$ and $f^*$ are weight exact, i.e. weight left exact and weight left exact;
		\item For f proper $f_!$ and $f_*$ are weight exact;
		\item If X is smooth $\un_X(n)[2n]$ is of weight zero for all $n\in\Z$.
	\end{enumerate}
\end{proposition}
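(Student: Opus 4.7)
The strategy is to reduce every statement to a computation on single strata, where $\DT{\A^n}$ is identified with graded vector spaces (Proposition~\ref{prop:mixedtateonstrata}) and the action of the six functors on Tate twists of $\un$ is explicit (Lemma~\ref{lem:sixfunctorsonstrata}). For any stratum inclusion $j_t: X_t \hookrightarrow X$, the affinely stratified hypothesis gives a factorisation $f \circ j_t = j_s \circ g$ where $g: X_t \to Y_s$ is a projection $\A^n \times \A^m \to \A^m$; by Lemma~\ref{lem:sixfunctorsonstrata}, the functors $g^*$, $g^!$, $g_*$ and $g_!$ all preserve the weight of $\un(p)[q]$ (the shifts by $(\pm n)[\pm 2n]$ introduced by $g^!$ and $g_!$ have weight zero).

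For (1), weight left exactness of $f^*$ is immediate from $j_t^* f^* = g^* j_s^*$. For $f_!$ one checks $j_s^* f_! M \in \DT{Y_s}_{w \leq 0}$ by induction on the number of strata in $f^{-1}(Y_s)$, applied to the localisation triangle combined with base change, exactly as in the proof of Proposition~\ref{prop:sixfunctorsandstratifiedmaps}; the base case is a projection of affine spaces, where Lemma~\ref{lem:sixfunctorsonstrata} gives the required preservation of weight. The right-exact statements in (2) then follow by adjunction: the right adjoint of a weight left exact functor is automatically weight right exact.

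For (3), since $j_s^*$ commutes with $\otimes$, the claim reduces to the stratum case where $\un(p)[q] \otimes \un(p')[q'] = \un(p{+}p')[q{+}q']$ has weight $(q{-}2p) + (q'{-}2p')$. For (4), the identity $j_s^! \Dual_X \cong \Dual_{X_s} j_s^*$ coming from Theorem~\ref{theo:propertiesofH}(11) reduces to the stratum case, where $\Dual_{\A^{n_s}}(\un(p)[q]) = \un(n_s{-}p)[2n_s{-}q]$ by Lemma~\ref{lem:sixfunctorsonstrata}, of weight $-(q{-}2p)$, so duality reverses weights. Then (5) follows formally from (3), (4) and the natural isomorphism $\iHom{X}(E,F) \cong \Dual_X(E \otimes \Dual_X F)$ of Theorem~\ref{theo:propertiesofH}(11). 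Statement (7) is immediate from $f_! \cong f_*$ for proper $f$ combined with (1) and (2).

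For (6), relative purity $f^! \cong f^*(d)[2d]$ (Theorem~\ref{theo:propertiesofH}(7)) together with (8) (which says in particular that Tate twists $(d)[2d]$ are weight zero) upgrades (1) and (2) to weight exactness of both $f^*$ and $f^!$. For (8), we check on each stratum: $j_s^* \un_X(n)[2n] = \un_{X_s}(n)[2n]$ has weight zero by direct inspection, and $j_s^! \un_X(n)[2n]$ is computed using absolute purity for the smooth locally closed immersion $j_s$ of codimension $c_s$ as $\un_{X_s}(n{-}c_s)[2n{-}2c_s]$, again of weight zero. The main obstacle is precisely this last point: absolute purity for a closed immersion of smooth varieties is not part of the formalism recorded in Theorem~\ref{theo:propertiesofH} and so must either be imported from the motivic category $D_{\A^1}$ via the conservative restriction of Proposition~\ref{prop:PhiPsi} and the corresponding result in \cite{CD}, or proved in our setting by dévissage along the normal bundle of each stratum combined with Nisnevich-local triviality and relative purity.
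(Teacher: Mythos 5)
Your overall strategy---reducing to a single stratum via the factorisation $f\circ j_t = j_s\circ g$, applying Lemma~\ref{lem:sixfunctorsonstrata}, and then running the localisation induction from Proposition~\ref{prop:sixfunctorsandstratifiedmaps}---is precisely what the paper's (one-line) proof intends, and most of your argument is correct. However, there are two points worth addressing.

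First, and most importantly, for item (8) you invoke \emph{absolute} purity $j_s^!\un_X \cong \un_{X_s}(-c_s)[-2c_s]$ for the locally closed smooth immersion $j_s$, and you correctly flag that this is not among the properties recorded in Theorem~\ref{theo:propertiesofH} (the paper's footnote in Section~\ref{sec:mstm} explicitly states absolute purity is not established in this formalism). But there is no need for it, and introducing it as a ``main obstacle'' overstates the difficulty. The $j_s^!$-bound for (8) can be obtained from item (4), which you have already proved without absolute purity: by Lemma~\ref{lem:sixfunctorsonstrata}, for $X$ smooth one has $\Dual_X\bigl(\un_X(m)[2m]\bigr)=\un_X(\dim X-m)[2\dim X-2m]$ (this uses only relative purity for the smooth structural morphism $X\to\Spec(k)$). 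The $j_s^*$-check you carried out shows that $\un_X(m)[2m]\in\DMT{\Ss}{X}_{w\leq0}$ for every $m\in\Z$; applying $\Dual_X$ and item (4) to $m=\dim X - n$ yields $\un_X(n)[2n]\in\DMT{\Ss}{X}_{w\geq0}$. Combining the two inclusions gives weight zero, with no purity assumption beyond what is already in Theorem~\ref{theo:propertiesofH}.

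Second, a small misattribution in (6): what you need is not (8) (a statement about the specific object $\un_X(n)[2n]$) but the more elementary fact that the endofunctor $(d)[2d]$ is weight-exact, which is immediate from the stratum-level definition of the weight structure (the weight of $\un_{\A^k}(p)[q]$ is $q-2p$, which is unchanged under $p\mapsto p+d$, $q\mapsto q+2d$) and the commutation of $(d)[2d]$ with $j_s^*$ and $j_s^!$. With this correction, your argument that relative purity plus (1) and (2) upgrades to weight exactness is fine.
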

\begin{proof}
	Follows by the same arguments as in Proposition \ref{prop:sixfunctorsandstratifiedmaps} while using Lemma \ref{lem:sixfunctorsonstrata} to see that the pullbacks and pushforwards associated to projections $p:\A^n\times\A^m\rightarrow\A^n$ preserve weights.
\end{proof}	
\subsection{Tilting and Pointwise purity} %explain why we call is pw purity and not something like stratumwise purity
In this and following sections we will show that our category of stratified mixed Tate motives can often be realized as bounded homotopy category of some additive subcategory, using a process called \emph{tilting.} Examples of such subcategories are weight zero motives---which turn out to parity motives in our applications---projective perverse motives or tilting perverse motives. For this we will use the following theorem.
\begin{theorem}[Tilting]\label{thm:tiltinggeneral} Let $(X,\Ss)$ be an affinely Whitney--Tate stratified variety.  Let $\pazocal C\subset \DMT{\Ss}{X}$ be an additive subcategory such that
\begin{enumerate}
	\item For all $E,F\in\pazocal C$, we have $\Hom{\H(X)}(E,F[n])=0 \text{ for all }n\neq0$.
	\item $\pazocal C$ generates $\DMT{\Ss}{X}$ as a triangulated category.
\end{enumerate} 
Then there is an equivalence of triangulated categories, called \emph{tilting}, 
$$\Hotb{\pazocal C}\stackrel{\sim}{\rightarrow} \DMT{\Ss}{X}.$$
Here $\Hotb{\pazocal C}$ denotes the bounded homotopy category of chain complexes of $\pazocal C$.
\end{theorem}
\begin{proof}
By construction, see Section \ref{defi:H},  our category of motives is defined as the quotient category $\H(X)=D((\K_{X}\otimes\kk)\modules) / \Ss_{\Htp, \Stb}.$ By Rappels~\ref{rappels:tricat2}\eqref{BousfieldLocalisationTriCat}, $\H(X)$ is hence equivalent to a full subcategory $D((\K_{X}\otimes\kk)\modules).$ Since $\DMT{\Ss}{X}$ is a full subcategory of $\H(X)$ it can hence be realized as a full subcategory of the derived category of the Grothendieck abelian category $\pazocal A= (\K_{X}\otimes\kk)\modules.$
For every object in $\pazocal C$, considered as an object in $D(\pazocal A)$, choose a representative in $C(\pazocal A)$ which is homotopy-injective. The collection $\pazocal T$ of these choices of representatives fulfills the following properties:
\begin{enumerate}
	\item For all $E,F\in\pazocal T$, we have $\Hom{\Hotub(\pazocal A)}(E,F[n])=\Hom{D(\pazocal A)}(E,F[n])$, since $F$ is homotopy-injective. Here $\Hotub(\pazocal A)$  denotes the \emph{unbounded} homotopy category of chain complexes in $\pazocal A.$
	\item For all $E,F\in\pazocal T$, we have $\Hom{D(\pazocal A)}(E,F[n])=0 \text{ for all }n\neq0$.
	\item $\pazocal T$ generates $\DMT{\Ss}{X}$ as a triangulated category.
\end{enumerate} 
A collection fulfilling those properties is often called a \emph{tilting collection} in the literature. The statement now follows from \cite[Proposition 10.1]{Ric89} or \cite[Theorem 1]{Kel93}, see \cite[Appendix B]{SoeWe} for a nice sketch of the proof.
\end{proof}
To show that weight zero motives fulfill the first assumption of this theorem, we need to impose an additional \emph{pointwise purity} property.
\begin{definition}
	Let $?\in\{*,!\}$. A motive $M\in\DMT{\Ss}{X}$ is called \emph{pointwise ?-pure} if for all $s\in\Ss$
	\begin{equation}
j^?_s M\in \DT{X_s}_{w=0}.
\end{equation}
If both conditions are satisfied, the motive is called \emph{pointwise pure}. %\todo{What is the relationship between point-wise pure motives, and weight zero motives? What about between point-wise pure motives and perverse sheaves?}
	%\todo{When are the hypotheses of Corollary~\ref{coro:pwPureErweiterungssatz} satisfied?}
\end{definition}
We list some compatibilities of the six functors with pointwise purity.
\begin{proposition}\label{prop:pointwisepurepropermap}
	Let $(X,\Ss)$ and $(Y,\Ss^\prime)$ be affinely Whitney--Tate stratified varieties and $f:X\rightarrow Y$ an affinely stratified map. Then
	\begin{enumerate}
		\item For f smooth $f^!$ and $f^*$ preserve pointwise purity;
		\item For f proper $f_!$ and $f_*$ preserve pointwise purity.
\end{enumerate}
\end{proposition}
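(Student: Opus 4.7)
The plan is to treat the smooth and proper cases separately. For part (1), relative purity $f^! \cong f^*(d)[2d]$ (where $d = \dim f$) lets us reduce to showing $f^*$ preserves pointwise purity, since the Tate twist $(d)[2d]$ has weight $2d-2d=0$. Let $M \in \DMT{\Ss^\prime}{Y}$ be pointwise pure and $j_s : X_s \hookrightarrow X$ any stratum. Since $f$ is affinely stratified, $f \circ j_s = j_{s^\prime} \circ g$ for some stratum $j_{s^\prime} : Y_{s^\prime} \hookrightarrow Y$ and some smooth projection $g : X_s \cong \A^n \times \A^m \to \A^m \cong Y_{s^\prime}$. Functoriality of $(-)^*$ and relative purity of $f$ give
\begin{equation*}
j_s^* f^* M \cong g^* j_{s^\prime}^* M,
\qquad
j_s^! f^* M \cong g^! j_{s^\prime}^! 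M\,(-d)[-2d].
\end{equation*}
Pointwise purity of $M$ says both $j_{s^\prime}^* M$ and $j_{s^\prime}^! M$ lie in $\DT{Y_{s^\prime}}_{w=0}$, and $g^*, g^!$ preserve weight zero by Lemma \ref{lem:sixfunctorsonstrata}; the Tate twist does so as well. This settles part (1).

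For part (2), since $f$ is proper we have $f_* \cong f_!$, so it suffices to treat $f_!$. Pointwise $!$-purity will follow from pointwise $*$-purity via Verdier duality: $\Dual M$ is pointwise pure because $\Dual$ swaps $j_s^*$ and $j_s^!$ up to weight-preserving Tate twists, and $\Dual (f_* M) \cong f_*(\Dual M)$ by Theorem \ref{theo:propertiesofH}. Now fix a stratum $j_{s^\prime} : Y_{s^\prime} \hookrightarrow Y$ and consider the cartesian square
\begin{center}
\cartesiandiagramwithmaps{f^{-1}(Y_{s^\prime})}{w}{X}{g}{f}{Y_{s^\prime}}{j_{s^\prime}}{Y}
\end{center}
Base change yields $j_{s^\prime}^* f_! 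M \cong g_! w^* M$. I induct on the number of strata of $X$ contained in $f^{-1}(Y_{s^\prime})$: pick an open stratum $j : X_{s_1} \hookrightarrow f^{-1}(Y_{s^\prime})$ with closed complement $i : Z \hookrightarrow f^{-1}(Y_{s^\prime})$, and apply $g_!$ to the localisation triangle $j_! j^* w^* M \to w^* M \to i_* i^* w^* M$ to obtain a distinguished triangle
\begin{equation*}
(gj)_! (wj)^* M \to g_! w^* M \to (gi)_! (wi)^* M \to .
\end{equation*}
The left term is weight zero because $(wj)^* M$ is weight zero (pointwise purity of $M$) and $gj : X_{s_1} \to Y_{s^\prime}$ is a projection $\A^{n+m} \to \A^m$, so $(gj)_!$ preserves weight zero by Lemma \ref{lem:sixfunctorsonstrata}; the right term is weight zero by the inductive hypothesis applied to $Z \to Y_{s^\prime}$ with one fewer stratum.

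The main obstacle is concluding the inductive step: one must show that weight zero is closed under extensions in $\DT{Y_{s^\prime}}$. Using Proposition \ref{prop:mixedtateonstrata} to identify $\DT{Y_{s^\prime}} \cong \Derb{\kk\modules^\Z}$, an object is of weight zero iff it is a direct sum of copies of $\un(p)[2p]$, corresponding to $\kk$ sitting in grading $p$ and cohomological degree $-2p$. For such weight zero $A = \bigoplus_p A_p[-2p]$ and $C = \bigoplus_q C_q[-2q]$, any morphism $C \to A[1]$ vanishes since the cohomological degrees $-2q$ and $-2p+1$ never agree, so any distinguished triangle with ends $A$ and $C$ splits with weight zero middle. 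This completes the induction and the proof.
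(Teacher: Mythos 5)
Your proof is correct and follows essentially the same approach as the paper's two-line sketch (relative purity for part (1), base change and $f_!=f_*$ for part (2)); you simply make explicit the Verdier duality reduction, the localisation induction, and the key fact that weight zero objects in $\DT{\A^n}$ are closed under extensions because their cohomological degrees are all even. Minor remark: the paper's proof of (1) has a sign typo, writing $f^! = f^*(-d)[-2d]$, whereas your $f^! \cong f^*(d)[2d]$ is the correct form given the statement of relative purity in Theorem~\ref{theo:propertiesofH} (the conclusion is unaffected since the twist preserves weight either way).
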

\begin{proof} Using Proposition \ref{prop:weightexactness},
	(1) follows from $f^!=f^*(-d)[-2d]$, where $d$ is the relative dimension of $f$, and
	(2) follows from base change and $f_!=f_*$.
\end{proof}
We can then prove the following Lemma, which  crucially depends on the fact that there are no non-trivial extension between the Tate objects on $\overline{\F}_p$:
\begin{lemma} \label{lem:pointwisepuremorphism}Let $(X,\Ss)$ be an affinely Whitney--Tate stratified variety.
	Let $E,F\in \DMT{\Ss}{X}$ such that $E$ is pointwise $*$-pure of weight zero and $F$ is pointwise $!$-pure of weight zero. Then
		\begin{equation}
		\Hom{\H(X)}(E,F[a])=0.
		\end{equation}
		for all $a\neq 0$.
\end{lemma}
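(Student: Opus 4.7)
The plan is to induct on the cardinality of $\Ss$. For the base case, $X=\A^n$ is a single stratum, so by Proposition~\ref{prop:mixedtateonstrata} pointwise pure weight zero objects are finite direct sums of Tate twists $\un_X(p)[2p]$. It then suffices to verify $\Hom{\H(X)}(\un_X(p)[2p],\un_X(q)[2q+a])=0$ for $a\neq 0$. By invertibility of the Tate twist this equals $\Hom{\H(X)}(\un_X,\un_X(q{-}p)[2(q{-}p)+a])$, and pushing forward along the structural map $\pi:\A^n\to\Spec\overline{\F}_p$---legitimate via the $\A^1$-homotopy invariance isomorphism $\pi_*\pi^*\cong\id$ of Proposition~\ref{prop:homotopy}---this reduces to $\Hom{\H(\overline{\F}_p)}(\un,\un(q{-}p)[2(q{-}p)+a])$, which by Corollary~\ref{coro:fieldChowGroups} vanishes unless both $q{-}p=0$ and $a=0$.

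For the inductive step, I would pick an open stratum $j:U=X_u\hookrightarrow X$---which exists because the closure partial order on the finite set $\Ss$ admits maximal elements---with closed complement $i:Z\hookrightarrow X$ a union of strictly fewer strata. Since $j$ is an open immersion, $j^!=j^*$. Applying $\Hom{\H(X)}(-,F[a])$ to the localization triangle
\begin{equation*}
j_!j^*E\to E\to i_*i^*E\to j_!j^*E[1]
\end{equation*}
of Theorem~\ref{theo:propertiesofH}\eqref{theo:propertiesofH:localisation} and invoking the adjunctions $(i_*,i^!)$ (valid because $i_!\cong i_*$ for the proper immersion $i$) and $(j_!,j^!)$ yields an exact sequence
\begin{equation*}
\Hom{\H(Z)}(i^*E,i^!F[a])\to\Hom{\H(X)}(E,F[a])\to\Hom{\H(U)}(j^*E,j^*F[a]).
\end{equation*}
For any stratum $X_s\subset Z$ with inclusion $\iota_s:X_s\hookrightarrow Z$ the identity $j_s=i\iota_s$ yields $\iota_s^*i^*=j_s^*$ and $\iota_s^!i^!=j_s^!$, so $i^*E$ remains pointwise $*$-pure of weight zero and $i^!F$ remains pointwise $!$-pure of weight zero on $Z$; similarly, on $U=\A^n$ the objects $j^*E$ and $j^*F=j^!F$ are pure of weight zero. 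The outer Hom groups then vanish for $a\neq 0$---the left by the inductive hypothesis on $Z$ and the right by the base case---forcing the middle to vanish too.

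The main technical point is to confirm that $Z$ and $U$ themselves inherit the Whitney--Tate property, so that $i^*E,i^!F$ and $j^*E,j^*F$ are honest stratified mixed Tate motives on their respective target varieties (and Remark~\ref{rem:WTec} applies to $i^!F$). This follows from the fully-faithfulness identities $i^*i_*\cong\id$ and $j^*j_*\cong\id$, which reduce $\iota_s^?\iota_{t*}$ on a stratum of $Z$ or $U$ back to $j_s^?j_{t*}$ on $X$. Every other compatibility needed---behavior of the six functors and of the weight structure under the stratified immersions $i$ and $j$---is supplied directly by Propositions~\ref{prop:sixfunctorsandstratifiedmaps} and~\ref{prop:weightexactness}, so the induction machinery assembles without further difficulty.
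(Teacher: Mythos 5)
Your proof is correct and follows essentially the same route as the paper's: induction on the number of strata via the localization triangle for an open stratum, reducing both outer Hom groups to the single-stratum case handled by Proposition~\ref{prop:mixedtateonstrata} (equivalently Corollary~\ref{coro:fieldChowGroups}). You spell out the base case and the inheritance of the Whitney--Tate condition to $Z$ and $U$ in more detail than the paper's sketch, but the underlying argument is identical; incidentally, your exact sequence has the two outer terms in the correct order, whereas the paper's displayed sequence appears to have them swapped (a harmless typo, since both vanish).
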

\begin{proof}
See also \cite[Corollary 6.3]{SoeWe}.
We proceed by induction on the number of strata. Denote by $j:\A^n =U\hookrightarrow X$ the inclusion of an open stratum in $X$ and by $i:Z\hookrightarrow X$ its closed complement.
Hence there is a distinguished triangle 
$$\disttriangle{j_!j^*E}{E}{i_!i^*E}$$
and $\Hom{\H(X)}(E,F[a])$ fits in an exact sequence
$$\les{\Hom{\H(Z)}(i^*E,i^!F[a])}{\Hom{\H(X)}(E,F[a])}{\Hom{\H(\A^n)}(j^*E,j^!F[a])}$$
where the right term vanishes using Proposition \ref{prop:mixedtateonstrata} (this is where we use that there are no non-trivial extension between the Tate objects) and the left hand term vanishes by induction. The statement follows.
\end{proof}

\begin{theorem}[Tilting for weight zero motives]\label{thm:tilting}
	Let $(X,\Ss)$ be an affinely Whitney--Tate stratified variety, such that all objects of $\DMT{\Ss}{X}_{w=0}$ are additionally pointwise pure. Then there is an equivalence of categories, called \emph{tilting},
	\begin{equation}
\Delta:\DMT{\Ss}{X}\stackrel{\sim}{\rightarrow}\Hotb{\DMT{\Ss}{X}_{w=0}}.
\end{equation}

\end{theorem}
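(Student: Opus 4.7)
The plan is to apply the weight-complex machinery à la Bondarko, adapted to our setting using the strong Hom-vanishing provided by Lemma~\ref{lem:pointwisepuremorphism}. The functor $\Delta$ should send a motive $M \in \DMT{\Ss}{X}$ to its associated weight complex. Concretely, applying the weight decomposition axiom iteratively yields a \emph{weight Postnikov tower}, i.e.\ a sequence of distinguished triangles $w_{\leq n} M \to M \to w_{\geq n+1} M \to$ whose "graded pieces" $\mathrm{gr}^w_n M$ lie in $\DMT{\Ss}{X}_{w=0}[n]$. Assembling these graded pieces with the obvious connecting differentials produces a bounded complex of weight-zero objects, well-defined up to homotopy equivalence, which we declare to be $\Delta(M)$.

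To ensure this outputs bounded complexes I first need boundedness of the weight structure on $\DMT{\Ss}{X}$: every object has finite weight amplitude. This follows by induction on the number of strata, since restrictions to each stratum $X_s$ lie in $\DT{X_s}$ which has bounded weight structure by Proposition~\ref{prop:mixedtateonstrata}, and gluing along finitely many strata preserves this boundedness. Verifying that $\Delta$ is a functor (not merely a map on objects) requires that morphisms in $\DMT{\Ss}{X}$ induce morphisms between weight Postnikov towers unique up to chain homotopy. This is a standard diagram chase using axiom (3) of a weight structure, the non-trivial point being the uniqueness of lifts, which is precisely controlled by Lemma~\ref{lem:pointwisepuremorphism}: since all heart objects are pointwise pure, we have $\Hom{\H(X)}(P,P'[a])=0$ for $a\neq 0$ and $P,P'\in \DMT{\Ss}{X}_{w=0}$.

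For full faithfulness, given $M, N \in \DMT{\Ss}{X}$, I filter both by their weight Postnikov towers and study $\Hom{\H(X)}(M,N[i])$ via the resulting spectral sequence. The above Hom-vanishing between heart objects forces the $E_1$-page to consist only of Hom groups between the weight-zero pieces in matching degree, and all higher differentials vanish. Comparing with the Hom groups in $\Hot(\DMT{\Ss}{X}_{w=0})$, which are computed by the analogous chain-level formula, gives the desired isomorphism. Essential surjectivity is proved by induction on the length of a bounded complex $P^\bullet$ in the heart: iteratively form cones of lifts of the differentials $P^n \to P^{n+1}$, where the same Hom-vanishing ensures that consecutive differentials compose to zero up to canonical coboundary, and that the lifts are unique enough to give a well-defined object $M$ with $\Delta(M)\cong P^\bullet$.

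The main obstacle will be the careful verification that $\Delta$ is well-defined at the level of morphisms and that it respects composition — this is the technical core of Bondarko's construction. Once that functorial weight complex is in hand, both full faithfulness and essential surjectivity fall out formally from Lemma~\ref{lem:pointwisepuremorphism}, which is itself the pivotal geometric input; it is precisely there that the pointwise purity hypothesis and the characteristic-$p$ vanishing of Milnor $K$-theory over $\overline{\F}_p$ combine to make the heart "negative" in both directions, collapsing all higher extensions.
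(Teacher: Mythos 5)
Your plan is a genuine alternative to the paper's proof: you attempt to construct $\Delta$ directly, Bondarko-style, as a (strong) weight complex functor, whereas the paper instead passes to a dg-level model. You have correctly isolated the two decisive inputs — boundedness of the weight structure, which you correctly derive by induction on strata from Proposition~\ref{prop:mixedtateonstrata}, and the Hom-vanishing $\Hom{\H(X)}(P,P'[a])=0$ for $a\neq 0$ between weight-zero pointwise-pure objects, coming from Lemma~\ref{lem:pointwisepuremorphism}. The essential surjectivity and full-faithfulness sketches are also in the right spirit.

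However, there is a real gap hiding behind the phrase ``This is a standard diagram chase.'' What you describe is the construction of a \emph{strong} weight complex functor $\Delta:\DMT{\Ss}{X}\to\Hot(\DMT{\Ss}{X}_{w=0})$, i.e.\ an actual triangulated functor landing in the honest bounded homotopy category. Bondarko's axiomatics, applied in a bare triangulated category, only yields a \emph{weak} weight complex functor $t:\DMT{\Ss}{X}\to K_w^b(\DMT{\Ss}{X}_{w=0})$ — the target is the homotopy category further quotiented by the weak homotopy relation — and Bondarko himself notes that the strong version requires an enhancement. Weight decompositions are non-canonical, Postnikov towers depend on choices, and lifts of morphisms between towers are only unique up to a weaker-than-homotopy relation; cones are not functorial. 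The assertion that the Hom-vanishing from Lemma~\ref{lem:pointwisepuremorphism} makes the lifts unique up to chain homotopy and composition associative is morally the right observation (negativity of the heart is indeed the missing ingredient) but is a nontrivial theorem, not a routine verification, and is the technical heart of the matter rather than something you can bury in a sentence.

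The paper avoids this issue by exploiting the explicit construction of $\H(X,\kk)$: by Remark~\ref{rem:Hisafullsubcatofaderivedcat}, $\DMT{\Ss}{X}$ is a full subcategory of the derived category $D(\K_X\otimes\kk\modules)$ of a Grothendieck abelian category equipped with the descent model structure of Section~\ref{subsec:descentModelStru}. Choosing fibrant-cofibrant chain-level representatives for each weight-zero object gives a genuine additive subcategory $\pazocal T$ of $C(\K_X\otimes\kk\modules)$ for which $\Hom{\Hotub}(E,F[n])=\Hom{\Der}(E,F[n])$, this group vanishes for $n\neq 0$ by Lemma~\ref{lem:pointwisepuremorphism}, and $\pazocal T$ generates $\DMT{\Ss}{X}$. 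This is precisely a tilting collection in the sense of Rickard/Keller, and the equivalence follows directly from \cite[Proposition 10.1]{Ric89} or \cite[Theorem 1]{Kel93}. Your argument could be salvaged by replacing the triangulated diagram chase with exactly this dg-level tilting argument — or, alternatively, by citing a strong-weight-complex-functor result that is proved with an enhancement in the background; either way you cannot avoid reaching below the triangulated level.
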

\begin{proof}
As heart of a weight structure, $\DMT{\Ss}{X}_{w=0}$ generates $\DMT{\Ss}{X}$ as a triangulated category. Together with Lemma \ref{lem:pointwisepuremorphism} this allows us to apply
Theorem \ref{thm:tiltinggeneral} and the statement follows.
\end{proof}
We want to remark that the tilting equivalence can also be stated in the form of a \emph{formality} theorem.
\begin{theorem}[Formality]  \label{theo:formality}
The last theorem can also be stated as natural equivalence
	\begin{equation}
\DMT{(\Ss)}{X}\stackrel{\sim}{\rightarrow}\operatorname{dgDer-}(E,d=0)
\end{equation}
where the right hand side denotes the dg-derived category of graded modules of the formal (equipped with a trivial differential) graded dg-algebra \begin{equation}
E=\bigoplus_{i,j\in\Z}\Hom{H(X)}(L,L(i)[j])
\end{equation}
where $L$ is a direct sum of objects $L_i$ generating $\DMT{\Ss}{X}_{w=0}$ with respect to direct sum, shifts,  $(n)[2n]$,  and isomorphism.
\end{theorem}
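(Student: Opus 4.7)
The plan is to deduce the statement from Theorem~\ref{thm:tilting} by a Morita-type formality argument. The tilting equivalence already identifies $\DMT{(\Ss)}{X}$ with $\Hot(\DMT{\Ss}{X}_{w=0})$, so what remains is to identify the latter with $\operatorname{dgDer-}(E, d=0)$.

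First I would lift the whole picture to a dg-enhancement. As already used in the proof of Theorem~\ref{thm:tilting}, $\H(X)$ is a Verdier quotient of $D(\R\modules)$ for the Grothendieck abelian category $\R\modules = (\K_{X} \otimes \kk)\modules$, which carries a canonical dg-enhancement by $\operatorname{Hom}^\bullet$-complexes between fibrant-cofibrant representatives in $C(\R\modules)$. Write $\operatorname{RHom}(L,L)$ for the resulting dg-algebra of self-endomorphisms of (a fibrant-cofibrant model of) $L$; its bigraded cohomology is, by construction, precisely $E$.

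Second, I would show that $\operatorname{RHom}(L,L)$ is formal, i.e.\ quasi-isomorphic as a dg-algebra to $(E, d=0)$. Because every generator $L_i$ is pointwise pure of weight zero, Lemma~\ref{lem:pointwisepuremorphism} applied to the pair $L_i$ and $L_j(n)[2n]$ (the latter being pointwise pure of weight zero as well) forces $\Hom{\H(X)}(L_i, L_j(n)[m]) = 0$ whenever $m \neq 2n$. Hence $E$ is concentrated on the single line $\{m = 2n\}$ of the bigrading, in particular only in even cohomological degrees. A standard Kadeishvili-type construction of the minimal $A_\infty$-model then shows that all higher products $\mu_k : E^{\otimes k} \to E$ for $k \geq 3$ must land outside the line $\{m = 2n\}$ and therefore vanish for pure degree reasons, so the minimal model of $\operatorname{RHom}(L,L)$ is $(E,d=0)$ with trivial higher operations, establishing formality.

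Third, I would invoke Keller's theorem \cite[Theorem~1]{Kel93}: the object $L$ is compact in $\DMT{(\Ss)}{X}$ (being a finite direct sum of the $L_i$), and the collection of its twist-shifts $\{L(n)[2n]\}_{n \in \Z}$ generates $\DMT{(\Ss)}{X}$ as a triangulated category by Theorem~\ref{thm:tilting}. Keller's theorem then produces an equivalence $\operatorname{RHom}(L,-) : \DMT{(\Ss)}{X} \stackrel{\sim}{\to} \operatorname{dgDer-}\operatorname{RHom}(L,L)$, which combined with the formality quasi-isomorphism gives the desired equivalence $\DMT{(\Ss)}{X} \stackrel{\sim}{\to} \operatorname{dgDer-}(E,d=0)$, intertwining Tate twist with internal grading and cohomological shift with dg-shift. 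The main obstacle is the formality step, but once a dg-enhancement is fixed the vanishing of all higher $A_\infty$-products is automatic from the concentration of $E$ on a single line of the bigrading, so the argument is essentially bookkeeping.
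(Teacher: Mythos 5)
Your overall strategy---tilting via Keller plus an $A_\infty$-purity argument for formality---is the right one, and the purity step (using Lemma~\ref{lem:pointwisepuremorphism} to concentrate $E$ on the line $j=2i$, hence killing all higher products for degree reasons) is correct and is exactly the mechanism that makes the theorem true. However, there is a genuine gap in the way you invoke Keller. You write the equivalence as $\operatorname{RHom}(L,-):\DMT{(\Ss)}{X}\stackrel{\sim}{\to}\operatorname{dgDer-}\operatorname{RHom}(L,L)$ and claim $\operatorname{RHom}(L,L)$ is formal with cohomology $E$. But $\operatorname{RHom}(L,L)$ alone has cohomology $\Hom{\H(X)}(L,L)=E^{0,0}$, concentrated in degree $(0,0)$: the very Lemma~\ref{lem:pointwisepuremorphism} you invoke kills $\Hom{\H(X)}(L,L[a])$ for $a\neq 0$. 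So $\operatorname{RHom}(L,L)$ is quasi-isomorphic to an \emph{ordinary} $\kk$-algebra, and $\operatorname{dgDer-}\operatorname{RHom}(L,L)$ is just the derived category of $E^{0,0}$-modules---it cannot be quasi-isomorphic to $E$, and its dg-derived category is not the right-hand side of the theorem. The Tate twist information has been lost.

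The fix is to apply the $\Z$-graded (or dg-categorical) version of Keller's theorem: the tilting collection is $\{L(n)[2n]\}_{n\in\Z}$, which is an infinite family (and $\bigoplus_n L(n)[2n]$ is not compact), so one takes the dg-category with objects $L(n)[2n]$ and Hom-complexes $\operatorname{RHom}\bigl(L(n)[2n],L(m)[2m]\bigr)$, or equivalently the $\Z\times\Z$-graded dg-algebra whose underlying bigraded vector space is exactly $E$. It is \emph{this} dg-algebra whose formality you need, and your purity argument applies to it: the $A_\infty$-operations preserve Tate degree and shift cohomological degree by $2-k$, so concentration on $\{j=2i\}$ forces $\mu_k=0$ for $k\geq 3$ (and $\mu_1=0$). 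Once this repair is made, your argument matches the intent of the paper, which presents the theorem as a reformulation of Theorem~\ref{thm:tilting}: the tilting collection $\pazocal T$ there is chosen fibrant-cofibrant so that $\Hom$-complexes between representatives compute $\operatorname{RHom}$ with cohomology concentrated in a single (bi)degree, giving formality directly without explicitly passing through Kadeishvili's minimal models.
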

\subsection{Erweiterungssatz} \label{subsec:Erweiterungssatz}
The \emph{Erweiterungssatz} as first stated in \cite{Soe90} and reproven in a more general setting in \cite{Gin} allows a \emph{combinatorial} description of pointwise pure weight zero sheaves on $X$ in terms of certain modules over the cohomology ring of $X$. In the case of $X$ being the flag variety, these modules are called \emph{Soergel modules}.

In our setting, the same results hold by replacing the usual singular or étale cohomology ring by the motivic one.
\begin{definition} Let $X\in \QProj / k$ and $E\in \H(X)$.
	Denote by 
	\begin{equation}
\Hyp(E):=\bigoplus_{(i,j)\in\Z\times\Z}\Hom{\H(X)}(\un_X,E(j)[i])
\end{equation}
%=p_*\iHom{X}(\un_X,E)$$
the \emph{hypercohomology} functor. %\todo{Why $p_*\iHom{X}(\un_X,E)$?}
Note that $\Hyp(E)$ is naturally a bigraded right module over $\Hyp(X,\kk):=\Hyp(X):=\Hyp(\un_X)= H^{\bullet}_\mathcal{M}(X, \kk (\bullet))$, the motivic cohomology ring of $X$, where $H^{i}_\mathcal{M}(X, \kk (j))=\Hom{\H(X)}(\un_X,\un_X(j)[i])$ sits in bidegree $(i,j).$
\end{definition}

Motivic cohomology (at least for smooth schemes) can be identified with higher Chow groups, which are usually quite infeasible for computation. For affinely stratified varieties everything gets much easier:
\begin{theorem}\label{thm:chowringcellular}
	Let $(X,\Ss)$ be an affinely stratified and irreducible variety of dimension $n$ over $k$. Then:
	\begin{enumerate}
		\item The motivic cohomology ring $\Hyp(X)= H^{\bullet}_\mathcal{M}(X, \kk (\bullet))$ is concentrated in degrees (2i,i) and we have
		\begin{equation}
H^{2i}_\mathcal{M}(X, \kk (i))=\bigoplus_{\substack{s\in \Ss,\\\dim X_s=i}}\kk.
\end{equation}
 %\todo{Is the ring structure (i.e., product) describable in these terms? It looks like one can describe explicitly the action of the Steenrod algebra on this guy.}
		\item If $X$ is furthermore smooth, there are graded $\kk$-algebra isomorphisms
		\begin{equation}
\Hyp(X)\cong CH^\bullet(X,\kk)\cong CH^\bullet(X,\Z)\otimes\kk,
\end{equation}
where for a ring $\Lambda$ we denote by $\Chow^\bullet(X,\Lambda)=\Chow_{n-\bullet}(X,\Lambda)$ the classical Chow ring of $X$ with coefficients in $\Lambda$, and $H^{2i}_\mathcal{M}(X, \kk (i))$ corresponds to $CH^i(X,\kk)$ under the isomorphism.
\end{enumerate}
	
\end{theorem}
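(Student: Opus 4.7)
The plan is to prove part (1) by induction on the number of strata and then deduce part (2) from part (1) together with Corollary~\ref{coro:calculateHigherChowGroups}. The essential inputs are $\A^1$-homotopy invariance (Proposition~\ref{prop:homotopy}), the localisation distinguished triangle (Theorem~\ref{theo:propertiesofH}\eqref{theo:propertiesofH:localisation}), and the point computation (Corollary~\ref{coro:fieldChowGroups}) that $\Hom{\H(\overline{\F}_p)}(\un,\un(i)[j])$ equals $\kk$ if $(i,j)=(0,0)$ and vanishes otherwise.

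For the base case of (1), when $X=\A^n$ consists of a single stratum, iterating $\A^1$-homotopy invariance along the structural morphism $p\colon\A^n\to\Spec(\overline{\F}_p)$ shows $p^*$ is fully faithful, reducing $\Hyp(\A^n)$ to $\Hyp(\Spec\overline{\F}_p)$. For the inductive step, irreducibility of $X$ provides a unique open stratum $j\colon U\cong\A^n\hookrightarrow X$ of top dimension, with closed complement $i\colon Z\hookrightarrow X$ equipped with its inherited affine stratification of strictly fewer strata. Applying $\Hom{\H(X)}(-,\un_X(a)[b])$ to the localisation triangle
\begin{equation*}
j_!\un_U\to\un_X\to i_*\un_Z\to j_!\un_U[1]
\end{equation*}
(obtained from $j^!=j^*$ for the étale morphism $j$, so that $j_!j^!\un_X=j_!\un_U$) produces a long exact sequence. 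By adjunction, the $i_*\un_Z$-term computes the motivic cohomology of $Z$ (controlled by the inductive hypothesis) and the $j_!\un_U$-term equals $\Hom{\H(U)}(\un_U,\un_U(a)[b])$ (controlled by the base case). The contributions from $Z$ and from $U$ sit in disjoint bidegree ranges: those from $Z$ correspond to strata of dimension strictly less than $n$, while the $U$-contribution sits in the single bidegree attached to the top-dimensional stratum. Hence all connecting maps vanish and the long exact sequence splits into the predicted direct sum. The main obstacle is exactly this bidegree disjointness argument, which crucially uses irreducibility of $X$ to isolate the open top stratum.

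For part (2), when $X$ is smooth, Corollary~\ref{coro:calculateHigherChowGroups} identifies $\Hom{\H(X)}(\un_X,\un_X(i)[j])$ with the higher Chow group $\Chow^i(X,2i{-}j;\kk)$. By part (1) these groups are concentrated on the line $j=2i$, collapsing $\Hyp(X)$ to $\bigoplus_i\Chow^i(X,0;\kk)=\Chow^\bullet(X,\kk)$ as bigraded $\kk$-modules; the ring structures agree since Corollary~\ref{coro:calculateHigherChowGroups} is compatible with external products and pullback along the diagonal. The final identification $\Chow^\bullet(X,\kk)\cong\Chow^\bullet(X,\Z)\otimes_\Z\kk$ follows from the classical fact that the integral Chow ring of a smooth cellular variety is free abelian on classes of stratum closures—proved by exactly the same localisation induction as part (1), but with $\Z$-coefficients, using only that the integral Chow groups of $\A^n$ are $\Z$ in codimension zero and vanish otherwise. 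Flatness of $\Chow^\bullet(X,\Z)$ over $\Z$ then makes base change exact and delivers the desired isomorphism.
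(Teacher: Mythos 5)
Your part (1) takes the same overall route as the paper—induction on the number of strata, the localisation triangle for the open top-dimensional stratum $U$ and its closed complement $Z$, and a vanishing-of-connecting-maps argument—but there is a genuine gap at the key step. You claim that ``by adjunction, the $i_*\un_Z$-term computes the motivic cohomology of $Z$.'' Adjunction (together with $i_*=i_!$ for a closed immersion) actually gives
\begin{equation*}
\Hom{\H(X)}\bigl(i_*\un_Z,\un_X(a)[b]\bigr)\cong\Hom{\H(Z)}\bigl(\un_Z,i^!\un_X(a)[b]\bigr),
\end{equation*}
and $i^!\un_X$ is \emph{not} the unit $\un_Z$. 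Identifying $i^!\un_X$ with $\un_Z(-d)[-2d]$, where $d$ is the codimension of $Z$ in $X$, is the substantive purity input (the paper invokes relative purity at exactly this point; this is what pins down the shift by $(-d)[-2d]$). The twist is not cosmetic: it is what carries the $Z$-contribution out of bidegree $(0,0)$. Without it the identity $\un_Z\to\un_Z$ furnishes a class in bidegree $(0,0)$ of $\Hyp(Z)$, which then collides with the $U$-contribution, and your bidegree-disjointness argument fails. Your assertion that the $Z$-terms ``correspond to strata of dimension strictly less than $n$'' conflates the dimensions of the strata with their positions in the bigraded ring: the strata of $Z$ index $\Hyp(Z)$ by their codimension \emph{in $Z$}, and only after the $(-d)[-2d]$ shift do they index $\Hyp(X)$ by codimension in $X$, which is what separates them from the codimension-$0$ contribution of $U$. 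Once the twist is inserted your argument and the paper's coincide (the paper phrases the degeneration as the two outer terms being concentrated in even cohomological degrees, which is all one really needs to kill the connecting maps). For part (2) your reduction via Corollary~\ref{coro:calculateHigherChowGroups} and the freeness of the integral Chow groups (obtained by running the same localisation induction with $\Z$-coefficients) matches the paper, which cites Bloch and Levine for the higher-Chow localisation sequence; note that what you want here is freeness of the integral groups, not just flatness, to kill the Tor term in the universal-coefficient comparison.
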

\begin{proof}
	(1) follows by a standard argument using an induction on the number of strata, which we quickly repeat here. Denote by $j:\A^n =U\hookrightarrow X$ the inclusion of an open stratum in $X$ and by $i:Z\hookrightarrow X$ its closed complement. Let $p:X\rightarrow\Spec(k)$ be the structure map. Then by the localization property we have the distinguished triangle in $\Derb{\kk\mod}$
	\begin{equation}
\disttriangle{p_*i_*i^!\un_X}{p_*\un_X}{p_*j_*j^*\un_X}
\end{equation}
For the right hand side we have 
\begin{equation}
p_*j_*j^*\un_X=p_*j_*\un_U=\un_{\Spec(k)}.
\end{equation}
 
	Denote by $d$ the codimension of $Z$ in $X$, then by relative purity we have  \begin{equation}
p_*i_*i^!\un_X=p_*i_*\un_Z(-d)[-2d]
\end{equation}
for the left hand side and its cohomology is (up to the shift / twist) the motivic cohomology of $Z$, for which we can apply the induction hypothesis. In particular, the left hand side $p_*i_*i^!\un_X$ is concentrated in even cohomological degrees and the statement follows by the cohomology long exact sequence associated to the distinguished triangle together with Lemma \ref{prop:mixedtateonstrata}.\\
	(2) The first equality follows from (1) and Corollary \ref{coro:calculateHigherChowGroups} (here we need to assume that $X$ is smooth). The second equality follows since the Chow groups of an affinely stratified variety are indeed free. One can see this by using the same arguments as in (1) and the localization property for higher Chow groups, see \cite{Blo86} and \cite{Lev94}.
\end{proof}

So in our setting the Erweiterungssatz reads:

\begin{theorem}[Erweiterungssatz]\label{thm:Erweiterungssatz} Let $(X,\Ss)$ be an affinely Whitney--Tate stratified and proper variety and $E,F\in \DMT{\Ss}{X}$ pointwise pure. Assume additionally that for each embedding $j$ of a stratum $\Hyp E\rightarrow \Hyp j_*j^*E$ is surjective and
 $\Hyp j_!j^! F\rightarrow \Hyp F$ is injective. Then hypercohomology induces an isomorphism
 \begin{equation}
\Hom{\H(X)}(E,F)\stackrel{\sim}{\rightarrow}\Hom{\Hyp(X)}^{\Z\times\Z}(\Hyp(E),\Hyp(F))
\end{equation}
where the right hand denotes morphisms of bigraded $\Hyp(X)$-modules.
\end{theorem}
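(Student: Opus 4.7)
The plan is to proceed by induction on the number of strata $|\Ss|$. The base case $|\Ss|=1$ forces $X=\mathrm{pt}$ by properness, in which case $\Hyp(X)=\kk$ and pointwise pure weight-zero motives are direct sums of $\un(p)[2p]$; the claim then reduces to the calculation $\Hom_{\H(\mathrm{pt})}(\un(p)[2p],\un(q)[2q])=\kk$ if $p=q$ and $0$ otherwise, i.e.\ Corollary~\ref{coro:fieldChowGroups}. For the inductive step, pick an open stratum $j:U=\A^n\hookrightarrow X$ with closed complement $i:Z\hookrightarrow X$. The stratified variety $Z$ inherits the affinely Whitney-Tate property, is proper as a closed subscheme of $X$, carries strictly fewer strata, and the pointwise purity plus the hypercohomology hypotheses for $i^*E$ and $i^!F$ follow from those for $E,F$ (the relevant strata of $Z$ are strata of $X$, the hypothesis being stated stratum by stratum), so the inductive hypothesis applies on $Z$.

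On the morphism side, apply $\Hom_{\H(X)}(-,F[n])$ to the localisation triangle $j_!j^*E\to E\to i_*i^*E\to$. Adjunction and the identity $j^!=j^*$ (open immersion, relative purity) turn the outer terms into $\Hom_U(j^*E,j^!F[n])$ and $\Hom_Z(i^*E,i^!F[n])$. Pointwise purity of $E,F$ makes $j^*E$, $i^*E$ pointwise $*$-pure of weight zero on $U$, $Z$ and $j^!F$, $i^!F$ pointwise $!$-pure of weight zero, so Lemma~\ref{lem:pointwisepuremorphism} kills these $\Hom$'s for $n\neq 0$ and collapses the long exact sequence to
\begin{equation*}
0\to \Hom_Z(i^*E,i^!F)\to \Hom_X(E,F)\to \Hom_U(j^*E,j^!F)\to 0. \quad (\star)
\end{equation*}
On the hypercohomology side, apply $\Hyp$ to the dual triangles $i_*i^!E\to E\to j_*j^*E$ and $j_!j^*F\to F\to i_*i^*F$: the surjectivity and injectivity assumptions upgrade the resulting long exact sequences to short exact sequences of bigraded $\Hyp(X)$-modules, in which adjunction identifies $\Hyp(i_*i^!E)=\Hyp(Z,i^!E)$ and $\Hyp(j_*j^*E)=\Hyp(U,j^*E)$ as $\Hyp(X)$-modules on which $\Hyp(X)$ acts via $\Hyp(X)\to\Hyp(Z)$ respectively $\Hyp(X)\to\Hyp(U)$, and analogously for $F$. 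Now we assemble a four-term exact sequence
\begin{equation*}
0\to \Hom_{\Hyp(Z)}(\Hyp i^*E,\Hyp i^!F)\to \Hom_{\Hyp(X)}(\Hyp E,\Hyp F)\to \Hom_{\Hyp(U)}(\Hyp j^*E,\Hyp j^!F)\to 0, \quad (\star\star)
\end{equation*}
compare it with $(\star)$ via the natural map induced by $\Hyp$, identify the outer terms on the two sides by induction (for $Z$) and by the base case (for $U=\A^n$, where $\Hyp(U)=\kk$ and both pure pieces split as sums of $\un(p)[2p]$), and conclude with the five-lemma.

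The hard part will be the construction and exactness of $(\star\star)$. Concretely, one must check that any $\Hyp(X)$-linear $\phi:\Hyp E\to\Hyp F$ necessarily carries the submodule $\Hyp(i_*i^!E)\subset\Hyp E$ into the submodule $\Hyp(j_!j^*F)\subset\Hyp F$, and that every compatible pair of morphisms on the $Z$- and $U$-pieces lifts to a global $\phi$. This is exactly the role of the two hypercohomology hypotheses: they assert that $\Hyp E$ and $\Hyp F$ split cleanly as extensions of a ``$U$-part'' by a ``$Z$-part'' in which elements of $\Hyp(X)$ coming from Theorem~\ref{thm:chowringcellular} act with disjoint support, so that any $\Hyp(X)$-linear composite $\Hyp(i_*i^!E)\hookrightarrow\Hyp E\xrightarrow{\phi}\Hyp F\twoheadrightarrow\Hyp(i_*i^*F)$ must vanish for support-degree reasons, and so that lifting morphisms amounts to a free diagram chase once the filtration compatibility is in place.
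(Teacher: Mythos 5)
The paper offers no in-text proof of this theorem, deferring entirely to \cite[Theorem 8.4]{SoeWe}, so I assess your proposal on its own merits. Your motive-side reduction to the short exact sequence $(\star)$ is correct: the localization long exact sequence collapses by Lemma~\ref{lem:pointwisepuremorphism} exactly as you say, and the inductive setup (properness and the Whitney-Tate and pointwise-purity hypotheses passing to $Z$, the base case being a point by properness) is sound.

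The genuine gap is exactly where you flag ``the hard part'': the sequence $(\star\star)$ is not well-defined as written, and the ``disjoint support'' hand-wave is not an argument. Two concrete problems. First, $\Hyp j_!j^!F = \bigoplus_{a,b}\Hom_X(\un_X, j_!j^!F(a)[b])$ is a $\Hyp(X)$-module, and there is no adjunction identifying it with $\Hyp_U(j^!F)$: the identifications you invoke work for $\Hyp(i_*i^!E)\cong\Hyp_Z(i^!E)$ via $i^*\dashv i_*$ and for $\Hyp(j_*j^*E)\cong\Hyp_U(j^*E)$ via $j^*\dashv j_*$, but $j_!$ is a \emph{left} adjoint and $\Hom_X(\un_X,j_!(-))$ does not unwind. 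So the term $\Hom_{\Hyp(U)}(\Hyp j^*E,\Hyp j^!F)$ is not the natural thing to compare with $\Hom_U(j^*E,j^!F)$. Second, and more seriously: even after replacing the right-hand term with $\Hom_{\Hyp(U)}(\Hyp_U(j^*E),\Hyp_U(j^*F))$ and using the restriction maps $\Hyp E\twoheadrightarrow\Hyp_U(j^*E)$, $\Hyp F\to\Hyp_U(j^*F)$, there is no natural map out of $\Hom_{\Hyp(X)}(\Hyp E,\Hyp F)$ unless one proves that every bigraded $\Hyp(X)$-linear $\phi$ carries $\mathrm{im}\bigl(\Hyp i_*i^!E\to\Hyp E\bigr)$ into $\mathrm{im}\bigl(\Hyp i_*i^!F\to\Hyp F\bigr)$. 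This ``support filtration is preserved by arbitrary module maps'' claim is the actual content of the Erweiterungssatz and is a theorem, not a formality: a priori $\mathrm{im}(\Hyp i_*i^!E\to\Hyp E)$ need not coincide with an ideal-theoretic submodule such as $\Hyp(X)_Z\cdot\Hyp E$ (which a module map \emph{would} automatically preserve), and proving such a coincidence requires real geometric input — control of Gysin maps, the bigraded structure, and both hypercohomology hypotheses simultaneously — none of which your sketch supplies. Until that is done, $(\star\star)$ is neither defined nor exact, and the five-lemma step does not apply.
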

\begin{proof}
	\cite[Theorem 8.4]{SoeWe}.
\end{proof}

\begin{corollary} \label{coro:pwPureErweiterungssatz}
 Let $(X,\Ss)$ be an affinely Whitney--Tate stratified and proper variety and assume that	all objects in $\DMT{\Ss}{X}_{w=0}$ satisfy the conditions of Theorem \ref{thm:Erweiterungssatz}. Then hypercohomology induces a fully faithful embedding
 \begin{equation}
\Hyp:\DMT{\Ss}{X}_{w=0}\rightarrow \operatorname{mod}^{\Z\times\Z}\operatorname{-} \Hyp(X)
\end{equation}
of weight zero motives into the category of bigraded $\Hyp(X)$-modules. We denote the essential image by $\operatorname{Smod}^{\Z\times\Z}\operatorname{-} \Hyp(X)$.
\end{corollary}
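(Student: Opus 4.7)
The plan is to deduce this corollary in a nearly tautological way from the Erweiterungssatz (Theorem~\ref{thm:Erweiterungssatz}). The first step is to verify that $\Hyp$ extends to a well-defined functor $\DMT{\Ss}{X}_{w=0} \to \operatorname{mod}^{\Z\times\Z}\operatorname{-}\Hyp(X)$. For this, any morphism $\phi: E \to F$ in $\H(X)$ induces a bigraded abelian group homomorphism $\Hyp(\phi): \Hyp(E) \to \Hyp(F)$ by post-composition with the appropriate twists and shifts of $\phi$. Linearity over $\Hyp(X)$ is automatic: the right $\Hyp(X)$-module structure on $\Hyp(E)$ is defined by pre-composing representatives $\un_X \to E(i)[j]$ with classes $\un_X \to \un_X(a)[b]$, and this operation manifestly commutes with post-composition by $\phi$.

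For full faithfulness, I need to show that for every pair $E, F \in \DMT{\Ss}{X}_{w=0}$, the induced map
\[
\Hom{\H(X)}(E,F) \to \Hom{\Hyp(X)}^{\Z\times\Z}(\Hyp(E),\Hyp(F))
\]
is a bijection. But by the standing hypothesis of the corollary, both $E$ and $F$ are pointwise pure and satisfy the surjectivity/injectivity conditions on stratum restrictions required by Theorem~\ref{thm:Erweiterungssatz}. Applying the theorem to the pair $(E, F)$ yields exactly this bijection, establishing full faithfulness. The essential image is then defined to be $\operatorname{Smod}^{\Z\times\Z}\operatorname{-}\Hyp(X)$.

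There is no real obstacle here: the entire content of the corollary is contained in Theorem~\ref{thm:Erweiterungssatz}, and the proof amounts to repackaging that theorem as the fully faithfulness of a concretely defined functor. The only mildly nontrivial bookkeeping is the $\Hyp(X)$-linearity check on morphisms, which follows formally from the definition of the module structure via the bigraded algebra $\Hyp(\un_X) = \Hyp(X)$.
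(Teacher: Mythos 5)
Your proposal is correct and matches the approach implicit in the paper: the corollary is stated without proof precisely because it is a direct repackaging of Theorem~\ref{thm:Erweiterungssatz}, and your two steps (verifying $\Hyp(X)$-linearity of $\Hyp(\phi)$ by associativity of composition, then invoking the Erweiterungssatz on every pair of weight-zero objects using the standing hypothesis) are exactly what is needed.
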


\begin{remark} For $X$  affinely stratified, $\Hyp(X)$ is concentrated in degrees $(2i,i)$ by Theorem \ref{thm:chowringcellular}. By similar arguments the hypercohomology of all pointwise pure weight zero motives will also only live in degrees $(2i,i)$, hence in fact we have a fully faithful embedding
		\begin{equation}
\Hyp:\DMT{\Ss}{X}_{w=0}\rightarrow \operatorname{mod}^{\Z}\operatorname{-} \Hyp(X)
\end{equation}
into the category of $\Z$-graded modules, with respect to the diagonal grading.
\end{remark}
%auto-ignore
% !TEX root = Dmain.tex
\section{Parity motives, perverse motives and the flag variety} \label{sec:parityMotives}
In this section we want to study certain interesting subcategories of our category of stratified mixed Tate motives. The general principle here is that everything works as one is used to from constructible étale sheaves or mixed Hodge modules. As in the last section, all varieties are over $\overline{\F}_p$ and $\kk$ is an arbitrary field of characteristic $p$, which we will often drop from the notation.

\subsection{Parity motives} \label{sec:parity}
Parity sheaves were first used by \cite{Soe00} as a substitute for intersection complexes in a setting where the decomposition theorem (c.f. \cite{BBD}, \cite{Saito}, \cite{CM}) does not hold in general, namely for constructible sheaves with modular coefficients. Then \cite{JMW} properly axiomatized and classified them. In this section we want to recall their properties and argue that the whole theory works fine in the setting of motives. All one really needs is a six functor formalism, as already stated in the introduction of \cite{JMW}. We want to remark that our situation is simpler, since we just consider affine strata and hence only trivial local systems. Furthermore,  \cite{JMW} makes extensive use of the fact that the surrounding constructible derived category of sheaves is Krull--Remak--Schmidt, meaning that every object is isomorphic to a finite direct sum of objects, each of which has local endomorphism ring. This is also the case for stratified mixed Tate motives.
\begin{lemma} Let $(X,\Ss)$ be an affinely Whitney--Tate stratified variety. Then $\DMT{\Ss}{X}$ is Krull-Remak-Schmidt.
\end{lemma}
\begin{proof} First of all $\DMT{\Ss}{X}$ is idempotent complete by \cite{LE2007452}, since it admits a bounded $t$-structure (for example the perverse $t$-structure defined in Section \ref{subsec:perversemotives}). Now $\End{\H(X)}(M)$ is a finitely generated $\kk$-algebra for every $M\in \DMT{\Ss}{X},$ as an inductive argument on the number of strata and the localisation long exact sequence easily shows.  Now \cite[Corollary A.2]{doi:10.1080/00927870701649184} shows that $\DMT{\Ss}{X}$ is Krull--Remak--Schmidt.
\end{proof}

\begin{definition} Let $(X,\Ss)$ be an affinely Whitney--Tate stratified variety.
A motive $E \in \DMT{\Ss}{X}$ is called \emph{parity} if it can be decomposed into a direct sum $E_1 \oplus E_2$ such that for all $s \in \Ss$ and $?\in\{!,*\}$ we have
\begin{align*}
\pazocal H^i(j^?_s E_k)=0\text{ if }i \not\equiv k\text{ mod }2.
\end{align*}
We denote the full subcategory of motives which are parity by \begin{equation}
\Par{\Ss}{X,\kk}=\Par{\Ss}{X}\subseteq\DMT{\Ss}{X}.
\end{equation}
%If $\Ff$ is additionally indecomposable, we call it a \emph{parity motive}.
\end{definition}

As opposed to intersection complexes, the existence of parity sheaves/motives with prescribed support is not known in general, only their uniqueness.
\begin{theorem}\label{thm:uniquenessparitysheaves}
	For all $s\in \Ss$ there exists (up to isomorphism) at most one indecomposable parity motive $E_s$ supported on $\overline{X}_s$ with $j^*_s E_s=\un_{X_s}$.
\end{theorem}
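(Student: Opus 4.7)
The plan is to adapt the standard classification argument for parity sheaves of Juteau--Mautner--Williamson, whose only real input is a six functor formalism plus a parity-shifted Hom-vanishing lemma. First I would observe that since $E_s$ is indecomposable and parity motives split as a sum of ``even'' and ``odd'' parts by definition, $E_s$ itself must be of uniform parity. Since $j_s^* E_s = \un_{X_s}$ is concentrated in cohomological degree $0$, $E_s$ is of even parity: $\pazocal H^i(j_t^? E_s) = 0$ for all odd $i$, all $t \in \Ss$, and $? \in \{*, !\}$. Given a second such indecomposable $F_s$ with $j_s^* F_s = \un_{X_s}$, also even, the task is to construct an isomorphism $E_s \cong F_s$.

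The key technical input is the following analogue of Lemma~\ref{lem:pointwisepuremorphism}: if $E$ is $*$-even parity and $F$ is $!$-even parity in $\DMT{\Ss}{W}$ for any affinely Whitney--Tate stratified $W$, then $\Hom_{\H(W)}(E, F[n]) = 0$ whenever $n$ is odd. The proof is the same induction on the number of strata: choosing an open stratum $j: U \cong \A^m \hookrightarrow W$ with closed complement $i: Z \hookrightarrow W$, the triangle $j_!j^*E \to E \to i_*i^*E$ reduces the statement to the corresponding Hom on $U$, which vanishes by Proposition~\ref{prop:mixedtateonstrata} (the derived category of graded $\kk$-vector spaces is semisimple, so any odd shift kills Hom between even-concentrated objects), together with the inductive hypothesis on $Z$.

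Next I would prove surjectivity of the restriction map. Setting $W = \overline{X}_s$ with open inclusion $j: X_s \hookrightarrow W$ and closed complement $i: W \setminus X_s \hookrightarrow W$, the localisation triangle $i_* i^! F_s \to F_s \to j_* j^* F_s$ yields a long exact sequence
\begin{equation*}
\Hom(E_s, i_*i^!F_s) \to \Hom(E_s, F_s) \to \Hom(j_s^*E_s, j_s^*F_s) \to \Hom(E_s, i_*i^!F_s[1]).
\end{equation*}
By adjunction, $\Hom(E_s, i_*i^!F_s[1]) = \Hom(i^*E_s, i^!F_s[1])$, which vanishes by the Hom-vanishing claim applied to $W \setminus X_s$ since $i^*E_s$ is $*$-even and $i^!F_s$ is $!$-even. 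Hence $\Hom(E_s, F_s) \twoheadrightarrow \End(\un_{X_s}) = \kk$ is surjective, so one obtains $\phi: E_s \to F_s$ restricting to $\id_{\un_{X_s}}$; symmetrically, a $\psi: F_s \to E_s$ with the same property.

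Finally, I would close with a Krull--Schmidt argument. The composition $\psi\phi \in \End(E_s)$ restricts to the identity on $X_s$. Assuming $\End(E_s)$ is a (finite-dimensional, hence local) $\kk$-algebra, every endomorphism is either nilpotent or invertible; if $\psi\phi$ were nilpotent then so would its image $j_s^*(\psi\phi) = \id$, a contradiction. Therefore $\psi\phi$, and symmetrically $\phi\psi$, are isomorphisms, so $\phi$ is the desired isomorphism $E_s \cong F_s$. The main obstacle is justifying Krull--Schmidt: one needs that $\End(E_s)$ is finite-dimensional over $\kk$ (whence local, $E_s$ being indecomposable). This should follow from compactness of $E_s$ together with the finiteness encoded by the Hom-vanishing above and the Erweiterungssatz-style embedding of weight zero objects into finitely generated bigraded $\Hyp(X)$-modules (Corollary~\ref{coro:pwPureErweiterungssatz}), but is the point at which some care is required, especially if one wishes to avoid imposing conditions beyond those already in force.
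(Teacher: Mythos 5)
Your argument is a correct and careful unpacking of the proof of Theorem 2.12 in Juteau--Mautner--Williamson, which is precisely what the paper's proof invokes (``translates unchanged''). Regarding the point you flag---finite-dimensionality of $\operatorname{End}(E_s)$---your suggested route via Corollary~\ref{coro:pwPureErweiterungssatz} is not the right tool: the Erweiterungssatz requires pointwise purity, which is neither hypothesised here nor automatic for an arbitrary indecomposable parity motive. Instead, one shows directly by induction over the number of strata, using the localisation triangles and Proposition~\ref{prop:mixedtateonstrata}, that $\operatorname{Hom}$-spaces between any two objects of $\DMT{\Ss}{X}$ are finite-dimensional over $\kk$; combined with the idempotent-completeness inherited stratum-by-stratum from Proposition~\ref{prop:mixedtateonstrata}, indecomposability then forces $\operatorname{End}(E_s)$ to be local, as you say.
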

\begin{proof}
\cite[Theorem 2.12]{JMW} translates unchanged.
\end{proof}
The following theorem is a useful tool for constructing parity motives. It can be thought of as an analogue of the decomposition theorem.
\begin{proposition}\label{prop:propermapsparity}
 	Let $(X,\Ss)$ and $(Y,\Ss^\prime)$ be affinely Whitney--Tate stratified varieties and $f:X\rightarrow Y$ a proper affinely stratified map. Then $f_!=f_*$ preserves the parity condition.
\end{proposition}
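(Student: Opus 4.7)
The plan is to verify the two parity vanishings ($\pazocal H^i(v_t^? f_* E) = 0$ for $i \not\equiv k \pmod 2$, both for $? = *$ and $? = !$) stratum by stratum in $Y$. By additivity of $f_*$ I reduce at once to the case $E = E_k$ of a single parity $k \in \{0,1\}$, so that for every $s \in \Ss$ and $? \in \{*,!\}$, $j_s^? E$ has cohomology concentrated in degrees $\equiv k \pmod 2$.

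Fix a stratum $v_t : Y_t \hookrightarrow Y$. For the $*$-restriction, properness gives $f_* = f_!$, and base change applied to the cartesian square
\begin{center}
\cartesiandiagramwithmaps{f^{-1}(Y_t)}{w}{X}{g}{f}{Y_t}{v_t}{Y}
\end{center}
yields $v_t^* f_* E \cong g_! w^* E$. Since $f$ is affinely stratified, $f^{-1}(Y_t)$ is a finite union of strata of $X$, and I proceed by induction on their number, in the spirit of Proposition~\ref{prop:sixfunctorsandstratifiedmaps}. Choosing an open stratum $j : X_s \hookrightarrow f^{-1}(Y_t)$ with closed complement $i$, the localisation triangle for $(i,j)$ applied to $w^*E$ and then pushed forward by $g_!$ gives
\begin{center}
\disttriangle{g_! j_! j^* w^* E}{g_! w^* E}{g_! i_! i^* w^* E}
\end{center}
The right-hand term is of parity $k$ by induction. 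For the left-hand term, $g \circ j : X_s \to Y_t$ is a linear projection $\A^n \times \A^m \to \A^m$ and $j^* w^* E = j_s^* E$ is parity $k$ by hypothesis; by Lemma~\ref{lem:sixfunctorsonstrata} the functor $(g\circ j)_!$ sends each summand $\un(a)[b]$ (with $b \equiv k \pmod 2$) to $\un(a{-}n)[b{-}2n]$, preserving the parity of the cohomological degree. Hence the left-hand term is also parity $k$, and the long exact sequence in the t-structure on $\DT{Y_t}$ from Proposition~\ref{prop:mixedtateonstrata} forces the middle term to lie in parity $k$.

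For the $!$-restriction, I pivot to Verdier duality. Properness of $f$ gives $\Dual_Y f_* E \cong f_* \Dual_X E$, and the identity $\Dual_{Y_t} v_t^*(-) \cong v_t^! \Dual_Y(-)$ from Theorem~\ref{theo:propertiesofH}\eqref{theo:propertiesofH:dual} (together with $\Dual\Dual = \id$ on compact objects) yields
\begin{equation*}
v_t^! f_* E \;\cong\; \Dual_{Y_t}\bigl(v_t^* f_* \Dual_X E\bigr).
\end{equation*}
The parity condition is symmetric in $!$ and $*$, and Verdier duality on an affine stratum $\A^n$ sends $\un(p)[q]$ to $\un(n{-}p)[2n{-}q]$ by Lemma~\ref{lem:sixfunctorsonstrata}, preserving parity mod $2$. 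Hence $\Dual_X E$ is again parity $k$; the $*$-case just proved applied to $\Dual_X E$ shows $v_t^* f_* \Dual_X E$ has cohomology of parity $k$; and a final application of $\Dual_{Y_t}$ on $Y_t \cong \A^{\dim Y_t}$ preserves this.

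The main technical point is that parity propagates through the distinguished triangle in the second paragraph. This is immediate because $\DT{Y_t}$ is the derived category of graded $\kk$-vector spaces (Proposition~\ref{prop:mixedtateonstrata}), so the long exact sequence of cohomology objects shows that if both outer terms have $\pazocal H^i = 0$ for $i \not\equiv k \pmod 2$, the middle term does too; everything else in the argument is a bookkeeping exercise built on base change, relative purity for affine-space projections, and Verdier duality.
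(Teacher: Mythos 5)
Your proof is correct and follows exactly the route the paper intends: the paper's own ``proof'' is a one-line reference to \cite[Prop.~2.34]{JMW} and to the analogous argument in Proposition~\ref{prop:sixfunctorsandstratifiedmaps}, and your write-up is precisely that argument carried out with parity bookkeeping (base change, induction over strata of $f^{-1}(Y_t)$ via the localisation triangle, the even shift from Lemma~\ref{lem:sixfunctorsonstrata}, and Verdier duality for the $!$-restrictions). No gaps.
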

\begin{proof}
This is proven in \cite[Prop. 2.34]{JMW}. Their condition of $f$ being even is not needed/trivially fulfilled in our setting. One can also show the statement analogously to the proof of Proposition \ref{prop:sixfunctorsandstratifiedmaps}.
%, which we want to show here.
%
%Let $E\in\DMT{\Ss}{X}$ be parity and assume w.l.o.g. that the cohomology of $E$ vanishes in odd degrees. Let $v:Y_s\hookrightarrow Y$ be the inclusion of a stratum. We show that $\pazocal H^i(v^*f_! E)$ vanishes in odd degrees by an induction on the number of strata in $f^{-1}(Y_s)$. We have $v^*f_! E=g_!w^*E$ by applying base change for the following cartesian diagram:
%\begin{center}
%	\cartesiandiagramwithmaps{f^{-1}(Y_s)}{w}{X}{g}{f}{Y_s}{v}{Y}
%\end{center}
%Denote by $j:X_s\hookrightarrow f^{-1}(Y_s)$ an open stratum and by $i:Z\hookrightarrow f^{-1}(Y_s)$ the inclusion of the closed complement.
%We obtain the distinguished triangle
%\begin{center}
%	\disttriangle{g_!j_!j^*w^*E}{g_!w^*E}{g_!i_!i^*w^*E}
%\end{center}
%As 
%\todo{reprove this, but use that all our strata are affine, and the induced map on the strata is a projection, here we just need localization(easier proof)}
\end{proof}
Under the condition that the closures of all strata admit proper resolutions we can identify the additive category of finite direct sums of (appropriately shifted and twisted) parity motives with the category of weight zero motives.

\begin{theorem}\label{thm:paritymotives}
	Let $(X,\Ss)$ be an affinely Whitney--Tate stratified variety and assume that for every $s\in\Ss$ there exists a proper, affinely stratified map
	\begin{equation}
\pi: \widetilde{X}_s\rightarrow\overline{X}_s\subset X
\end{equation}
	with $\widetilde{X}_s$ smooth, inducing an isomorphism over $X_s$.
	Then for every $s\in\Ss$ there exists an indecomposable, pointwise pure parity motive $E_s\in \Par{\Ss}{X}$ with $j_s^*E_s=\un_{X_s}$. The objects in $\Par{\Ss}{X}_{w=0}$ are the motives isomorphic to finite direct sums of the $E_s(n)[2n]$ for $n\in\Z$, $s\in\Ss$ and we have
	\begin{equation}
\DMT{\Ss}{X}_{w=0}=\Par{\Ss}{X}_{w=0}.
\end{equation}
	Furthermore every parity motive is a direct sum of motives in $\Par{\Ss}{X}_{w=0}$.
\end{theorem}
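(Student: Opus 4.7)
My plan is to adapt the strategy of Juteau--Mautner--Williamson and Soergel--Wendt to the motivic setting, leveraging the six-functor formalism of Section~\ref{sec:motives} and the weight structure developed in this section.

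First, I would construct $E_s$ as an indecomposable summand of $\pi_\ast\un_{\widetilde X_s}$. Since $\widetilde X_s$ is smooth with affine strata, relative purity (Theorem~\ref{theo:propertiesofH}(7)) applied to each stratum embedding $j_{\widetilde{t}}$ of codimension $c$ yields $j_{\widetilde{t}}^!\un_{\widetilde X_s}\cong\un_{X_{\widetilde{t}}}(-c)[-2c]$ and $j_{\widetilde{t}}^\ast\un_{\widetilde X_s}\cong\un_{X_{\widetilde{t}}}$, so $\un_{\widetilde X_s}$ is pointwise pure of weight zero and satisfies the evenness required for parity. Since $\pi$ is proper and affinely stratified, Propositions~\ref{prop:propermapsparity} and~\ref{prop:pointwisepurepropermap} together with weight exactness (Proposition~\ref{prop:weightexactness}(7)) imply that $\pi_\ast\un_{\widetilde X_s}$ is a pointwise pure weight zero parity motive on $X$, supported on $\overline{X}_s$; base change along the isomorphism $\pi|_{X_s}$ gives $j_s^\ast\pi_\ast\un_{\widetilde X_s}\cong\un_{X_s}$. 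A Krull--Schmidt argument then extracts the desired indecomposable summand $E_s$ with $j_s^\ast E_s\cong\un_{X_s}$: Lemma~\ref{lem:pointwisepuremorphism} combined with Proposition~\ref{prop:mixedtateonstrata} shows that endomorphism algebras of pointwise pure weight zero motives are finite-dimensional over $\kk$, hence semiperfect, so idempotents split. Uniqueness is Theorem~\ref{thm:uniquenessparitysheaves}.

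Next I would prove the two displayed assertions together. That finite direct sums of $E_s(n)[2n]$ lie in $\Par{\Ss}{X}_{w=0}$ is clear, since shifting by $(n)[2n]$ preserves both parity and the property of being weight zero (Proposition~\ref{prop:weightexactness}(8)). For the reverse containment and the key equality $\DMT{\Ss}{X}_{w=0}=\Par{\Ss}{X}_{w=0}$, I would induct on $|\Ss|$: for an open stratum $j\colon X_s\hookrightarrow X$ with closed complement $i\colon Z\hookrightarrow X$ and $F\in\DMT{\Ss}{X}_{w=0}$, the restriction $j^\ast F$ is weight zero on $X_s$, hence by Proposition~\ref{prop:mixedtateonstrata} decomposes as $\bigoplus_\alpha\un_{X_s}(n_\alpha)[2n_\alpha]$. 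Using adjunction together with the vanishing of negative Ext groups between pointwise pure weight zero objects (Lemma~\ref{lem:pointwisepuremorphism}), I would lift this to a morphism $\bigoplus_\alpha E_s(n_\alpha)[2n_\alpha]\to F$ that is an isomorphism on $j^\ast$; its cone $C$ satisfies $j^\ast C=0$, is supported on $Z$, and remains weight zero. Induction applied to $i^\ast C$ identifies it as a sum of $E_{s'}(m)[2m]$ with $s'\neq s$, and Lemma~\ref{lem:pointwisepuremorphism} forces the distinguished triangle to split, completing the induction.

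The main obstacle will be this inductive splitting: concretely, realising the summands of $j^\ast F$ by globally defined morphisms $E_s(n_\alpha)[2n_\alpha]\to F$ and verifying the cone splits off cleanly; both points reduce to $\operatorname{Ext}^1$-vanishing between pointwise pure weight zero objects supplied by Lemma~\ref{lem:pointwisepuremorphism}. Granted this, the final clause follows by decomposing an arbitrary parity motive $F=F_0\oplus F_1$ into its even and odd parts and applying the weight-truncation triangles to each $F_k$: each weight piece, shifted back by its weight, is parity and weight zero, hence already classified as a sum of $E_s(n)[2n]$ by the previous step, and the triangles split via Lemma~\ref{lem:pointwisepuremorphism}.
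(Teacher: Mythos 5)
Your construction of $E_s$ is essentially the paper's. One small point: you invoke relative purity (Theorem~\ref{theo:propertiesofH}(7)) at the stratum embeddings $j_{\widetilde t}$ of $\widetilde X_s$, but that statement is for \emph{smooth} morphisms, while $j_{\widetilde t}$ is a locally closed immersion. The conclusion $j_{\widetilde t}^!\un_{\widetilde X_s}\cong\un(-c)[-2c]$ is nonetheless correct; the paper gets it by Verdier duality on the smooth variety $\widetilde X_s$ via Lemma~\ref{lem:sixfunctorsonstrata} (equivalently, one needs the Gysin isomorphism for regular closed immersions, which is how the duality is proved). With that adjustment the first paragraph matches the paper.

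For the identity $\DMT{\Ss}{X}_{w=0}=\Par{\Ss}{X}_{w=0}$ your route diverges: the paper does not carry out an explicit inductive splitting. It observes that the $E_s(n)[2n]$ generate $\DMT{\Ss}{X}$ as a triangulated category, that they are a negative family (via Lemma~\ref{lem:pointwisepuremorphism} and pointwise purity), and then cites Bondarko \cite[Prop.~1.7(6)]{Bon14}, which identifies the heart of a bounded weight structure with the idempotent-closed additive envelope of any negative generating subfamily of the heart. Your argument tries to reprove a special case of this by hand, and there are two issues. The smaller one: to lift the iso $j^*P\cong j^*F$ to $P\to F$ you cite Lemma~\ref{lem:pointwisepuremorphism}, but that lemma requires $i_*i^!F$ to be pointwise $!$-pure of weight zero, and for a general $F\in\DMT{\Ss}{X}_{w=0}$ you only know $j_t^!F\in\DT{X_t}_{w\geq 0}$, which is strictly weaker; the correct justification is the weight-structure orthogonality axiom, since $i^*P\in w_{\leq 0}$ and $i^!F[1]\in w_{\geq 1}$ gives $\Hom(P,i_*i^!F[1])=\Hom(i^*P,i^!F[1])=0$. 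The serious one: the assertion that the cone $C$ ``remains weight zero'' does not follow. Stratum-wise, $j_t^*C$ is the cone in $\DT{X_t}\cong\Derb{\kk\modules^{\Z}}$ of $j_t^*\tilde\phi\colon j_t^*P\to j_t^*F$, where $j_t^*P$ sits in weight exactly $0$ and $j_t^*F$ in weight $\leq 0$; any kernel of $j_t^*\tilde\phi$ in the weight-$0$ slots contributes a weight-$1$ summand to $j_t^*C$, so a priori one only gets $C\in\DMT{\Ss}{X}_{w\geq 0}\cap\DMT{\Ss}{X}_{w\leq 1}$, not $w=0$. To close this you would have to show the lift can be chosen with $j_t^*\tilde\phi$ injective in weight zero for all strata $t$, and you would then also need $C$ pointwise pure before Lemma~\ref{lem:pointwisepuremorphism} splits the triangle, which is again circular. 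These are precisely the combinatorics that Bondarko's proposition handles abstractly, which is why the paper takes that route; as written your inductive step has a genuine gap.
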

\begin{proof}
	Since $\widetilde{X}_s$ is smooth, the dual of $\un_{\widetilde{X}_s}$ is $\Dual_{\widetilde{X}_s}\un_{\widetilde{X}_s}=\un_{\widetilde{X}_s}(\dim X_s)[2\dim X_s]$. Hence the restriction of $\un_{\widetilde{X}_s}$ to all strata of $\widetilde{X}_s$ using $!$ or $*$ is pure of weight zero and concentrated in even cohomological degrees. Hence $\un_{\widetilde{X}_s}$ is parity and pointwise pure of weight zero. Since $\pi$ is proper, $\pi_!\un_{\widetilde{X}_s}$ is also parity and pointwise pure of weight zero by Propositions \ref{prop:pointwisepurepropermap} and \ref{prop:propermapsparity}. Furthermore by base change $j_s^*\pi_!\un_{\widetilde{X}_s}=\un_{X_s}$. So we choose $E_s$ to be the unique indecomposable direct summand of $\pi_!\un_{\widetilde{X}_s}$ with $j_s^*E_s=\un_{X_s}$. %why is there a unique such thing
	By Theorem $\ref{thm:uniquenessparitysheaves}$ we know that these are all weight zero indecomposable parity motives---up to shifting and twisting by $(n)[2n]$. 
	
	The other statements can be proven along the lines of \cite[Corollary 6.7]{SoeWe}. By a standard induction argument one sees that the $E_s(n)[2n]$ generate $\DMT{\Ss}{X}$ as a triangulated category. The pointwise purity and Proposition \ref{prop:pointwisepurepropermap} imply that $\Hom{\H (X)}(E,F[a])=0$ for all $a>0$ and $E,F\in\Par{\Ss}{X}_{w=0}$. By \cite[Proposition 1.7(6)]{Bon14} it follows that 
	\[\Par{\Ss}{X}_{w=0}=\DMT{\Ss}{X}_{w=0}\]
	which concludes the proof.
\end{proof}
The pointwise purity of the indecomposable parity motives allows us to apply the tilting result from the last section and we obtain:

\begin{corollary}\label{cor:paritymotivestilting}
	Under the assumptions of Theorem \ref{thm:paritymotives} there is an equivalence of categories
	\begin{equation}
\DMT{\Ss}{X}\cong\Hotb{\Par{\Ss}{X}_{w=0}}.
\end{equation}
\end{corollary}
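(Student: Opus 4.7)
The plan is to deduce the corollary by plugging Theorem \ref{thm:paritymotives} into Theorem \ref{thm:tilting}. First, I would verify the hypotheses of the tilting theorem: the variety $(X,\Ss)$ is affinely Whitney--Tate by assumption, and by Theorem \ref{thm:paritymotives} every object of $\DMT{\Ss}{X}_{w=0}$ is isomorphic to a finite direct sum of the indecomposable parity motives $E_s(n)[2n]$ constructed there, each of which is pointwise pure. Since pointwise purity is preserved under finite direct sums, shifts by $(n)[2n]$, and direct summands, all weight zero motives are pointwise pure.

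Applying Theorem \ref{thm:tilting} then yields an equivalence $\DMT{\Ss}{X}\stackrel{\sim}{\to}\Hot(\DMT{\Ss}{X}_{w=0})$. By the second assertion of Theorem \ref{thm:paritymotives}, the heart of the weight structure coincides with $\Par{\Ss}{X}_{w=0}$, so this becomes $\DMT{\Ss}{X}\stackrel{\sim}{\to}\Hot(\Par{\Ss}{X}_{w=0})$.

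The only thing left to check is that the image of the tilting functor actually lands in the \emph{bounded} homotopy category $\Hotb{\Par{\Ss}{X}_{w=0}}$. This I would establish by showing that any $E \in \DMT{\Ss}{X}$ has bounded weights, so that the weight complex produced by the tilting construction has only finitely many nonzero terms. Concretely, $\Ss$ is finite and for each stratum $j_s^\ast E$ and $j_s^! E$ live in $\DT{X_s}$, which under the identification of Proposition \ref{prop:mixedtateonstrata} is bounded, so each $j_s^{\ast/!} E$ is concentrated in a bounded range of weights. Gluing along the finitely many strata, $E$ itself lies in $\DMT{\Ss}{X}_{w\leq N}\cap\DMT{\Ss}{X}_{w\geq -N}$ for some $N$, and the tilting complex is bounded in the same range.

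The argument is essentially a routine packaging, so there is no real obstacle. The only mildly subtle point is the passage from the unbounded $\Hot$ in Theorem \ref{thm:tilting} to the bounded $\Hotb$ in the statement; this hinges on finiteness of $\Ss$ together with the explicit description of $\DT{\A^n}$ as bounded graded complexes, which guarantees that tilting converts weight-bounded motives into bounded complexes of weight zero objects.
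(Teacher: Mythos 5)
Your argument is correct and matches the paper's (implicit) proof: the paper simply observes that Theorem~\ref{thm:paritymotives} provides the pointwise purity needed to invoke Theorem~\ref{thm:tilting}, and the identification of the heart with $\Par{\Ss}{X}_{w=0}$ is the second assertion of Theorem~\ref{thm:paritymotives}. Your additional attention to the discrepancy between $\Hot$ in Theorem~\ref{thm:tilting} and $\Hotb$ in the corollary is a legitimate and correctly resolved fine point (bounded weights follow from finiteness of $\Ss$ and Proposition~\ref{prop:mixedtateonstrata}), though the paper treats it as implicit.
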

%\begin{definition}
%	We say that $(X,\Ss)$ \emph{admits enough parity sheaves}, if $\DMT{S}{X}$ is generated by parity sheaves as a triangulated category.
%\end{definition}
%This is the case if for example every $\overline X_s$ admits an even resolution (cf. \cite[Corollary 2.35.]{JMW}) or for (partial) flag varieties with their Bruhat stratification.
%TODO make this right.
%\begin{enumerate}
%	\item Recall conditions under which parity sheaves exist, show that they are pw. pure then!
%	\item combine this with the tilting result
%\end{enumerate}
\subsection{Perverse motives}\label{subsec:perversemotives}
The whole theory of perverse sheaves from \cite[\S1, \S2]{BBD} applies in our setting. Again, all one needs is a six functor formalism. In particular, we can perversely glue the standard $t$-structures on the categories of mixed Tate motives on the strata---recall that they are just derived categories of graded vector spaces---to obtain a perverse $t$-structure on the category of stratified mixed Tate motives on an affinely stratified variety. Our goal is to show that---under a technical assumption---the category of stratified mixed Tate motives can be realized as the derived category of perverse motives, the homotopy category of projective perverse motives, or the homotopy category of tilting perverse motives, respectively.
\begin{definition} %put in a proper reference
	Let $(X,\Ss)$ be an affinely Whitney--Tate stratified variety. Then we obtain a  $t$-structure, called the \emph{perverse $t$-structure}, on $\DMT{\Ss}{X}$ by setting
	\begin{align*}
	\DMT{\Ss}{X}^{p\leq 0}&:=\left\{M\setline j_s^*M \in \DT{X_s}^{\leq-\dim X_s}\text{ for all } s\in \Ss \right\}\text{ and}\\
	\DMT{\Ss}{X}^{p\geq 0}&:=\left\{M\setline j_s^!M \in \DT{X_s}^{\geq-\dim X_s}\text{ for all } s\in \Ss\right\}.
	\end{align*}
	We denote the heart of this $t$-structure by 
	\begin{equation}
	\Per{\Ss}{X,\kk}=\Per{\Ss}{X}.
	\end{equation}
	 This is an abelian category and we call its objects \emph{perverse motives} on $X$.
\end{definition}
\begin{proposition}\label{prop:perverseexactness}
	Let $(X,\Ss)$ be an affinely Whitney--Tate stratified variety and $j:W\rightarrow X$ be an inclusion of a union of strata. Then
	\begin{enumerate}
		\item the functors $j^*,j_!$ are right t-exact, i.e. they preserve $p\leq0$;
		\item the functors $j^!,j_*$  are left t-exact, i.e. they preserve $p\geq0$;
		\item the tensor product is weight left exact, i.e. restricts to
		\begin{equation}
\DMT{\Ss}{X}^{p\leq n}\times\DMT{\Ss}{X}^{p\leq m}\rightarrow\DMT{\Ss}{X}^{p\leq n+m};
\end{equation}
		\item Verdier duality reverses the $t$-structure, i.e. restricts to
		\begin{equation}
\Dual_X: \DMT{\Ss}{X}^{p\leq n,\op}\rightarrow\DMT{\Ss}{X}^{p\geq -n};
\end{equation}
		\item the internal Hom functor $\iHom{X}$ is weight right t-exact, i.e. restricts to
		\begin{equation}
\DMT{\Ss}{X}^{p\leq n,\op}\times\DMT{\Ss}{X}^{p\geq m}\rightarrow\DMT{\Ss}{X}^{p\geq m-n};
\end{equation}
		\item For $j$ smooth $j^!$ and $j^*$ are t-exact;
		\item For $j$ proper $j_!$ and $j_*$ are t-exact.
	\end{enumerate}
\end{proposition}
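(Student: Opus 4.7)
The plan is to mimic the classical BBD arguments \cite[\S 1.4, \S 2.1]{BBD}, which go through formally given the six functor formalism of Theorem~\ref{theo:propertiesofH} and the fact that the stratum categories $\DT{X_s} \cong \Derb{\kk\modules^\Z}$ are semisimple by Proposition~\ref{prop:mixedtateonstrata}, so that tensor product and Verdier duality have particularly transparent behaviour relative to the standard $t$-structure. The key identity I would use throughout is the compatibility $j_s^! \Dual_X \cong \Dual_{X_s} j_s^*$ of Theorem~\ref{theo:propertiesofH}\eqref{theo:propertiesofH:dual}, together with the elementary fact that $\Dual_{X_s} = \iHom{X_s}(-, \un(d_s)[2 d_s])$ carries $\DT{X_s}^{\leq k}$ into $\DT{X_s}^{\geq -k - 2 d_s}$.

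For (1) and (2), I would start from the observation that any stratum inclusion $i_s : X_s \hookrightarrow W$ composes with $j$ to a stratum inclusion of $X$, so $i_s^* j^* M = j_s^* M \in \DT{X_s}^{\leq n - d_s}$ for $M \in \DMT{\Ss}{X}^{p \leq n}$, whence $j^* M \in \DMT{\Ss}{W}^{p \leq n}$. For $j_!$, I would factor $j$ through the closure as $W \hookrightarrow \bar W \hookrightarrow X$ (both parts are inclusions of unions of strata, the first open, the second closed) and combine base change with the localisation triangles of Theorem~\ref{theo:propertiesofH}\eqref{theo:propertiesofH:localisation} to compute $i_s^* j_! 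M$ stratum-by-stratum. The statements for $j^!$ and $j_*$ follow dually, either by adjunction or by the symmetric argument once (4) is established.

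For (3), restrict to strata: $j_s^*$ is strict monoidal, and on $\Derb{\kk\modules^\Z}$ tensor product over the field $\kk$ is $t$-exact in each variable, so $j_s^*(M \otimes N) \in \DT{X_s}^{\leq (n-d_s) + (m-d_s)} \subseteq \DT{X_s}^{\leq n + m - d_s}$, where the last inclusion uses $d_s \geq 0$; this yields $M \otimes N \in \DMT{\Ss}{X}^{p \leq n+m}$. For (4), starting from $M \in \DMT{\Ss}{X}^{p \leq n}$, the chain $j_s^! \Dual_X M \cong \Dual_{X_s}(j_s^* M) \in \DT{X_s}^{\geq -(n-d_s) - 2 d_s} = \DT{X_s}^{\geq -n - d_s}$ is exactly the condition for $\Dual_X M \in \DMT{\Ss}{X}^{p \geq -n}$.

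Parts (5), (6), (7) are then short. Statement (5) follows by combining (3) and (4) via $\iHom{X}(E,F) \cong \Dual_X(E \otimes \Dual_X F)$ from Theorem~\ref{theo:propertiesofH}\eqref{theo:propertiesofH:dual}. For (6), a smooth inclusion is forced to be an open immersion (relative dimension zero gives \'etale, and \'etale inclusions are open), so $j^* \cong j^!$ and combining the one-sided exactnesses of (1)-(2) yields $t$-exactness. For (7), a proper inclusion is a closed immersion, so $j_! \cong j_*$ by Theorem~\ref{theo:propertiesofH}\eqref{theo:propertiesofH:properRightAdjoint} and the same argument applies. The one non-trivial step I anticipate is the $j_!, j_*$ case in (1)-(2) for a general locally closed $j$, which requires careful bookkeeping with base change along the open-closed factorisation; everything else is a direct reduction to the stratum-by-stratum picture of Proposition~\ref{prop:mixedtateonstrata}.
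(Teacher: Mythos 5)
Your proposal is correct, and it is essentially the paper's approach made explicit: the paper's proof simply cites \cite[Propositions 2.1.6 and 2.1.20]{BBD}, and what you sketch is exactly how one runs those arguments in the present setting, using the stratum-by-stratum characterisation of the perverse $t$-structure, Verdier duality compatibility from Theorem~\ref{theo:propertiesofH}, and the open-closed factorisation of a locally closed stratified inclusion. Your bookkeeping for (3)--(5) is also correct (one minor looseness: the stratum categories are derived categories of semisimple abelian categories rather than ``semisimple,'' and the tensor statement you want is $D^{\leq a} \otimes D^{\leq b} \subseteq D^{\leq a+b}$ rather than literal $t$-exactness, but these are the standard paraphrases and do not affect the argument).
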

\begin{proof}
	See \cite[Propositions 2.1.6 and 2.1.20]{BBD}.
\end{proof}
\begin{definition} Let $(X,\Ss)$ be an affinely Whitney--Tate stratified variety. Then   $$\Delta_s:=j_{s,!}\un_{X_s}[\dim X_s]\in\DMT{\Ss}{X}$$ is called a \emph{standard object} and
 $$\nabla_s:=j_{s,*}\un_{X_s}[\dim X_s]\in\DMT{\Ss}{X}$$ is called a \emph{costandard object.} 
\end{definition} 
Here we encounter a technical difficulty. For étale sheaves, Artin's vanishing theorem \cite[XIV, Theorem 3.1]{SGA43} implies that affine maps are exact with respect to the perverse $t$-structure. In particular for an affinely stratified variety the standard and costandard objects in the category étale sheaves are perverse. There is no motivic proof of this fact yet, and hence we have to make this an additional assumption in the following statements.  Let $(X,\Ss)$ be an affinely Whitney--Tate stratified variety, we will often assume:
\begin{enumerate}
\item[\dagg]  All standard objects or equivalently all costandard objects are perverse, i.e., $\nabla_s, \Delta_s\in\Per{\Ss}{X}$ for all $s\in\Ss.$
\end{enumerate}
We will show that for example flag varieties fulfill \dagg.
%It will be however fulfilled for under an additional assumption on our stratification, as we will see in the end of this section.
\begin{definition}
Let $(X,\Ss)$ be an affinely Whitney--Tate stratified variety fulfilling \dagg.
We say that $E\in\Per{\Ss}{X}$ has \begin{enumerate}
	\item a \emph{standard flag} if $E$ has a filtration whose subquotients are standard objects $\Delta_s(a)$ for $s\in\Ss$, $a\in \Z$ or 
	\item a \emph{costandard flag} if $E$ has a filtration whose subquotients are costandard objects $\nabla_s(a)$ for $s\in\Ss$, $a\in \Z$.
\end{enumerate}
\end{definition}
\begin{proposition}\label{prop:enoughprojperversemotives}
	Let $(X,\Ss)$ be an affinely Whitney--Tate stratified variety fulfilling \dagg. Then $\Per{\Ss}{X}$ has enough projective and injective objects. Furthermore the projective objects have a standard flag and the injective objects have a costandard flag.
\end{proposition}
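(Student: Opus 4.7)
The plan is to verify that $\Per{\Ss}{X}$ is a highest weight category (in the sense of Cline--Parshall--Scott) with weight poset $\Ss \times \Z$, ordered by the closure order on strata together with the discrete order on Tate twists, and with standards $\Delta_s(a)$ and costandards $\nabla_s(a)$; the existence of enough projectives and injectives with the prescribed flag properties then follows from the standard theory of quasi-hereditary categories, along the lines of the analogous characteristic-zero argument in \cite{SoeWe}. First I would classify the simple objects and show that the category has finite length: by induction on the number of strata, applying the perverse cohomology long exact sequence to the recollement triangle associated to an open stratum and its closed complement, every perverse motive admits a finite composition series whose simple factors are of the form $L_s(a) := \operatorname{im}\bigl(\Delta_s(a) \to \nabla_s(a)\bigr)$ for $s \in \Ss$, $a \in \Z$, where $\Delta_s(a) \to \nabla_s(a)$ is the canonical morphism coming from the natural transformation $j_{s,!} \to j_{s,*}$.

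Second I would establish the key Ext-vanishing
\[
\Hom{\H(X)}\bigl(\Delta_s(a),\, \nabla_t(b)[n]\bigr) \cong \begin{cases} \kk & \text{if } s = t,\ a = b,\ n = 0, \\ 0 & \text{otherwise.} \end{cases}
\]
For $s \neq t$ one rewrites this via the adjunction $(j_t^\ast, j_{t,*})$ as $\Hom{\H(X_t)}(j_t^\ast \Delta_s(a), \nabla_t(b)[n])$ and observes that $j_t^\ast j_{s,!} = 0$: factoring $j_s = \bar{\jmath}_s \circ i_s$ through $\overline{X}_s$, base change for the closed immersion $\bar{\jmath}_s$ combined with the localisation triangle for the open immersion $i_s$ shows that the restriction of $j_{s,!}\un_{X_s}$ to any stratum $X_t$ disjoint from $X_s$ vanishes. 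For $s = t$ the identity $j_s^\ast j_{s,!} = \id$ reduces the computation to $\Hom{\H(X_s)}(\un, \un(b-a)[n])$ on $X_s \cong \A^{\dim X_s}$, which by $\A^1$-homotopy invariance (Proposition~\ref{prop:homotopy}) and Corollary~\ref{coro:fieldChowGroups} equals $\kk$ precisely when $a = b$ and $n = 0$.

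With finite length and this Ext-vanishing in place, the projective cover $P_s(a)$ of $L_s(a)$ is built inductively on the stratification: starting from $\Delta_s(a)$ on a suitable open subvariety on which $X_s$ is closed, one enlarges the open piece step by step and at each stage forms a universal extension with copies of $\Delta_u(c)$ to kill $\operatorname{Ext}^1\bigl(-, L_u(c)\bigr)$ for the newly added simples; the Ext-vanishing guarantees that the procedure terminates in finitely many steps and that the resulting projective admits a filtration by costandards $\Delta_u(c)$. Injective envelopes with standard flag are then produced dually via Verdier duality, which by Lemma~\ref{lem:sixfunctorsonstrata} swaps $\Delta_s$ and $\nabla_s$ up to a Tate twist and by Proposition~\ref{prop:perverseexactness}(4) reverses the perverse $t$-structure.

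The main obstacle is the off-diagonal Ext-vanishing in the second step: although it reduces formally to $j_t^\ast j_{s,!} = 0$, one must ensure that base change and the localisation triangle are available in our abstract six-functor setting and that all intermediate restrictions remain inside the Tate subcategory. The former is provided by Corollary~\ref{coro:stableHomotopy2functor}, while the latter is exactly what the Whitney--Tate hypothesis guarantees, so that the crucial numerical vanishing of Corollary~\ref{coro:fieldChowGroups} on the affine strata can be applied.
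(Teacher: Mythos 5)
Your argument is correct and follows the same route the paper takes — the paper simply cites \cite[Theorem 3.2.1]{BGS} and \cite[Proposition 11.7]{SoeWe}, and your proof is a filled-in version of exactly that universal-extension construction in the highest-weight/quasi-hereditary framework: finite length and classification of simples via recollement, the Ext-vanishing $\Hom{}(\Delta_s(a),\nabla_t(b)[n]) = \delta_{st}\delta_{ab}\delta_{n0}\,\kk$ reduced via adjunction and $j_t^*j_{s,!}=0$ (resp.\ Proposition~\ref{prop:mixedtateonstrata}, Corollary~\ref{coro:fieldChowGroups}), the inductive universal-extension construction of $\Delta$-filtered projective covers, and injectives by Verdier duality using Proposition~\ref{prop:perverseexactness}(4) and the explicit formula from Lemma~\ref{lem:sixfunctorsonstrata} showing $\Dual_X$ swaps $\Delta_s$ and $\nabla_s$ up to twist. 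One small phrasing slip: it is the finiteness of the stratification and finite length, not the Ext-vanishing itself, that guarantees the universal-extension procedure terminates; the Ext-vanishing is what ensures the end result is projective and $\Delta$-filtered.
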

\begin{proof}
	See \cite[Theorem 3.2.1]{BGS} and \cite[Proposition 11.7]{SoeWe}.
\end{proof}
\begin{lemma}\label{lem:standardcostandardtilting}
	Let $(X,\Ss)$ be an affinely Whitney--Tate stratified variety fulfilling \dagg. Let $E,F\in \Per{\Ss}{X}$ such that $E$ has a standard flag and $F$ has a costandard flag. Then for all $n\neq 0$ we have
	$\Hom{\H(X)}(E,F[n])=0$.
\end{lemma}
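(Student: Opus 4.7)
The plan is classical dévissage on both flags, reducing to the bivariate vanishing $\Hom{\H(X)}(\Delta_s(a), \nabla_t(b)[n]) = 0$ for all $s,t \in \Ss$, $a,b\in\Z$ and $n \neq 0$, followed by a case split on the relative position of $X_s$ and $X_t$. By induction on the length of the costandard flag of $E$, using the long exact sequences of $\Hom$s associated to the extensions in $\Per{\Ss}{X}$ that build $E$, I reduce to $E = \Delta_s(a)$. Inducting analogously on the standard flag of $F$ further reduces to $F = \nabla_t(b)$.

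For the diagonal case $s = t$, I apply the adjunction $(j_{s!}, j_s^!)$ and the natural isomorphism $j_s^! j_{s*} \cong \id$ (obtained by factoring $j_s = i\circ k$ with $k$ open and $i$ closed, and combining $i^! i_* \cong \id$ with $k^! = k^*$) to identify
\[
\Hom{\H(X)}(\Delta_s(a), \nabla_s(b)[n]) \cong \Hom{\H(X_s)}(\un_{X_s}(a), \un_{X_s}(b)[n]).
\]
Since $X_s \cong \A^{\dim X_s}$ over $\overline{\F}_p$, Proposition~\ref{prop:mixedtateonstrata} gives vanishing for $n \neq 0$.

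For the off-diagonal case $s \neq t$, the key geometric observation is that at least one of $X_s \cap \overline{X}_t$ or $X_t \cap \overline{X}_s$ must be empty. Indeed, since closures of strata are unions of strata and each stratum $\cong \A^n$ is irreducible and open in its own closure, the simultaneous inclusions $X_s \subset \overline{X}_t$ and $X_t \subset \overline{X}_s$ would force $\overline{X}_s = \overline{X}_t$ and hence $X_s = X_t$, a contradiction. If $X_s \cap \overline{X}_t = \emptyset$, then $\nabla_t = j_{t*}\un_{X_t}[\dim X_t]$ is supported on $\overline{X}_t$ (apply base change along the open complement of $\overline{X}_t$, whose pullback of $j_{t*}$ vanishes since it factors through a pushforward from the empty scheme, using $\H(\varnothing) = 0$), so $j_s^! \nabla_t = 0$, and the same adjunction as above yields vanishing. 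In the symmetric case $X_t \cap \overline{X}_s = \emptyset$, the object $\Delta_s = j_{s!}\un_{X_s}[\dim X_s]$ is supported on $\overline{X}_s$, so $j_t^* \Delta_s = 0$, and the adjunction $(j_t^*, j_{t*})$ gives vanishing.

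There is no serious obstacle: the whole argument is a routine application of the six-functor formalism of Theorem~\ref{theo:propertiesofH}, in particular Base Change and Localization, together with the explicit $\Hom$ computation on affine spaces over $\overline{\F}_p$ from Proposition~\ref{prop:mixedtateonstrata}. The only things worth checking in passing are the isomorphism $j_s^! j_{s*} \cong \id$ for a locally closed immersion and the support property of $j_{t*}$, both standard consequences of the formalism and the vanishing $\H(\varnothing)=0$.
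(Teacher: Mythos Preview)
Your argument is correct and is exactly the standard d\'evissage the paper defers to via the citation \cite[Lemma 11.8, Theorem 11.10]{SoeWe}: reduce on both flags to $\Hom(\Delta_s(a),\nabla_t(b)[n])$, use $(j_{s!},j_s^!)$ together with $j_s^! j_{s*}\cong\id$ and Proposition~\ref{prop:mixedtateonstrata} for $s=t$, and use the support of $\Delta_s$ (resp.\ $\nabla_t$) on $\overline{X}_s$ (resp.\ $\overline{X}_t$) plus incomparability of distinct strata for $s\neq t$. Nothing to add.
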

\begin{proof}
		See for example \cite[Lemma 11.8, Theorem 11.10]{SoeWe}.
\end{proof}
These statements allow us to apply tilting.
\begin{theorem}\label{thm:perversemotivestilting}
	Let $(X,\Ss)$ be an affinely Whitney--Tate stratified variety fulfilling \dagg. Then tilting induces the following equivalences of categories
	\begin{equation}
\Derb{\Per{\Ss}{X}}\stackrel{\sim}{\rightarrow}\Hotb{\Projectives(\Per{\Ss}{X})}\stackrel{\sim}{\rightarrow}\DMT{\Ss}{X}.
\end{equation}
\end{theorem}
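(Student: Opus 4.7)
The plan is to adapt the tilting strategy used in the proof of Theorem~\ref{thm:tilting} to the setting of perverse motives, following the templates of Soergel--Wendt \cite[Thm.~11.10]{SoeWe} and Beilinson--Ginzburg--Soergel.

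First I would establish the equivalence $\Derb{\Per{\Ss}{X}} \stackrel{\sim}{\to} \Hotb{\Projectives(\Per{\Ss}{X})}$ by invoking the classical fact that for any abelian category with enough projectives and finite global dimension, the bounded homotopy category of projectives is equivalent to the bounded derived category. Enough projectives is Proposition~\ref{prop:enoughprojperversemotives}. Finite global dimension follows from the quasi-hereditary (highest-weight) structure on $\Per{\Ss}{X}$: since there are finitely many strata, the simples admit finite projective resolutions, obtained by iteratively taking projective covers along standard-filtration subquotients and using the Ext-vanishing of Lemma~\ref{lem:standardcostandardtilting}.

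Second I would establish $\Hotb{\Projectives(\Per{\Ss}{X})} \stackrel{\sim}{\to} \DMT{\Ss}{X}$ by applying Keller's dg-tilting criterion exactly as in the proof of Theorem~\ref{thm:tilting}. Let $\mathcal{T}$ be a set of fibrant-cofibrant models in $C(\R\modules)$ of all indecomposable projective perverse motives, running over Tate twists. We must verify that (i) for $P, P' \in \mathcal{T}$ one has $\Hom{\Hotub(\R\modules)}(P, P'[n]) = \Hom{\H(X)}(P, P'[n])$, automatic from the model-categorical choice of representatives; (ii) $\Hom{\H(X)}(P, P'[n]) = 0$ for $n \neq 0$; and (iii) $\mathcal{T}$ triangulated-generates $\DMT{\Ss}{X}$. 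Condition (iii) follows from the projective generation of $\Per{\Ss}{X}$ combined with truncation along the perverse t-structure.

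The main obstacle is (ii). By dévissage along the costandard flag of $P'$ (Proposition~\ref{prop:enoughprojperversemotives}) and the long exact $\Hom{}$-sequence, it suffices to show $\Hom{\H(X)}(P, \Delta_s(a)[n]) = 0$ for $n > 0$. Since Lemma~\ref{lem:standardcostandardtilting} only yields direct vanishing between objects with costandard versus standard flags, I would proceed by embedding $P' \hookrightarrow I$ into an injective envelope (which has a standard flag by Proposition~\ref{prop:enoughprojperversemotives}), yielding a distinguished triangle $P' \to I \to I/P' \to P'[1]$ in $\DMT{\Ss}{X}$, and applying $\Hom{\H(X)}(P, -)$. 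The terms involving $I$ vanish by Lemma~\ref{lem:standardcostandardtilting} (since $P$ has a costandard flag and $I$ a standard flag), reducing the computation to $\Hom{\H(X)}(P, (I/P')[n{-}1])$, which I would handle by induction on the finite projective dimension of $P'$ (guaranteed by the first paragraph), carefully tracking the flag-type of the intermediate quotients. Making this induction rigorous, and verifying that the intermediate quotients continue to admit the flag structure needed to reapply Lemma~\ref{lem:standardcostandardtilting} at each step, is the main technical step of the proof.
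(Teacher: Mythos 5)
Your overall plan matches the paper's (very terse) proof: the first equivalence is classical given enough projectives and finite global dimension, and the second is Keller tilting exactly as in Theorem~\ref{thm:tilting}, with the projectives as the tilting collection; the key technical input is Lemma~\ref{lem:standardcostandardtilting}, applied via injective resolutions because the lemma does not apply directly to a pair of projectives (both carry costandard, not standard, flags). You correctly spotted this mismatch, which is where the paper's one-line proof is implicitly cutting a corner.

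Two small clean-up points on your dévissage for condition (ii). First, a slip: you write ``induction on the finite \emph{projective} dimension of $P'$''; since $P'$ is projective this is $0$, so you mean \emph{injective} dimension. In fact you need no finiteness hypothesis at all: induct on $n$. The case $n=1$ is $\operatorname{Ext}^1_{\Per{\Ss}{X}}(P,P')=0$ by projectivity of $P$, using the standard identification of $\Hom{\H(X)}(A,B[1])$ with $\operatorname{Ext}^1$ in the heart of a $t$-structure. For $n\geq 2$, embedding $P'\hookrightarrow I$ and applying the long exact sequence together with the lemma (which kills the two terms involving $I$, since $n,n-1\geq 1$) gives $\Hom{\H(X)}(P,P'[n])\cong\Hom{\H(X)}(P,C[n-1])$ with $C=I/P'$, and one applies the inductive hypothesis to $C$. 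Second, and more importantly, your stated ``main technical step'' --- verifying that the intermediate quotients $C,C',\dots$ admit a flag structure --- is a false alarm: no flag is needed on these cokernels. The only flag hypotheses the argument ever uses are the $\Delta$-flag on the \emph{fixed} object $P$ and the $\nabla$-flags on the injectives $I^k$, both supplied by Proposition~\ref{prop:enoughprojperversemotives}; the cokernels $C^k$ merely need to be perverse. Once this is seen, the dévissage closes without further work, and your proof matches the paper's intended argument.
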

\begin{proof}
	Using the same argument as in \cite[Theorem 11.10]{SoeWe}, one sees that Lemma \ref{lem:standardcostandardtilting} and Propostion \ref{prop:enoughprojperversemotives} imply that category of projective perverse motives $\Projectives(\Per{\Ss}{X})$ fulfills the assumption of Theorem \ref{thm:tiltinggeneral} and the statement follows.
\end{proof}
In the preceding theorem we can replace projective objects by so called tilting objects. For a nice reference see \cite{BBM}.
\begin{definition}
Let $(X,\Ss)$ be an affinely Whitney--Tate stratified variety fulfilling \dagg. An object $E\in \Per{\Ss}{X}$ is called \emph{tilting} if it has both a standard flag and costandard flag. We denote the additive subcategory of \emph{tilting perverse motives} by $\Tilt(\Per{\Ss}{X})$.
\end{definition}
\begin{proposition}\label{prop:enoughtiltingperversemotives}
Let $(X,\Ss)$ be an affinely Whitney--Tate stratified variety fulfilling \dagg. Then for every stratum there exists a unique tilting perverse sheaf $T_s$ supported on $\overline{X}_s$ with $j_s^*T_s=\un_{X_s}[\dim X_s]$.
\end{proposition}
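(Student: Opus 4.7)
The plan is to establish both existence and uniqueness by induction on the number of strata contained in the closure $\overline{X}_s$, following the classical highest-weight-category pattern (as, e.g., in \cite{BBM} or \cite{SoeWe}).

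For the base case, $X_s$ is already closed in $X$, and we may take $T_s = \Delta_s = \nabla_s$; these coincide because $j_s$ is simultaneously an open and closed immersion, so $j_{s!} \cong j_{s*}$. The single-step standard flag and single-step costandard flag are both trivially present. For the inductive step, pick a stratum $X_t \subset \overline{X}_s \setminus X_s$ of minimal dimension, so that $X_t$ is closed in $\overline{X}_s$, and write $i: X_t \hookrightarrow \overline{X}_s$ with open complement $j: V \hookrightarrow \overline{X}_s$. By induction there is a tilting perverse motive $T' \in \Per{\Ss}{V}$ with $(j_s)^*T' = \un_{X_s}[\dim X_s]$. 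I would set $\tilde T := j_*T'$. Since for any stratum $X_r \subset V$ we have $j_* \circ (i_r^V)_* = (i_r^{\overline{X}_s})_*$, the $j_*$ functor carries the costandard flag of $T'$ to a costandard flag on $\tilde T$.

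The defect of $\tilde T$ is that it need not possess a standard flag: measured by the invariant $i^!\tilde T \in \DT{X_t}$, there is a graded vector space of obstructions lying in bidegrees along which $\Hom{}(\Delta_t(a)[b], \tilde T) \neq 0$. I would then construct $T_s$ as a universal extension fitting in a distinguished triangle
\[
 \bigoplus_{(a,b)} \Delta_t(a)[b]^{\oplus m_{a,b}} \longrightarrow T_s \longrightarrow \tilde T \stackrel{+1}{\longrightarrow}
\]
with multiplicities $m_{a,b}$ chosen to annihilate the ``non-standard'' part of $i^!\tilde T$. Because $\Delta_t(a)$ is supported on the closed stratum $X_t$, this extension leaves $j^*T_s \cong T'$ untouched; and because $\Hom{}(\Delta_t(a), \nabla_r(c)[d]) = 0$ for $d > 0$ by Lemma~\ref{lem:standardcostandardtilting} applied to the tilting collection, the costandard flag of $\tilde T$ lifts to a costandard flag of $T_s$. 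A standard flag on $T_s$ is then assembled from the $\Delta_t(a)[b]$ summands at the bottom and the standard flag inherited from $T' = j^*T_s$ at the top, using perversity of the $\Delta$'s (part of our standing assumption).

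For uniqueness, given two candidates $T_s, T_s'$, an isomorphism $\phi_0 : j_s^*T_s \cong j_s^*T_s'$ lifts to a morphism $\phi : T_s \to T_s'$ via the adjunction $(j_s^*, j_{s*})$, with the obstruction to lifting lying in positive $\Ext$ groups between a standard-flag object and a costandard-flag object, which vanish by Lemma~\ref{lem:standardcostandardtilting}. Symmetrically one obtains $\phi' : T_s' \to T_s$, and $\phi'\phi$ restricts to the identity on $X_s$; Krull--Schmidt applied in the additive category $\Tilt(\Per{\Ss}{X})$, together with the fact that any endomorphism of $T_s$ restricting to the identity on $X_s$ is an isomorphism (indeed, its restriction cannot factor through a proper summand because $\un_{X_s}[\dim X_s]$ is indecomposable), shows $\phi$ is an isomorphism. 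The main obstacle is bookkeeping the universal extension carefully enough to confirm it preserves the costandard flag while creating a standard flag; this is ultimately a formal consequence of the $\Ext$-vanishing in Lemma~\ref{lem:standardcostandardtilting} but requires unwinding the two $t$-structures (perverse and weight) in tandem.
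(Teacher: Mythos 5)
The paper's own proof is a citation to Ringel's algebraic construction of tilting modules in highest weight categories, together with Proposition~\ref{prop:enoughprojperversemotives} and Lemma~\ref{lem:standardcostandardtilting}. Your geometric induction-on-strata approach is a genuinely different (recollement-style, \`a la BBM) route, and the overall framework -- base case on a closed stratum, pushforward from the open part, universal extension to restore the missing flag, then Ext-vanishing for uniqueness -- is reasonable. However, the key step has a variance error that makes the construction break down as written.

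With $\tilde T = j_*T'$, you measure the obstruction to a standard flag by $i^!\tilde T$ and the groups $\Hom{}(\Delta_t(a)[b],\tilde T)$. But $i^!j_* = 0$ (this follows immediately from the localization triangle $i_!i^!\to\mathrm{id}\to j_*j^*$ together with $j^*j_*\cong\mathrm{id}$), and since $\Delta_t = i_*\un_{X_t}[\dim X_t]$ is pushed forward along the closed immersion, $\Hom{}(\Delta_t(a)[b],\tilde T)\cong\Hom{}(\un(a)[\dim X_t+b], i^!\tilde T) = 0$ for all $a,b$. So your obstruction space is always zero, the universal extension is trivial, and you would end up with $T_s = j_*T'$, which has a costandard flag but is not tilting in general. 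The obstruction that actually controls the failure of a standard flag is $i^*\tilde T$, detected by $\Hom{}(\tilde T,\Delta_t(a)[b])$ (opposite variance), and the universal extension should be set up to kill $\mathrm{Ext}^{\geq 1}(\tilde T,\Delta_t(a))$. Relatedly, the $[b]$-shifts in your triangle $\bigoplus\Delta_t(a)[b]\to T_s\to\tilde T$ are incompatible with $T_s$ being perverse: if the outer terms are perverse the middle is too, forcing $b=0$; if $i^*\tilde T$ has perverse cohomology in more than one degree, a single extension step will not suffice and you need to iterate. Finally, in the uniqueness argument the adjunction $(j_s^*,j_{s*})$ only gives you a map $T_s\to j_{s*}j_s^*T_s'$, not directly a map $T_s\to T_s'$; you still need to lift through the unit $T_s'\to j_{s*}j_s^*T_s'$, which is where Lemma~\ref{lem:standardcostandardtilting} and the $\nabla$-flag of $T_s'$ actually enter. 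These are fixable -- use $j_!T'$ with universal coextensions by $\nabla_t$, or correct the variance throughout -- but as written the central step does not produce a tilting object.
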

\begin{proof}
This follows from standard theory of highest weight categories and Proposition \ref{prop:enoughprojperversemotives} and Lemma \ref{lem:standardcostandardtilting}.
See for example \cite[Section 4/5]{Rin}.
\end{proof}
Again we can apply tilting to obtain:
\begin{theorem}
	Let $(X,\Ss)$ be an affinely Whitney--Tate stratified variety fulfilling \dagg. Then tilting induces an equivalence of categories
	\begin{equation}
\Hotb{\Tilt(\Per{\Ss}{X})}\stackrel{\sim}{\rightarrow}\DMT{\Ss}{X}.
\end{equation}
\end{theorem}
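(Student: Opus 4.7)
The plan is to adapt the tilting argument of Theorem~\ref{thm:perversemotivestilting} by replacing projective perverse motives with tilting perverse motives. The strategy is to verify that the collection of indecomposable tilting perverse motives $\{T_s\mathrel{:}s\in\Ss\}$, together with their Tate twists $T_s(n)$, forms a tilting collection inside $\DMT{\Ss}{X}$ in the sense of Keller--Rickard, and then apply \cite[Proposition 10.1]{Ric89} or \cite[Theorem 1]{Kel93} exactly as in the proofs of Theorem~\ref{thm:tilting} and Theorem~\ref{thm:perversemotivestilting}.

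First I would check the $\Hom{}$-vanishing condition. For any tilting perverse motives $T, T'$, the object $T$ admits a costandard flag and $T'$ admits a standard flag, so Lemma~\ref{lem:standardcostandardtilting} directly gives $\Hom{\H(X)}(T, T'[n]) = 0$ for all $n\neq 0$. Tate twisting preserves both flag properties, so this vanishing holds for all $T_s(a)$ and $T_t(b)[n]$ with $n\neq 0$.

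Next I would verify that the $T_s(a)$ generate $\DMT{\Ss}{X}$ as a triangulated category. Proceeding by induction on the number of strata, one shows that the standard objects $\nabla_s(a)$ generate $\DMT{\Ss}{X}$: using the localisation triangle for the inclusion of an open stratum $j: X_s\hookrightarrow X$ with closed complement $i: Z\hookrightarrow X$, any $E\in\DMT{\Ss}{X}$ sits in a triangle $i_!i^*E \to E \to j_*j^*E \to$, where $j_*j^*E$ is a sum of shifted Tate twists of $\nabla_s$ (by Proposition~\ref{prop:mixedtateonstrata}) and $i_!i^*E$ is generated by standards on $Z$ by induction. Since each $T_s$ has a standard flag with $\nabla_s(0)$ appearing as the open top and $\nabla_t(a)$ with $t<s$ otherwise, an upper-triangular change-of-basis argument shows that the $T_s(a)$ generate the same thick subcategory as the $\nabla_s(a)$, namely all of $\DMT{\Ss}{X}$.

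Finally, as in the proof of Theorem~\ref{thm:tilting}, since our category $\H(X)$ sits as a full subcategory of the derived category of the Grothendieck abelian category $(\K_X\otimes\kk)\modules$ (Remark \ref{rem:Hisafullsubcatofaderivedcat}), we may lift each indecomposable tilting $T_s(a)$ to a simultaneously fibrant and cofibrant model, so that the $\Hom{}$-groups in $\Hotub$ agree with those in $\H(X)$. The three conditions of a tilting collection -- Hom-vanishing in non-zero degrees, classical Hom-agreement between $\Hotub$ and the derived category, and triangulated generation -- are then satisfied, and the cited Keller--Rickard theorem produces the desired equivalence
\[
\Hotb{\Tilt(\Per{\Ss}{X})} \stackrel{\sim}{\to}\DMT{\Ss}{X}.
\]
I do not expect any substantial obstacle: both ingredients (the Hom-vanishing and the generation by tiltings) are entirely formal consequences of results already established, and the formal tilting machinery has already been invoked twice in the paper in exactly the same shape.
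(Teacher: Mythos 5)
Your proof is correct and follows the same route as the paper, which simply notes that the argument of Theorem~\ref{thm:tilting} carries over once one replaces the pointwise-purity input by Lemma~\ref{lem:standardcostandardtilting} to get the required $\Hom{}$-vanishing for the tilting collection. One small slip: in the generation step the localisation triangle should read $i_!i^!E \to E \to j_*j^*E \to$ (not $i_!i^*E$), matching the second triangle of Theorem~\ref{theo:propertiesofH}\eqref{theo:propertiesofH:localisation}; with that correction the upper-triangular argument passing from the $\nabla_s(a)$ to the $T_s(a)$ goes through exactly as you describe.
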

\begin{proof}
Lemma \ref{lem:standardcostandardtilting} and Propostion \ref{prop:enoughtiltingperversemotives} imply that $\Tilt(\Per{\Ss}{X})$ fulfills the assumption of Theorem \ref{thm:tiltinggeneral} and the statement follows.
\end{proof}
%\begin{theorem}
%	Let $(X,\Ss)$ be an affinely Whitney--Tate stratified variety and let $j:U\hookrightarrow X$ be the open stratum. Assume that there exists a $\G_m$-action on $X$ contracting $U$ to its complement $i:Z\hookrightarrow X$. Then $j_*\un_{X}[\dim X]$ and $j_!\un_{X}[\dim X]$ are perverse.
%\end{theorem}
%\begin{proof} By duality it suffices to prove the statement for $j_*\un_{U}[\dim X]$.
%	It follows immediately by Proposition \ref{prop:perverseexactness} that  $j_*\un_{X_s}[\dim U]\in\DMT{(\Ss)}{X}^{p\geq0}$.
%	It is hence left to show that $\pazocal H^ni_s^*j_*\un_{U}=0$ for all strata $i_s: X_s\hookrightarrow X$ and $n>\dim X-\dim X_s$. Denote by $p:X\to Z$ the morphism mapping each point to its limit with respect of the $\G_m$-action. By \cite[Proposition 7.3]{SoeWe} we have $i^*j_*\un_{U}=p_*j_*\un_{U}$. Now the statement follows by .
%\end{proof}
\subsection{Example: The flag variety}\label{subsec:flagvar} Denote by $G\supset B \supset T$ a split reductive algebraic group over $\overline{\F}_p$ with a Borel subgroup $B$ and maximal torus $T$. Denote by $X(T)\supset \Phi\supset \Phi_+$ the character lattice, root system, and positive roots associated to $B\supset T$. Let $X=G/B$ denote the \emph{flag variety} of $G$. Write $\Weyl=\operatorname{N}_G(T)/T$ for the Weyl group with simple reflections $\Simp$ and $X_w=BwB/B$ for the Bruhat cell for $w\in\Weyl$. 
Then the Bruhat decomposition \begin{equation}
X=\bigcup_{w\in \Weyl} X_w,
\end{equation}
gives rise to an affine Whitney--Tate stratification, which is denoted $(X,(B))$.

In the following, we will study the category $\DMT{(B)}{X}$ and certain interesting subcategories of it. 
\subsubsection{Parity motives and Soergel modules} \label{subsubsec:parmotsoemod}
We start by describing some motivic cohomology rings.
To the root system $\Phi$ of $G$ one associates a positive integer $t_\Phi$, called the \emph{torsion index}, see \cite{Dem73} and \cite{Gro58}. The torsion index $t_\Phi$ is a product of primes associated to the simple constituents of $\Phi$, which can be found the table
 \begin{center}
 	\begin{tabular}{c|c|c|c}
 	$A_l$, $C_l$ &  $B_l$ {\scriptsize($l\geq3$)}, $D_l$ {\scriptsize($l\geq4$)}, $G_2$ & $E_6$, $E_7$, $F_4$ & $E_8$ \\\hline
 	1 & 2 & 2,3 & 2,3,5\\
 \end{tabular}.
 \end{center}
 For an arbitrary $\Z$-module $M$, denote by $\SymA(M)$ its symmetric algebra, where we by convention put $M$ in degree 2. The \emph{coinvariant algebra} is defined by 
 \begin{equation}
C\stackrel{def}{=} \biggl ( \SymA(X(T))/\SymA(X(T))^\Weyl_+ \biggr )\otimes \kk,
\end{equation} 
where $X(T)=\Hom{\Sch(\overline{\F}_p)}(T,\G_m)\cong\Z^{\operatorname{rank}(T)}$ denotes the character lattice and $\SymA(X(T))^\Weyl_+$ denotes $\Weyl$-invariant elements of degree greater than zero. 
\begin{theorem}\label{thm:chowflag} Assume that $t_\Phi$ is invertible in $\kk$. 
	%If $R$ has a constituent of the form $C_l$, for $l\geq 2$, we additionally want $2$ to be invertible. 
	Then there is an isomorphism of graded $\kk$-algebras
	\begin{equation}
C \stackrel{\sim}{\to} \Hyp (X,\kk).
\end{equation}
Let $s\in\Simp$ be a simple reflection and $P_s=BsB\cup B$ the associated minimal parabolic. Then furthermore
\begin{equation}
C^s \cong \Hyp (G/P_s,\kk).
\end{equation}
\end{theorem}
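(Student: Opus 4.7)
The plan is to reduce the statement to a purely classical computation of Chow rings, which is the content of Demazure's integral formula. First I would invoke Theorem~\ref{thm:chowringcellular}(2), applicable since $X = G/B$ is smooth, proper, and affinely stratified by Bruhat cells, to obtain a graded $\kk$-algebra isomorphism
\begin{equation*}
\Hyp(X,\kk) \;\cong\; \Chow^\bullet(X,\Z)\otimes \kk.
\end{equation*}
This moves the problem entirely out of the motivic world: it now suffices to identify $\Chow^\bullet(G/B,\Z)\otimes\kk$ with the coinvariant algebra $C = \Sym(X(T))/\Sym(X(T))^W_+ \otimes \kk$.

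Next I would construct the characteristic homomorphism. Every character $\lambda \in X(T)$ defines a $G$-equivariant line bundle $\pazocal{L}_\lambda = G\times^B \kk_{-\lambda}$ on $G/B$, and taking first Chern classes yields a group homomorphism $c_1 : X(T) \to \Chow^1(G/B,\Z)$, placed in degree $2$. It extends multiplicatively to a graded ring homomorphism $\Sym(X(T)) \to \Chow^\bullet(G/B,\Z)$. The key input is Demazure's integral theorem \cite{Dem73}: after inverting the torsion index $t_\Phi$, the $W$-invariants of positive degree lie in the kernel, and the induced map
\begin{equation*}
\Sym(X(T))\otimes \Z[1/t_\Phi] \;\big/\; \Sym(X(T))^W_+ \;\longrightarrow\; \Chow^\bullet(G/B,\Z)\otimes \Z[1/t_\Phi]
\end{equation*}
is an isomorphism of graded rings. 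Because $t_\Phi$ is assumed invertible in $\kk$, tensoring with $\kk$ gives the desired isomorphism $C \stackrel{\sim}{\to} \Chow^\bullet(G/B,\Z)\otimes\kk \cong \Hyp(X,\kk)$.

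For the partial flag variety $G/P_s$, the natural projection $\pi : G/B \to G/P_s$ is a Zariski-locally trivial $\P^1$-bundle, so by Proposition~\ref{prop:projectivebundleformula} and the projection formula, $\pi^* : \Hyp(G/P_s,\kk) \to \Hyp(G/B,\kk)$ is a graded-ring injection with image equal to $\Hyp(G/B,\kk)^s$, the invariants under the natural action of $W_s = \langle s\rangle$ coming from Demazure operators. (Equivalently, one invokes Demazure's corresponding statement $\Chow^\bullet(G/P_s,\Z)[1/t_\Phi] \cong \Chow^\bullet(G/B,\Z)^{W_s}[1/t_\Phi]$.) Combined with the first isomorphism, this identifies $\Hyp(G/P_s,\kk)$ with $C^s$.

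The main obstacle, and the only non-formal input, is the invocation of Demazure's integral computation with $t_\Phi$ inverted: one must verify that his theorem as stated over $\Spec(\Z)$ specialises correctly over $\overline{\F}_p$ and that the characteristic map coming from our motivic $c_1$ (via the isomorphism of Theorem~\ref{thm:chowringcellular}(2) with classical Chow groups) matches his. Both are standard, but worth checking carefully because everything else in the argument is either formal or already established in the preceding sections. Once this is accepted, the proof is essentially a bookkeeping exercise.
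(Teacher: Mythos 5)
Your proof of the first isomorphism $C \stackrel{\sim}{\to} \Hyp(X,\kk)$ matches the paper's argument: reduce to the classical Chow ring via Theorem~\ref{thm:chowringcellular}(2), then invoke Demazure's integral characteristic map with the torsion index inverted. For the second isomorphism $C^s \cong \Hyp(G/P_s,\kk)$, however, you take a genuinely different route. The paper also first reduces to the classical Chow ring of $G/P_s$, but then passes through \emph{equivariant} Chow groups: citing \cite[Prop.\,3.4, Cor.\,5.9]{Kri13}, it runs the chain $\Chow^*(G/P) = \Chow^*_P(G) = \Chow^*_L(G) = (\Chow^*_T(G))^{W_L} = (\Chow^*(G/B))^{W_L}$, where $L$ is the Levi of $P_s$. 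You instead use the $\P^1$-bundle $\pi\colon G/B \to G/P_s$, the motivic projective bundle formula (Proposition~\ref{prop:projectivebundleformula}), and the identification $\mathrm{im}(\pi^*) = \ker(\partial_s) = (-)^s$ via Demazure operators. Both work; the paper's route is arguably cleaner because it defers the entire invariant-theory question to Krishna's equivariant Chow results, whereas your route buys the connection to the machinery already set up in Section~\ref{subsec:flagvar} (where the $\P^1$-bundle and Demazure-operator picture reappears in the lemma on $\pi_{s,*}\un_{G/B}$). One caveat: the step ``image of $\pi^*$ equals the $s$-invariants'' does not follow from the projective bundle formula alone; it requires first defining the $W$-action on $\Chow^\bullet(G/B)[1/t_\Phi]$ and verifying $\ker(\partial_s) = (-)^s$ there, which is exactly the content of Demazure's \cite{Dem73}. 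So your two offered alternatives are not independent — the first reduces to the second — and the safe version of your argument is simply the direct citation of Demazure's $\Chow^\bullet(G/P_s)[1/t_\Phi] \cong \Chow^\bullet(G/B)^{W_s}[1/t_\Phi]$, which is fine.
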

\begin{proof}
	Since $X$ is an affinely stratified smooth variety we can use Theorem \ref{thm:chowringcellular} to identify $\Hyp(X)$ with the classical Chow ring of $X$ with coefficients in $\kk$. Under our hypotheses, \cite[Section 8]{Dem73} and \cite{Dem74}
	shows that the first Chern class \begin{equation}
X(T)\rightarrow\operatorname{Pic}X\stackrel{c_1}{\rightarrow} CH^1(X)
\end{equation} induces the claimed isomorphism.
	
	For the second statement, again $\Hyp (G/P_s)$ can be identified with the classical Chow ring. Denote by $L\subset P$ the Levi subgroup of $P=P_s$. Then by \cite[Prop.3.4 and Cor.5.9]{Kri13} we have the following chain of equalities
	\begin{equation}
\Chow^*(G/P)=\Chow^*_P(G)=\Chow^*_L(G)=(\Chow^*_T(G))^{\Weyl_L}=(\Chow^*(G/B))^{\Weyl_L},
\end{equation}
where the subscript denotes equivariant Chow groups and $\Weyl_L=\{1,s\}$ is the Weyl group of $L$.
\end{proof}
 Our next goal is to understand $\DMT{(B)}{X}_{w=0}$ using the ideas and results of \cite{Soe00}.
For $w\in \Weyl$, let $w=s_1\dots s_l$ with $s_i\in\Simp$ be a reduced expression which we will write as $\underline w=(s_1,\dots,s_l)$. Recall the \emph{Bott--Samelson resolution} of the \emph{Schubert variety} $\overline{X}_w$ given by 
\begin{equation}
\pi_{\underline{w}}:\BS(\underline{w})\stackrel{def}{=}P_{s_1}\times^B\dots\times^BP_{s_l}/B \rightarrow \overline{X}_w\subset X,
\end{equation}
where the morphism is given by multiplication. The variety $\BS(\underline{w})$ is smooth and $\pi_{\underline{w}}$ is proper and induces an isomorphism on $X_w$. Hence, we can apply Theorem \ref{thm:paritymotives} and the Erweiterungssatz (Theorem \ref{thm:Erweiterungssatz}) to identify weight zero motives, weight zero parity motives and Soergel modules
\begin{align*}
\DMT{(B)}{X,\kk}_{w=0}&=\Par{(B)}{X,\kk}_{w=0}\\
&=\left\langle\, \pi_{\underline{w},!}\un_{\BS(\underline{w})}(n)[2n]\,\vert\, w\in\Weyl, n \in \Z\,\right\rangle_{\oplus,\inplus,\cong}\\
&\cong \left\langle\,  \Hyp\left(\pi_{\underline{w},!}\un_{\BS(\underline{w})}(n)[2n]\right)\,\vert\, w\in\Weyl, n \in \Z\,\right\rangle_{\oplus,\inplus,\cong}\\
&\subset \operatorname{mod}^\Z\operatorname{-}\Hyp(X,\kk).
%&= \operatorname{Smod}^\Z\operatorname{-}\Hyp(X) \subset \operatorname{mod}^\Z\operatorname{-}\Hyp(X).
\end{align*}
Here $\oplus$, $\inplus$ and $\cong$ means closure under finite direct sums, direct summands and isomorphisms in the category of motives and $\Hyp(X)$-modules, respectively.

As a last step we want to recall Soergel's explicit description of the \emph{Bott--Samelson modules} $\Hyp\left(\pi_{\underline{w},!}\un_{\BS(\underline{w})}\right)$.
For $s\in S$ denote by $\pi_s:X=G/B \rightarrow G/P_s$ the projection. 
Then by \cite[Lemma 3.2.1]{Soe00} we have
\begin{equation}
\pi_{\underline{w},!}\un_{\BS(\underline{w})}=\pi_{s_l}^*\pi_{s_l,*}\cdots\pi_{s_1}^*\pi_{s_1,*}\un_{B/B}.
\end{equation}
Hence we need to understand the interaction of the functors $\Hyp$ and $\pi_{s}^*\pi_{s,*}$.
\begin{lemma}Let $s\in S$ be a simple reflection.
	Then we can identify $$\pi_{s,*}\un_{G/B}=\un_{G/P_s}\oplus\un_{G/P_s}(-1)[-2].$$
	Assume that $t_\Phi$ is invertible in $\kk$. Then there is a natural equivalence of functors 
	$$\Hyp(\pi_{s_*}\pi_s^*(-))\cong C\otimes_{C^s}\Hyp(-): \DMT{(B)}{X}_{w=0}\rightarrow C\modules^\Z.$$
\end{lemma}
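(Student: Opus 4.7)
The plan has two parts. For the decomposition $\pi_{s,*}\un_{G/B} \cong \un_{G/P_s} \oplus \un_{G/P_s}(-1)[-2]$, I will use that $\pi_s : G/B \to G/P_s$ is Zariski-locally a trivial $\P^1$-bundle, with fibre $P_s/B \cong \P^1$. The projective bundle formula of Proposition~\ref{prop:projectivebundleformula}, combined with Verdier duality (Lemma~\ref{lem:sixfunctorsonstrata}) and relative purity (Theorem~\ref{theo:propertiesofH}) in order to convert between $\pi_{s,\#}$ and $\pi_{s,*}$, computes the pushforward in the trivial case; one globalizes via proper base change along a trivializing Zariski cover of $G/P_s$.

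For the natural equivalence, interpret $\pi_{s,*}\pi_s^*$ as an endofunctor on motives on $G/P_s$, where the composite is well-defined. The projection formula from Theorem~\ref{theo:propertiesofH} combined with the first part yields, natural in $E$,
\begin{equation*}
\pi_{s,*}\pi_s^* E \;\cong\; E \otimes \pi_{s,*}\un_{G/B} \;\cong\; E \oplus E(-1)[-2].
\end{equation*}
Applying hypercohomology then produces $\Hyp(\pi_{s,*}\pi_s^* E) \cong \Hyp(E) \oplus \Hyp(E)(-1)[-2]$, whose $C$-module structure arises through the adjunction identification $\Hyp_{G/P_s}(\pi_{s,*}\pi_s^* E) \cong \Hyp_{G/B}(\pi_s^* E)$ together with the fact that $\Hyp_{G/B}$ lands in $C$-modules. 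Separately, applying $\Hyp$ to the decomposition of $\pi_{s,*}\un_{G/B}$ gives an isomorphism $C \cong C^s \oplus C^s(-1)[-2]$ of graded $C^s$-modules; this is precisely where the hypothesis that $t_\Phi$ is invertible in $\kk$ enters, via Theorem~\ref{thm:chowflag} identifying $\Hyp(G/B) \cong C$ and $\Hyp(G/P_s) \cong C^s$. Tensoring over $C^s$ with $\Hyp(E)$ then gives the right-hand side $C \otimes_{C^s} \Hyp(E) \cong \Hyp(E) \oplus \Hyp(E)(-1)[-2]$, matching the left.

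The main obstacle is not producing the underlying bigraded isomorphism but verifying that it respects the $C$-module structures. Concretely, one must check that the $C$-action on $\Hyp(\pi_{s,*}\pi_s^* E) \cong \Hyp_{G/B}(\pi_s^* E)$, which comes from $\Hyp(G/B)$ acting on hypercohomology of objects on $G/B$, corresponds under the projection-formula isomorphism to the $C$-action on $C \otimes_{C^s} \Hyp(E)$ coming from the left tensor factor. This should follow by unpacking the projection formula together with the observation that the isomorphism $\Hyp(\pi_{s,*}\un_{G/B}) \cong C$ is one of $C$--$C^s$-bimodules: the left $C$-action comes from $C = \Hyp(G/B)$ acting on $\Hyp(\un_{G/B}) = \Hyp(\pi_s^*\un_{G/P_s})$ via adjunction, while the right $C^s$-action is from $\pi_{s,*}\un_{G/B}$ being an object of $\H(G/P_s)$. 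Naturality of all the above in $E$ is automatic from the naturality of the projection formula and the adjunction.
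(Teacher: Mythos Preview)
Your approach for the first statement is more circuitous than needed and skips the key step the paper supplies. The projective bundle formula (Proposition~\ref{prop:projectivebundleformula}) is stated for $\P(\mathcal{E})\to X$ with $\mathcal{E}$ an honest vector bundle, and already gives $p_*\un$ directly---so neither the duality/purity conversion between $\pi_{s,\#}$ and $\pi_{s,*}$, nor the reduction to the trivial case, is necessary. What \emph{is} necessary, and what you do not address, is knowing that the $\P^1$-bundle $\pi_s:G/B\to G/P_s$ is globally of the form $\P(\mathcal{E})$ for some rank-two vector bundle $\mathcal{E}$; a priori a Zariski-locally trivial $\P^1$-bundle need not be. The paper handles this by invoking \cite[Exercise~7.10(c)]{Har}, which says that over a regular Noetherian base this obstruction vanishes, and then applies the projective bundle formula in one step. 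Your proposed globalisation via proper base change only gives the decomposition Zariski-locally; gluing the local splittings into a global one is not automatic (the splitting maps depend on a class of type $c_1(\mathcal{O}(1))$, whose global existence is precisely the question of whether the bundle is a projectivisation).

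For the second statement, your strategy via the projection formula is essentially what underlies the references \cite[Theorem~14]{Soe90} and \cite[Proposition~4.1.1]{Soe00} that the paper cites, so the approaches agree in spirit. Two comments: first, the functor in the lemma is meant as an endofunctor on $\DMT{(B)}{G/B}_{w=0}$ (so $\pi_s^*\pi_{s,*}$), as the stated domain and the application to Bott--Samelson motives make clear; your reading of it on $G/P_s$ follows the literal order of symbols but mismatches the domain. Second, your identification of the $C$-module structure is the right concern and your outline is correct, but you should note that it is not merely an abstract bimodule check: one needs that $C$ is free of rank two over $C^s$ (a consequence of Theorem~\ref{thm:chowflag} under the torsion-index hypothesis) so that $C\otimes_{C^s}\Hyp(E)$ really decomposes as $\Hyp(E)\oplus\Hyp(E)\langle 2\rangle$ and the comparison with $\Hyp(\pi_s^*\pi_{s,*}E)$ can be made on the level of $C$-modules, not just vector spaces.
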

\begin{proof}
	There are two different proofs for the first statement which do not apply in our situation. The first one given in \cite{Soe90} uses the decomposition theorem for perverse sheaves and the second one in \cite{Soe00} relies on a concrete description of the category of sheaves and the identification $G(\C)/B(\C)=K/T$ for a compact real form $K$ of the complex group $G(\C)$, while it also requires that $2$ is invertible in $\kk$. 
	But one can also apply the projective bundle formula (see Proposition \ref{prop:projectivebundleformula}). It is well known that $\pi_s: G/B\rightarrow G/P_s$ is a $\Proj^1$-bundle. By \cite[Exercise 7.10(c)]{Har} this bundle is the projectivization $\Proj(\pazocal E)$ of a vector bundle $\pazocal E$ on $G/P_s$, since $G/B$ is regular and Noetherian. In our case there is an completely explicit description of the bundle $\pazocal E$ for which we did not find an reference. Hence the projective bundle formula applies and the first statement follows.

Following \cite[Theorem 14]{Soe90} or \cite[Proposition 4.1.1]{Soe00} this implies that there is a natural isomorphism of functors
$$\Hyp(\pi_{s_*}\pi_s^*(-))\cong \Hyp(X)\otimes_{\Hyp(G/P_s)}\Hyp(-): \DMT{(B)}{X}\rightarrow \Hyp(X)\modules^\Z$$
and the statement follows using Theorem \ref{thm:chowflag}.
%	 Under our assumptions on the characteristic of $\kk$ we know that
%	$$\Hyp(\pi_{s,*}\un_{G/B})\cong\Hyp( G/B )\cong C \cong C^s\oplus \alpha C^s\cong C^s\oplus C^s\langle2\rangle$$
%	as $C^s$-modules, where $\alpha$ is the simple root associated to $s$. Now the Erweiterungssatz (Theorem \ref{thm:Erweiterungssatz}) implies the first statement.
%	The second statement can easily be deduced from the first, see \cite[Proposition 4.1.1]{Soe00}.
\end{proof}
Using $\Hyp(\un_{B/B})=\kk$ and applying the preceding Lemma we get an isomorphism
\begin{equation}
\Hyp\left(\pi_{\underline{w},!}\un_{\BS(\underline{w})}\right)\cong C\otimes_{C^{s_l}}\cdots C\otimes_{C^{s_1}}\kk.
\end{equation}
Furthermore, the pointwise purity of the motives $\pi_{\underline{w},!}\un_{\BS(\underline{w})}$ allows us to use the tilting result (Theorem \ref{thm:tilting}). So in conclusion we obtain the following theorem.
\begin{theorem}\label{thm:mainflag}
	There is an equivalence of categories
	\begin{equation}
\DMT{(B)}{X}=\Hotb{\Par{(B)}{X}_{w=0}}.
\end{equation}
	Assume that $t_\Phi$ is invertible in $\kk$. Then $\Par{(B)}{X}_{w=0}$ can be identified with the category of evenly graded Soergel modules 
	\begin{equation}
C\soergelmodules^\Z_{ev}=\left\langle\,  C\otimes_{C^{s_1}}\cdots C\otimes_{C^{s_n}}\kk\,\vert\, s_i\in\Simp\,\right\rangle_{\oplus,\inplus,\cong,\langle2-\rangle}
\end{equation}
	where $\oplus$, $\inplus$, $\cong$ and $\langle2-\rangle$ means closure under finite direct sums, direct summands, isomorphisms and even shifts of grading in the category of graded $C$-modules.
	
	Under this isomorphism the unique indecomposable parity motive $E_w$ with $j_w^*E_w=\un_{X_w}$ gets identified with the unique indecomposable Soergel module $D_w$ which appears as a direct summand of $C\otimes_{C^{s_1}}\cdots C\otimes_{C^{s_n}}\kk$ but not in the corresponding modules for smaller expressions.
\end{theorem}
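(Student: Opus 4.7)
The plan is to assemble the theorem out of three pieces already established in the paper: the tilting equivalence, the Erweiterungssatz, and the explicit computation of hypercohomology of Bott–Samelson motives.

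First I would verify that the flag variety $(X,(B))$ together with its Bruhat stratification satisfies the hypotheses of Theorem~\ref{thm:paritymotives} and Corollary~\ref{cor:paritymotivestilting}. The Bott–Samelson map $\pi_{\underline{w}}:\BS(\underline{w})\to\overline{X}_w\subset X$ is proper, affinely stratified (with respect to the natural cellular decompositions on source and target), has smooth source, and restricts to an isomorphism over $X_w$ for any reduced expression $\underline{w}$. Hence Theorem~\ref{thm:paritymotives} supplies indecomposable pointwise pure parity motives $E_w$ with $j_w^*E_w=\un_{X_w}$, shows $\Par{(B)}{X}_{w=0}=\DMT{(B)}{X}_{w=0}$, and Corollary~\ref{cor:paritymotivestilting} then gives the first displayed equivalence.

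Next, to identify $\Par{(B)}{X}_{w=0}$ with evenly graded Soergel modules, I invoke the Erweiterungssatz in the form of Corollary~\ref{coro:pwPureErweiterungssatz}. The hypotheses that $\Hyp(E)\to\Hyp(j_*j^*E)$ be surjective and $\Hyp(j_!j^!F)\to\Hyp(F)$ be injective for standard inclusions follow from the concentration of $\Hyp$ of Bott–Samelson motives in even bidegrees $(2i,i)$ combined with the localisation triangle on each stratum; since everything lives in even perversity and there are no extensions among Tate twists over $\overline{\F}_p$ (Corollary~\ref{coro:fieldChowGroups}), the boundary maps in the localisation long exact sequence vanish. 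This gives the fully faithful embedding $\Hyp:\Par{(B)}{X}_{w=0}\hookrightarrow \operatorname{mod}^{\Z}\operatorname{-}C$, where we identify $\Hyp(X,\kk)=C$ via Theorem~\ref{thm:chowflag} (using $t_\Phi$ invertible in $\kk$).

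Third, I would compute the essential image. From \cite[Lemma~3.2.1]{Soe00} we have $\pi_{\underline{w},!}\un_{\BS(\underline{w})}\cong \pi_{s_l}^*\pi_{s_l,*}\cdots \pi_{s_1}^*\pi_{s_1,*}\un_{B/B}$, and the preceding lemma on the $\Proj^1$-bundle $\pi_s:G/B\to G/P_s$ turns $\pi_s^*\pi_{s,*}$ into the functor $C\otimes_{C^s}-$ on hypercohomology, where $\Hyp(G/P_s,\kk)\cong C^s$ by Theorem~\ref{thm:chowflag}. Iterating yields
\begin{equation}
\Hyp(\pi_{\underline{w},!}\un_{\BS(\underline{w})})\cong C\otimes_{C^{s_l}}\cdots\otimes_{C^{s_1}}\kk,
\end{equation}
and closing under $\oplus$, direct summands, isomorphisms, and the even twist $(n)[2n]$ (which on hypercohomology is the even shift $\langle 2n\rangle$) reproduces exactly the category $C\soergelmodules^\Z_{ev}$. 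Together these identifications show that the embedding $\Hyp$ has essential image $C\soergelmodules^\Z_{ev}$.

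Finally, for the identification $E_w\leftrightarrow D_w$, I use Krull–Schmidt in the additive Karoubian category $\Par{(B)}{X}_{w=0}$, which transfers via the equivalence $\Hyp$ to Krull–Schmidt for $C\soergelmodules^\Z_{ev}$. The motive $E_w$ is characterised as the unique indecomposable summand of $\pi_{\underline{w},!}\un_{\BS(\underline{w})}$ not appearing as a summand of any $\pi_{\underline{v},!}\un_{\BS(\underline{v})}$ with $\ell(v)<\ell(w)$ (by Theorem~\ref{thm:uniquenessparitysheaves} and support considerations), and $\Hyp$ sends this to the analogously characterised indecomposable summand $D_w$ of the Bott–Samelson module. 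I expect the main technical obstacle to be the careful verification of the surjectivity/injectivity hypotheses of the Erweiterungssatz, since this is where the concentration of motivic cohomology in even bidegrees and the vanishing provided by Corollary~\ref{coro:fieldChowGroups} must be combined with a stratum-by-stratum induction, and keeping track of the grading conventions $(n)[2n]\leftrightarrow\langle 2n\rangle$ is where sign and parity errors are easiest to introduce.
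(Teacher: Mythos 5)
Your proof follows essentially the same route as the paper: apply Theorem~\ref{thm:paritymotives} and Corollary~\ref{cor:paritymotivestilting} via the Bott--Samelson resolutions to get the first equivalence and the identification of weight-zero objects with parity motives, use the Erweiterungssatz together with the $\Proj^1$-bundle lemma and Theorem~\ref{thm:chowflag} to compute $\Hyp$ and identify the essential image with evenly graded Soergel modules, and invoke Krull--Schmidt to match $E_w$ with $D_w$. Your additional care in sketching the verification of the Erweiterungssatz hypotheses via even-degree concentration is a sensible elaboration of a step the paper leaves implicit.
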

\begin{remark}
	The equivalence $\Par{(B)}{X}_{w=0}\stackrel{\sim}{\rightarrow} C\soergelmodules^\Z_{ev}$ also proves that in case of the flag variety, the category of stratified mixed Tate motives is equivalent to the \emph{mixed derived category} as considered in \cite[Definition 2.1]{AR2}. Their mixed derived category is by construction the homotopy category of Soergel modules.
	\end{remark}
\subsubsection{Perverse motives} We start by showing that the technical requirements for a nice theory of perverse motives and tilting perverse motives are in fact met by the flag variety.
\begin{lemma}
The flag variety with its Bruhat stratification $(X,(B))$ fulfills \dagg, i.e., $\nabla_w, \Delta_w\in\Per{(B)}{X}$ for all $w\in W.$
\end{lemma}
\begin{proof}
	By Verdier duality, it suffices to show the statement for $\Delta_w.$ By Proposition \ref{prop:perverseexactness} we know that  $\Delta_w=j_{w,!}\un_{X_w}\in\DMT{(B)}{X}^{p\leq0}$. To show that $\Delta_w\in\DMT{(B)}{X}^{p\geq0}$ and is hence perverse we proceed by induction on the length of $w$. If $w=e$, then $j_w$ is a closed embedding. Hence $j_{w,!}=j_{w,*}$ and the statement follows from Proposition \ref{prop:perverseexactness}. Otherwise, let $s\in \Weyl$ be such that $ws<w$ and let $\pi:G/B\to G/P_s$ be the projection. Then we obtain the distinguished triangle
	\begin{equation}
\disttriangle{\Delta_{ws}}{\pi^!\pi_!\Delta_{ws}}{\Delta_{w}(1)[1]}
\end{equation}
	where $\Delta_{ws}$ is perverse by induction. Now let $x\in\Weyl$ and assume that $xs>s$. Then we obtain the cartesian square
	\begin{center}
\cartesiandiagramwithmaps{X_{xs}\cup X_x}{k}{X}{p}{\pi}{BxP_s/P_s}{i}{G/P_s.}
	\end{center}
	Applying base change and $k^*$ our distinguished triangle becomes
	\begin{equation}
\disttriangle{k^*\Delta_{ws}}{p^!p_!k^*\Delta_{ws}}{k^*\Delta_{w}(1)[1]}
\end{equation}
	Now $p$ is a trivial $\Proj^1$-bundle. Hence, we have reduced our statement to the case $X=\Proj^1$ where it follows easily.
\end{proof}
Hence Section \ref{subsec:perversemotives} implies the following equivalent descriptions of the category of stratified mixed Tate motives on $X$.
\begin{theorem}
	There are equivalences of categories
	\begin{align*}
	\Hotb{\Tilt(\Per{(B)}{X})}\stackrel{\sim}{\leftarrow}\DMT{(B)}{X}&\stackrel{\sim}{\rightarrow}\Hotb{\Projectives(\Per{(B)}{X})}\\&\stackrel{\sim}{\rightarrow}\Derb{\Per{(B)}{X})}.
	\end{align*}
\end{theorem}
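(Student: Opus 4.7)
The plan is to recognise this theorem as a direct specialisation of the two general equivalence results in Section~\ref{subsec:perversemotives} to the Bruhat-stratified flag variety $(X,(B))$. The category $\DMT{(B)}{X}$ is affinely Whitney-Tate by \cite{SoeWe}, and the immediately preceding lemma establishes that all standard objects $\nabla_w$, and hence by Verdier duality all costandard objects $\Delta_w$, are perverse. This verifies the sole hypothesis required by both Theorem~\ref{thm:perversemotivestilting} and the tilting theorem that follows it.

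Given this, I would first invoke Theorem~\ref{thm:perversemotivestilting} applied to $(X,(B))$ to obtain the right-hand pair of equivalences
\begin{equation*}
\DMT{(B)}{X} \stackrel{\sim}{\to} \Hotb{\Projectives(\Per{(B)}{X})} \stackrel{\sim}{\to} \Derb{\Per{(B)}{X}},
\end{equation*}
and then separately invoke the tilting theorem for perverse motives to obtain the left-hand equivalence
\begin{equation*}
\Hotb{\Tilt(\Per{(B)}{X})} \stackrel{\sim}{\to} \DMT{(B)}{X}.
\end{equation*}
Concatenating these three equivalences yields the statement. Under the hood, both results feed into Keller's dg-tilting machinery via the tilting collections of projective (respectively tilting) objects in $\Per{(B)}{X}$, whose existence and $\operatorname{Ext}$-vanishing properties come from Proposition~\ref{prop:enoughprojperversemotives} and Lemma~\ref{lem:standardcostandardtilting}.

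Consequently, there is essentially no obstacle specific to the flag variety in this final statement: all the real work (Keller tilting, highest-weight theory yielding enough projectives and tilting objects, $\operatorname{Ext}$-vanishing between standard and costandard flags) is already in place abstractly. The only genuinely case-specific input is the perversity of $\nabla_w$ established in the preceding lemma, which in turn relies on induction on $\ell(w)$ and the distinguished triangle
\begin{equation*}
\nabla_{ws} \to \pi^!\pi_!\nabla_{ws} \to \nabla_w(1)[1]
\end{equation*}
associated to a projection $\pi: G/B \to G/P_s$ that is a $\Proj^1$-bundle. Once this is available, the present theorem is pure bookkeeping.
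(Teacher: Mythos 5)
Your proposal is correct and follows exactly the paper's approach: the paper gives no separate argument for this theorem, merely remarking that it follows from Section~\ref{subsec:perversemotives} once the preceding lemma on perversity of the standard and costandard objects is in hand. Your identification of the two general tilting results (Theorem~\ref{thm:perversemotivestilting} and the one for tilting perverse motives) and the single flag-variety-specific input is precisely the intended reading.
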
 %\todo{standard elements!}
%auto-ignore
% !TEX root = Dmain.tex
\section{Representation Theory} \label{sec:repTheory}
In this section we apply our results to the representation theory of semisimple algebraic groups in characteristic $p$.
\subsection{Modular Category $\cato$}
%\section{Modular Category $\cato$}
Let $G\supset B \supset T$ be a split semisimple simply connected algebraic group with a Borel subgroup and maximal torus over a field $\kk$ of characteristic $p$. 
Assume that $p$ is bigger than the Coxeter number of $G$. Denote by $N_G(T)/T=\Weyl\supset \simple$ the corresponding Weyl group and simple reflections and by $X(T)\supset \Phi\supset \Phi_+\supset\Delta$ the associated root lattice, root system, positive and simple roots.

For $\lambda\in X(T)$ dominant write $\Ind_B^G\kk_\lambda=H^0(G/B,\pazocal O(\lambda))=H^0(\lambda)$ for the induced representation of the one-dimensional $T$-module $\kk_\lambda$.
Over the complex numbers, those are exactly all simple rational representations (Borel--Weil--Bott Theorem) and they have a nice character formula (Weyl character formula). In positive characteristic though, the modules $H^0(\lambda)$ can become reducible and the main goal is to determine their composition factors. For astronomically big (see \cite{Fie}) prime numbers $p$, this is solved by the proof of the Lusztig conjecture in \cite{AJS}, which turns out to be false for smaller primes, as shown by \cite{Wil13} using the modular category $\cato$.

The \emph{modular category $\cato=\cato(G,B)$}, also called \emph{subquotient around the Steinberg point}, is a subquotient of the category $G\modules$ of finite dimensional representations of $G$ over $\kk$. It was defined by Soergel \cite{Soe90} in the following way. In the notation of \cite{Jan}, let $L(\lambda)$ denote the unique simple submodule of $\H^0(\lambda)$ and let  $\rho$ denote the half sum of all positive roots and $\operatorname{st}=(p-1)\rho$ the Steinberg weight. Soergel then defines two full subcategories of $G\modules$ by
\begin{align}
\pazocal{A}&=\{M\in G\modules\setline[M:L(\lambda)]\neq 0\Rightarrow\lambda \uparrow \operatorname{st}+\rho\}\text{ and}\\
\pazocal{N}&=\{M\in G\modules\setline[M:L(\lambda)]= 0\Rightarrow\lambda \in \operatorname{st}+\Weyl\rho\}.
\end{align}
Here $[M:L(\lambda)]$ is the number of times $L(\lambda)$ appears as factor in a composition series of $M$, and $\lambda \uparrow \operatorname{st}+\rho$ meams that $\lambda$ is linked to $\operatorname{st}+\rho$, with respect to the $p$-dilated action of the affine Weyl group (see \cite[Chapter 6]{Jan}).
%\todo{up arrow? in? these are integers?}

\begin{definition} \label{defi:modO}
The modular category $\cato$ is then the Serre quotient 
\begin{equation}
\cato \stackrel{def}{=} \pazocal{A}/\pazocal{N}.
\end{equation}
\end{definition}

The modular category $\cato$ resembles the BGG category $\cato_0(\g)$ associated to \emph{complex} semisimple Lie algebras $\g$ (see \cite{BGG}) in many ways. It has standard objects $M_x=H^0(\operatorname{st}+x\rho)^*$ with unique simple quotient $L_x$ and projective covers $P_x$, all parametrized by elements of the Weyl group $x\in \Weyl$. In a way, it is a \emph{window into} or \emph{excerpt of} the category of all finite dimensional representations, which can be used to test or prove conjectures with methods used in the study of category $\cato_0(\g)$. Indeed, it was introduced by Soergel in \cite{Soe00} in the hope to partly prove the Lusztig conjecture and the mentioned counterexamples by Williamson \cite{Wil13} are constructed using the modular category $\cato$.

The modular category also has an analogue to the \emph{Struktur-} and \emph{Endomorphismensatz} from \cite{Soe90}.
\begin{theorem}[{\cite[19.8]{AJS}, \cite[Theorem 2.6.1]{Soe00}}]
The functor 
\begin{equation}
\V\stackrel{def}{=}\Hom{\cato}(P_{w_0}, - ): \cato\rightarrow \operatorname{mod-}\End{\cato}(P_{w_0}) 
\end{equation}
 is fully faithful on projective modules. Here $w_0$ denotes the longest element in $\Weyl.$ Furthermore
\begin{equation}
\End{\cato}(P_{w_0})=C\stackrel{def}{=}\SymA(\h)/\SymA(\h)_+^\Weyl.
\end{equation}
where $\h=\operatorname{Lie}(T)$, and $\SymA(\h)$ denotes the symmetric algebra.
%and induces an equivalence
%$$\Projectives \cato \rightarrow C\soergelmodules$$
%between the projectives in $\cato$ and the category of Soergel modules, which in turn is identified with
%$$C\soergelmodules=\Kar(\langle C\otimes_{C^{s_1}}\cdots\otimes_{C^{s_n}}\C|s_i \text{ simple reflections} \rangle_\oplus)$$
\end{theorem}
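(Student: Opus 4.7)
The plan is to adapt Soergel's classical \emph{Struktur-} und \emph{Endomorphismensatz} for the BGG category $\cato_0$ to the modular setting, following Andersen--Jantzen--Soergel. Since $\cato(G,B)$ is by construction a subquotient around a single linkage-orbit block of $G\modules$, it inherits a contravariant duality $\tau$ from the standard duality on $G\modules$ which fixes every simple $L_x$. The first step is to show that $P_{w_0}$ is self-dual, and therefore coincides with the injective hull of $L_{w_0}$. This follows once one checks that the standard module $M_{w_0}$ attached to the extremal weight $w_0\cdot\operatorname{st}$ of the Steinberg-dot orbit is simple, which uses the hypothesis $p>h$ so that the orbit lies in a single regular block and $w_0\cdot\operatorname{st}$ is a minimal element for the linkage order.

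The main obstacle will be identifying $\End{\cato}(P_{w_0})$ with the coinvariant ring $C$. I would proceed via translation through walls: for each simple reflection $s\in\simple$, the wall-crossing functors produce, by adjunction, a canonical algebra map $\SymA(\h)^{\Weyl_s}\to\End{\cato}(P_{w_0})$, and assembling these over $s\in\simple$ yields a surjection $\SymA(\h)\twoheadrightarrow\End{\cato}(P_{w_0})$. The kernel should be identified with the ideal $\SymA(\h)\cdot\SymA(\h)_+^{\Weyl}$ by a careful count: on the representation-theoretic side, the standard filtration of $P_{w_0}$ has length $|\Weyl|$, giving an upper bound on the rank of $\End{\cato}(P_{w_0})$; on the other side, the surjection visibly factors through $C$, and $C$ has $\kk$-rank $|\Weyl|$ by Chevalley. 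This matching of ranks --- essentially the content of \cite[\S 19]{AJS} --- is the heart of the Endomorphismensatz and requires the genuinely subtle deformation-theoretic arguments there.

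For the full faithfulness of $\V$ on projectives, the strategy is a standard double-centralizer argument. Because $P_{w_0}$ is projective-\emph{and}-injective, the functor $\V=\Hom{\cato}(P_{w_0},-)$ admits a left adjoint $-\otimes_{C}P_{w_0}$ which is exact, and the unit of this adjunction is visibly an isomorphism on $P_{w_0}$ itself. It therefore remains to show that every indecomposable projective $P_x$ appears as a direct summand of a module obtained from $P_{w_0}$ by repeated application of translation-through-walls functors: this is proven by induction on the length of $x$ using the standard combinatorics of the action of such functors on projective covers, exactly as in Soergel's original paper. Since translation through an $s$-wall on the representation side corresponds under $\V$ to the functor $C\otimes_{C^s}(-)$ on $C\modules$ (a consequence of the adjunction defining $C\to C^s$ together with the identification of endomorphism rings), the inductive step reduces to the base case $P_{w_0}$, completing the proof.
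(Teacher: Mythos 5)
The paper does not actually prove this theorem: it is stated purely as a citation to \cite[19.8]{AJS} and \cite[Theorem 2.6.1]{Soe00}, so there is no in-paper proof to compare your sketch against. Judged on its own terms, your outline captures the familiar overall shape of the Struktur- and Endomorphismensatz (self-duality of $P_{w_0}$, wall-crossing, the intertwiner $\V\theta_s\cong C\otimes_{C^s}\V$, a dimension count against $|\Weyl|$), but two of its central steps do not go through as written.

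First, the construction of the algebra map $\SymA(\h)\to\End{\cato}(P_{w_0})$ cannot be obtained by \emph{assembling} the maps $\SymA(\h)^{\Weyl_s}\to\End{\cato}(P_{w_0})$ that come from adjunction for the various $s\in\simple$. Nothing guarantees these partial maps agree on overlaps, and $\SymA(\h)$ is not the colimit of its parabolic invariant subalgebras in the category of $\kk$-algebras, so there is no universal property to invoke. In AJS the action of $\SymA(\h)$ on $P_{w_0}$ is built in a completely different way, via the deformation of the entire block over a completion of $\SymA(\h)$; the identification of $\End{\cato}(P_{w_0})$ with $C$ is then extracted from the combinatorics of the deformed standard filtration, and this is precisely where the bulk of the technical work in \cite[\S 19]{AJS} lives. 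Your rank count also only gives that two spaces have the same dimension once you already know the map is defined and surjective, which are exactly the points your argument leaves open. Second, the full-faithfulness argument via ``a left adjoint $-\otimes_{C}P_{w_0}$ which is exact'' is not sound: $P_{w_0}$ is not flat over the local ring $C$ (which has nilpotents), so $-\otimes_{C}P_{w_0}$ is only right exact, and the unit being an isomorphism on $P_{w_0}$ does not then propagate to all projectives. Soergel's actual argument is the one you gesture at in your last sentences --- every indecomposable projective is a summand of a Bott--Samelson object $\theta_{s_1}\cdots\theta_{s_n}P_{w_0}$, and one checks that $\V$ intertwines the $(\theta_s,\theta_s)$-adjunction with the $(C\otimes_{C^s}-,\operatorname{res})$-adjunction compatibly with units and counits, so that all $\Hom$-spaces on both sides are computed by one and the same recursion from the base case $\End{\cato}(P_{w_0})=C$ --- but this needs to be carried out as a Hom-space comparison, not as a statement about an exact left adjoint.
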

Moreover, by analysing the interaction of the functor $\V$ with translation functors, \cite{Soe00} identifies the essential image of the projective modules in $\cato$ under $\V$ with the category $C\soergelmodules$ of \emph{Soergel modules}.
\begin{theorem}[\cite{Soe00} Theorem 2.8.2.]
	 The essential image of $\V$ is the category of \emph{Soergel modules} $$C\soergelmodules\stackrel{def}{=}\left\langle\,  C\otimes_{C^{s_1}}\cdots C\otimes_{C^{s_n}}\kk\,\vert\, s_i\in\Simp\,\right\rangle_{\oplus,\inplus,\cong}$$
	 where $\oplus$, $\inplus$ and $\cong$ means closure under finite direct sums, direct summands and isomorphisms in the category of $C$-modules.
\end{theorem}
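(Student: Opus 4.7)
The strategy I would take is to compute $\V$ explicitly on a generating set of projectives by exploiting the compatibility of $\V$ with translation functors through simple walls, and then use full faithfulness of $\V$ on projectives (the preceding theorem) to identify the essential image.

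The first and crucial step is to establish, for each simple reflection $s\in\simple$, a natural isomorphism
\[\V\circ\theta_s\;\cong\;(C\otimes_{C^s}-)\circ\V,\]
where $\theta_s$ is the translation functor through the $s$-wall on $\cato$. I would construct such a natural transformation from the self-adjunction of $\theta_s$ together with the inclusion $C^s\hookrightarrow C$, and then verify isomorphy on the projective generator $P_{w_0}$, where $\V(P_{w_0})=C$ by the Endomorphismensatz. Identifying $\End{\cato}(\theta_s P_{w_0})\cong C\otimes_{C^s}C$ and matching this with the parallel computation on the Soergel-module side should force the transformation to be an isomorphism on all projective summands of $\theta_s P_{w_0}$, and from there on all projectives by iteration, since translation functors preserve projectivity.

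Next, I would argue that every indecomposable projective $P_x$ for $x\in\Weyl$ appears as a direct summand of some Bott--Samelson-type composition $\theta_{s_1}\theta_{s_2}\cdots\theta_{s_n}P$ applied to a distinguished seed projective $P$ satisfying $\V(P)=\kk$. This can be carried out by induction on the Bruhat length; the assumption that $p$ exceeds the Coxeter number of $G$ ensures the linkage class $\operatorname{st}+\Weyl\rho$ is regular and that the standard wall-crossing combinatorics applies. Combining with the first step, $\V$ sends $\theta_{s_1}\cdots\theta_{s_n}P$ to $C\otimes_{C^{s_1}}\cdots\otimes_{C^{s_n}}\kk$, so $\V$ takes every indecomposable projective to a direct summand of a Bott--Samelson Soergel module. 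Since $\V$ is fully faithful on projectives and every Bott--Samelson Soergel module arises this way, the essential image will coincide with $C\soergelmodules$.

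The principal obstacle is the first step. In characteristic zero one could invoke the decomposition theorem applied to the geometric analog $\pi_{s,!}\pi_s^*$; in the modular setting that tool is unavailable, so the bimodule-tensor description of $\V\circ\theta_s$ must be established by a direct computation inside $\cato$, controlling the $C^s$-action through the explicit presentation of translation functors and the Struktur-/Endomorphismensatz, rather than via any appeal to purity or geometric decomposition.
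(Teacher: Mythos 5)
Your proposal is essentially the same approach the paper attributes to Soergel: the text immediately preceding the theorem says ``by analysing the interaction of the functor $\V$ with translation functors, \cite{Soe00} identifies the essential image of the projective modules in $\cato$ under $\V$ with the category $C\soergelmodules$.'' The paper gives no proof of its own, only the citation, and your sketch---establishing $\V\circ\theta_s\cong(C\otimes_{C^s}-)\circ\V$ via the self-adjunction of $\theta_s$ and the Endomorphismensatz, then generating all indecomposable projectives as summands of $\theta_{s_1}\cdots\theta_{s_n}$ applied to the antidominant projective---is a faithful outline of Soergel's argument, including the correct observation that in the modular setting this must be established by direct algebraic computation rather than via the decomposition theorem.
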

Putting these results together we get a \emph{combinatorial} description of the derived modular category $\cato$ in terms of the homotopy category of Soergel modules
\begin{center}
	\begin{tikzcd}
	\Derb{\cato}\arrow[r,"\sim"']&\Hotb{\Projectives\cato}\arrow[r,"\sim"',"\V"]&\Hotb{C\soergelmodules}.
\end{tikzcd}
\end{center}
We now combine this with the results from Section \ref{subsec:flagvar}. Let $G^\vee\supset B^\vee\supset T^\vee$ be a semisimple algebraic group over $\overline{\F}_p$ with a Borel subgroup and maximal torus and root system $X(T^\vee)\supset\Phi^\vee$ dual to that of $G$.
Denote by $X^\vee=G^\vee/B^\vee$ the flag variety. Under the assumption that the torsion index $t_{\Phi^\vee},$ see Section \ref{subsubsec:parmotsoemod}, is invertible in $\kk$, Theorem \ref{thm:chowflag} gave us a description of the motivic cohomology ring of $X$ as
$$\Hyp(X) = 
\biggl (\SymA(X(T^\vee))/\SymA(X(T^\vee))_+^\Weyl \biggr ) \otimes\kk =
\SymA(\h)/\SymA(\h)_+^\Weyl=
C$$ 
and Theorem \ref{thm:mainflag} provided us with a combinatorial description of the category of stratified mixed Tate motives on $X^\vee$
\begin{center}
	\begin{tikzcd}
		\DMT{(B^\vee)}{X^\vee}\arrow[r,"\sim"']&\Hotb{\Par{(B^\vee)}{X^\vee}}\arrow[r,"\sim"',"\Hyp"]&\Hotb{C\soergelmodules^\Z_{ev}}.
	\end{tikzcd}
\end{center}
Putting everything together, we obtain our final theorem.
\begin{theorem}\label{thm:modularcategoryo} The functor induced by forgetting the grading of Soergel modules 
\begin{center}
	\begin{tikzcd}
		\DMT{(B^\vee)}{X^\vee,\kk}\arrow[r,"v"]&\Derb{\cato(G,B)}
	\end{tikzcd}
\end{center}
has the following properties:
\begin{enumerate}
	\item There is natural isomorphism $v \cong v\circ (1)[2]$.
	\item For all $E,F\in\DMT{(B^\vee)}{X^\vee,\kk}$ we can identify
	$$\bigoplus_{n\in\Z}\Hom{H(X^\vee)}(E,F(n)[2n])=\Hom{\Derb{\cato}}(v(E),v(F)).$$
	\item For every indecomposable projective module $P_x$ in $\cato$ there is an indecomposable pointwise pure parity motive $E_x\in\Par{(B^\vee)}{X^\vee}_{w=0}$ with $j^*_xE_x=\un_{X_s^\vee}$, such that $$v(E_x)=P_x.$$
	\item Costandard objects correspond to standard modules
	$$v(\nabla_x)=M_x.$$
\end{enumerate}
\end{theorem}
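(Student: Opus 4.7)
The plan is to exploit the fact that both sides of the desired equivalence already admit matching combinatorial models built out of Soergel modules, and to construct $v$ as essentially the functor that forgets the grading. On the motivic side, Theorem~\ref{thm:mainflag} gives an equivalence
\[ \Phi:\DMT{(B^\vee)}{X^\vee,\kk}\xrightarrow{\sim}\Hotb{C\soergelmodules^\Z_{ev}}, \]
under which the indecomposable parity motive $E_x$ (with $j_x^*E_x=\un_{X_x^\vee}$) is sent to the indecomposable graded Soergel module $D_x$. On the representation theory side, combining Soergel's Struktursatz and Theorem~2.8.2 of \cite{Soe00} with Keller's dg-tilting (precisely as in the proof of Theorem~\ref{thm:perversemotivestilting}, using that projectives have costandard flags) yields
\[ \Psi:\Derb{\cato(G,B)}\xrightarrow{\sim}\Hotb{\Projectives\cato}\xrightarrow{\V}\Hotb{C\soergelmodules}, \]
sending the indecomposable projective $P_x$ to $D_x$. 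I would define $v:=\Psi^{-1}\circ\operatorname{Forget}\circ\Phi$ where $\operatorname{Forget}$ is induced by the functor forgetting the grading $C\soergelmodules^\Z_{ev}\to C\soergelmodules$.

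With this definition, properties (1) and (3) are essentially tautological. For (1), the Tate twist $(1)[2]$ on $\DMT{(B^\vee)}{X^\vee}$ is identified under $\Phi$ with an even shift $\langle 2\rangle$ of grading (this falls out of Corollary~\ref{coro:pwPureErweiterungssatz} together with the description of $\Hyp(\un(1)[2])$), and every even grading shift becomes the identity after applying $\operatorname{Forget}$. For (3), the composite $\Psi^{-1}\circ\operatorname{Forget}\circ\Phi$ sends $E_x\mapsto D_x\mapsto D_x\mapsto P_x$ by the discussion above.

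For property (2), the key elementary fact is that for $M,N\in C\soergelmodules^\Z_{ev}$ the forgetful functor induces an isomorphism
\[ \bigoplus_{n\in\Z}\Hom{C\modules^\Z}(M,N\langle 2n\rangle)\xrightarrow{\sim}\Hom{C\modules}(M,N), \]
which upgrades, via the homotopy category, to the stated identity when $E,F$ are weight zero parity motives. Using the tilting description $\DMT{(B^\vee)}{X^\vee}\cong\Hotb{\Par_{w=0}}$ and the fact that both sides of (2) are cohomological in each variable and commute with the triangulated structure, one then extends from weight zero generators to arbitrary $E,F$ by standard dévissage.

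Property (4) is the substantive point. I would approach it by comparing explicit resolutions on both sides. On the motivic side, $j_{x,*}\un_{X_x^\vee}$ is the standard perverse object (up to shift) and admits a canonical resolution by Bott–Samelson parity motives $\pi_{\underline{w},!}\un_{\BS(\underline{w})}$ for $w\leq x$, which under $\Phi$ becomes an explicit complex of Bott–Samelson graded Soergel modules $C\otimes_{C^{s_l}}\cdots\otimes_{C^{s_1}}\kk$. On the representation side, the standard module $M_x$ has, by highest weight theory, a projective resolution whose image under $\V$ is built from exactly the same Bott–Samelson Soergel modules (this is part of Soergel's original comparison of translation functors with $C\otimes_{C^s}-$). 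The main obstacle is to align these two resolutions term-by-term and verify that the differentials agree after forgetting the grading; this ultimately reduces to checking that the hypercohomology of the costandard motive $j_{x,*}\un_{X_x^\vee}$ and the ungraded Soergel module $\V(M_x)$ match as $C$-modules, which in turn reduces to known formulas for both (the hypercohomology of a Bruhat cell closure, respectively the structure of the standard module of $\cato$). Once (4) is established for the standards $j_{x,*}\un_{X_x^\vee}$, all remaining compatibilities follow from (1)–(3) and triangulatedness.
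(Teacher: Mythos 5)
Your proposal is a faithful expansion of what the paper leaves implicit (the paper just writes ``Putting everything together, we obtain our final theorem'' with no separate proof). The definition of $v$ as $\Psi^{-1}\circ\operatorname{Forget}\circ\Phi$, built from Theorem~\ref{thm:mainflag} and Soergel's combinatorial description of $\Derb{\cato}$, is exactly the intended construction, and your treatment of (1), (2), (3) is correct: (1) and (3) are immediate from the identifications $E_x\mapsto D_x\mapsto P_x$ and the fact that $(1)[2]$ corresponds to an even grading shift; (2) follows from the elementary graded/ungraded $\Hom$ comparison on weight zero objects plus d\'evissage, since both sides are cohomological $\delta$-functors and the $E_x(n)[2n]$ generate.

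However, your argument for (4) has a genuine gap. The resolution-comparison you sketch cannot be closed the way you propose. You acknowledge the obstruction (``verify that the differentials agree''), but the proposed reduction to ``checking that the hypercohomology of $j_{x,*}\un_{X_x^\vee}$ and $\V(M_x)$ match as $C$-modules'' is insufficient on two counts. First, $j_{x,*}\un_{X_x^\vee}$ is only pointwise $*$-pure, not pointwise pure, so the Erweiterungssatz (Theorem~\ref{thm:Erweiterungssatz}) does not guarantee that $\Hyp$ is faithful on morphisms into or out of it, and hence matching hypercohomology modules does not determine the object in the derived category. Second, matching the terms of two resolutions says nothing about the differentials, which is precisely where the quasi-isomorphism type lives. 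A cleaner route, which is what the paper has in mind given its setup, avoids resolutions entirely: by (2) and (3), $v$ restricts to an equivalence between the additive categories $\Projectives(\Per{(B^\vee)}{X^\vee})$ and $\Projectives(\cato)$ compatible with the labelling by $\Weyl$ (and with the ordering, since $E_x$ is supported on $\overline{X}_x$). Both $\Per{(B^\vee)}{X^\vee}$ (Proposition~\ref{prop:enoughprojperversemotives}) and $\cato$ are highest weight categories over the same poset, and in a highest weight category the standard object $\Delta_x$ is uniquely determined by the projectives together with the poset (as the largest quotient of $P_x$ with composition factors in the lower order ideal). Since $v$ identifies the projective generators and their $\operatorname{End}$-algebras, it identifies the hearts and hence the standard objects, giving $v(j_{x,*}\un_{X_x^\vee})=M_x$.
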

\begin{remark}
	We could also formulate the last theorem as an equivalence
		$$	\DMT{(B^\vee)}{G^\vee/B^\vee}\stackrel{\sim}{\rightarrow}\Derb{\cato^{\Z,ev}}$$
	as stated in the introduction, because the right hand side is by definition equivalent to $\Hotb{C\soergelmodules^{\Z,ev}}$. Alternatively, one could also artificially add a root of the Tate twist on the geometric side to get an equivalence with the whole derived graded category $\Derb{\cato^\Z}$.
\end{remark}
\appendix

\section{Rappels: Some category theory} \label{sec:Cat}

\subsection{Triangulated categories} \label{sec:triCat}

\begin{rappels} \label{rappels:tricat2}
Let $\Tt$ be a triangulated category.
\begin{enumerate}
 \item \label{VerdierLocalisation} Let $\Ss$ be a triangulated subcategory of $\Tt$. Then there is a description of the Verdier quotient $\Tt / \Ss$ whose objects are the same as those of  $\Tt$, and morphisms are equivalence classes of ``hats'' of morphisms $X \stackrel{s}{\leftarrow} \stackrel{f}{\to} Y$ with $f, s$ morphisms of $\Tt$ and $\Cone(s) \in \Ss$, cf. \cite[2.1.7, 2.2.1]{Ver96}, \cite[2.1.11]{Nee01}.

 \item \label{BousfieldRightadjoint}If $\Tt$ admits all small sums and is compactly generated, \cite[Def.1.7,1.8]{Nee96}, and $\Ss$ is a thick triangulated subcategory which also admits small sums and is generated by objects which are compact in $\Tt$, then the canonical quotient functor preserves small sums, and admits a right adjoint \cite[Exam.8.4.5]{Nee01}
\begin{equation}
 \Tt \rightleftarrows \Tt  / \Ss. 
\end{equation}

 \item \label{BousfieldLocalisationTriCat} Under the above hypotheses,  objects $\Ee$ such that $\Hom{\Tt}(f, \Ee)$ is an isomorphism for every $f$ with $\Cone(f)\in \Ss$, or equivalently, those objects $\Ee$ such that $\Hom{\Tt}(\Ff, \Ee) = 0$ for every $\Ff \in \Ss$, are called $\Ss$-\emph{local}. The right adjoint identifies $\Tt / \Ss$ with the full subcategory $\Ss$-local objectes in $\Tt$  \cite[Theo.9.1.16]{Nee01}. In symbols,
\begin{equation}
 \Tt  / \Ss \cong  \{ \Ee\in\Tt : \Ee \text{ is } \Ss\text{-local} \} \stackrel{\textrm{full}}{\subset} \Tt. 
\end{equation}

\end{enumerate}
\end{rappels}

\subsection{Descent model structures} \label{subsec:descentModelStru}
Recall the setup of Section \ref{subsec:DerCat}. In particular, $\R$ denotes a cartesian commutative monoïd of $\Sh_\Nis(\Sm / -)^\Sfrak$ in the monoïdal category of symmetric sequences in Nisnevich sheaves over $\Sm/ -.$
We recall here the definitions of cofibrations and fibrations in $C(\R\modules)$, even though in the text, we only use the definition of fibrant object, and the fact that the $t^n\R(X)[i{-}1]$ are cofibrant.

\begin{definition}[{\cite[5.1.11]{CD}}] \label{defi:cofibrations}
The class of \emph{cofibrations} of $C(\R\modules)$ is the smallest class of morphisms closed under retracts, pushouts, and transfinite composition, and containing the canonical morphisms 
\begin{equation}
t^n\R(X)[i] \to \Cone \biggl (\id: t^n\R(X)[i]{\to}t^n\R(X)[i] \biggr )
\end{equation}
 for all $X \in \Sm / S, i \in \Z, n \geq 0$. Note, the morphism $0 \to t^n\R(X)[i{+}1]$ is the pushout of this latter along $t^n\R(X)[i] \to 0$, so the objects $t^n\R(X)[i{+}1]$ are all cofibrant. We remark that the role of $i$ and $n$ are exchanged compared to \cite[5.1.11]{CD}.
\end{definition}

It follows from this that the functors $\R_S(X) \otimes -$ are left Quillen functors, but it seems that we never need this fact.

\begin{definition}[{\cite[Def.5.1.9, 5.1.11]{CD}}] \label{defi:fibrantKmodule}
%Let $\R$ be a cartesian commutative monoïd of $\Sh_\Nis(\Sm / -)^\Sfrak$, such as $\K$ or $\T$. %
An object $\Ee \in C(\R\modules)$ is \emph{fibrant} if and only if for every $i, n \geq 0$ and $X \in \Sm / S$, the (cochain complex) cohomology $H^i(X, \Ee_n)$ and the Nisnevich hypercohomology $\Hyp_{\Nis}^i(X, \Ee_n)$ agree.
\end{definition}

\begin{definition} \label{defi:fibrations}
A morphism $f: \Ee \to \Ff$ in $C(\R\modules)$ is a \emph{fibration} if each $\Ee_n \to \Ff_n$ is surjective as a morphism of \emph{presheaves}, and each kernel $\ker(\Ee_n \to \Ff_n)$ is fibrant.
\end{definition}

\bibliographystyle{amsalpha} % Use the "unsrtnat" BibTeX style for formatting the Bibliography
\bibliography{main} % The references (bibliography) information are stored in the file named "Bibliography.bib"
\end{document}